\title{Poincar\'{e} profiles of groups and spaces}
\author{David Hume}
\thanks{The first and third authors were supported by the grant ANR-14-CE25-0004 ``GAMME''.  The first and second authors were supported in part by the NSF grant DMS-1440140 while in residence at the Mathematical Sciences Research Institute in Berkeley, California, during the Fall 2016 semester.  The second author was also supported in part by EPSRC grant EP/P010245/1.}
\address{Mathematical Institute, University of Oxford, Oxford, OX2 6GG.}
\email{david.hume@maths.ox.ac.uk}
\author{John M. Mackay}
\address{School of Mathematics, University of Bristol, Bristol, BS8 1TX.}
\email{john.mackay@bristol.ac.uk}
\author{Romain Tessera}
\address{Universit\'{e} Paris-Sud, Orsay, France.}
\email{romain.tessera@math.u-psud.fr}
\date{\today}
\newtheorem{btheorem}{Theorem}[]
\newtheorem{bcorollary}[btheorem]{Corollary}
\newtheorem{bproposition}[btheorem]{Proposition}
\numberwithin{equation}{section}
\newtheorem{theorem}[equation]{Theorem}
\newtheorem{proposition}[equation]{Proposition}
\newtheorem{corollary}[equation]{Corollary}
\newtheorem{lemma}[equation]{Lemma}
\theoremstyle{definition}
\newtheorem{example}[equation]{Example}
\newtheorem{bremark}[btheorem]{Remark}
\newtheorem{definition}[equation]{Definition}
\newtheorem{remark}[equation]{Remark}
\newtheorem*{theorem*}{Theorem}
\newtheorem*{assump*}{Standing assumption}
\newtheorem*{remark*}{Remark}
\newtheorem*{claim*}{Claim}
\newtheoremstyle{citing}
  {3pt}
  {3pt}
  {\itshape}
  {}
  {\bfseries}
  {}
  {.5em}
  {\thmnote{#3}}
\theoremstyle{citing}
\DeclareMathOperator{\diam}{diam}
\DeclareMathOperator{\cut}{cut}
\DeclareMathOperator{\sep}{sep}
\newcommand{\eps}{\epsilon}
\newcommand{\bdry}{\partial_\infty}
\newcommand{\hLip}{h_{\mathrm{Lip}}}
\DeclareMathOperator{\Lip}{Lip}
\DeclareMathOperator{\Isom}{Isom}
\DeclareMathOperator{\mdim}{mdim}
\newcommand{\set}[1]{\left\{#1\right\}}
\newcommand{\setcon}[2]{\left\{#1\, :\, #2\right\}}
\newcommand{\varsetcon}[2]{\left\{ #1\, :\, #2\right\}}
\newcommand{\abs}[1]{\left\lvert#1\right\rvert}
\newcommand{\norm}[1]{\left\lvert\left\lvert#1\right\rvert\right\rvert}
\newcommand{\cH}{\mathcal{H}}
\newcommand{\ra}{\rightarrow}
\newcommand{\R}{\mathbb{R}}
\newcommand{\C}{\mathbb{C}}
\newcommand{\Sph}{\mathbb{S}}
\newcommand{\N}{\mathbb{N}}
\newcommand{\Z}{\mathbb{Z}}
\newcommand{\HH}{\mathbb{H}}
\def\Xint#1{\mathchoice
{\XXint\displaystyle\textstyle{#1}}%
{\XXint\textstyle\scriptstyle{#1}}%
{\XXint\scriptstyle\scriptscriptstyle{#1}}%
{\XXint\scriptscriptstyle\scriptscriptstyle{#1}}%
\!\int}
\def\XXint#1#2#3{{\setbox0=\hbox{$#1{#2#3}{\int}$}
\vcenter{\hbox{$#2#3$}}\kern-.5\wd0}}
\def\dashint{\Xint-}
\numberwithin{equation}{section}
\begin{document}

\begin{abstract} We introduce a spectrum of monotone coarse invariants for metric measure spaces called Poincar\'{e} profiles. The two extremes of this spectrum determine the growth of the space, and the separation profile as defined by Benjamini--Schramm--Tim\'{a}r. In this paper we focus on properties of the Poincar\'{e} profiles of groups with polynomial growth, and of hyperbolic spaces, where we deduce a  connection between these profiles and conformal dimension.  As applications, we use these invariants to show the non-existence of coarse embeddings in a variety
of examples.
\end{abstract}

\maketitle

\setcounter{tocdepth}{1}
\tableofcontents

\section{Introduction}
One of the key goals of geometric group theory is to relate algebraic and large-scale geometric properties of finitely generated groups. Each finitely generated group is considered as a metric space when equipped with a word metric; different finite generating sets yield different metrics but they are all \textbf{quasi-isometric}.

Interesting families of groups are generally either defined algebraically and are geometrically mysterious (such as solvable groups) or are defined geometrically and are algebraically mysterious (such as hyperbolic groups). 

In general, subgroups of finitely generated groups need not be finitely generated. Even when they are, the word metric on the subgroup is not always quasi-isometric to the word metric of the group restricted to the subgroup. However, not all geometric information is lost as the inclusion of a subgroup is always a coarse embedding:

A map $\phi:X\to Y$ between metric spaces is a coarse embedding if there exist increasing functions $\rho_{\pm}:[0,\infty)\to [0,\infty)$ such that $\rho_-(r)\to\infty$ as $r\to\infty$ and for all $x,y\in X$
\[
 \rho_-(d_X(x,y)) \leq d_Y(\phi(x),\phi(y)) \leq \rho_+(d_X(x,y)).
\]

\subsection*{The geometric subgroup problem}
Let $G$ be a finitely generated group. For which finitely generated groups $H$ is there a coarse embedding of $H$ into $G$?

Note that this is a generalisation of the problem of determining the finitely generated subgroups of $G$. For example, there are hyperbolic groups into which $\Z^2$ can be coarsely embedded, but $\Z^2$ can never occur as a subgroup of a hyperbolic group.

To make progress with the geometric subgroup problem requires \textbf{monotone coarse invariants}: maps $M$ from finitely generated groups to a poset $(P,\leq)$ such that if $H$ coarsely embeds into $G$ then $M(H)\leq M(G)$. Growth and asymptotic dimension are two monotone coarse invariants which have been intensively studied in the literature; much more recently Benjamini--Schramm--Tim\'{a}r introduced a third\footnote{To our knowledge these are the only three monotone coarse invariants for all finitely generated groups which may take infinitely many different values.} monotone coarse invariant called the \textbf{separation profile}. The separation profile of an infinite, bounded degree graph at $r\in \N$ is the
supremum over all subgraphs of size 
 $\leq r$, of the number of vertices needed to be removed from the subgraph, in order to cut it to connected pieces of size at most $r/2$.

\medskip

In this paper, we introduce for every $1\leq p\leq \infty$, an analytic generalization of the separation profile that we call the $L^p$-Poincar\'e profile. It is indeed a generalization as for $p=1$ we actually recover the separation profile; in a sense these invariants interpolate between separation ($p=1$) and growth ($p=\infty$, as we see later). 
We shall establish general properties of these invariants, calculate their values in natural examples like groups of polynomial growth and certain hyperbolic groups, and use these to find coarse non-embedding results.

\subsection{Poincar\'e profiles}

For $p\in[1,\infty]$, the $L^p$--Poincar\'{e} constant of a finite graph $\Gamma$ is
\[
 h^p(\Gamma) = \inf\setcon{\frac{\norm{\nabla f}_p}{\norm{f}_p}}{f:V\Gamma\to\R\ \sum_{v\in V\Gamma} f(v)=0}
\]
where $\nabla f:V\Gamma\to\R$ is defined by \[\nabla f(x)=\max\setcon{\abs{f(x)-f(y)}}{xy\in E\Gamma}.\] We then define the $L^p$--Poincar\'{e} profile of a bounded degree graph $X$ by
\[
 \Lambda^p_X(r) = \sup\setcon{\abs{V\Gamma}h^p(\Gamma)}{\Gamma\leq X,\ \abs{V\Gamma}\leq r}.
\]
We consider functions up to the natural order $\lesssim$ where $f\lesssim g$ if there exists a constant $C$ such that $f(r)\leq Cg(Cr+C) + C$ for all $r$, and $f\simeq g$ if $f\lesssim g$ and $g\lesssim f$. Often, the constant $C$ will depend on some parameter $p$; to emphasise this we will use the notations $\lesssim_p$ and $\simeq_p$.

In this introduction, we only define Poincar\'{e} profiles in the context of graphs; however, our results naturally extend to compactly generated locally compact groups and Riemannian manifolds with bounded geometry. The majority of the paper is presented in a more general context which includes all of these spaces.
\medskip

A lower bound on the $L^p$-Poincar\'{e} profile corresponds to a ``$p$-Poin\-ca\-r\'e inequality''\footnote{Technically these Poincar\'e inequalities are Neumann-type, considering functions with average $0$, rather than Dirichlet-type Poincar\'{e} inequalities which consider only functions which are $0$ on the boundary of the subgraph in the ambient space. See Remark~\ref{rem:NeumannDirich} for more details.} for functions on a finite subgraph of the corresponding size. Poincar\'e inequalities have been intensively studied, particularly in the case of balls in doubling metric spaces, see \cite{Saloff-Coste,HajKos00-Sobolev-met-Poincare} and references therein.
For finite graphs, there is a vast literature linking Cheeger constants and spectral gaps to such inequalities when $p=1,2$, see~\cite{Chung-Fan-97-Spectral-graph-theory,Saloff-Coste-96-St-Flour-notes}.
Discrete Poincar\'e inequalities on balls in metric spaces have been studied before by, for example, Holopainen--Soardi~\cite{Holo-Soa-97-p-harmonic} and Gill--Lopez~\cite{Gill-Lopez-15-discrete-PI}.  Our approach differs in that we are working in a situation where global Poincar\'e inequalities do not necessarily hold, where measures need not be doubling, and where we have to consider inequalities on all subsets, not just balls.

One of our main results is that like the separation profile, Poincar\'{e} profiles are monotonous under regular maps. 

\begin{btheorem}\label{bthm:monotoneunderreg} Let $X,Y$ be graphs with bounded degree. If there is a regular map $F:VX\to VY$, then for all $p\in[1,\infty]$, $\Lambda^p_X\lesssim_p \Lambda^p_Y$.
\end{btheorem}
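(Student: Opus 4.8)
\emph{Proof proposal.} Recall that a \emph{regular map} $F\colon VX\to VY$ is a map which is coarsely Lipschitz, $d_Y(F(x),F(x'))\le\lam(d_X(x,x')+1)$, and has uniformly bounded fibres, $\abs{F^{-1}(y)}\le N$ for all $y$; between bounded degree graphs these two conditions force $\abs{F^{-1}(B_Y(y,s))}$ to be bounded in terms of $N$, $s$ and $D_Y$, the degree bound of $Y$. The plan is to show that for every connected subgraph $\Gamma\le X$ with $\abs{V\Gamma}\le r$ there is a subgraph $\Gamma'\le Y$ with $\abs{V\Gamma'}\lesssim_p\abs{V\Gamma}$ and $h^p(\Gamma')\gtrsim_p h^p(\Gamma)$; since $\abs{V\Gamma'}\ge\abs{F(V\Gamma)}\ge\abs{V\Gamma}/N$, this yields $\abs{V\Gamma'}h^p(\Gamma')\gtrsim_p\abs{V\Gamma}h^p(\Gamma)$, and taking suprema over $\Gamma$ gives $\Lambda^p_X(r)\lesssim_p\Lambda^p_Y(C r)$ for a suitable $C$. (Only connected subgraphs matter, as a disconnected graph carries a non-constant, locally constant, mean-zero function and so has $h^p=0$.) I take $\Gamma'$ to be the subgraph of $Y$ induced on the set $B^+$, where $B=F(V\Gamma)$ and $B^+$ consists of $B$ together with the vertices of one chosen geodesic of $Y$ between each pair of points of $B$ at distance $\le2\lam$; each such geodesic lies in the $\lam$-neighbourhood of $B$, so $\abs{B^+}$ is at most a constant (depending on $D_Y,\lam$) times $\abs{B}$, and $\Gamma'$ is connected since edges of $\Gamma$ map to pairs at distance $\le2\lam$ (whose images are joined in $\Gamma'$ by these geodesics) and every vertex of $B^+\setminus B$ lies on one of them.

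To transfer the Poincar\'e inequality, fix $f'\colon V\Gamma'\to\R$ with $\sum_{V\Gamma'}f'=0$ and pull it back to $g=f'\circ F\colon V\Gamma\to\R$. If $x,x'$ are adjacent in $\Gamma$ then $F(x),F(x')$ are joined in $\Gamma'$ by a chosen geodesic of length $\le2\lam$, hence $\abs{f'(F(x))-f'(F(x'))}\le2\lam\max\setcon{\nabla f'(w)}{w\in B^+,\ d_Y(w,F(x))\le2\lam}$; raising to the $p$-th power, summing over $x\in V\Gamma$, exchanging the order of summation, and using the uniform bound on $\abs{F^{-1}(B_Y(w,2\lam))}$ gives $\norm{\nabla g}_p\lesssim_p\lam\,\norm{\nabla f'}_p$. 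On the other hand the defining Poincar\'e inequality on $\Gamma$, applied to $g-\bar g$ (where $\bar g$ is the mean of $g$, so that $\nabla(g-\bar g)=\nabla g$), gives $\norm{\nabla g}_p\ge h^p(\Gamma)\inf_c\norm{g-c}_p$, and since the fibre of $F$ over each point of $B$ is non-empty we have $\inf_c\norm{g-c}_{p,V\Gamma}\ge\inf_c\norm{f'-c}_{p,B}$. Combining these, $\inf_c\norm{f'-c}_{p,B}\lesssim_p\norm{\nabla f'}_p/h^p(\Gamma)$.

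The remaining, and principal, obstacle is that $g$ only records $f'$ on $B$, whereas I must control $\norm{f'}_p$ over all of $V\Gamma'=B^+$; I overcome this with two ingredients. First, an elementary fact: if $\sum_S f=0$ then $\norm{f}_{p,S}\le2\inf_c\norm{f-c}_{p,S}$, because for the optimal constant $c^\ast$ one has $\abs{c^\ast}=\abs{S}^{-1}\bigl|\sum_S(c^\ast-f)\bigr|\le\abs{S}^{-1/p}\norm{f-c^\ast}_{p,S}$ by H\"older, whence $\norm{c^\ast}_{p,S}\le\norm{f-c^\ast}_{p,S}$. Second, a ``relative Poincar\'e'' estimate on $\Gamma'$: each $v\in B^+\setminus B$ lies within distance $\lam$ of some $b(v)\in B$ along a subpath of a chosen geodesic, so $\abs{f'(v)-f'(b(v))}\le\lam\max\nabla f'$ along that subpath, and a double count (each vertex lies on boundedly many such subpaths, and each $b\in B$ equals $b(v)$ for boundedly many $v$, the bounds depending only on $D_Y$ and $\lam$) yields $\inf_c\norm{f'-c}_{p,B^+}\lesssim_p\inf_c\norm{f'-c}_{p,B}+\norm{\nabla f'}_p$. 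Putting everything together, $\norm{f'}_p\le2\inf_c\norm{f'-c}_{p,B^+}\lesssim_p\inf_c\norm{f'-c}_{p,B}+\norm{\nabla f'}_p\lesssim_p\norm{\nabla f'}_p/h^p(\Gamma)+\norm{\nabla f'}_p$, and the additive term is absorbed into the first using the trivial bound $h^p(\Gamma)\le C(D_X)$ (test with a balanced $\pm1$ function on a spanning tree of $\Gamma$); hence $\norm{\nabla f'}_p/\norm{f'}_p\gtrsim_p h^p(\Gamma)$. Taking the infimum over $f'$ gives $h^p(\Gamma')\gtrsim_p h^p(\Gamma)$, which completes the proof. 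The case $p=\infty$ is identical, with all sums replaced by maxima and the combinatorial constants disappearing.
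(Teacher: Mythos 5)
Your proof is correct, and it takes a genuinely different route from the paper's. The paper proves this statement as a consequence of a more general result (Proposition~\ref{prop:Poincarecoarsereg}) for arbitrary metric measure spaces: given a coarse regular $F$ and a subspace $Z \subset X$, it forms $Z' = [[F(Z)]_M]_1$, builds discretizations of both, and compares Poincar\'e constants through the discretizations. The key technical trick (Lemma~\ref{lem:regularDiscrete}) is to put a \emph{weighted} measure on the discretization with $\nu(\pi^{-1}(y')) = \nu'(y')$ exactly, so that the pullback $f' \mapsto f' \circ \pi$ preserves $L^p$-norms on the nose and one only needs to control the gradient. You stay entirely in the unweighted graph world and instead cope with the mismatch between $\|f'\circ F\|_p$ (which only sees $f'$ on $B=F(V\Gamma)$, with multiplicities) and $\|f'\|_p$ on $\Gamma'$ by hand: you get $\inf_c \|f'-c\|_{p,B}$ from the pulled-back test function, then combine the elementary comparison between mean-zero and best-constant normalisations (the paper's Lemma~\ref{lem:average/energymin}), a local ``relative Poincar\'e'' estimate along the added geodesics, and the trivial upper bound $h^p(\Gamma) \preceq 1$ (Proposition~\ref{prop:linubd}) to absorb the error. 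The paper's route buys generality — the same proposition applies to Riemannian manifolds and compactly generated locally compact groups, which is needed later in the paper — while your route is more elementary, self-contained for the graph statement, and treats $p=\infty$ by the same argument rather than deferring to the growth characterisation (Proposition~\ref{bprop:P8growth}) as the paper does. One small caveat: your remark that only connected $\Gamma$ matter is fine for $\Gamma$ with at least two vertices, but the case of a single vertex (where the defining infimum is over an empty set, and where Proposition~\ref{prop:linubd} does not apply) is a degenerate corner that should be set aside explicitly; it contributes nothing to the profile under any sensible convention, so this does not affect the proof.
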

A map $F:(X,d_X)\to (Y,d_Y)$ between metric spaces is said to be \textbf{regular} if there exists a constant $K$ such that $d_Y(F(x),F(y))\leq K(1+d_X(x,y))$ and for every $y\in Y$, $F^{-1}(B(y,1))$ is contained in a union of at most $K$ balls of radius $1$ in $X$. In particular, every quasi-isometric or coarse embedding of bounded degree graphs is regular. Thus for each $p$ the $L^p$-Poincar\'e profile is a well-defined coarse invariant of a finitely generated group $G$.

\subsection{Extremal cases}

In the cases $p=\infty$ and $p=1$ the Poincar\'{e} profile is easily understood in terms of the growth and separation profile respectively.

Recall the \textbf{growth function} of a graph $X$: $\gamma_X(k)$ is the maximum number of vertices contained in a closed ball $B(x,k)$ of radius $k$ centred at some vertex $x\in VX$. We define the \textbf{inverse growth function}: $\kappa_X(r)$ is the smallest positive $k$ such that $\gamma_X(k)>r$.

At one extreme, $p=\infty$, the Poincar\'{e} profile detects inverse growth.

\begin{bproposition}\label{bprop:P8growth} For any bounded degree graph $X$, $\Lambda^\infty_X(r) \simeq {\displaystyle \sup_{1\leq s \leq r}}\frac{\displaystyle s}{\displaystyle \kappa_X(s)}$.
\end{bproposition}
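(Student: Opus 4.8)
The plan is to reduce the statement to the elementary estimate that, for a connected graph $\Gamma$ on $n\geq 2$ vertices with \emph{intrinsic} diameter $D:=\diam(\Gamma)$, one has
\[
 \frac{n}{D}\;\leq\;\abs{V\Gamma}\,h^\infty(\Gamma)\;\leq\;\frac{2n}{D}.
\]
A disconnected $\Gamma$ satisfies $h^\infty(\Gamma)=0$: taking $f$ constant of opposite, suitably weighted signs on two distinct components (so that $\sum_v f(v)=0$) gives $\nabla f\equiv 0$ while $\|f\|_\infty>0$, and single-vertex subgraphs may be ignored; so only connected subgraphs on at least two vertices matter. Hence
\[
 \Lambda^\infty_X(r)\;\simeq\;\sup\setcon{\frac{n}{D}}{\Gamma\leq X\text{ connected},\ 2\leq n=\abs{V\Gamma}\leq r,\ D=\diam(\Gamma)},
\]
and it remains to compare diameters of subgraphs of $X$ with the inverse growth function $\kappa_X$.

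For the lower bound $h^\infty(\Gamma)\geq 1/D$: if $f\colon V\Gamma\to\R$ has $\sum_v f(v)=0$ and $f\not\equiv 0$, then $\max f>0>\min f$, so $\|f\|_\infty\leq\max f-\min f$; choosing vertices realising the maximum and minimum together with a path of length $\leq D$ joining them in $\Gamma$, a telescoping estimate produces an edge with $|f(x)-f(y)|\geq(\max f-\min f)/D\geq\|f\|_\infty/D$, hence $\|\nabla f\|_\infty\geq\|f\|_\infty/D$. For the upper bound $h^\infty(\Gamma)\leq 2/D$: I would pick $x_0,x_1\in V\Gamma$ with $d_\Gamma(x_0,x_1)=D$, put $g(v)=d_\Gamma(v,x_0)-d_\Gamma(v,x_1)$ — a difference of $1$-Lipschitz functions, so $\nabla g\leq 2$, with $g(x_1)-g(x_0)=2D$ — and test $h^\infty(\Gamma)$ with $f=g-\tfrac1n\sum_v g(v)$: then $\|\nabla f\|_\infty=\|\nabla g\|_\infty\leq 2$ whereas $\|f\|_\infty\geq\tfrac12(\max g-\min g)\geq D$.

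To prove $\Lambda^\infty_X(r)\lesssim\sup_{1\leq s\leq r}s/\kappa_X(s)$: given a connected $\Gamma\leq X$ with $n\geq 2$ vertices and diameter $D$, fix $x\in V\Gamma$; since $d_X\leq d_\Gamma$ we have $V\Gamma\subseteq B_X(x,D)$, so $\gamma_X(D)\geq n>n-1$, giving $\kappa_X(n-1)\leq D$ and $n/D\leq n/\kappa_X(n-1)\leq 2(n-1)/\kappa_X(n-1)$. For the reverse direction, fix an integer $s$ with $1\leq s\leq r$ and set $k=\kappa_X(s)$, so some ball $B_X(x,k)$ has more than $s$ vertices; I would run a breadth-first search from $x$ and keep the first $s$ vertices discovered. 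This yields a connected subgraph $\Gamma$ with $\abs{V\Gamma}=s\leq r$ in which each vertex is joined to $x$ by its BFS-tree path — an $X$-geodesic of length $\leq k$ that lies inside $\Gamma$, because the search never leaves $B_X(x,k)$ before accumulating $s$ vertices (as $\abs{B_X(x,k-1)}\leq\gamma_X(k-1)\leq s$) and the BFS-parent of a retained vertex is retained. Hence $\diam(\Gamma)\leq 2k$ and $\abs{V\Gamma}\,h^\infty(\Gamma)\geq s/(2k)=s/(2\kappa_X(s))$; taking the supremum over such $s$ — and noting that on each maximal interval where $\kappa_X$ equals a constant $k$ the function $s\mapsto s/\kappa_X(s)=s/k$ is increasing, so the suprema over integer $s$ and over real $s\in[1,r]$ differ by at most a factor $2$ — completes the argument.

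I expect the only genuinely delicate point to be the distinction between \emph{intrinsic} and \emph{ambient} diameter: both the estimate $\abs{V\Gamma}\,h^\infty(\Gamma)\simeq\abs{V\Gamma}/\diam(\Gamma)$ and the lower-bound construction require the diameter of the test subgraph measured \emph{inside} it, so one must ensure the BFS-tree geodesics remain within the chosen subgraph. Everything else is bookkeeping of constants, which $\simeq$ absorbs.
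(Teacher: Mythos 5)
Your proof is correct and follows essentially the same strategy as the paper's (Lemma~\ref{lem:inftylwbd} and Proposition~\ref{prop:Linfty_inverse growth}): both reduce the claim to the two-sided estimate $h^\infty(\Gamma)\asymp 1/\diam(\Gamma)$ for connected $\Gamma$, proved via the same telescoping bound and the same distance-function test function. The one small variation is in the lower bound: the paper works with balls $[B(x_t,t-1)]_1$ of measure $\asymp\gamma_X(t)$ and then compares $\gamma_X(t)/t$ with $s/\kappa_X(s)$, whereas you truncate a BFS tree to get a connected subgraph of \emph{exactly} $s$ vertices with intrinsic diameter $\leq 2\kappa_X(s)$, which makes the final comparison slightly more direct — but the underlying idea (balls of the right radius realize the growth) is the same.
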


From this, we may easily deduce Theorem \ref{bthm:monotoneunderreg} in the case $p=\infty$. At the other extreme we show that the $L^1$-Poincar\'{e} profile is equivalent to the separation profile, as introduced by Benjamini--Schramm--Tim\'{a}r \cite{BenSchTim-12-separation-graphs}. The perspective we adopt of studying Poincar\'{e} profiles up to regular maps is inspired by their observation that separation is monotone under regular maps.

In  \cite{HumSepExp} the first author proved that the separation profile of an infinite graph $X$ may be defined by $\sep_X(r)=\max\set{\abs{\Gamma}h(\Gamma)}$ where the maximum is taken over all subgraphs $\Gamma$ of $X$ with at most $r$ vertices, and $h(\Gamma)$ is the Cheeger constant.  The following fact is then an easy consequence of the classical co-area formula (see section \ref{section:Cheeger/L1}).

\begin{bproposition}\label{bprop:P1sep} For any bounded degree graph $X$, $\Lambda^1_X(r) \simeq \sep_X(r)$.
\end{bproposition}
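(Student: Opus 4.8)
The statement follows by comparing $h^1(\Gam)$ directly with the Cheeger constant of each finite subgraph. By the reformulation of \cite{HumSepExp} quoted above, $\sep_X(r)=\max\{\abs{V\Gam}h(\Gam):\Gam\leq X,\ \abs{V\Gam}\leq r\}$, where $h(\Gam)=\min\{\abs{\partial A}/\abs{A}:A\subseteq V\Gam,\ 1\leq\abs{A}\leq\abs{V\Gam}/2\}$ for any of the standard vertex-boundary conventions (which agree up to a factor depending only on the degree bound $D$ of $X$). Since $\Lambda^1_X(r)$ is the analogous supremum with $h^1$ in place of $h$, the plan is to prove $c\,h(\Gam)\leq h^1(\Gam)\leq C\,h(\Gam)$ for every finite $\Gam\leq X$, with $c,C>0$ depending only on $D$; taking suprema over $\Gam$ with $\abs{V\Gam}\leq r$ then gives $\Lambda^1_X(r)\simeq\sep_X(r)$ (in fact up to fixed multiplicative constants, with no rescaling of $r$).

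For $h^1\lesssim_D h$ I would take a set $A$ realising $h(\Gam)$, write $n=\abs{V\Gam}$, $c=\abs{A}/n\leq\tfrac12$, and test $h^1(\Gam)$ against $f=\mathbf 1_A-c$, which has zero mean. A one-line computation gives $\abs{A}\leq\norm{f}_1=2\abs{A}(1-c)\leq 2\abs{A}$, while $\nabla f$ is supported on the inner and outer vertex boundaries of $A$ and equals $1$ there, so $\norm{\nabla f}_1\lesssim_D\abs{\partial A}$ by bounded degree; hence $h^1(\Gam)\leq\norm{\nabla f}_1/\norm{f}_1\lesssim_D\abs{\partial A}/\abs{A}=h(\Gam)$.

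For the reverse inequality $h\lesssim_D h^1$ I would run the co-area argument. Start with $f$ of zero mean and $\norm{\nabla f}_1/\norm{f}_1$ close to $h^1(\Gam)$; replacing $f$ by $f-m$ for a median $m$ leaves $\nabla f$ unchanged, and since $\norm{f-m}_1\geq\abs{\sum_v(f(v)-m)}=n\abs{m}$ one gets $\norm{f-m}_1\geq\tfrac12\norm{f}_1$. Replacing $f-m$ by whichever of $\pm(f-m)$ has the larger positive part, one is left with $u\geq 0$ satisfying $\abs{\mathrm{supp}(u)}\leq n/2$ (by the median property), $\norm{u}_1\geq\tfrac14\norm{f}_1$, and $\nabla u\leq\nabla f$ pointwise (as $t\mapsto\max(t,0)$ is $1$-Lipschitz). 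For the superlevel sets $A_t=\{u>t\}$, $t>0$, one has $\int_0^\infty\abs{A_t}\,dt=\norm{u}_1$, and since $\{t:v\in\partial^{\mathrm{in}}A_t\}$ is an interval of length at most $\nabla u(v)$,
\[
 \int_0^\infty\abs{\partial^{\mathrm{in}}A_t}\,dt\ \leq\ \sum_{v\in V\Gam}\max\!\big(0,\,u(v)-\min_{w\sim v}u(w)\big)\ \leq\ \norm{\nabla u}_1.
\]
As $\abs{A_t}\geq 1$ on the nonempty interval $(0,\max u)$, a mean-value comparison of these two integrals produces a level $t_0$ with $1\leq\abs{A_{t_0}}\leq n/2$ and $\abs{\partial^{\mathrm{in}}A_{t_0}}/\abs{A_{t_0}}\leq\norm{\nabla u}_1/\norm{u}_1\leq 4\norm{\nabla f}_1/\norm{f}_1$; hence $h(\Gam)\leq\abs{\partial A_{t_0}}/\abs{A_{t_0}}\lesssim_D\norm{\nabla f}_1/\norm{f}_1$, and letting $f$ approach the infimum gives $h(\Gam)\lesssim_D h^1(\Gam)$.

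I expect the only genuinely delicate points to be the co-area estimate for the ``max'' gradient $\nabla f(v)=\max_{w\sim v}\abs{f(v)-f(w)}$ (rather than an edge-by-edge gradient), and the normalisation that converts an arbitrary zero-mean function into a nonnegative one supported on at most half of $V\Gam$ while keeping simultaneous control of $\norm{\cdot}_1$, $\norm{\nabla\cdot}_1$ and the support size. Once these are in place, both the mean-value selection of $A_{t_0}$ and the passage between the various vertex-boundary conventions under the bounded-degree hypothesis are routine.
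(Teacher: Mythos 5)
Your proposal is correct and follows essentially the same route as the paper: reduce the statement to a two-sided comparison between $h^1(\Gamma)$ and the Cheeger constant $h(\Gamma)$ on each finite subgraph, with the easy direction given by testing against a mean-zero shifted indicator of an optimal cut and the hard direction given by the co-area formula applied to a median-normalised, one-signed function (the paper's Proposition~\ref{prop:h1h} together with Proposition~\ref{prop:Lambda1Sep} and Remark~\ref{rmk:bdrydefs}). The only differences are cosmetic: the paper works at scale $a$ in its metric-measure-space framework and invokes Proposition~\ref{prop:PoincareIndependant} and Remark~\ref{rmk:bdrydefs} to pass back to the graph definitions, while you work directly with the max-over-neighbours gradient; and you normalise by keeping whichever of $\pm(f-m)$ has the larger positive part (losing a factor of $4$), whereas the paper splits $f=f_++f_-$ and uses the mediant inequality to keep whichever half has the smaller gradient-to-norm ratio (losing a factor of $2$).
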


\begin{bremark} The case of $p=2$ is also natural, being the largest spectral
gap among subgraphs of a given size. The spectral gap can be used to bound mixing times of random walks on the subgraph. A related \emph{spectral profile} was considered by Goel--Montenegro--Tetali \cite{GMT_Mixing}.
\end{bremark}

\subsection{Relating profiles}

The following results are classical, and are likely to be easy exercises for experts; for completeness we present full proofs.

\begin{bproposition}\label{bprop:pqmonotone} Let $1\leq p \leq q<\infty$. There exists a constant $C=C(p,q)$ such that for every bounded degree graph $X$ and every $r$ we have $\Lambda^p_X(r) \leq C \Lambda^q_X(r)$.
\end{bproposition}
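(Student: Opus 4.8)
The plan is to reduce the statement on profiles to the pointwise inequality $h^p(\Gamma)\le C(p,q)\, h^q(\Gamma)$ for every finite graph $\Gamma$ (with the convention that both $\|\nabla f\|$ and $\|f\|$ are measured in the appropriate norm). Once this is established, taking the supremum over all subgraphs $\Gamma\le X$ with $\abs{V\Gamma}\le r$ gives $\Lambda^p_X(r)\le C\,\Lambda^q_X(r)$ directly from the definitions, with the same constant $C=C(p,q)$ and no dependence on $X$ or $r$. So the whole content is the comparison of the $L^p$- and $L^q$-Poincar\'e constants of a fixed finite graph, in the direction $h^p\le C\,h^q$.

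To prove $h^p(\Gamma)\le C\,h^q(\Gamma)$, I would start from a function $f:V\Gamma\to\R$ with $\sum_v f(v)=0$ that is near-optimal for the $q$-problem, i.e.\ with $\|\nabla f\|_q/\|f\|_q$ close to $h^q(\Gamma)$, and bound $h^p(\Gamma)$ using this \emph{same} $f$ as a test function for the $p$-problem (it is admissible since the mean-zero condition is the same for both exponents). Thus $h^p(\Gamma)\le \|\nabla f\|_p/\|f\|_p$, and it remains to compare $\|\nabla f\|_p/\|f\|_p$ with $\|\nabla f\|_q/\|f\|_q$. The numerator is easy: since $p\le q$ and the counting measure has total mass $N:=\abs{V\Gamma}\ge 1$, the inclusion $\ell^q\hookrightarrow\ell^p$ on a finite set gives, by H\"older's inequality, $\|\nabla f\|_p\le N^{1/p-1/q}\|\nabla f\|_q$. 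The subtlety is the denominator: we need a lower bound on $\|f\|_p$ in terms of $\|f\|_q$, but the $\ell^p\hookrightarrow\ell^q$ direction only gives $\|f\|_q\le\|f\|_p$, i.e.\ $\|f\|_p\ge\|f\|_q$ --- which is exactly the direction we want. Combining, $\|\nabla f\|_p/\|f\|_p\le N^{1/p-1/q}\|\nabla f\|_q/\|f\|_q$.

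This leaves the factor $N^{1/p-1/q}$, which is unbounded, so the naive argument above is not quite enough and this is the main obstacle. The remedy is to exploit the mean-zero normalisation together with the degree bound hidden in the gradient: for a mean-zero function one has a reverse-type control showing that $f$ cannot be too concentrated relative to its gradient. Concretely, using $\sum_v f(v)=0$ one shows $\|f\|_\infty \le \diam(\Gamma)\,\|\nabla f\|_\infty$ along a path realising the diameter, but more usefully one compares $\|f\|_p$ to $\|f\|_q$ by a different route: write $\|f\|_p^p=\sum_v\abs{f(v)}^p$ and split the sum according to whether $\abs{f(v)}$ is large or small compared to a threshold chosen as a suitable multiple of $\|\nabla f\|_q \|f\|_q^{1-q}\cdots$; a cleaner approach is to normalise $\|f\|_q=1$, note $\|\nabla f\|_q=h^q(\Gamma)(1+o(1))$, and bound $\|f\|_p$ from below by $\|f\|_q=1$ while bounding $\|\nabla f\|_p$ from above \emph{not} by the crude $N^{1/p-1/q}\|\nabla f\|_q$ but by first passing through $\|\nabla f\|_\infty$ and the fact that $\|\nabla f\|_\infty^{q}\le (\text{const})\|\nabla f\|_q^q$ only when the support of large gradient is small --- which it need not be. Given this genuine difficulty, I expect the actual proof to run as follows: reduce as above to showing $h^p(\Gamma)\le C(p,q)h^q(\Gamma)$; then use the elementary inequality that for a mean-zero $f$ on an $N$-point set, $\|f\|_q \ge c_N \|f\|_p$ is \emph{false} in general but $\|f\|_p \le N^{1/p-1/q}\|f\|_q$ \emph{and} (crucially) apply this to $\nabla f$ only, while for $f$ itself use $\|f\|_p\ge\|f\|_q$; the surviving factor $N^{1/p-1/q}$ is then absorbed because one re-examines the problem: $h^q(\Gamma)\ge N^{-(1/p-1/q)}\cdot(\text{something})$ is guaranteed by testing against a two-valued function, so $N^{1/p-1/q}h^q(\Gamma)\gtrsim h^p(\Gamma)$ after bounding $h^p(\Gamma)$ itself by a universal constant times $N^{-(1-1/p)}$ times a diameter factor. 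The technically cleanest path, and the one I would write up, is: (i) observe $h^p(\Gamma)\le h^\infty(\Gamma)\cdot N^{1/p}$ is too lossy, so instead (ii) directly verify $h^p \le C(p,q) h^q$ by the three-line H\"older computation $\|\nabla f\|_p\le N^{1/p-1/q}\|\nabla f\|_q$, $\|f\|_p\ge\|f\|_q$, \emph{plus} the observation that $h^q(\Gamma)\le \|\nabla g\|_q/\|g\|_q$ for the optimiser $g$ of the $p$-problem yields $h^q(\Gamma)\le N^{1/q-1/p}\cdot h^p(\Gamma)\cdot(\|g\|_p/\|g\|_q)^{-1}$ and $\|g\|_p/\|g\|_q\le N^{1/p-1/q}$, so $h^q(\Gamma)\le h^p(\Gamma)$ fails --- confirming that the true inequality $h^p\le Ch^q$ must come from a concentration argument on $f$, namely that a mean-zero function with small $\|\nabla f\|$ relative to $\|f\|$ spreads its mass over a definite fraction of the vertices, making $\|f\|_p\asymp_{p,q}\|f\|_q$ up to a bounded factor. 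That spreading estimate --- the statement that $h^q(\Gamma)$ small forces the near-optimal $f$ to have $\|f\|_p\ge c(p,q)\|f\|_q$ with $c$ independent of $N$ --- is the heart of the matter, and it follows from the layer-cake/co-area representation together with the isoperimetric meaning of $h^q$; I would isolate it as a sublemma and deduce the proposition from it.
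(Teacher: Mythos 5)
Your reduction to the pointwise inequality $h^p(\Gamma)\leq C(p,q)\,h^q(\Gamma)$ for each finite graph is correct and is exactly how the paper proceeds, but the proposal never actually proves that inequality, and this is a genuine gap. You correctly diagnose that reusing the $q$-optimal function $g$ as a test function for the $p$-problem loses a factor $N^{1/p-1/q}$ in the gradient term, but the fix you finally propose --- a ``spreading'' sublemma asserting that a near-optimal mean-zero $f$ has $\norm{f}_p\asymp_{p,q}\norm{f}_q$ --- does not close it: the inequality $\norm{f}_p\geq\norm{f}_q$ is automatic for counting measure, and the loss sits in the \emph{numerator} $\norm{\nabla f}_p$ versus $\norm{\nabla f}_q$, so to salvage the same-test-function argument you would need either gradient concentration or the much stronger bound $\norm{f}_p\gtrsim N^{1/p-1/q}\norm{f}_q$, neither of which holds in general (and no proof of the sublemma via ``layer-cake plus the isoperimetric meaning of $h^q$'' is given --- the co-area formula is an $L^1$ device and does not obviously produce it). Several intermediate steps in the proposal are explicitly self-refuting, so as written the argument terminates in a restatement of the desired inequality rather than a proof.

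The missing idea in the paper (Proposition \ref{prop:monotonep}) is to \emph{change the test function}: given $g$ near-optimal for the $q$-problem, one composes with the signed power map $t\mapsto \mathrm{sign}(t)\abs{t}^{q/p}$, i.e.\ sets $f(z)=\{g(z)+C\}^{q/p}$ with the shift $C$ chosen so that $f$ has average zero, and uses $f$ as the test function for $h^p$. The gradient of $f$ is controlled by the mean value inequality $\abs{\{s\}^\alpha-\{t\}^\alpha}\leq\alpha(\abs{s}^{\alpha-1}+\abs{t}^{\alpha-1})\abs{s-t}$ (Matou\v{s}ek), and after raising to the $p$th power, integrating, and applying H\"older with exponents $q/(q-p)$ and $q/p$, the extra factors are absorbed using the universal bound $h^q\leq 2\cdot 3^{1/q}$ (Proposition \ref{prop:linubd}) together with the average-versus-minimizer comparison of Lemma \ref{lem:average/energymin}; this yields $\norm{g+C}_q^{q}\,h^p(\Gamma)^p\preceq_{p,q}\norm{g+C}_q^{q-p}\norm{\nabla g}_q^{p}$, hence $h^p(\Gamma)\preceq_{p,q}\norm{\nabla g}_q/\norm{g-g_\Gamma}_q$ and the proposition. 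Without this power-composition device (or an equivalent substitute), your outline cannot be completed.
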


In the opposite direction we have the following.

\begin{bproposition}\label{bprop:p1monotone} If $\Gamma$ is a finite graph and $p\in[1,\infty)$, then $h^p(\Gamma)^p\leq 2^{p}h^1(\Gamma)$.
\end{bproposition}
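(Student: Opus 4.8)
The plan is to manufacture a test function for $h^p(\Gamma)$ out of a near-minimiser for $h^1(\Gamma)$, by extracting a $p$-th root of its positive part. Fix $p\in[1,\infty)$ and let $g\colon V\Gamma\to\R$ be any function with $\sum_{v\in V\Gamma}g(v)=0$ and $g\not\equiv 0$; it suffices to prove $h^p(\Gamma)^p\le 2^p\norm{\nabla g}_1/\norm{g}_1$, since the infimum of $\norm{\nabla g}_1/\norm{g}_1$ over such $g$ equals $h^1(\Gamma)$. As $\setcon{v}{g(v)>0}$ and $\setcon{v}{g(v)<0}$ are disjoint, one of them has at most $\abs{V\Gamma}/2$ elements; replacing $g$ by $-g$ if needed, I may assume this holds for $A:=\setcon{v}{g(v)>0}$, which is nonempty since $g\not\equiv0$ has vanishing sum. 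Put $g_+=\max(g,0)$ and $g_-=\max(-g,0)$. The mean-zero condition gives $\norm{g_+}_1=\norm{g_-}_1$, hence $\norm{g_+}_1=\tfrac12\norm{g}_1$; I set $f:=g_+^{1/p}$, so $f$ is supported on $A$ and $\norm{f}_p^p=\norm{g_+}_1=\tfrac12\norm{g}_1$.

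Two estimates on $f$ drive the argument. First, $\norm{\nabla f}_p^p\le\norm{\nabla g}_1$: along any edge $xy$, subadditivity of $t\mapsto t^{1/p}$ on $[0,\infty)$ (that is, $\abs{a^{1/p}-b^{1/p}}\le\abs{a-b}^{1/p}$ for $a,b\ge 0$) gives $\abs{f(x)-f(y)}^p\le\abs{g_+(x)-g_+(y)}$, which is at most $\abs{g(x)-g(y)}$ because $t\mapsto\max(t,0)$ is $1$-Lipschitz; maximising over the neighbours of $x$ and summing over $V\Gamma$ yields the bound. Second, I claim the deficit estimate $\norm{f-c}_p^p\ge 2^{1-p}\norm{f}_p^p$ for every $c\in\R$. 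For $c\le0$ this holds with constant $1$, since $\abs{f(v)-c}\ge f(v)\ge 0$ for every $v$ and $2^{1-p}\le1$. For $c>0$, using that $f$ vanishes off $A$ and $\abs{V\Gamma\setminus A}\ge\abs{A}$, one gets $\norm{f-c}_p^p\ge\sum_{v\in A}\bigl(\abs{f(v)-c}^p+c^p\bigr)$; then the convexity bound $a^p+b^p\ge 2^{1-p}(a+b)^p$ for $a,b\ge 0$, together with $\abs{f(v)-c}+c\ge f(v)$, gives $\abs{f(v)-c}^p+c^p\ge 2^{1-p}f(v)^p$ for each $v\in A$, and summing proves the claim.

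To conclude: $f$ is non-constant, being positive on $A$ and zero on the nonempty set $V\Gamma\setminus A$, so if $\bar f$ is its mean over $V\Gamma$ then $f-\bar f$ is admissible in the definition of $h^p(\Gamma)$ and has the same gradient as $f$. Applying the two estimates above (the second with $c=\bar f$),
\[
 h^p(\Gamma)^p\ \le\ \frac{\norm{\nabla f}_p^p}{\norm{f-\bar f}_p^p}\ \le\ \frac{\norm{\nabla g}_1}{2^{1-p}\cdot\tfrac12\norm{g}_1}\ =\ 2^p\,\frac{\norm{\nabla g}_1}{\norm{g}_1},
\]
and taking the infimum over $g$ finishes the proof.

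The step carrying the real content is the deficit estimate in the second paragraph: it is precisely what lets the recentring $f\mapsto f-\bar f$ — forced on us because $f$ need not be mean-zero — cost only the factor $2^{1-p}$, and hence what keeps the final constant at $2^p$ rather than a larger power of two. That estimate uses the support condition $\abs{A}\le\abs{V\Gamma}/2$, which is the reason the construction works with the positive part $g_+$ (supported on at most half of $V\Gamma$) instead of, for instance, recentring $g$ at a median or applying $t\mapsto\abs{t}^{1/p}\sign(t)$ to all of $g$, either of which would yield a test function spread over the whole vertex set and lose an extra factor of $2$.
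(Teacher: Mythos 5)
Your proof is correct, and it takes a genuinely different route from the paper's. The paper bounds $h^p(\Gamma)^p$ by testing against indicator functions $1_\Omega$ with $|\Omega|\leq |V\Gamma|/2$: for such $f=1_\Omega$ one has $\norm{\nabla f}_p^p = |\partial\Omega|$ and $\norm{f-f_\Gamma}_p^p\geq 2^{1-p}|\Omega|$, giving $h^p(\Gamma)^p\leq 2^{p-1}h(\Gamma)$ (with $h$ the Cheeger constant), and then the already-established bound $h(\Gamma)\leq 2h^1(\Gamma)$ from Proposition~\ref{prop:h1h} finishes the job. You instead go directly from $h^p$ to $h^1$ without mentioning the Cheeger constant: given a near-minimiser $g$ for $h^1$, you construct $f=(g_+)^{1/p}$ and control $\norm{\nabla f}_p^p$ by the subadditivity of $t\mapsto t^{1/p}$, while the support condition $|\{g>0\}|\leq |V\Gamma|/2$ is used to prove the deficit bound $\norm{f-c}_p^p\geq 2^{1-p}\norm{f}_p^p$. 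In a sense the two proofs are dual: the paper discretises along level sets $\{f>t\}$ via the co-area formula, while you superpose those level sets into a single test function via the $p$-th root. The paper's factorisation through $h(\Gamma)$ yields the slightly finer intermediate inequality $2^{1-p}h^p(\Gamma)^p\leq h(\Gamma)$ as a by-product, but your argument is self-contained and reaches the same final constant $2^p$. All the steps you write are sound: the subadditivity $|a^{1/p}-b^{1/p}|\leq|a-b|^{1/p}$ and the 1-Lipschitzness of $t\mapsto\max(t,0)$ give the gradient bound, and the convexity bound $a^p+b^p\geq 2^{1-p}(a+b)^p$ combined with the triangle inequality $|f(v)-c|+c\geq f(v)$ and $|V\Gamma\setminus A|\geq|A|$ give the deficit estimate.
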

Asymptotically this is sharp for balls in the $3$-regular tree, as we will see in section \ref{sec:trees}.  Proposition \ref{bprop:pqmonotone} cannot be extended to the case $q=\infty$ since there are bounded degree graphs containing expanders: combining the above propositions with results in \cite{HumSepExp} we see that for every $p\in[1,\infty)$, $\Lambda^p_X(r)/r\not\to 0$ as $r\to\infty$ if and only if $X$ contains an expander, while a bounded degree graph $Y$ has at most exponential growth, so always satisfies $\Lambda^\infty_Y(r)\lesssim r/\log(r)$.

\subsection{Polynomial growth}
 In the the case of abelian groups, Benjamini, Schramm and Timar show that for $d\geq 2$, $\sep_{\Z^d}(r)\simeq r^{\frac{d-1}{d}}$, therefore the separation profile detects the degree of volume growth \cite{BenSchTim-12-separation-graphs}. We extend their result to groups with polynomial growth. 
Recall that Gromov's celebrated polynomial growth theorem asserts that every finitely generated group with polynomial growth is virtually nilpotent. Results of Bass--Guivarc'h then show that for every group $G$ of polynomial growth there is an integer $d$ such that $\gamma_G(r)\simeq r^d$ \cite{Gro-81-poly-growth, Bass-72-degree-poly-growth, Gui-73-crois-poly}.

\begin{btheorem}\label{bthm:CGLC-poly} Let $G$ be a finitely generated group such that $\gamma_G(r)\simeq r^d$. Then for all $p\in[1,\infty]$, $\Lambda^p_G(r)\simeq_p r^{\frac{d-1}{d}}$.
\end{btheorem}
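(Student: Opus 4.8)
The strategy is to dispose of the case $p=\infty$ immediately, and then prove matching lower and upper bounds on $\Lambda^p_G$ for $p\in[1,\infty)$; the upper bound is the substantial part.

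For $p=\infty$ the hypothesis $\gamma_G(r)\simeq r^d$ gives $\kappa_G(s)\simeq s^{1/d}$, so $s/\kappa_G(s)\simeq s^{(d-1)/d}$ is (up to constants) increasing, and Proposition~\ref{bprop:P8growth} yields $\Lambda^\infty_G(r)\simeq\sup_{1\le s\le r}s/\kappa_G(s)\simeq r^{(d-1)/d}$.

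For the lower bound with $p\in[1,\infty)$ I would choose $k\simeq r^{1/d}$ with $|B_G(e,k)|\le r$ and $|B_G(e,k)|\simeq r$ (possible since consecutive values of $\gamma_G$ are comparable), and take $\Gamma=B_G(e,k)$ as an induced subgraph. Groups of polynomial growth are doubling and, being virtually nilpotent, satisfy a discrete $(1,p)$-Poincar\'e inequality on balls, $\norm{f-f_B}_p\le Ck\norm{\nabla f}_p$ for $f\colon B\to\R$ (this is classical, e.g.\ via Gaussian heat kernel bounds; cf.\ Saloff-Coste and Coulhon--Saloff-Coste). Applying this to a function realising $h^p(\Gamma)$, which has vanishing mean and hence equals $f-f_B$, gives $h^p(\Gamma)\ge 1/(Ck)\gtrsim_p r^{-1/d}$, so $\Lambda^p_G(r)\ge|V\Gamma|\,h^p(\Gamma)\gtrsim_p r\cdot r^{-1/d}=r^{(d-1)/d}$. (Alternatively, Propositions~\ref{bprop:pqmonotone} and~\ref{bprop:P1sep} reduce this to $p=1$.)

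For the upper bound with $p\in[1,\infty)$, by Theorem~\ref{bthm:monotoneunderreg} I may replace $G$ by a Cayley graph $X$, and since $|V\Gamma|\le r$ it suffices to prove $h^p(\Gamma)\lesssim_p n^{-1/d}$ for every finite connected subgraph $\Gamma\le X$ on $n$ vertices (for disconnected $\Gamma$, $h^p(\Gamma)=0$). Fix $v_0\in V\Gamma$; a first-moment argument using $|B_X(v_0,t)|\lesssim t^d$ shows the mean of $d_X(v_0,\cdot)$ over $V\Gamma$ is $\gtrsim n^{1/d}$. The natural test function is a recentred $1$-Lipschitz modification of $d_X(v_0,\cdot)$, which automatically has $\norm{\nabla f}_p\le n^{1/p}$, so the task is to force $\norm{f}_p\gtrsim_p n^{1/p+1/d}$. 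When $\diam_X(V\Gamma)\lesssim n^{1/d}$ this works with $f(v)=d_X(v_0,v)-\mathrm{mean}$: then $\Gamma$ fills a definite fraction of a ball of radius $\simeq n^{1/d}$, and the precise volume asymptotics $\gamma_G(t)\sim\lambda t^d$ (Pansu) prevent the values $d_X(v_0,v)$, $v\in V\Gamma$, from concentrating in a window of width $o(n^{1/d})$ about their mean, so a definite fraction of vertices contribute $|f(v)|\gtrsim n^{1/d}$. When $\diam_X(V\Gamma)$ is large one must work harder: here I would use a scale-$n^{1/d}$ decomposition of $X$ into boundedly many families of clusters of diameter $\lesssim n^{1/d}$ that are pairwise $n^{1/d}$-separated within each family (polynomial-growth groups have finite Assouad--Nagata dimension), and build a Lipschitz test function by interpolating carefully chosen cluster values; equivalently, one inducts on $d$, passing to a thick shell carrying a constant fraction of the mass, which behaves like a space of strictly smaller growth degree and where the inductive bound $h^p\lesssim_p n^{-1/(d-1)}\le n^{-1/d}$ applies. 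I expect this last case to be the main obstacle: producing, for a completely arbitrary subgraph, a test function that is genuinely spread out at scale $n^{1/d}$, since the possibly irregular local growth of a general polynomial-growth group is exactly what makes the "no concentration" input subtle.
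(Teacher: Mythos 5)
Your handling of the $p=\infty$ case via Proposition~\ref{bprop:P8growth}, and your remark that Propositions~\ref{bprop:pqmonotone} and~\ref{bprop:P1sep} reduce the lower bound to $p=1$, both match the paper. For the lower bound, however, there is a subtlety you gloss over: the discrete Poincar\'e inequality the paper proves (Theorem~\ref{thm:mmsPoincare}, Corollary~\ref{cor:polygrowthPI}) controls $\|f-f_B\|_{L^p(B)}$ by the gradient integrated over the \emph{larger} ball $3B$, whereas $h^p(\Gamma)$ for $\Gamma=B$ only sees the gradient along edges inside $B$, so one cannot simply divide and conclude. The paper's route is therefore indirect: it applies the enlarged-ball inequality to characteristic functions on a sub-ball located ``deep inside'' $B$ (Lemma~\ref{lem:smallcoronas}, Lemma~\ref{lem:deepsets}) to show that any set of about half the measure of $B$ has interior boundary $\gtrsim r^{d-1}$ (Proposition~\ref{prop:polygrowthcut}), then turns this into a cut-set lower bound on $\sep_X$ and finally invokes Propositions~\ref{bprop:P1sep} and~\ref{bprop:pqmonotone}. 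Your shortcut is salvageable only if you genuinely have a same-ball Neumann Poincar\'e inequality in hand; that is true in this setting but is not merely a restatement of the weak inequality, and the paper deliberately avoids needing it.

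The real gap is the upper bound, as you acknowledge. Your small-diameter case (via Pansu's volume asymptotics and a no-concentration argument for $d_X(v_0,\cdot)$) is plausible but delicate, and your large-diameter case is not carried through: the ``induct on $d$ by passing to a thick shell'' idea fails because a shell in a polynomial-growth group is not quasi-isometric to any group of growth degree $d-1$, so there is no smaller-degree theorem to invoke. The scale-$n^{1/d}$ decomposition into separated clusters that you mention in passing is in fact the right idea, and it eliminates the need to case-split on diameter at all. This is exactly what the paper does: Definition~\ref{def:measurabledim} (measurable dimension) packages the finite asymptotic Assouad--Nagata dimension of polynomial-growth groups into a decomposition $X=\bigcup_{i=0}^{n}\bigcup_j X_{ij}$ with the $X_{ij}$ of uniformly bounded measure and $t$-separated within each family $i$; given an arbitrary subspace $A$ of measure $\le r$ one family captures a $\frac{1}{n+1}$ fraction of $\mu(A)$, one splits its clusters into two parts $X'_0,X''_0$ each carrying a definite fraction of $\mu(A)$, and takes the $\tfrac1t$-Lipschitz test function $f(x)=\frac{1}{t}\min\{t,d(x,X'_0)\}$. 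This yields $h^p_a(A)\preceq_n 1/t$ \emph{uniformly} over subspaces with $\mu(A)\le r$ (Proposition~\ref{prop:mdim}), from which $\Lambda^p_G(r)\lesssim r^{(d-1)/d}$ follows. The key insight you were missing is that the separated-cluster structure already furnishes a good test function for an arbitrary subgraph; no concentration argument and no diameter dichotomy are needed.
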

To prove the lower bound on $\Lambda^p_G(r)$ we calculate a lower bound on the separation profile using a well-known Poincar\'{e} inequality on balls in such groups, and apply Propositions \ref{bprop:P1sep} and \ref{bprop:pqmonotone}. For the upper bound we use a general result, Proposition~\ref{prop:mdim}, which holds for any bounded degree graph with finite Assouad--Nagata dimension (cf.\ \cite[Theorem $1.5$]{HumSepExp}).

Recall that by a classical result of Heintze \cite{Heintze}, every simply connected negatively curved homogeneous Riemannian manifold $M$ is isometric to a connected Lie group of the form $N\rtimes \R$ equipped with a left-invariant Riemannian metric, where $N$ is a simply connected nilpotent Lie group and the action of $\R$ on $N$ is contracting. We immediately deduce from  Theorem \ref{bthm:CGLC-poly} that for every $p\in[1,\infty]$, the $L^p$-Poincar\'e profile of such a manifold\footnote{In our more general context we can directly consider Poincar\'e profiles of manifolds. For the purposes of the introduction one can consider any bounded degree graph quasi-isometric to the manifold.} is bounded from below by $r^{\frac{d-1}{d}}$, where $d$ is the homogeneous dimension of $N$. As a special case of this we deduce the following lower bounds for rank one symmetric spaces.

\begin{bcorollary}\label{bcor:lbdrank1ss}
	Let $\mathbb{K} \in \{ \R, \C, \HH, \mathbb{O}\}$ be a real division algebra, and let $X=\HH_{\mathbb{K}}^m$ be a rank-one symmetric space for $m \geq 2$ (and $m=2$ when $\mathbb{K}=\mathbb{O}$).
	Then, for all $1\leq p<\infty$, we have $\Lambda_X^p(r) \gtrsim_p r^{(Q-1)/Q}$ where $Q = (m+1)\dim_{\R} \mathbb{K} -2$.
\end{bcorollary}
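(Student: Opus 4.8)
This is the specialisation to rank-one symmetric spaces of the general lower bound recorded above for negatively curved homogeneous manifolds; I will make the two ingredients explicit. The plan is (i) to read off from the Heintze/Iwasawa structure of $X$ the homogeneous dimension of its nilpotent part and check that it equals $Q$, and (ii) to push a lattice of that nilpotent part into $X$ by a regular map, so that Theorems \ref{bthm:CGLC-poly} and \ref{bthm:monotoneunderreg} apply.

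For (i): each $X=\HH^m_\mathbb{K}$ is homogeneous and pinched negatively curved, so Heintze's theorem gives $X\cong N\rtimes\R$ with $N$ simply connected nilpotent and $\R$ acting by contracting automorphisms. Using the restricted root space decomposition of the corresponding simple Lie algebra, $N$ is the Carnot group with first layer $\mathfrak g_\alpha$ of real dimension $(m-1)\dim_\R\mathbb{K}$ and second layer (its centre) $\mathfrak g_{2\alpha}$ of real dimension $\dim_\R\mathbb{K}-1$ (so the centre is trivial and $N=\R^{m-1}$ when $\mathbb{K}=\R$), with $\R$ acting by the one-parameter group of Carnot dilations. Hence the homogeneous dimension of $N$ is
\[
Q=1\cdot(m-1)\dim_\R\mathbb{K}+2\cdot(\dim_\R\mathbb{K}-1)=(m+1)\dim_\R\mathbb{K}-2.
\]
Since $N$ is built from the multiplication of $\R,\C,\HH$ or $\mathbb{O}$ it has rational structure constants, hence (Mal'cev) contains a cocompact lattice $\Gamma$; by the \v{S}varc--Milnor lemma $\Gamma$ with its word metric is quasi-isometric to $N$ equipped with any left-invariant Riemannian metric, and by Bass--Guivarc'h $\gamma_\Gamma(r)\simeq r^{Q}$, so Theorem \ref{bthm:CGLC-poly} gives $\Lambda^p_\Gamma(r)\simeq_p r^{(Q-1)/Q}$ for every $p\in[1,\infty)$.

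For (ii): identify $N$ with the horosphere $\mathcal N:=N\times\{0\}\subset N\rtimes\R=X$, write $\rho$ for the path metric induced on $\mathcal N$ by the Riemannian metric of $X$ (so $(\mathcal N,\rho)$ is $N$ with a left-invariant metric, hence quasi-isometric to $\Gamma$, say with constant $A$), and define $F\colon\Gamma\to X$ by $F(\gamma)=(\gamma,0)$. Then $d_X(F(\gamma),F(\gamma'))\le\rho(\gamma,\gamma')\le A\,d_\Gamma(\gamma,\gamma')+A$, which is the Lipschitz-type bound. For the bounded-multiplicity condition, $\mathcal N$ is a closed subset of the proper space $X$ on which $\rho$ is locally comparable to $d_X|_{\mathcal N}$, and $N$ acts on $\mathcal N$ transitively by isometries of both metrics; hence there is $R_0$ with $\rho(a,b)\le R_0$ whenever $a,b\in\mathcal N$ and $d_X(a,b)\le 2$ (equivalently, horospheres in pinched negatively curved spaces are exponentially distorted). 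Therefore, if $F^{-1}(B(y,1))\ne\emptyset$ and $\gamma_0$ lies in it, every $\gamma$ in it has $d_X((\gamma,0),(\gamma_0,0))\le 2$, so $\rho(\gamma,\gamma_0)\le R_0$ and hence $d_\Gamma(\gamma,\gamma_0)$ is bounded by a constant; thus $F^{-1}(B(y,1))$ lies in a bounded ball of $\Gamma$ and is covered by boundedly many unit balls. So $F$ is regular, and Theorem \ref{bthm:monotoneunderreg} (in the bounded-geometry manifold version established in the body of the paper, or after replacing $X$ by a quasi-isometric bounded-degree graph) gives $\Lambda^p_\Gamma\lesssim_p\Lambda^p_X$. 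Combining with the previous paragraph yields $\Lambda^p_X(r)\gtrsim_p r^{(Q-1)/Q}$ for all $p\in[1,\infty)$.

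The only step that is not pure bookkeeping is the verification in (ii) that $F$ is regular, i.e.\ that a unit ball of $X$ meets the horosphere $\mathcal N$ in a set of bounded intrinsic diameter; this is exactly the exponential distortion of horospheres, standard for pinched negatively curved spaces but worth stating carefully. Everything else is the Iwasawa description of rank-one symmetric spaces together with a direct appeal to Theorems \ref{bthm:CGLC-poly} and \ref{bthm:monotoneunderreg}.
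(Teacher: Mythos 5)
Your proof is correct and takes essentially the same route as the paper: Heintze's decomposition $X\cong N\rtimes\R$, identification of the homogeneous dimension of the horospherical nilpotent group $N$ with $Q$, and combining the polynomial-growth lower bound (Theorem~\ref{bthm:CGLC-poly}) with monotonicity of Poincar\'e profiles under regular/coarse maps (Theorem~\ref{bthm:monotoneunderreg}). The one cosmetic difference is in how the reduction from $X$ to a nilpotent group of polynomial growth is made: the paper stays in its compactly-generated-locally-compact framework and simply observes that the inclusion $N\hookrightarrow N\rtimes\R$ is a proper continuous morphism, so Proposition~\ref{prop:latticeseparation} gives $\Lambda^p_N\lesssim_p\Lambda^p_X$ with no further discussion; you instead pass to a cocompact lattice $\Gamma\subset N$ and build the regular map $\Gamma\to X$ by hand via the horosphere. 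Your verification of regularity is correct (the key point being that points of $\mathcal N$ at bounded ambient distance are at bounded intrinsic distance, which follows from the transitive $N$-action and local comparability of the two metrics), but the parenthetical ``equivalently, horospheres are exponentially distorted'' is slightly misstated: what you need is the much weaker properness of the inclusion $\mathcal N\hookrightarrow X$, which exponential distortion implies but is not equivalent to. This extra work is exactly what the paper's use of Proposition~\ref{prop:latticeseparation} avoids. For completeness, the paper also records a second, genuinely different proof of this corollary later, via boundary $p$-Poincar\'e inequalities (Theorem~\ref{bthm:lbdQrgPI}); that route does not pass through the nilpotent horospherical group at all.
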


It is worth noting at this point that $Q$ is the conformal dimension of the boundary of $X$. For large $p$ this bound is far from optimal as we will see in the next section.

\subsection{Hyperbolic spaces}
For groups quasi-isometric to real hyperbolic spaces  Benjamini, Schramm and Timar observed a strange phenomenon: while for $d\geq 2$, $\sep_{\HH^{d+1}}(r)\simeq r^{\frac{d-1}{d}}$, one has $\sep_{\HH^{2}}(r)\simeq \log r$. In the case $d\geq 2$, graphs contained in horospheres (isomorphic to $\R^d$) are the hardest to cut, while for $d=1$, balls are the best connected subsets (note that in this case horospheres are real lines).  As we shall see below,  this ``anomaly" at $d=1$ becomes part of a more natural phenomenon in the context of Poincar\'e profiles. 

We begin by considering the case of an infinite $3$-regular tree.

\begin{btheorem}\label{bthm:trees} Let $T$ be the infinite $3$-regular tree. Then $\Lambda^p_T(r)\simeq_p r^{\frac{p-1}{p}}$, for all $p\in[1,\infty)$.
\end{btheorem}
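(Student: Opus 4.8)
The plan is to prove the two estimates $\Lambda^p_T(r)\lesssim_p r^{(p-1)/p}$ and $\Lambda^p_T(r)\gtrsim_p r^{(p-1)/p}$ separately. At the outset I would record that a disconnected subgraph $\Gamma\le T$ has $h^p(\Gamma)=0$: if $A$ is a component of $\Gamma$, then setting $f=\abs{V\Gamma\setminus A}$ on $A$ and $f=-\abs{A}$ on $V\Gamma\setminus A$ gives $\sum_v f(v)=0$ while $\nabla f\equiv 0$. Since every connected subgraph of a tree is an induced subtree, it follows that $\Lambda^p_T(r)$ is the supremum of $\abs{VS}\,h^p(S)$ over all induced subtrees $S\le T$ with $\abs{VS}\le r$.

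For the \textbf{upper bound} it suffices to exhibit, for every finite subtree $S$ with $m:=\abs{VS}\ge 2$, a function $f$ with $\sum_v f(v)=0$ and $\norm{\nabla f}_p^p\le (C_p/m)\norm{f}_p^p$ (for a constant $C_p$ depending only on $p$): then $\abs{VS}\,h^p(S)\le C_p^{1/p}m^{(p-1)/p}\le C_p^{1/p}r^{(p-1)/p}$. I would take a centroid $c$ of $S$, so that every component of $S\setminus\{c\}$ has at most $m/2$ vertices; since $\deg_S(c)\le 3$, the largest component $C_1$ has between $m/6$ and $m/2$ vertices, and hence $R:=VS\setminus C_1$ has between $m/2$ and $5m/6$ vertices. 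Setting $f=\abs{R}$ on $C_1$ and $f=-\abs{C_1}$ on $R$ gives $\sum_v f(v)=0$; the function $\nabla f$ is supported on the two endpoints of the unique edge joining $C_1$ to $R$, where it equals $\abs{C_1}+\abs{R}=m$, so that $\norm{\nabla f}_p^p=2m^p$, whereas $\norm{f}_p^p=\abs{C_1}\,\abs{R}^p+\abs{R}\,\abs{C_1}^p\ge \abs{C_1}\,\abs{R}\bigl(\abs{R}^{p-1}+\abs{C_1}^{p-1}\bigr)\gtrsim_p m^{p+1}$; dividing gives the bound. This argument does not depend on $p$ in any essential way and covers the endpoint $p=1$ (where $r^{(p-1)/p}=1$).

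For the \textbf{lower bound} I would fix a vertex $\rho$ of $T$, regard two of its three incident edges as leading to children, and let $S_n\le T$ be the resulting rooted complete binary tree of depth $n$, an induced subtree with $\abs{VS_n}=2^{n+1}-1$. I would prove the Poincar\'e-type inequality
\[
\sum_{v\in VS_n}\abs{f(v)-f(\rho)}^p\ \le\ C_p\,2^{n}\sum_{v\in VS_n}\nabla f(v)^p\qquad\text{for all }f\colon VS_n\to\R,
\]
with $C_p=(1-2^{-1/p})^{-p}$. This is an induction on $n$: writing $S_n$ as $\rho$ together with the two depth-$(n-1)$ subtrees $S^{(1)},S^{(2)}$ hanging off its children $\rho_1,\rho_2$, one has $\abs{f(v)-f(\rho)}\le\abs{f(v)-f(\rho_i)}+\nabla f(\rho_i)$ for $v\in S^{(i)}$, and the decisive point is to add the two resulting contributions using Minkowski's inequality in $\ell^p$ rather than the lossy bound $\abs{a+b}^p\le 2^{p-1}(\abs{a}^p+\abs{b}^p)$; invoking the inductive bound on each $S^{(i)}$ together with $\abs{VS^{(i)}}\le 2^n$, the induction closes exactly because $t:=C_p^{1/p}$ satisfies $t\,2^{-1/p}+1\le t$. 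A short averaging step — for mean-zero $f$, H\"older's inequality gives $\abs{f(\rho)}\le\abs{VS_n}^{-1/p}\norm{f-f(\rho)}_p$, hence $\norm{f}_p^p\le 2^p\norm{f-f(\rho)}_p^p$ — turns this into $\norm{f}_p^p\le 2^pC_p\,2^n\norm{\nabla f}_p^p$, so $h^p(S_n)\gtrsim_p 2^{-n/p}$ and therefore $\abs{VS_n}\,h^p(S_n)\gtrsim_p 2^{n(p-1)/p}$. Choosing $n$ maximal with $\abs{VS_n}\le r$ then yields $\Lambda^p_T(r)\gtrsim_p r^{(p-1)/p}$.

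The main obstacle is the Poincar\'e inequality in the previous paragraph. The naive approach — expressing $f(v)$ minus the mean of $f$ as an average over $u$ of the telescoping sum of $\nabla f$ along the geodesic from $u$ to $v$, and then applying Jensen's inequality — loses a factor of order $(2n)^{p-1}$ coming from the lengths of these geodesics, and would only give $\Lambda^p_T(r)\gtrsim_p (r/\log r)^{(p-1)/p}$, which for $p>1$ is strictly weaker than the claim. Removing this loss is the real work: one may either replace Jensen's inequality by a H\"older inequality with geometric weights (weighting the vertex at depth $j$ along a geodesic by $2^{j/2}$, so that the relevant inverse powers form a convergent geometric series, while the number of geodesics through a fixed vertex at depth $j$ in the ambient ball is only $O(2^{2n-j})$), or — as sketched above — run an induction on $n$ that uses only the sharp Minkowski inequality and never pays a dimensional constant. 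Checking that the recursion for the constant admits a finite solution for every $p\in(1,\infty)$ is then the sole genuine computation.
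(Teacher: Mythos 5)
Your upper bound is essentially the paper's: in both cases one takes a centroid and a $\pm$-step function across the resulting split, giving $\mu(\Gamma)h^p(\Gamma)\lesssim_p r^{(p-1)/p}$; the initial observation that disconnected subgraphs have vanishing $h^p$ is a small but correct cleaning step that the paper leaves implicit.

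Your lower bound is correct but takes a genuinely different route. The paper works with balls $B(x_0,t)$, reduces via the complete-graph estimate (Proposition~\ref{prop:complete-graph-cheeger}) to the inequality $\sum_e |\nabla f(e)|^p \ge \tfrac{1}{2|B|^2}\sum_{x,y}|f(x)-f(y)|^p$, and proves this by weighting each edge $e$ of each simple path by $N_e$, the number of paths through $e$, applying H\"older, and observing that $\sum_{e\in\gamma}N_e^{1/(p-1)}$ is a bounded geometric series because $N_e$ decays exponentially in the depth of $e$. You instead work with complete binary subtrees $S_n$ and prove the Poincar\'e inequality $\sum_v|f(v)-f(\rho)|^p \le C_p\,2^n\sum_v \nabla f(v)^p$, with $C_p=(1-2^{-1/p})^{-p}$, by induction on $n$: splitting $S_n$ at the root and adding the two subtree contributions via the sharp Minkowski inequality so that the constant converges (the step $(C_p 2^{n-1})^{1/p} + 2^{n/p} \le (C_p 2^n)^{1/p}$ closes precisely when $C_p^{1/p}\ge (1-2^{-1/p})^{-1}$), followed by the standard mean-zero substitution $\|f\|_p\le 2\|f-f(\rho)\|_p$. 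Both approaches exploit the same geometric feature --- exponential decay of path multiplicities with depth --- but yours does so through a self-contained induction producing an explicit constant, avoiding the detour through the complete graph. The paper's path-counting formulation is the one it later generalises (with layer-wise Poincar\'e inequalities) to obtain the sharp $p=Q$ lower bound for hyperbolic spaces in Theorem~\ref{thm:hyp-PI-bdry-sharp}; your inductive method is arguably cleaner for the tree itself but does not so readily extend to that setting. In short: correct, same upper bound, a different and valid lower bound.
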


Note that when $p=\infty$, $\Lambda^p_T(r)\simeq r/\log(r)$ by Proposition \ref{prop:Linfty_inverse growth}. Using Theorem \ref{bthm:trees}, together with results of Chou and Benjamini--Schramm on embeddings of trees into elementary amenable groups with exponential growth and non-amenable groups respectively \cite{Chou, Ben-Sch-trees-in-nonamen} we obtain the following corollary.

\begin{bcorollary}\label{bcor:lowerbdsolvnonamen} Let $G$ be a finitely generated elementary amenable group with exponential growth or a finitely generated infinite non-amenable group. Then for all $p\in[1,\infty)$, $\Lambda^p_G(r)\gtrsim_p r^{\frac{p-1}{p}}$.
\end{bcorollary}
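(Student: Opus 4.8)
The plan is to deduce the corollary from Theorem~\ref{bthm:trees} together with the monotonicity of the $L^p$--Poincar\'e profile under regular maps, Theorem~\ref{bthm:monotoneunderreg}. It suffices to exhibit, in each of the two cases, a regular map from the infinite $3$-regular tree $T$ to a Cayley graph of $G$: this gives $\Lambda^p_T\lesssim_p\Lambda^p_G$, and since $\Lambda^p_T(r)\simeq_p r^{(p-1)/p}$ for $p\in[1,\infty)$ by Theorem~\ref{bthm:trees}, one obtains $\Lambda^p_G(r)\gtrsim_p r^{(p-1)/p}$. For concreteness I would instead work with the rooted binary tree $\mathcal{T}$, whose vertices are the finite words in a two-letter alphabet $\{a,b\}$ with $w$ joined to $wa$ and to $wb$; this is quasi-isometric to $T$, so $\Lambda^p_{\mathcal{T}}\simeq_p\Lambda^p_T$, and it is enough to map $\mathcal{T}$ regularly into $G$.

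The one elementary point I would isolate first is that an \emph{injective} map $F$ between bounded degree graphs which is coarsely Lipschitz, meaning $d_Y(F(x),F(y))\leq K(1+d_X(x,y))$ for some $K$, is automatically regular. Indeed, if $Y$ has degree at most $D$ then every ball $B(y,1)$ contains at most $D+1$ vertices, so by injectivity $F^{-1}(B(y,1))$ has at most $D+1$ vertices and hence lies in a union of $D+1$ balls of radius $1$ in $X$. Thus in both cases it is enough to produce an injective, coarsely Lipschitz map $\mathcal{T}\to G$.

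For $G$ finitely generated elementary amenable of exponential growth, Chou's theorem \cite{Chou} implies that $G$ is not virtually nilpotent and therefore contains a free sub-semigroup of rank $2$, freely generated by some $a,b\in G$. The map $F\colon\mathcal{T}\to G$ sending a word $w$ to the corresponding product $\overline{w}\in G$ is then injective (freeness of the sub-semigroup forces distinct words to represent distinct elements), and $d_G(\overline{w},\overline{wa})\leq\abs{a}_G$ and $d_G(\overline{w},\overline{wb})\leq\abs{b}_G$, so $F$ is coarsely Lipschitz, hence regular. For $G$ finitely generated, infinite and non-amenable, its Cayley graph has positive Cheeger constant, so by the result of Benjamini--Schramm \cite{Ben-Sch-trees-in-nonamen} it contains, as a subgraph, a tree $T'$ of positive Cheeger constant; such a tree is ``bushy'' in the sense that branch points occur at uniformly bounded distance along its rays, and a standard greedy construction then embeds $\mathcal{T}$ into $T'$ by an injective, coarsely Lipschitz map. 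Composing with the inclusion $T'\hookrightarrow G$ finishes that case.

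The genuinely non-routine ingredients are the two cited embedding theorems; given either one, monotonicity of the profile does the rest. The point requiring care in the amenable case is that Chou's theorem supplies a free sub-\emph{semigroup} rather than a free subgroup — which is precisely what is available and what is needed, since $\mathcal{T}$ only records positive words, whereas a free subgroup cannot exist inside an amenable group.
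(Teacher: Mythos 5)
Your proposal is correct and follows exactly the route the paper takes: the paper's own proof (Corollary~\ref{cor:hyp-lower-bound}) is a one-liner citing Chou and Benjamini--Schramm for the tree embeddings and then invoking Theorem~\ref{bthm:trees} together with monotonicity of $\Lambda^p$ under regular maps. Your preliminary observation that an injective, coarsely Lipschitz map between bounded degree graphs is automatically regular is correct and is implicitly what makes the cited embedding results usable, and the free sub-semigroup argument in the Chou case is exactly right.

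The one place that needs a little more care is the ``standard greedy construction'' in the non-amenable case. When you reach a vertex $v\in T'$ and look for a nearby branch point, the branches emanating from it may lead into \emph{finite} subtrees of $T'$, i.e.\ dead ends, so a naive greedy step can get stuck. A clean fix: fix a root $v_0\in T'$, and let $S\subseteq T'$ be the subtree of those vertices whose descendant cone (away from $v_0$) is infinite. Then $S$ is leafless, and it retains a positive Cheeger constant: given any finite $A\subset S$, augment it to $A'\subset T'$ by adjoining all the finite descendant cones hanging off vertices of $A$; one has $|A'|\geq |A|$ and $|\partial_{T'}A'|=|\partial_S A|$, so $h(S)\geq h(T')>0$. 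In particular $S$ is genuinely bushy with no dead ends, and your greedy embedding of $\mathcal{T}$ into $S\subset T'\subset G$ goes through as stated. With this adjustment the argument is complete and agrees with the paper's intent.
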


The following result suggests a connection between Pansu's conformal dimension of the boundary of a hyperbolic group (see Definition~\ref{def:confdim}), and a phase transition in the Poincar\'{e} profiles of hyperbolic groups.   

\begin{btheorem}\label{bthm:hypupperbd} Let $G$ be a finitely generated hyperbolic group with equivariant conformal dimension $Q$ (see Definition~\ref{def:equiv-confdim}). For every $\varepsilon>0$
\[
 \Lambda^p_G(r) \lesssim \left\{
 \begin{array}{lcl}
 r^{\frac{Q-1}{Q}+\epsilon} & \textup{if} & p \leq Q
 \\
 r^{\frac{p-1}{p}} & \textup{if} & p>Q.
 \end{array}\right.
\]
If the equivariant conformal dimension is attained, we have:
\[
 \Lambda^p_G(r) \lesssim \left\{
 \begin{array}{lcl}
 r^{\frac{Q-1}{Q}} & \textup{if} & 1\leq p < Q
 \\
 r^{\frac{Q-1}{Q}} \log^{\frac{1}{Q}}(r) & \textup{if} & p = Q
 \\
 r^{\frac{p-1}{p}} & \textup{if} & p>Q.
 \end{array}\right.
\]
\end{btheorem}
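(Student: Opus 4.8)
The plan is to replace $G$ by a combinatorial model read off from its boundary, and then, for \emph{every} finite subgraph of that model, to exhibit a mean-zero test function whose $L^p$-Poincar\'e ratio realises the claimed bound; the function will simply be the restriction of a Lipschitz function on the boundary. By construction of the equivariant conformal dimension, for each $\varepsilon>0$ there is a visual metric $\rho$ on $\partial G$, in the $G$-equivariant gauge, which is Ahlfors regular of dimension $Q+\varepsilon$ -- and of dimension exactly $Q$ when the equivariant conformal dimension is attained. Fixing $a>1$, let $X$ be the hyperbolic approximation graph of $(\partial G,\rho)$: its vertices are $\bigsqcup_{\ell\ge 0}V_\ell$ with $V_\ell$ a maximal $a^{-\ell}$-separated subset of $\partial G$, edges joining $a^{-\ell}$-close points of $V_\ell$ and nearby points of consecutive levels. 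Then $X$ has bounded degree, is Gromov hyperbolic, and -- for $\rho$ in the equivariant gauge -- is quasi-isometric to $G$; Ahlfors regularity gives $\abs{V_\ell}\lesssim a^{\ell(Q+\varepsilon)}$, and likewise inside any $\rho$-ball, and these are the only properties of $\rho$ we shall use. By Theorem~\ref{bthm:monotoneunderreg} it suffices to bound $\Lambda^p_X$.

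\emph{Core estimate.} Let $\Gamma\le X$ with $N:=\abs{V\Gamma}\le r$ and put $n_\ell=\abs{V\Gamma\cap V_\ell}$, so $\sum_\ell n_\ell=N$ and $n_\ell\lesssim a^{\ell(Q+\varepsilon)}$. For a $1$-Lipschitz $g\colon(\partial G,\rho)\to\R$ set $f=g|_{V\Gamma}$, each net point being a point of $\partial G$; since two neighbours of a level-$\ell$ vertex lie at $\rho$-distance $\lesssim a^{-\ell}$, we get $\nabla f(v)\lesssim a^{-\ell}$ at level $\ell$, so with $m$ chosen by $a^{m(Q+\varepsilon)}\asymp N$,
\[
\norm{\nabla f}_p^p\ \lesssim\ \sum_{\ell\ge 0}n_\ell\, a^{-\ell p}\ \lesssim\ \sum_{\ell\le m}a^{\ell(Q+\varepsilon-p)}\ +\ a^{-mp}N,
\]
which is $\lesssim 1$ when $p>Q+\varepsilon$, $\lesssim\log N$ when $p=Q+\varepsilon$, and $\lesssim N^{\,1-p/(Q+\varepsilon)}$ when $p<Q+\varepsilon$. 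If moreover $g$ can be chosen mean-zero over $V\Gamma$ with $\norm{f}_p^p\gtrsim N$, then $h^p(\Gamma)\le\norm{\nabla f}_p/\norm{f}_p$, and multiplying by $N$ gives
\[
\abs{V\Gamma}\,h^p(\Gamma)\ \lesssim\ N^{\,(\max(p,\,Q+\varepsilon)-1)/\max(p,\,Q+\varepsilon)},
\]
with an extra factor $(\log N)^{1/(Q+\varepsilon)}$ in the borderline case $p=Q+\varepsilon$. Letting $\varepsilon\to 0$ -- and, in the regime $p>Q$, simply fixing $\varepsilon<p-Q$ -- this is exactly the asserted upper bound, including the $\log^{1/Q}(r)$ at $p=Q$ when the conformal dimension is attained. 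The two regimes are the expected ones: for large $p$ a hyperbolic space behaves like a tree (the bound of Theorem~\ref{bthm:trees}), while for small $p$ the dominant subgraphs are ``$Q$-dimensional'', as in Theorem~\ref{bthm:CGLC-poly}.

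\emph{Producing $g$.} It remains to find such a $g$. Let $\nu=\tfrac1N\sum_{v\in V\Gamma}\delta_v$ and fix a small universal $\delta>0$. Either no point mass $\delta_{\xi^\ast}$ satisfies $\int\rho(\xi,\xi^\ast)^p\,d\nu(\xi)\le\delta^p$, in which case a compactness argument on $\partial G$ produces a $1$-Lipschitz, $\nu$-mean-zero $g$ with $\norm{g}_{L^p(\nu)}\gtrsim_\delta 1$, whence $\norm{f}_p^p\gtrsim N$ and the core estimate applies; or $\nu$ is $\delta$-concentrated near some $\xi^\ast$, so by Markov at least $(1-\delta^{p/2})N$ vertices of $\Gamma$ lie in a single (shifted) dyadic cube $Q^\ast$ of $(\partial G,\rho)$ whose level is controlled by $\delta$, chosen at the deepest scale where such concentration persists. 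Using approximate self-similarity of $\partial G$ to rescale $Q^\ast$ to a bounded-distortion copy of $(\partial G,\rho)$ puts $\Gamma\cap Q^\ast$ into the first case; one then takes the good function on $\Gamma\cap Q^\ast$, extends it by $0$ and recentres over $V\Gamma$, noting that the interface meets each level below that of $Q^\ast$ in only $O(1)$ vertices with gradient $\lesssim\diam Q^\ast$, so the interface and the $\le\delta^{p/2}N$ discarded vertices contribute only lower-order terms to $\norm{\nabla f}_p^p$. Finally transfer the bound back to $G$ by Theorem~\ref{bthm:monotoneunderreg}.

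\emph{Main obstacle.} The genuine difficulty is this last step: an arbitrary finite subgraph is neither convex nor a ball nor uniformly spread across levels and columns, so one must dichotomise by the distribution of \emph{mass} (the measure $\nu$) rather than of support, and feed the concentrated alternative back into the spread one via a controlled rescaling. This is also where the loss of $\varepsilon$ is forced in the non-attained case -- it comes solely from having to work with a metric whose Ahlfors-regularity exponent exceeds $Q$ by an arbitrarily small amount -- while the lone factor $\log^{1/Q}(r)$ at $p=Q$ reflects the geometric series $\sum_{\ell\le m}a^{\ell(Q+\varepsilon-p)}$ degenerating at $p=Q+\varepsilon$.
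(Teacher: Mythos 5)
Your core gradient estimate is correct and is essentially the discrete analogue of the paper's computation: summing $n_\ell\,a^{-\ell p}$ over levels $\ell$ with $n_\ell\lesssim a^{\ell(Q+\varepsilon)}$ and $\sum n_\ell=N$ is the same rearrangement calculation the paper carries out in the form $\int_0^r e^{h(X)t}e^{-p\epsilon t}\,dt$, and the three regimes $p\lessgtr Q$ appear for the same reason in both. However, there is a genuine gap in your ``producing $g$'' step, and it is precisely where the paper brings in its heaviest machinery.

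Your dichotomy is: either $W_p(\nu,\delta_{\xi^\ast})>\delta$ for all $\xi^\ast$, or $\nu$ is concentrated near some point. In the first case you assert that compactness yields a $\nu$-mean-zero $1$-Lipschitz $g$ with $\|g\|_{L^p(\nu)}\gtrsim_\delta 1$. This does not follow from the hypothesis as stated. Consider $\nu$ concentrated on a thin $\rho$-sphere of radius $\delta$ around some $\xi^\ast$: then $\int\rho(\cdot,\xi^\ast)^p\,d\nu\approx\delta^p$, so $\nu$ is ``far from every point mass'' in your sense, yet the obvious candidate $g_0=\rho(\cdot,\xi^\ast)$ is nearly constant on $\operatorname{spt}\nu$, and $g_0-(g_0)_\nu$ has tiny $L^p(\nu)$ norm. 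One would need to find a \emph{different} Lipschitz function that ``splits'' the sphere into two parts of comparable mass, and whether this is always possible for an arbitrary $\nu$ on $\partial G$ is exactly the nontrivial question. What is needed is a quantitative centrepoint or balanced-separation statement: a Lipschitz (or coboundary of a hyperplane) function $g$ with two well-separated level sets each carrying a definite fraction of $\nu$. Your spread-out/concentrated dichotomy does not supply this, and the rescaling in the concentrated alternative (``deepest scale where concentration persists,'' ``approximate self-similarity'') is also not made precise and risks losing uniform control.

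The paper obtains the balanced-separation property by a route you have omitted: it embeds $X$ quasi-isometrically into $\HH^k_\R$ via Bonk--Schramm, applies the centrepoint theorem (Helly's theorem in CAT(0) spaces, Lemma~\ref{realhypcentrepoint}) to find a hyperplane through a point such that \emph{every} hyperplane through it splits the mass $\kappa$-fairly, then averages over directions to find a hyperplane $H$ whose $\alpha$-tube captures little mass, and pulls the two complementary half-spaces back to $X$ as ``asymptotic shadows'' $H^\pm$ satisfying condition \ref{condition:cutsets}. This gives two $\rho$-separated boundary sets each seeing a definite fraction of $\mu(Z)$, from which the lower bound $\|\phi-\phi_Z\|_p^p\gtrsim\mu(Z)$ is immediate. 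So your overall strategy (restrict a boundary Lipschitz function, estimate gradients levelwise, show non-degeneracy) matches the paper's in spirit, but the non-degeneracy step requires the Bonk--Schramm--Helly centrepoint argument, which your measure-concentration dichotomy does not replace.
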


In certain cases we are able to prove that these upper bounds are sharp. Our key examples are rank-one symmetric spaces, and a collection of groups $G_{m,n}=\langle s_1,\ldots,s_m\mid s_i^n,[s_1,s_{2}],\ldots,[s_{m-1},s_m],[s_m,s_1] \rangle$, $m\geq 5,n\geq 3$ which occur as uniform lattices in the isometry groups of associated Fuchsian buildings $\Delta_{m,n}$, as studied by Bourdon and Bourdon--Pajot amongst others~\cite{Bou-97-GAFA-exact-cdim,BP-99-hyp-build-PI}.  Following the terminology of~\cite{Cap-14-aut-ra-buildings} we call these \emph{Bourdon buildings}.  These groups are virtually torsion free~\cite{HsuWise}, and commensurable to hyperbolic Coxeter groups when $n$ is even~\cite{Hag-commens}.

\begin{btheorem}\label{bthm:hypconfdim}
	Let $X=\HH_{\mathbb{K}}^m$ be a rank-one symmetric space for $\mathbb{K} \in \{ \R, \C, \HH, \mathbb{O}\}$ and $m \geq 2$ (with $m=2$ when $\mathbb{K}=\mathbb{O}$),
	or let $X$ be one of the groups $G_{m,n}$; in either case, let $Q$ be the conformal dimension of the boundary of $X$. Then
\[
\Lambda^p_{X}(r) \simeq \left\{
\begin{array}{ll}
r^{\frac{Q-1}{Q}}
&
\textrm{if } p< Q
\\
r^{\frac{Q-1}{Q}}\log(r)^{\frac{1}{Q}}
&
\textrm{if } p = Q
\\
r^{\frac{p-1}{p}}
&
\textrm{if } p > Q
\end{array}\right.
\]
\end{btheorem}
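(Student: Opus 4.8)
The upper bound will come straight from Theorem~\ref{bthm:hypupperbd}, and the matching lower bound will be assembled from three pieces, according to whether $p$ lies above, below, or exactly at $Q$. For the upper bound the key input is that for each of the two families the conformal dimension $Q$ of $\partial X$ is \emph{attained}, and attained by an Ahlfors $Q$-regular metric which is visual --- hence equivariant for the natural cocompact action on $X$ --- and which supports a $(1,Q)$-Poincar\'e inequality; consequently the equivariant conformal dimension of $\partial X$ also equals $Q$ and is attained. For a rank-one symmetric space this is classical: $\partial X$ carries, in its visual gauge, an Ahlfors $Q$-regular Carnot--Carath\'eodory metric satisfying a $(1,1)$-Poincar\'e inequality (Pansu), with $Q=(m+1)\dim_{\R}\mathbb{K}-2$. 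For the Bourdon buildings $G_{m,n}$ the same conclusion follows from Bourdon's computation of $\Cdim(\partial\Delta_{m,n})$ together with the Bourdon--Pajot Poincar\'e inequality on $\partial\Delta_{m,n}$ \cite{Bou-97-GAFA-exact-cdim,BP-99-hyp-build-PI}. Feeding ``equivariant conformal dimension attained and equal to $Q$'' into the second alternative of Theorem~\ref{bthm:hypupperbd} yields precisely $\Lambda^p_X(r)\lesssim r^{(Q-1)/Q}$ for $p<Q$, $\lesssim r^{(Q-1)/Q}\log^{1/Q}(r)$ for $p=Q$, and $\lesssim r^{(p-1)/p}$ for $p>Q$.

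For the lower bound in the easy ranges: when $p>Q$, each $X$ is quasi-isometric to a non-elementary hyperbolic group (a cocompact lattice in $\Isom(X)$, respectively $G_{m,n}$ itself), which is in particular non-amenable, so Corollary~\ref{bcor:lowerbdsolvnonamen} --- or, directly, a regular embedding of the $3$-regular tree coming from a free subgroup combined with Theorems~\ref{bthm:trees} and~\ref{bthm:monotoneunderreg} --- gives $\Lambda^p_X(r)\gtrsim_p r^{(p-1)/p}$. When $p<Q$ and $X=\HH_{\mathbb{K}}^m$, the bound $\Lambda^p_X(r)\gtrsim_p r^{(Q-1)/Q}$ is exactly Corollary~\ref{bcor:lbdrank1ss}, obtained there by coarsely embedding a horosphere of polynomial growth degree $Q$ and invoking Theorem~\ref{bthm:CGLC-poly}.

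It remains to prove $\Lambda^p_X(r)\gtrsim r^{(Q-1)/Q}$ for the buildings $G_{m,n}$ when $p<Q$, and the sharper estimate $\Lambda^Q_X(r)\gtrsim r^{(Q-1)/Q}\log^{1/Q}(r)$ at the critical exponent for both families --- the latter being the real content of the theorem, since the horosphere argument produces no logarithm. Here the plan is to bound $h^p$ of metric balls from below: set $\Gamma_R=B(o,R)$ (equivalently the depth-$\le R$ truncation of a hyperbolic cone on $\partial X$), chosen so that $\abs{V\Gamma_R}\simeq r$ with $\log r\simeq R$. Given a mean-zero function $f$ on $\Gamma_R$, decompose it into its average over each sphere $S(o,k)$ and the complementary ``angular'' part; the angular part on $S(o,k)$ is controlled by the Poincar\'e inequality on $\partial X$ rescaled to the resolution $e^{-\varepsilon_0 k}$ of that sphere, where $e^{-\varepsilon_0}$ is the parameter of a visual metric in which $\partial X$ is Ahlfors $Q$-regular, while the vanishing mean forces the radial averages to spread out. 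Summing over $0\le k\le R$ produces a geometric series, convergent precisely when $p<Q$, which gives $\norm{\nabla f}_p\gtrsim e^{-\varepsilon_0 R}\norm{f}_p$ and hence $\abs{V\Gamma_R}h^p(\Gamma_R)\gtrsim e^{\varepsilon_0(Q-1)R}\simeq r^{(Q-1)/Q}$; when $p=Q$ the same series degenerates into a sum of $\simeq R$ comparable terms, and keeping careful track of the constants converts this into the extra factor $R^{1/Q}\simeq\log^{1/Q}(r)$, the familiar logarithmic correction at a critical Sobolev/Poincar\'e exponent.

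The main obstacle is this borderline case $p=Q$: one must run the summation above with uniform control over the Poincar\'e constants of all the rescaled copies of $\partial X$ sitting inside the spheres $S(o,k)$, understand how their contributions interact across levels of the cone, and verify that nothing is lost when passing between $\partial X$, the hyperbolic cone, and genuine subgraphs of $X$ (for the symmetric spaces one also transfers the manifold version of $\Lambda^p$ to a quasi-isometric bounded-degree graph, using the footnoted convention). The $(1,Q)$-Poincar\'e inequality on the boundary --- Pansu's, respectively Bourdon--Pajot's --- is exactly what powers this step, which is why the phase transition in $\Lambda^p_X$ occurs precisely at $Q=\Cdim(\partial X)$. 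Combining the four cases gives the stated equivalences.
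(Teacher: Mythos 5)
Your proposal is correct and follows the same overall strategy as the paper: upper bounds via Theorem~\ref{bthm:hypupperbd} once you observe the equivariant conformal dimension is attained, lower bounds from trees for $p>Q$, from the boundary Poincar\'e inequality (or the horosphere embedding, Corollary~\ref{bcor:lbdrank1ss}, for the symmetric spaces) for $p<Q$, and from a multi-scale refinement of the boundary Poincar\'e inequality at $p=Q$. The one implementation detail where the paper diverges slightly from your sketch is the critical case: Theorem~\ref{thm:hyp-PI-bdry-sharp} works with annuli $B_{s,t}=\Gamma_{s+1}\cup\cdots\cup\Gamma_t$ (with $s=\lfloor t/2\rfloor$) and a colouring lemma (Lemma~\ref{lem:colour}) that distributes radial paths across the $t-s$ layers so the $1/(t-s)$ factor produces the $\log^{1/Q}(r)$ gain, rather than decomposing a full ball into spheres, but the mechanism driving the logarithmic correction is exactly the one you identify.
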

It is interesting to note that uniform lattices $G$ in $PSL(2,\R)$ satisfy $\Lambda^1_G(r)\simeq\log(r)$ and $\Lambda^p_G(r)\simeq r^{\frac{p-1}{p}}$ for all $p>1$, while non-uniform lattices $H$ satisfy $\Lambda^1_H(r)\simeq r^{\frac{p-1}{p}}$ for all $p\geq 1$. We have no other examples of this distinction between uniform and non-uniform lattices for any other $p$ or for any groups of higher rank.

Conformal dimension is defined in Definition~\ref{def:confdim}, but we note here that for a rank-one symmetric space $\HH^m_\mathbb{K}$ we have $Q=(m+1)\dim_\R \mathbb{K}-2$ and for the groups $G_{m,n}$ we have $Q=1+\log(n-1)/\mathrm{arccosh}((m-2)/m)$, which takes a dense set of values in $(1,\infty)$ as $m,n$ vary.
 
The upper bound in Theorem \ref{bthm:hypconfdim} is obtained by constructing specific functions on the boundary using an embedding of the space into a real hyperbolic space.  The lower bound in Theorem \ref{bthm:hypconfdim} for rank-one symmetric spaces with $p<Q$ follows from Corollary~\ref{bcor:lbdrank1ss}.
For the groups $G_{m,n}$, and for the sharp case $p=Q$, we require the following more general result.

\begin{btheorem}\label{bthm:lbdQrgPI}
	Suppose that $X$ is a visual Gromov hyperbolic graph with a visual metric $\rho$ on $\bdry X$ that is Ahlfors $Q$-regular and admits a $p$-Poincar\'e inequality.  Then for all $q\geq p$, $\Lambda_X^q(r)\gtrsim r^{(Q-1)/Q}$. 
   
	Moreover, if $(\bdry X, \rho)$ admits a $Q$-Poincar\'e inequality, then $\Lambda_X^Q(r)\gtrsim r^{1-1/Q}\log(r)^{1/Q}$. 
\end{btheorem}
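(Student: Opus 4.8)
The strategy is to produce, inside the hyperbolic graph $X$, a sequence of finite subgraphs which simultaneously have bounded size and large $L^q$-Poincar\'e constant, by pushing suitable test functions from the boundary $\bdry X$ back into $X$. Fix a basepoint $o\in X$ and a scale $n\in\N$; the relevant subgraph will be a ``shadow'' $\Gamma_n$ consisting of vertices at distance roughly $n$ from $o$ that lie in the cone over a fixed boundary ball $B_0\subset\bdry X$. Because $\rho$ is a visual metric and $(\bdry X,\rho)$ is Ahlfors $Q$-regular, the number of vertices of $\Gamma_n$ is comparable to $e^{\epsilon n Q}/\,(\text{measure of a }\rho\text{-ball of radius }e^{-\epsilon n})$, i.e.\ $\abs{V\Gamma_n}\simeq e^{\epsilon n Q}$ up to constants; so a function on $\Gamma_n$ whose Poincar\'e ratio is $\gtrsim e^{-\epsilon n}$ contributes $\abs{V\Gamma_n}h^q(\Gamma_n)\gtrsim e^{\epsilon n(Q-1)}\simeq \abs{V\Gamma_n}^{(Q-1)/Q}$ to $\Lambda^q_X$, which with $r=\abs{V\Gamma_n}$ gives exactly the claimed $r^{(Q-1)/Q}$.

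The test functions come from the $p$-Poincar\'e inequality on the boundary. First I would recall that an Ahlfors $Q$-regular space supporting a $p$-Poincar\'e inequality (with $p\le Q$) admits, for each pair of well-separated balls, a function — essentially a ``capacitary'' or ``thick-path'' function, or more concretely the distance function $x\mapsto \min\{\rho(x,B_0)/\rho\text{-radius},1\}$ composed with a cutoff — whose $L^p$-energy is controlled and which separates the two balls by a definite amount. One then pulls this function back to $\Gamma_n$ via the natural ``radial projection'' sending $v\in\Gamma_n$ to a boundary point $\xi_v\in\bdry X$ within distance $\simeq e^{-\epsilon n}$ of the geodesic ray through $v$. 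The key computation is to compare the discrete gradient $\nabla f$ on $\Gamma_n$ with the (continuous, $p$-energy-controlled) behaviour of the boundary function: adjacent vertices of $\Gamma_n$ project to boundary points at $\rho$-distance $\lesssim e^{-\epsilon n}$, so $\nabla f(v)$ is bounded by the oscillation of the boundary function over an $e^{-\epsilon n}$-ball, and summing $\abs{\nabla f(v)}^q$ over $V\Gamma_n$ reproduces a discretized $L^q$ (hence, by $q\ge p$ and finiteness of measure, an $L^p$-controlled) energy integral on $\bdry X$. Meanwhile $\norm{f}_q$ is bounded below because the function takes values near $0$ on a definite-measure piece and near $1$ on another. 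For the refined statement when $p=Q$, one uses that the $Q$-Poincar\'e inequality is the critical (``conformally invariant'') one: the extremal function between two balls has $Q$-energy $\simeq 1/\log(\text{ratio of radii})$ rather than a positive constant, and here the ratio of radii on $\bdry X$ seen from scale $n$ is a fixed constant while there are $\simeq n$ nested scales down to $e^{-\epsilon n}$ — stacking $\simeq n$ such annular functions, or equivalently using the logarithmic Loewner-type estimate directly, yields the extra $\log(r)^{1/Q}=n^{1/Q}$ factor.

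The main obstacle is the transfer step: making rigorous the comparison between the discrete gradient on $\Gamma_n$ and the metric-space modulus/energy on $(\bdry X,\rho)$, so that a Poincar\'e inequality on the boundary genuinely forces a Poincar\'e inequality on the interior subgraphs. This requires care because (i) the projection $v\mapsto\xi_v$ is only coarsely well-defined, so one must fix a choice and control the overlap multiplicity of the preimages of boundary balls (bounded, using $Q$-regularity and bounded geometry), and (ii) one must ensure that an $L^p$-inequality on the boundary can be used to bound an $L^q$-Poincar\'e constant of the graph for all $q\ge p$ — this is where Proposition~\ref{bprop:pqmonotone}, or rather a direct H\"older argument on the finite-measure space $\Gamma_n$, is invoked. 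Once the dictionary ``$\bdry X$ at scale $e^{-\epsilon n}$ $\leftrightarrow$ $\Gamma_n$'' is set up with these multiplicity bounds, both displayed estimates follow by plugging the boundary $p$- (resp.\ $Q$-) Poincar\'e inequality into the energy computation and reading off the contribution to $\Lambda^q_X(r)$.
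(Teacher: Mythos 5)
Your framework---approximate $\bdry X$ at scale $e^{-\epsilon n}$ by a discrete set $\Gamma_n$ of size $\asymp e^{\epsilon n Q}$ sitting at distance $\simeq n$ from the basepoint, and transfer the boundary Poincar\'e inequality to a discrete one on $\Gamma_n$---matches the paper. But the mechanism you propose runs in the wrong direction. Bounding $\Lambda^q_X(r)=\sup\{\mu(\Gamma)h^q(\Gamma)\}$ from below requires finding some $\Gamma$ on which $h^q(\Gamma)$ is \emph{large}; since $h^q$ is an infimum, this means proving $\|\nabla f\|_q/\|f-f_\Gamma\|_q \gtrsim e^{-\epsilon n}$ for \emph{every} function $f$ on $\Gamma_n$. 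You instead plan to construct one boundary ``capacitary'' function of small $L^p$-energy separating two balls and pull it back as a test function on $\Gamma_n$. That only bounds $h^q(\Gamma_n)$ from \emph{above}, and says nothing useful about $\Lambda^q_X$. (Your closing ``obstacle'' paragraph---that the boundary inequality must ``genuinely force a Poincar\'e inequality on the interior subgraphs''---names the correct goal, but the test-function mechanism you describe does not achieve it.) The paper's actual transfer: rescale the boundary to $(Z,\rho_t,\mu_t)$ so the assumed Poincar\'e inequality applied to \emph{all} Lipschitz functions gives $h^q_{\Lip}(Z_t)\succeq e^{-t}$; pass to a fixed-scale Poincar\'e constant via Proposition~\ref{prop:UpperGradToLargeScale}; then discretize via Lemma~\ref{lem:disc} and compare metrics to conclude $h^q_C(\Gamma_t,d)\succeq e^{-t}$---an estimate valid for every $f$, not just one test function.

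The $p=Q$ refinement is also not supported by your outline. A single layer cannot produce the logarithm; the paper works on an annular piece $B_{s,t}$ spanning layers $\Gamma_{s+1},\dots,\Gamma_t$ with $s\asymp t/2$, decomposes $|f(x)-f(y)|$ into a radial ascent $\alpha_x$, a horizontal within-layer piece controlled by the boundary $Q$-Poincar\'e inequality applied \emph{inside each layer}, and a radial descent $\beta_{x,y}$, and then needs a colouring argument (Lemma~\ref{lem:colour}) to control the path-overlap multiplicities so that each of the three resulting sums gains a factor of $1/(t-s)$; it is precisely this gain which, with $p=Q$ making a geometric series degenerate, produces $h^Q(B_{s,t})\succeq t^{1/Q}e^{-t}$. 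Your ``stack $\simeq n$ annular extremal functions with $Q$-energy $\simeq 1/\log$'' again builds test functions (wrong direction) and glosses over exactly the multiplicity bookkeeping on which the argument hinges.
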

Here a ``$p$-Poincar\'e inequality'' is in the sense of Heinonen--Koskela \cite{HK-98-qc-pi}, namely an analytic property of the compact metric space $\bdry X$.
Such inequalities hold on boundaries of rank-one symmetric spaces, see e.g.\ \cite{Jerison86-Poincare, HK-98-qc-pi, Mac-Tys-cdimexpo}, so we can apply this lower bound to obtain an alternative proof of Corollary \ref{bcor:lbdrank1ss}.  For the groups $G_{m,n}$, the Poincar\'{e} inequalities are constructed in \cite{BP-99-hyp-build-PI}.

The sharp lower bounds on $\Lambda^{Q}_X$ come from showing a suitable Poincar\'e inequality on annuli $B(o,2r)\setminus B(o,r)$ in $X$.  It is interesting to observe that for $p<Q$, $p=Q$, and $p>Q$, the sharp lower bounds on $\Lambda^{p} X$ are realised by embedded spheres, annuli and trees respectively.

Finally, Theorems \ref{bthm:trees} and \ref{bthm:hypconfdim}, together with the embedding theorem of Bonk--Schramm \cite{BS-00-gro-hyp-embed}, imply that for every hyperbolic group $G$ there is some $p_0$ such that for all $p>p_0$, we have $\Lambda^p_G(r)\simeq r^{\frac{p-1}{p}}$. The relationship between the infimal such $p_0$ and the conformal dimension of the boundary of $G$ is one of the most intriguing aspects of these profiles.

\subsection{Consequences}

By applying Theorem \ref{bthm:hypconfdim} to the groups $G_{m,n}$, we find a new collection of functions which can be obtained as separation profiles of finitely generated groups:
\begin{bcorollary}\label{bthm:denseexponents} There exists a dense subset $A$ of $(0,1)$ such that for all $\alpha\in A$ there is a hyperbolic group $G_\alpha$ with $\sep_{G_\alpha}(r)\simeq r^\alpha$.
\end{bcorollary}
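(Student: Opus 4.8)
The plan is to specialise Theorem~\ref{bthm:hypconfdim} to $p=1$ for the Bourdon building groups $G_{m,n}$, convert the resulting $L^1$-Poincar\'e profile into a separation profile via Proposition~\ref{bprop:P1sep}, and then vary the parameters so that the exponents sweep out a dense subset of $(0,1)$.

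First I would note that for $m\geq 5$ and $n\geq 3$ the conformal dimension $Q=Q(m,n)$ of $\bdry G_{m,n}$ is strictly greater than $1$, so $p=1$ lies in the first regime of Theorem~\ref{bthm:hypconfdim}; hence $\Lambda^1_{G_{m,n}}(r)\simeq r^{(Q-1)/Q}$. Since $G_{m,n}$ is a uniform lattice in the isometry group of the Gromov hyperbolic Fuchsian building $\Delta_{m,n}$, it is itself a hyperbolic group. Applying Proposition~\ref{bprop:P1sep} then gives
\[
\sep_{G_{m,n}}(r)\simeq \Lambda^1_{G_{m,n}}(r)\simeq r^{1-\frac{1}{Q(m,n)}}.
\]

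Next, set $\alpha(m,n):=1-1/Q(m,n)\in(0,1)$ and let $A:=\{\alpha(m,n):m\geq 5,\ n\geq 3\}$. By the formula $Q(m,n)=1+\log(n-1)/\arccosh((m-2)/m)$ recorded above, the values $Q(m,n)$ form a dense subset of $(1,\infty)$ as $m,n$ vary; since $t\mapsto 1-1/t$ is an increasing homeomorphism of $(1,\infty)$ onto $(0,1)$, the set $A$ is dense in $(0,1)$. For each $\alpha\in A$, choosing a pair $(m,n)$ with $\alpha(m,n)=\alpha$ and setting $G_\alpha:=G_{m,n}$ produces a hyperbolic group with $\sep_{G_\alpha}(r)\simeq r^\alpha$, as required. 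The only ingredient here beyond bookkeeping on top of Theorem~\ref{bthm:hypconfdim} is the density of $\{Q(m,n)\}$ in $(1,\infty)$, which is an elementary analysis of the displayed function of $(m,n)$ (for fixed $n$ the values accumulate correctly as $m$ grows, while varying $n$ rescales the numerator $\log(n-1)$) and is in any case already asserted in the discussion preceding the corollary; so no genuine obstacle remains once Theorem~\ref{bthm:hypconfdim} is in hand.
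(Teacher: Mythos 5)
Your proposal is correct and follows exactly the route the paper intends: apply Theorem~\ref{bthm:hypconfdim} with $p=1$ to the Bourdon building groups $G_{m,n}$ (for which $Q(m,n)>1$, so the first regime applies), translate the $L^1$-Poincar\'e profile to the separation profile via Proposition~\ref{bprop:P1sep}, and use the asserted density of $\{Q(m,n)\}$ in $(1,\infty)$ to conclude that $\{1-1/Q(m,n)\}$ is dense in $(0,1)$. The paper gives no further argument beyond this, so there is nothing to add.
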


The key purpose of a monotone coarse invariant is to be able to distinguish situations in which one space cannot be coarsely embedded into another.  Unlike the case of quasi-isometric embeddings, there are few general tools to do this; asymptotic dimension is one and growth (or equivalently, the $L^\infty$-Poincar\'{e} profile) is another. Here we present and discuss some results of this form which cannot be obtained by studying growth and/or asymptotic dimension.

\begin{bcorollary}\label{bcor:realcomphypspace} If there is a coarse embedding of $\mathbb{H}_\mathbb{C}^k$ into $\mathbb{H}_\mathbb{R}^l$, then $l>2k$.
  Likewise, if there is a coarse embedding of $\HH^k_\HH$ into $\HH^l_\R$, then $l>4k+2$.
\end{bcorollary}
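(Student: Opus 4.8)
The plan is to derive Corollary~\ref{bcor:realcomphypspace} directly from the computation of Poincar\'e profiles in Theorem~\ref{bthm:hypconfdim} together with the monotonicity principle of Theorem~\ref{bthm:monotoneunderreg}. Recall that a coarse embedding of bounded degree graphs is regular, so if $\HH^k_\C$ coarsely embeds in $\HH^l_\R$ then $\Lambda^p_{\HH^k_\C} \lesssim_p \Lambda^p_{\HH^l_\R}$ for every $p\in[1,\infty]$. The strategy is therefore to compare the two sides of this inequality for a cleverly chosen value of $p$ and extract the claimed arithmetic constraint on $k$ and $l$.

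First I would record the relevant conformal dimensions: by the formula $Q=(m+1)\dim_\R\K-2$ stated in the excerpt, the boundary of $\HH^k_\C$ has conformal dimension $Q_1 = 2(k+1)-2 = 2k$, while the boundary of $\HH^l_\R$ has conformal dimension $Q_2 = (l+1)\cdot 1 - 2 = l-1$. Both are rank-one symmetric spaces, so Theorem~\ref{bthm:hypconfdim} applies to each and gives a precise phase transition at $p=Q_1$ and $p=Q_2$ respectively. Next I would evaluate both profiles at the critical exponent $p = Q_1 = 2k$ of the smaller space. The left side is $\Lambda^{2k}_{\HH^k_\C}(r)\simeq r^{(Q_1-1)/Q_1}\log(r)^{1/Q_1} = r^{1-1/(2k)}\log(r)^{1/(2k)}$, which in particular is strictly larger (up to $\simeq$) than any pure power $r^{1-1/(2k)}$, and even larger than $r^{1-1/Q_2}$ whenever $Q_2 \le 2k$. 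Meanwhile the right side $\Lambda^{2k}_{\HH^l_\R}(r)$ equals $r^{1-1/Q_2}$ if $2k > Q_2$, equals $r^{1-1/Q_2}\log(r)^{1/Q_2}$ if $2k = Q_2$, and equals $r^{1-1/(2k)}$ if $2k < Q_2$.

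The comparison then proceeds by ruling out the two cases $Q_2 < 2k$ and $Q_2 = 2k$. If $Q_2 < 2k$, i.e. $l - 1 < 2k$, i.e. $l \le 2k$, then the right side is $\simeq r^{1-1/Q_2}$, a pure power with exponent strictly less than $1-1/(2k)$; since the left side grows at least like $r^{1-1/(2k)}$, this contradicts $\Lambda^{2k}_{\HH^k_\C}\lesssim_p\Lambda^{2k}_{\HH^l_\R}$. If $Q_2 = 2k$, i.e. $l = 2k+1$, then both sides have the form $r^{1-1/(2k)}\log(r)^{1/(2k)}$ and the crude comparison no longer separates them; here I would instead evaluate the profiles at a value $p$ slightly smaller than $2k$ — say $p=Q_2 = 2k$ is degenerate, so take any $p$ with $Q_2 = 2k$... — more carefully, pick $p$ in the open interval $(\max\{1,Q_2-1\}, Q_2)=(2k-1,2k)$; there the right side is the pure power $r^{1-1/p}$ (since $p<Q_2$) while the left side, since $p<Q_1=2k$ as well, is also $r^{1-1/(2k)}$... so that does not separate them either. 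The genuinely effective separation at $l=2k+1$ must instead use $p=Q_2=2k$ versus a nearby larger $p$: for $p\in(2k, 2k)$ there is nothing, so the argument at the boundary case needs the full strength of the $\log$ term exponents — but $1/Q_1 = 1/Q_2 = 1/(2k)$ there, so the logarithmic exponents match as well. This shows $l = 2k+1$ genuinely requires a finer invariant; however, the statement only claims $l > 2k$, so we need to also exclude $l=2k$, not $l=2k+1$. Re-examining: $l>2k$ means $l\ge 2k+1$, so we only need to rule out $l\le 2k$, which is exactly the case $Q_2 < 2k$ handled cleanly above. Thus the main argument is: assume $l\le 2k$, get $Q_2 = l-1 < 2k = Q_1$, compare at $p=Q_1$ to contradict monotonicity, done. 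The quaternionic case is identical with $Q_1 = \dim_\R\HH\cdot(k+1)-2 = 4(k+1)-2 = 4k+2$ and $Q_2 = l-1$; ruling out $l \le 4k+2$ gives $l > 4k+2$.

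The main obstacle I anticipate is not conceptual but bookkeeping: one must be careful that the $\lesssim_p$ relation absorbs only multiplicative and linear-rescaling constants, so that a strict inequality of growth exponents (here $1-1/Q_2$ versus $1-1/(2k)$ with $Q_2<2k$) really is an obstruction, and that the logarithmic factor at the critical exponent does not secretly interfere; since in the decisive case $Q_2 < Q_1$ strictly, the power-law exponents differ and no $\log$ factor can bridge a gap between $r^{\alpha}$ and $r^{\beta}$ with $\alpha<\beta$, this is clean. It is worth remarking that the bound is not claimed to be optimal — indeed the honest phase-transition data suggests the true threshold might be closer to $l = 2k+1$ — but Theorem~\ref{bthm:hypconfdim} as stated yields precisely $l>2k$ (resp.\ $l>4k+2$) and no more.
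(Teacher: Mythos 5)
Your high-level strategy is correct and matches what the paper intends: read off the conformal dimensions from the formula $Q=(m+1)\dim_\R\K-2$, apply Theorem~\ref{bthm:hypconfdim} to both spaces, and use the monotonicity Theorem~\ref{bthm:monotoneunderreg}. Your arithmetic for $Q_1$ and $Q_2$ and the reformulation of ``$l>2k$'' as ``$Q_2\geq Q_1$'' are also right.

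However, in the key comparison you have misread Theorem~\ref{bthm:hypconfdim}, swapping the $p<Q$ and $p>Q$ cases. The theorem gives $\Lambda^p_X(r)\simeq r^{(Q-1)/Q}$ when $p<Q$ and $\Lambda^p_X(r)\simeq r^{(p-1)/p}$ when $p>Q$. In your decisive case ($p=2k$, $Q_2<2k$, so $p>Q_2$) the theorem therefore gives $\Lambda^{2k}_{\HH^l_\R}(r)\simeq r^{(2k-1)/(2k)}=r^{1-1/(2k)}$, \emph{not} $r^{1-1/Q_2}$; symmetrically, your stated formula $r^{1-1/(2k)}$ for the regime $2k<Q_2$ should be $r^{1-1/Q_2}$. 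Thus the reason you give for the contradiction --- ``a pure power with exponent strictly less than $1-1/(2k)$'' --- is false: the right-hand power exponent is exactly $1-1/(2k)$, the same as the left. The comparison does still yield a contradiction, because the left side $r^{1-1/(2k)}\log(r)^{1/(2k)}$ has an extra unbounded logarithmic factor which no multiplicative or linear-rescaling constant can absorb, but that is not the reason your write-up appeals to, and it relies on the harder endpoint $p=Q$ estimate when nothing forces that choice.

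The clean repair is to evaluate at any $p$ strictly between $Q_2$ and $Q_1$, which is possible exactly when $Q_2<Q_1$. There $\Lambda^p_{\HH^k_\C}(r)\simeq r^{(Q_1-1)/Q_1}$ (because $p<Q_1$) while $\Lambda^p_{\HH^l_\R}(r)\simeq r^{(p-1)/p}$ (because $p>Q_2$), and $p<Q_1$ gives $(p-1)/p<(Q_1-1)/Q_1$, a strict power-exponent gap that contradicts $\Lambda^p_{\HH^k_\C}\lesssim_p\Lambda^p_{\HH^l_\R}$. This avoids both the logarithmic endpoint case and the misquoted formula. The quaternionic case is identical with $Q_1=4k+2$.
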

To prove the analogous result for quasi-isometric embeddings, one can use the conformal dimension of the boundary, however, a coarse embedding does not necessarily induce a well-defined map between boundaries \cite{Baker-Riley} so this approach cannot be expected to work. Using asymptotic dimension as an invariant one could only deduce that $l\geq 2k$ in the first case and $l\geq 4k$ in the second.

By \cite{BS-00-gro-hyp-embed}, every hyperbolic group quasi-isometrically embeds into some $\mathbb{H}_\mathbb{R}^k$. A natural obstruction to a coarse embedding $G_k\to \mathbb{H}_\mathbb{R}^k$ is that the asymptotic dimension of $G_k$ is greater than $k$. Poincar\'{e} profiles provide a different obstruction.

\begin{bcorollary}\label{bcor:hyptohypk} For every $k$ there is a hyperbolic group $G_k$ of asymptotic dimension $2$ which does not coarsely embed into $\mathbb{H}_\mathbb{R}^k$.
\end{bcorollary}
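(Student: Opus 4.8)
The plan is to produce, for each $k$, a hyperbolic group $G_k$ whose boundary has conformal dimension $Q_k > k$ while $G_k$ itself has asymptotic dimension $2$, and then play off the two refinements of Theorem~\ref{bthm:hypconfdim} against each other. The natural candidates are the Bourdon buildings $G_{m,n}$: these are hyperbolic groups whose boundary has conformal dimension $Q = 1 + \log(n-1)/\operatorname{arccosh}((m-2)/m)$, which tends to infinity as $n\to\infty$ (for fixed $m\geq 5$), and which are $2$-dimensional complexes, so one expects $\operatorname{asdim} G_{m,n} = 2$. First I would pin down the asymptotic dimension: a hyperbolic group has asymptotic dimension equal to the topological dimension of its boundary plus one, and the boundary of $G_{m,n}$ is the Menger curve, which has topological dimension $1$; hence $\operatorname{asdim} G_{m,n} = 2$ for all admissible $m,n$. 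Alternatively one can cite that $G_{m,n}$ acts properly cocompactly on a $2$-dimensional building, which has asymptotic dimension $2$.

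Next, fix $k$ and choose $m\geq 5$, $n\geq 3$ with $Q := Q_{m,n} > k$ (possible since $Q_{m,n}\to\infty$); set $G_k := G_{m,n}$. By Theorem~\ref{bthm:hypconfdim} applied to $X = G_k$ with the exponent $p = k < Q$, we get the precise asymptotics
\[
 \Lambda^k_{G_k}(r) \simeq r^{\frac{Q-1}{Q}}.
\]
Now suppose for contradiction that $G_k$ coarsely embeds into $\HH^k_\R$. A coarse embedding of bounded degree graphs is a regular map, so Theorem~\ref{bthm:monotoneunderreg} gives $\Lambda^k_{G_k} \lesssim_k \Lambda^k_{\HH^k_\R}$. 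On the other hand, $\HH^k_\R$ is a rank-one symmetric space with boundary of conformal dimension $k-1$, so by Theorem~\ref{bthm:hypconfdim} (the case $p = k > k-1 = Q_{\HH^k_\R}$, i.e.\ $p$ strictly above the conformal dimension of the boundary) we have
\[
 \Lambda^k_{\HH^k_\R}(r) \simeq r^{\frac{k-1}{k}}.
\]
Combining, $r^{(Q-1)/Q} \lesssim r^{(k-1)/k}$, which forces $(Q-1)/Q \leq (k-1)/k$, i.e.\ $Q \leq k$, contradicting $Q > k$. Hence no such coarse embedding exists, and $\operatorname{asdim} G_k = 2 \leq k$ shows that the asymptotic dimension obstruction is useless here, which is the point of the corollary.

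The only genuine subtlety is making sure Theorem~\ref{bthm:hypconfdim} really applies to $\HH^k_\R$ with the relevant parameters: one must check $k \geq 2$ (so that $\HH^k_\R$ is covered by the theorem) and that the conformal dimension of $\partial\HH^k_\R \cong S^{k-1}$ is exactly $k-1$, so that the exponent $p=k$ falls in the regime $p > Q$ giving the clean bound $r^{(k-1)/k}$; for $k=1$ the statement is vacuous or trivial, and for $k=2$ one uses $Q_{\HH^2_\R} = 1$ and the fact that $\Lambda^2_{\HH^2_\R}(r)\simeq r^{1/2}$ (consistent with $\HH^2_\R$ being quasi-isometric to a tree-like space). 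I expect no real obstacle beyond bookkeeping: the substance is entirely contained in Theorems~\ref{bthm:monotoneunderreg} and~\ref{bthm:hypconfdim} together with the standard facts that $Q_{m,n}\to\infty$ and $\operatorname{asdim} G_{m,n}=2$.
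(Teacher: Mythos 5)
Your proposal is correct and matches the paper's (brief) argument: the paper also takes $G_k$ to be a Bourdon building group $G_{m,n}$ chosen so that its conformal dimension exceeds $k$, and then compares Poincar\'e profiles via Theorem~\ref{bthm:hypconfdim} and the monotonicity under coarse (regular) maps. The only cosmetic difference is that the paper fixes $n=5$ and increases $m$, whereas you fix $m$ and increase $n$; both drive $Q_{m,n}\to\infty$, and your verification that $\operatorname{asdim} G_{m,n}=2$ via the Menger-curve boundary together with the Buyalo--Lebedeva formula is the intended justification.
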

We can take $G_k$ to be $G_{m(k),5}$ for some appropriately chosen $m(k)$ and apply Theorem $\ref{bthm:hypconfdim}$.

It is in general very difficult to prove a statement of the form ``a hyperbolic group $H$ is not isomorphic to a subgroup of a hyperbolic group $G$''. Two commonly considered obstructions are torsion and asymptotic dimension. Here we show that the Poincar\'{e} profiles can exclude subgroups when the two methods listed above fail.

\begin{bcorollary}\label{bcor:distinguishhypgps} There exists a collection of (torsion-free) hyperbolic groups $(G_q)_{q\in\mathbb{Q}}$ with asymptotic dimension $2$ such that whenever $i<j$ there is no coarse embedding from $G_i$ to $G_j$. In particular, $G_i$ is not virtually a subgroup of $G_j$.
\end{bcorollary}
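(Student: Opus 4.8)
The plan is to realise the groups $G_q$ as torsion-free finite-index subgroups of Bourdon buildings $G_{m,n}$, and to rule out coarse embeddings using Theorem~\ref{bthm:monotoneunderreg} together with the sharp computation in Theorem~\ref{bthm:hypconfdim}. Recall that the conformal dimensions $Q(m,n)=1+\log(n-1)/\arccosh((m-2)/m)$, with $m\geq 5$, $n\geq 3$, form a dense subset $S$ of $(1,\infty)$. Since $S$ is dense, a routine recursion produces a map $q\mapsto (m(q),n(q))$ on $\mathbb{Q}$ for which $q\mapsto Q_q:=Q(m(q),n(q))$ is strictly \emph{decreasing}: enumerating $\mathbb{Q}$ and having assigned values to finitely many rationals consistently with this order, the next rational is constrained to lie in some non-empty open subinterval of $(1,\infty)$, which meets $S$ in infinitely many points, so a fresh value can be chosen. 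By~\cite{HsuWise}, $G_{m(q),n(q)}$ has a torsion-free finite-index subgroup; call it $G_q$ (it is hyperbolic, being finite-index in a hyperbolic group). Each $G_q$ is quasi-isometric to $G_{m(q),n(q)}$, which has asymptotic dimension $2$ (as used in the proof of Corollary~\ref{bcor:hyptohypk}), so $\operatorname{asdim} G_q=2$.

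Now suppose $i<j$, so $Q_i>Q_j$, and suppose for contradiction that there is a coarse embedding $G_i\to G_j$. A coarse embedding of bounded degree graphs is a regular map, so Theorem~\ref{bthm:monotoneunderreg} gives $\Lambda^p_{G_i}\lesssim_p\Lambda^p_{G_j}$ for every $p\in[1,\infty]$. Fix any real $p$ with $Q_j<p<Q_i$ (in particular $p>1$). Since $\Lambda^p$ and the conformal dimension of the boundary are quasi-isometry invariants, Theorem~\ref{bthm:hypconfdim} applies to $G_i$ and $G_j$ and yields $\Lambda^p_{G_i}(r)\simeq r^{(Q_i-1)/Q_i}$ (as $p<Q_i$) and $\Lambda^p_{G_j}(r)\simeq r^{(p-1)/p}$ (as $p>Q_j$). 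Because $t\mapsto 1-1/t$ is strictly increasing and $p<Q_i$, we have $(p-1)/p<(Q_i-1)/Q_i$, hence $r^{(p-1)/p}\not\gtrsim r^{(Q_i-1)/Q_i}$; this contradicts $\Lambda^p_{G_i}\lesssim_p\Lambda^p_{G_j}$. Therefore no such coarse embedding exists. Finally, if some finite-index subgroup $H\leq G_i$ were isomorphic to a subgroup of $G_j$, then composing a quasi-isometry $G_i\to H$ with the (coarse) inclusion $H\hookrightarrow G_j$ would produce a coarse embedding $G_i\to G_j$, which we have excluded; so $G_i$ is not virtually a subgroup of $G_j$.

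I do not expect a deep obstacle here: the essential input is entirely contained in Theorems~\ref{bthm:monotoneunderreg} and~\ref{bthm:hypconfdim}. The points that need care are the recursion in the first step --- checking that density of $S$ really allows a strictly monotone $\mathbb{Q}$-indexed family of conformal dimensions --- and verifying that Poincar\'e profiles, conformal dimension of the boundary, asymptotic dimension, and the relation ``coarsely embeds into'' all behave well under replacing $G_{m,n}$ by a torsion-free finite-index subgroup, which they do since such a subgroup is quasi-isometric to the whole group.
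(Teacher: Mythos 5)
Your proof is correct and follows essentially the same route the paper takes: realise the $G_q$ as torsion-free finite-index subgroups of the groups $G_{m,n}$, use density of the conformal dimensions $Q_{m,n}$ in $(1,\infty)$ to make $q\mapsto Q_q$ strictly decreasing, and rule out coarse embeddings by comparing Poincar\'e profiles via Theorems~\ref{bthm:monotoneunderreg} and~\ref{bthm:hypconfdim}. One small simplification: you do not need a $p$ strictly between $Q_j$ and $Q_i$ --- taking $p=1$ already gives $\Lambda^1_{G_i}(r)\simeq r^{(Q_i-1)/Q_i}\not\lesssim r^{(Q_j-1)/Q_j}\simeq\Lambda^1_{G_j}(r)$ since $Q_i>Q_j$ and $t\mapsto 1-1/t$ is strictly increasing.
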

Indeed, the groups $G_{m,n}$ are virtually torsion-free so we may choose the $G_q$ in Corollary \ref{bcor:distinguishhypgps} to be torsion-free.
By results of Gersten \cite{Gerst-hypdim2}, finitely presented subgroups of hyperbolic groups with cohomological dimension $2$ (which equals the asymptotic dimension for torsion-free hyperbolic groups \cite{BestMess,BuyaloLebedeva}) are hyperbolic but not necessarily quasi-convex. As a function of $q$, the conformal dimension of the boundary of $G_q$ will be strictly decreasing, therefore it will follow immediately that $G_i$ can never be a quasi-convex subgroup of $G_j$ for $i<j$.

\begin{bremark}
 By a recent result of Pansu \cite{Pan-16-coarse-conformal}, if a hyperbolic group $H$ coarsely embeds into a hyperbolic group $G$, then the ``$L^p$-cohomological dimension'' of $H$ is less than or equal to the conformal dimension of the boundary of $G$. In the cases of the Bourdon buildings and rank-one symmetric spaces, these two numbers turn out to coincide. This provides an alternative proof of 
 Corollaries \ref{bcor:realcomphypspace}, \ref{bcor:hyptohypk}  and \ref{bcor:distinguishhypgps}.  
\end{bremark}

\subsection{About the proofs}

The proof of the theorems described in the previous section employ a variety of techniques. In particular, the arguments needed for getting upper bounds are completely different than those for obtaining lower bounds. 

\subsubsection{Upper bounds.}
For groups with polynomial growth, our sharp upper bounds are obtained via an argument adapted from \cite{HumSepExp} based on the fact that these groups have finite Assouad--Nagata dimension (\cite[Theorem $1.5$]{HumSepExp} deals with the separation profile, corresponding to $p=1$). Finite Assouad--Nagata dimension is a quantitative strengthening of  finite asymptotic dimension. Contrary to the latter, 
finite Assouad--Nagata dimension is not monotone under coarse embedding (only quasi-isometric embedding) as it is sensitive to distortion of the metric. We come up with a new notion called {\it finite measured  dimension} (see Definition \ref{def:measurabledim}), weaker than finite asymptotic dimension, which should be of independent interest. Our motivation here is that it is well adapted for providing upper bound on the Poincar\'e profiles.

Obtaining upper bounds for hyperbolic groups is trickier, and occupies all of \S \ref{section:upperHyp}. We need three different arguments, depending whether $p$ lies below, above, or equals the (equivariant) conformal dimension. 
We show that hyperbolic groups admit in some sense ``many hyperplanes'', by using a theorem of Bonk--Schramm~\cite{BS-00-gro-hyp-embed} to embed the group into a real hyperbolic space, which has an abundance of hyperplanes. We crucially use a version of Helly's theorem for CAT(0) spaces. 
Our argument for small $p$ is largely inspired from \cite{BenSchTim-12-separation-graphs} where the separation profile of the real hyperbolic plane is computed. It consists of a symmetrisation argument. For large $p$, we construct for every finite set $A$, a $p$-Dirichlet function whose restriction to $A$ provides a good test function.  

\subsubsection{Lower bounds.} Obtaining lower bounds for groups with polynomial growth follows from well-known Poincar\'e inequalities in balls. It is interesting that the functional analytic interpretation of the separation profile gives us new estimates for nilpotent groups without effort. 

The lower bounds for hyperbolic Lie groups and small $p$ are obtained by considering parabolic closed nilpotent subgroups.  The cases of Bourdon--Pajot buildings, and of the cases when $p=Q$, are more interesting and more subtle.  For $p<Q$, we exploit the fact that their visual boundary satisfies (infinitesimal) Poincar\'e inequalities. We ``pull down" these inequalities on a sphere of large radius in the space using a discretization argument developed in the first part of the paper. 
To get the sharp lower bound in the $p=Q$ case, we use the Poincar\'e inequalities on spheres and a curve counting argument to find a new Poincar\'e inequalities on annuli.
A similar but simpler curve counting argument gives the lower bound for the $3$-regular tree, and hence all spaces in which it embeds.

\subsection{Structure of the paper}

The paper splits roughly into three parts. The first part introduces Poincar\'{e} profiles as monotone regular (in particular coarse) invariants. After introducing our notations and fixing the class of metric measure spaces under consideration, we present the more general definition of Poincar\'{e} constants in Section \ref{sec:Pconsts} and explain some basic properties. We then introduce Poincar\'{e} profiles and prove Theorem \ref{bthm:monotoneunderreg} in Sections \ref{sec:poinc-profiles} and \ref{sec:reg-maps-lseq} respectively.

The second part deals with relationships between Poincar\'{e} profiles. The descriptions of extremal profiles (Propositions \ref{bprop:P8growth} and \ref{bprop:P1sep}) and the connection with separation (Proposition~\ref{bprop:p1monotone}) are proved in Section \ref{sec:sepL1}, and the dependence on $p$ (Proposition \ref{bprop:pqmonotone}) is discussed in Section \ref{sec:pdep}.

The final part is dedicated to calculating profiles using the technology developed in the rest of the paper.  
Groups with polynomial growth (Theorem~\ref{bthm:CGLC-poly}) are considered in Sections~\ref{sec:polygrowth} and \ref{section:measureDimension}.
For hyperbolic spaces, trees (Theorem~\ref{bthm:trees}), lower bounds (Theorem~\ref{bthm:lbdQrgPI}) and upper bounds are in Sections~\ref{sec:trees}, \ref{sec:hyp-PI} and \ref{section:upperHyp} respectively, with applications (in particular, Theorem~\ref{bthm:hypconfdim}) discussed in Section~\ref{sec:applications}.

\subsection{Acknowledgements} We are grateful to Laurent Saloff-Coste and Anne Thomas for comments on earlier versions of this paper and for pointing out a number of related references, and to a referee for many very helpful suggestions.
%

\section{Notation and framework}\label{sec:noteframe}
We first introduce notation to be used throughout the paper.

Suppose $f, g: S \ra [0,\infty)$ where $S = \N$ or $S = [0, \infty)$.
We write $f\preceq_{u,v,\dots} g$ if there exists a constant $C>0$ depending only on $u,v,\dots$ such that $f(x)\leq Cg(x)$ for all $x \in S$.  If $f \preceq_{u,v,\dots} g$ and $g \preceq_{u,v,\dots} f$ then we write $f \asymp_{u,v,\dots} g$.  We drop the subscripts if the constants are understood.

We write $f\lesssim_{u,v,\dots} g$ if there exists a constant $C>0$ depending only on $u,v,\dots$ such that $f(x)\leq Cg(Cx+C)+C$ for all $x \in S$; similarly, we write $f \simeq_{u,v,\dots} g$ if $f \lesssim_{u,v,\dots} g$ and $g \lesssim_{u,v,\dots} f$.

Given a subset $A$ of a metric space $(X,d)$ and some $M\geq 0$ we define the closed $M$-neighbourhood of $A$ to be
\[
 [A]_M = \setcon{x\in X}{d(x,A)\leq M}.
\]
Given a point $x\in X$ and $r\geq 0$ we denote by $B(x,r)$ the closed metric ball of radius $r$ centred at $x$.

Let $(Z,\nu)$ be a measure space with positive finite measure. We denote the averaged integral by 
\[
 \dashint_Z f d\nu = \frac{1}{\nu(Z)} \int_Z f d\nu.
\]
Given a function $f\in L^p(X,\mu)$, another measure $\mu'$ such that $f\in L^p(X,\mu')$ and a measurable subset $Z\subseteq X$ we write
\[
 \norm{f}_{p,\mu'}=\left(\int_X \abs{f(z)}^p d\mu'(z)\right)^{\frac1p}\quad \textup{and}
 \]
 \[
 \norm{f}_{Z,p}=\left(\int_Z \abs{f(z)}^p d\mu(z)\right)^{\frac1p}.
\]
The $L^\infty$ norms $\norm{\cdot}_{\infty,\mu'}$ and $\norm{\cdot}_{Z,\infty}$ are defined analogously.

Given a graph $\Gamma=(V\Gamma,E\Gamma)$ and a subset $A\subset V\Gamma$, the full (or induced) subgraph of $\Gamma$ with vertex set $A$ is the graph with vertex set $A$ and edge set $\setcon{xy\in E}{x,y\in A}$.

The purpose of the remainder of this section is to introduce the class of spaces we will consider in this paper.

\begin{definition}\label{def:metricmeasurespace}
A \textbf{metric measure space} is a triple $(X,d,\mu)$ where $\mu$ is a non-trivial, locally finite, Borel measure on a complete, separable metric space $(X,d)$.
\end{definition}

The key examples are: graphs of bounded degree, Riemannian manifolds with bounded geometry and compactly generated locally compact groups, so we will make the following standing assumptions.

We will assume throughout the paper that any metric measure space $(X,d,\mu)$ satisfies the following properties:
\begin{itemize}
\item $X$ has \textbf{bounded packing on large scales}\footnote{If $r_0=0$, then we simply say that $X$ has bounded packing.}: there exists $r_0\geq 0$ such that for all $r\geq r_0$, there exists $K_r>0$ 
such that
$$\forall x\in X, \; \mu(B(x,2r))\leq K_r\mu(B(x,r)).$$
We then say that $X$ has \textbf{bounded packing on scales $\geq r_0$}.
\item $X$ is $k$\textbf{-geodesic} for some $k>0$: for every pair of points $x,y\in X$ there is a sequence $x=x_0,\ldots,x_n=y$ such that $d(x_{i-1},x_i)\leq k$ for all $i$ and $d(x,y)=\sum_{i=1}^n d(x_{i-1},x_i)$.
\end{itemize}

Up to rescaling the metric we will assume that $X$ is $1$-geodesic and has bounded packing on scales $\geq r_0=1$.

A subspace $Z\subset X$ is always assumed to be $1$\textbf{-thick} (a union of closed balls of radius $1$), so in particular it has positive measure. We equip $Z$ with the subspace measure and the induced $1$-distance
\[
 d(z,z')=\inf\set{\sum_{i=1}^n d(z_{i-1},z_i)}
\]
where the infimum is taken over all sequences $z=z_0,\ldots,z_n=z'$, such that each $z_i\in Z$ and $d(z_{i-1},z_i)\leq 1$.

Note that (as in the case of a disconnected subgraph) the induced $1$-distance will take values in $[0,\infty]$. 

\begin{remark} In the case of (the vertex set of) a bounded degree graph $X$, $d$ is the shortest path metric and $\mu$ is the (vertex) counting measure. Subspaces $Z$ are (vertex sets of) $1$-thick subgraphs equipped with the vertex counting measure and their own shortest path metric (the induced $1$-distance).

In a locally compact group $G$ with compact generating set $K$, we equip $G$ with a Haar measure (which is unique up to scaling) and the word metric $d=d_K$. 
\end{remark}
The reason for working with thick sets is justified by the following easy lemma (see \cite[Lemma 8.4]{Tes-Sobolev-ineq}).
\begin{lemma}\label{lem:doublingThick}
Assume $X$ has bounded packing on scales $\geq r_0$, and let $A\subset X$ be $r$-thick for some $r\geq r_0$. Then for all $u>0$,
$$\mu([A]_u)\preceq_u \mu(A).$$
\end{lemma}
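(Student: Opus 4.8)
My plan is to combine $r$-thickness with a maximal separated net. The role of $r$-thickness is that underneath every point of $[A]_u$ there sits a ball of radius $r$ contained in $A$; disjointifying a sub-collection of these balls bounds $\mu(A)$ from below, while the net covers $[A]_u$ by slightly larger balls, and bounded packing trades large balls back for radius-$r$ balls at bounded cost.

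First I would choose, by Zorn's lemma, a subset $S\subseteq [A]_u$ that is maximal among those whose points are pairwise at distance strictly greater than $2u+4r$. For each $s\in S$, pick $a_s\in A$ with $d(s,a_s)\le u$, and then, using that $A$ is $r$-thick, pick $c_s$ with $a_s\in B(c_s,r)\subseteq A$; thus $d(s,c_s)\le u+r$. The $(2u+4r)$-separation of $S$ forces $d(c_s,c_{s'})>2r$ whenever $s\ne s'$, so the closed balls $B(c_s,r)$, $s\in S$, are pairwise disjoint subsets of $A$, whence $\sum_{s\in S}\mu(B(c_s,r))\le\mu(A)$. We may assume $\mu(A)<\infty$ (otherwise there is nothing to prove); since each $B(c_s,r)$ has positive measure, $S$ is then countable. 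By maximality of $S$, every point of $[A]_u$ lies within $2u+4r$ of some $s\in S$, so by the triangle inequality $[A]_u\subseteq\bigcup_{s\in S}B(s,2u+4r)\subseteq\bigcup_{s\in S}B(c_s,3u+5r)$.

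To finish I would iterate the bounded-packing inequality $\mu(B(x,2t))\le K_t\mu(B(x,t))$ (valid for all $t\ge r_0$) starting from the scale $r\ge r_0$: after $m=\lceil\log_2(3u/r+5)\rceil$ doublings one obtains $\mu(B(c_s,3u+5r))\le C\,\mu(B(c_s,r))$ with $C=\prod_{j=0}^{m-1}K_{2^j r}$, a constant depending only on $u$, $r$ and the packing function of $X$ — hence only on $u$, since in our standing framework $r=r_0=1$. Summing over $S$ and using countable subadditivity,
\[
\mu([A]_u)\le\sum_{s\in S}\mu(B(s,2u+4r))\le C\sum_{s\in S}\mu(B(c_s,r))\le C\,\mu(A).
\]

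I expect no serious obstacle: the only thing that needs care is the bookkeeping of the separation and triangle-inequality constants, so that the balls $B(c_s,r)$ are simultaneously pairwise disjoint, contained in $A$, and enlarge to a cover of $[A]_u$. The remaining points are measure-theoretic trivialities — that $[A]_u$ is closed, hence Borel, being a sublevel set of the $1$-Lipschitz function $d(\cdot,A)$; that balls of radius $\ge r_0$ have positive finite measure, which follows from non-triviality, local finiteness and bounded packing; and that the case $\mu(A)=\infty$ is vacuous. The $1$-geodesic hypothesis is not needed for this lemma.
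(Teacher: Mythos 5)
The paper itself does not prove this lemma --- it cites \cite[Lemma 8.4]{Tes-Sobolev-ineq} --- so there is no in-paper argument to compare against. Your proof is correct and is the expected covering argument: a maximal $(2u+4r)$-separated net $S\subseteq[A]_u$, pushed into $A$ via $r$-thickness, produces pairwise-disjoint radius-$r$ balls $B(c_s,r)\subseteq A$ (disjoint because $d(c_s,c_{s'})>2r$), whose enlargements $B(c_s,3u+5r)\supseteq B(s,2u+4r)$ cover $[A]_u$ by maximality; iterating the bounded-packing inequality over $\lceil\log_2(3u/r+5)\rceil$ doublings (each at a scale $\geq r\geq r_0$) then trades the enlarged balls back at a cost depending only on $u$, $r$ and the packing constants, and your side remarks --- countability of $S$, positivity and finiteness of ball measures of radius $\geq r_0$, Borel measurability of $[A]_u$, and the irrelevance of the $1$-geodesic hypothesis here --- are all accurate.
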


\section{Poincar\'e constants}\label{sec:Pconsts}

Let $(X,d)$ be a metric space and let $a>0$. Given a measurable function $f:X\to \R$, we define its \textbf{upper gradient at scale} $a$ to be $$|\nabla_{a} f|(x)=\sup_{y,y'\in B(x,a)}|f(y)-f(y')|.$$ 
\begin{remark}
We have slightly modified the notation from \cite{Tes-Sobolev-ineq}, where the upper gradient was referred to as the ``local norm of the gradient'' and was denoted by $|\nabla f|_a$. The changes in this paper are for brevity; in what follows $\||\nabla_a f|\|_p$ will simply be denoted by $\|\nabla_a f\|_p$.
\end{remark}

\begin{definition}\label{mmspCheeger}
Let $(Z,d,\nu)$ be a metric measure space with finite measure and fix a scale $a>0$.
We define the $L^p$\textbf{-Poincar\'e constant at scale} $a$ of $Z$ to be
$$h_{a}^p(Z)=\inf_{f} \frac{\|\nabla_a f\|_p}{\|f\|_p},$$
where the infimum is taken over all $f\in L^p(Z,\nu)$ such that $f_Z:=\dashint_Z fd\nu=0$ and $f \not\equiv 0$. We adopt the convention that $h_{a}^p(Z)=0$ whenever $\nu(Z)=0$.
\end{definition}

Before continuing we list some basic properties of the Poincar\'{e} constant.
\begin{lemma}\label{lem:PCbasics} Let $(Z,d,\nu)$ be a metric measure space with finite measure.
\begin{enumerate}[(i)]
    \item \label{PCchangemetric} Let $\theta:(0,\infty)\to(0,\infty)$ be a non-decreasing function, and let $(Z',d',\nu')$ be a metric measure space such that $(Z,\nu)=(Z',\nu')$, and 
    $d'(z_1,z_2)\leq\theta(d(z_1,z_2))$ for all $z_1,z_2$. Then for all $a>0$, $$h^p_a(Z)\leq h^p_{\theta(a)}(Z').$$
	\item \label{PCchangemeasure} Let $(Z',d',\nu')$ be a metric measure space where $(Z,d)=(Z',d')$ and there exists some $M\geq 1$ such that $M^{-1}\nu(A)\leq\nu'(A)\leq M\nu(A)$ for every measurable $A\subseteq Z$. 
Then for all $a>0$, $$h_{a}^p(Z')\leq 2M^{2/p}h_{a}^p(Z).$$  
\end{enumerate}
\end{lemma}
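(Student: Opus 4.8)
The plan is to treat the two assertions separately; both amount to manipulating a single test function.

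For the first assertion I would argue by a pointwise comparison of upper gradients. Since $\theta$ is non-decreasing and $d'(z_1,z_2)\le\theta(d(z_1,z_2))$ for all $z_1,z_2$, any point $z$ with $d(x,z)\le a$ also satisfies $d'(x,z)\le\theta(a)$, so the metric ball $B_d(x,a)$ is contained in $B_{d'}(x,\theta(a))$. Consequently, for any $f$ and any $x$, the upper gradient of $f$ at scale $a$ computed with $d$ is at most the upper gradient of $f$ at scale $\theta(a)$ computed with $d'$. As $(Z,\nu)=(Z',\nu')$, integrating gives $\|\nabla_a f\|_p\le\|\nabla_{\theta(a)}f\|_p$, while $\|f\|_p$ and the mean-zero constraint $f_Z=0$ depend only on $(Z,\nu)$; taking the infimum over admissible $f$ then yields $h^p_a(Z)\le h^p_{\theta(a)}(Z')$.

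For the second assertion, the degenerate case $\nu(Z)=0$ forces $\nu'(Z)=0$, so both sides vanish; assume $0<\nu(Z)<\infty$. Given a test function $f$ for $h^p_a(Z)$ --- so $f\in L^p(Z,\nu)$, $\dashint_Z f\,d\nu=0$, $f\not\equiv0$ --- I would replace it by $g=f-c$ with $c=\dashint_Z f\,d\nu'$. The two-sided measure comparison ensures $f\in L^p(Z,\nu')$ and that $c$ is finite, $g$ has vanishing $\nu'$-average, and $g\not\equiv0$ (else $f$ would be a constant equal to its own $\nu$-average, i.e.\ $0$), so $g$ is admissible for $h^p_a(Z')$. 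Since upper gradients kill additive constants, $|\nabla_a g|=|\nabla_a f|$ pointwise, and $\nu'\le M\nu$ gives $\|\nabla_a g\|_{p,\nu'}\le M^{1/p}\|\nabla_a f\|_{p,\nu}$. For the denominator, $\nu'\ge M^{-1}\nu$ gives $\|g\|_{p,\nu'}\ge M^{-1/p}\|f-c\|_{p,\nu}$, and here I would invoke the elementary fact that the $\nu$-average is within a factor $2$ of the best constant $L^p(\nu)$-approximation: $\big\|f-\dashint_Z f\,d\nu\big\|_{p,\nu}\le 2\|f-c\|_{p,\nu}$ for every constant $c$, which follows from the triangle inequality together with Jensen's inequality (with the obvious reading when $p=\infty$). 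Since $\dashint_Z f\,d\nu=0$, this says $\|f-c\|_{p,\nu}\ge\tfrac12\|f\|_{p,\nu}$, so $\|g\|_{p,\nu'}\ge\tfrac12 M^{-1/p}\|f\|_{p,\nu}>0$. Combining,
\[
\frac{\|\nabla_a g\|_{p,\nu'}}{\|g\|_{p,\nu'}}\ \le\ 2M^{2/p}\,\frac{\|\nabla_a f\|_{p,\nu}}{\|f\|_{p,\nu}},
\]
and taking the infimum over admissible $f$ gives $h^p_a(Z')\le 2M^{2/p}h^p_a(Z)$.

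I expect the only genuine subtlety to be this denominator estimate in the second assertion: one cannot compare $\|g\|_{p,\nu'}$ directly with $\|f\|_{p,\nu}$ because passing from $\nu$ to $\nu'$ changes which constant makes the average vanish, and the factor-$2$ lemma is precisely what absorbs that mismatch; one just has to keep the two factors $M^{\pm1/p}$ from the measure comparison on the correct sides so that they multiply rather than cancel. The first assertion is essentially immediate once the ball inclusion is observed.
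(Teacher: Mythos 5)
Your proof is correct and follows essentially the same approach as the paper's: part (i) is the immediate ball-inclusion observation, and part (ii) hinges on exactly the quasi-minimality of the mean (the factor-$2$ inequality, which is the paper's Lemma~\ref{lem:average/energymin}, eq.~\eqref{eq:mean-quasi-min}) to control the change of centering constant when passing from $\nu$ to $\nu'$. The only cosmetic difference is that you construct the shifted test function $g=f-c$ explicitly, while the paper works directly with the ratio $\|\nabla_a f\|_{p,\nu'}/\|f-m\|_{p,\nu'}$; the estimates are identical.
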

\begin{proof}
Part (i) is immediate. For part (ii), let $f:X\to \R$ be a measurable function such that $\dashint f d\nu=0$ and let $m=\dashint fd\nu'$. We see that
$$\|f\|_{p,\nu} \leq 2\|f-m\|_{p,\nu}\leq 2M^{1/p}\|f-m\|_{p,\nu'}$$
The first inequality above is the $C=-m$ case of inequality \eqref{eq:mean-quasi-min} proved in Lemma \ref{lem:average/energymin}. 
On the other hand $$\|\nabla f\|_{p,\nu'}\leq M^{1/p}\|\nabla f\|_{p,\nu},$$
so we are done.
\end{proof}

To obtain a sensible definition of the $L^p$-Poincar\'{e} constant it is necessary to only consider functions whose average is zero and to choose a notion of gradient. In both cases there are multiple ways to do this.

\subsection{Choice of average}\label{sec:choiceav}
Given a measure space $(Z,\nu)$ with finite positive measure, there are multiple ways to define the ``average'' of a measurable function $f:(Z,\nu)\to\R$:
\begin{enumerate}
	\item the \textbf{average} $f_Z=\dashint_Z f d\nu$,
	\item a \textbf{median} $m_f$: any value such that $\nu(\set{f<m_f})\leq\nu(Z)/2$ and $\nu(\set{f>m_f})\leq\nu(Z)/2$,
	\item a $p$\textbf{-energy minimizer}: any value $c_p$ such that $\inf_c\norm{f-c}_p$ is attained for $c=c_p$.
\end{enumerate}

There is a simple comparison between the average and any energy minimizer, so choosing (1) or (3) gives comparable Poincar\'e constants.

\begin{lemma}\label{lem:average/energymin} Let $(Z,\nu)$ be a measure space with finite positive measure, and let $f:(Z,\nu)\to\R$ be a measurable function. For every $p\in[1,\infty)$ we have $\norm{f-c_p}_p\leq \norm{f-f_Z}_p\leq 2\norm{f-c_p}_p$.
\end{lemma}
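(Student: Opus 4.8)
The plan is to prove the two inequalities separately. The left-hand inequality $\norm{f-c_p}_p \le \norm{f-f_Z}_p$ is immediate from the definition of a $p$-energy minimizer: $c_p$ realizes $\inf_c \norm{f-c}_p$, and $f_Z$ is a competitor, so $\norm{f-c_p}_p = \inf_c\norm{f-c}_p \le \norm{f-f_Z}_p$. For this direction I do not even need to know that the infimum is attained; if one prefers, one can take $c_p$ to be any near-minimizer, but attainment is standard by a convexity/coercivity argument (the map $c\mapsto\norm{f-c}_p$ is continuous and tends to $\infty$ as $\abs{c}\to\infty$ since $\nu(Z)>0$ is finite, hence attains its minimum).

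For the right-hand inequality $\norm{f-f_Z}_p \le 2\norm{f-c_p}_p$, the key observation is that $f_Z$ is the average of $f$, so $f-f_Z$ has average zero, i.e. $\dashint_Z (f - f_Z)\,d\nu = 0$; equivalently $f_Z - c = \dashint_Z (f-c)\,d\nu$ for any constant $c$. First I would estimate, using Jensen's inequality (or Hölder with the normalized measure $\nu/\nu(Z)$), that for any constant $c$,
\[
 \abs{f_Z - c}^p = \abs{\dashint_Z (f-c)\,d\nu}^p \le \dashint_Z \abs{f-c}^p\,d\nu = \frac{1}{\nu(Z)}\norm{f-c}_p^p,
\]
hence $\norm{f_Z - c}_p = \nu(Z)^{1/p}\abs{f_Z-c} \le \norm{f-c}_p$ (this is exactly the estimate quoted as \eqref{eq:mean-quasi-min} in the proof of Lemma~\ref{lem:PCbasics}). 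Then I would apply the triangle inequality in $L^p(Z,\nu)$ with $c = c_p$:
\[
 \norm{f - f_Z}_p \le \norm{f - c_p}_p + \norm{c_p - f_Z}_p \le \norm{f-c_p}_p + \norm{f - c_p}_p = 2\norm{f-c_p}_p.
\]

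There is essentially no hard step here — the only mild subtlety is making sure the integrals are finite, which holds since $f\in L^p(Z,\nu)$ with $\nu$ a finite measure (so constants are in $L^p$ too and $f_Z$ is well-defined), and noting that the same inequality $\norm{f_Z-c}_p\le\norm{f-c}_p$ is the one that gets reused elsewhere in the paper, so it is worth isolating. I would remark that the argument in fact shows $\abs{f_Z - c_p} \le \norm{f-c_p}_p/\nu(Z)^{1/p}$, quantifying how close the average is to any energy minimizer, and that the constant $2$ is not optimal but suffices for all the applications.
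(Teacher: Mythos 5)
Your proof is correct and takes essentially the same route as the paper: the left inequality is immediate from the definition of $c_p$, and the right inequality combines the triangle inequality with the estimate $\norm{f_Z-c}_p\leq\norm{f-c}_p$. The paper derives that last estimate via H\"older's inequality while you use Jensen's inequality on the normalized measure; these are equivalent and the proofs are otherwise identical.
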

\begin{proof} For any $C \in \R$ we have
\begin{equation}\label{eq:mean-quasi-min}
  \begin{split}
    \|f-f_Z\|_p & \leq \| f+C \|_p + \|C+f_Z \|_p
    \\ & = \|f+C\|_p + \nu(Z)^{1/p} \left| C+ \frac{1}{\nu(Z)}\int_Z f(z) d\nu(z) \right| \\
    & \leq \|f+C\|_p + \nu(Z)^{-1+1/p} \int_Z |C+f(z)| d\nu(z)
    \\ & \leq \|f+C\|_p + \nu(Z)^{-1+1/p} \| C+f \|_p \|1\|_{p/(p-1)}
    \\ & = 2 \|f+C\|_p.
  \end{split}
\end{equation}
In addition, if $C=-c_p$, then $\|f-c_p\|_p\leq \|f-f_Z\|_p$ by definition.
\end{proof}

In the case of $p=1$, this lemma combines with the following to show that taking either averages or medians will yield comparable Poincar\'{e} constants.

\begin{lemma}\label{lem:median-minimizer}
Let $(Z,\nu)$ be a measure space with finite positive measure $\nu$ and let $f:Z\to\R$ be a measurable function. Then a value $c$ is a $1$-energy minimizer $c_1$ of $f$ if and only if it is a median $m_f$.
\end{lemma}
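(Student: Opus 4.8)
The plan is to show both implications by a direct one-variable convexity argument applied to the function $g(c) = \|f-c\|_1 = \int_Z |f(z)-c|\, d\nu(z)$. First I would record that $g$ is a finite (since $f \in L^1$), convex, real-valued function on $\R$ that tends to $+\infty$ as $|c|\to\infty$; hence the set of $1$-energy minimizers is a nonempty closed bounded interval $[c^-, c^+]$ (possibly a point), and a value $c$ minimizes $g$ if and only if $0$ lies between the one-sided derivatives $g'_-(c)$ and $g'_+(c)$. The key computation is the formula for these one-sided derivatives: differentiating under the integral sign (justified by dominated convergence, since the difference quotients are bounded by $1$),
\[
 g'_+(c) = \nu(\{f < c\}) - \nu(\{f > c\}), \qquad g'_-(c) = \nu(\{f \le c\}) - \nu(\{f \ge c\}).
\]
Writing $\nu(Z) = \nu(\{f<c\}) + \nu(\{f=c\}) + \nu(\{f>c\})$, the condition $g'_-(c) \le 0 \le g'_+(c)$ rearranges exactly to
\[
 \nu(\{f < c\}) \le \tfrac{1}{2}\nu(Z) \quad\text{and}\quad \nu(\{f > c\}) \le \tfrac{1}{2}\nu(Z),
\]
which is precisely the definition of a median. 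This gives both directions simultaneously: $c$ is a minimizer iff $c$ is a median.

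Alternatively, to avoid invoking differentiation under the integral, I could argue more elementarily: suppose $c$ is a median and $c' > c$ is arbitrary; then for the integrand one checks pointwise that $|f(z)-c'| - |f(z)-c| \ge (c-c')\cdot\mathbf{1}_{\{f \le c\}}(z) + (c'-c)\cdot\mathbf{1}_{\{f > c\}}(z)$ up to handling the middle region, and integrating gives $g(c') - g(c) \ge (c'-c)\big(\nu(\{f>c\}) - \nu(\{f\le c\})\big)$; since $c$ is a median, $\nu(\{f > c\}) \le \nu(Z)/2 \le \nu(\{f \le c\})$, so the right side is $\ge 0$. A symmetric estimate handles $c' < c$, proving $c$ minimizes $g$. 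Conversely, if $c$ is \emph{not} a median, say $\nu(\{f < c\}) > \nu(Z)/2$, then moving slightly to the left of $c$ strictly decreases $g$ by the same kind of estimate, so $c$ is not a minimizer; the case $\nu(\{f>c\}) > \nu(Z)/2$ is symmetric.

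The only genuinely delicate point is the careful bookkeeping of the set $\{f = c\}$, which has positive measure exactly when the median/minimizer interval degenerates or when $c$ is an endpoint of it; the inequalities above must be written so that mass on $\{f=c\}$ is distributed correctly between the $\le$ and $\ge$ (or $<$ and $>$) sides. Handling this cleanly is what makes the derivative formulation attractive, since there the two one-sided derivatives automatically split $\{f=c\}$ the two possible ways. I would present the derivative argument as the main proof, noting that convexity of $g$ and the standard characterization of minima via one-sided derivatives are all that is needed beyond the elementary computation of $g'_\pm$.
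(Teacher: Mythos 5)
Your main route---convexity of $g(c)=\|f-c\|_1$ plus the characterization of minima of a convex function via one-sided derivatives---is a genuinely different and arguably cleaner framework than the paper's: the paper instead computes the finite difference $\|f-c'\|_1-\|f-c\|_1$ explicitly via a two-term identity, takes the limit $c'\to c$ by hand, and then repeats the whole argument with $f$ replaced by $-f$; the convexity language lets you handle both sides at once. Your secondary ``elementary'' sketch is essentially the paper's argument.

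However, your formulas for $g'_\pm$ are swapped, and as written they would fail. The correct one-sided derivatives are
\[
g'_+(c)=\nu(\{f\le c\})-\nu(\{f>c\}),\qquad g'_-(c)=\nu(\{f<c\})-\nu(\{f\ge c\}).
\]
Heuristically: as $c$ increases past a value taken by $f$, the integrand $|f(z)-c|$ at those points begins to increase at rate $+1$, so mass on $\{f=c\}$ contributes positively to $g'_+$; symmetrically it contributes negatively to $g'_-$. With your signs one gets $g'_+(c)-g'_-(c)=-2\nu(\{f=c\})\le 0$, which contradicts the convexity you are relying on whenever $\{f=c\}$ has positive mass, and the equivalence with the median condition breaks down in concrete examples. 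For instance, take $Z$ with three unit atoms and $f=(0,0,1)$: $c=0$ is a median and the minimizer, with $g'_-(0)=-3$ and $g'_+(0)=+1$, but your formula gives $g'_+(0)=-1<0$, which would falsely say $c=0$ is not a minimizer. With the corrected formulas, $g'_-(c)\le 0$ rearranges exactly to $\nu(\{f<c\})\le\tfrac12\nu(Z)$ and $g'_+(c)\ge 0$ to $\nu(\{f>c\})\le\tfrac12\nu(Z)$, which is the median condition, and the argument closes. So the plan is sound and a nice alternative to the paper's proof, but the central formula needs to be corrected before it works.
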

\begin{proof}
	For $c'>c$, a calculation gives:
	\begin{multline}\label{eq:median-1}
		\|f-c'\|_1-\|f-c\|_1 = (c'-c)\big(\nu(\{f\leq c\})-\nu(\{f\geq c'\})\big) \\ +\int_{\{c<f<c'\}}(c+c'-2f)d\nu.
	\end{multline}
	
	If $c$ minimizes $\|f-c\|_1$, \eqref{eq:median-1} gives
	\[
		0 \leq 
		(c'-c)\big(\nu(\{f\leq c\})-\nu(\{f\geq c'\})\big)+(c'-c)\nu(\{c<f<c'\}).
	\]
	Letting $c' \ra c$, we get $\nu(\{f>c\}) \leq \nu(\{f\leq c\})$.
	The same argument applied to $-f$ gives $\nu(\{f<c\}) \leq \nu(\{f \geq c\})$,
	so $c$ is a median of $f$.

	Conversely, if $c$ is a median for $f$, \eqref{eq:median-1} gives
	\begin{align*}
		& \|f-c'\|_1-\|f-c\|_1 
		\\ & \quad = (c'-c)\left(\nu(\{f\leq c\})-\nu(\{f> c\})\right)
		\\ & \quad \qquad +(c'-c)\nu(\{c<f<c'\})
		+\int_{\{c<f<c'\}}(c+c'-2f)d\nu.
		\\ & \quad \geq (c'-c)(\tfrac{1}{2}\nu(Z) - \tfrac{1}{2}\nu(Z)) + \int_{\{c<f<c'\}}(2c'-2f) \geq 0,
	\end{align*}
	so increasing $c$ cannot lower $\|f-c\|_1$.  The same argument applied to $-f$
	gives that the median $c$ is also a minimizer for $\|f-c\|_1$.
\end{proof}

\begin{remark}\label{rmk:eval-comparison}
	For $\Gamma$ a finite graph of constant degree $d$, $\lambda_{1,p}(\Gamma)$, the first eigenvalue of the $p$-Laplacian on $\Gamma$, may be calculated to be the infimum of $\left(\sum_{xy\in E\Gamma} |f(x)-f(y)|^p\right)/ \left(\sum_{x\in V\Gamma} |f(x)-c_p(f)|^p d\right)$ over all non-constant $f$ with $c_p(f)$ the energy minimizer of $f$ (see \cite{Bou-12-lp-fixed-point}).  Thus by Lemma~\ref{lem:average/energymin} we have $\lambda_{1,p}(\Gamma) \asymp h^p(\Gamma)^{1/p}$.
\end{remark}

\subsection{Comparison with Lipschitz gradient}\label{sec:choicegrad}
Classical Poincar\'e inequalities on balls in $\R^n$ involve the $L^p$-norms of the usual gradient vector $\nabla f$.
For general metric spaces this makes no sense, but it is possible to define an analogue of the point-wise norm $|\nabla f|$.
Given this, one can define what it means for a metric measure space to satisfy a Poincar\'e inequality in this infinitesimal sense (see Section~\ref{sec:hyp-PI}).  
Let $(Z,d,\nu)$ be a metric measure space with finite (positive) measure. We define the Lipschitz gradient to be
\[
\Lip_x(f)= \limsup_{h\to 0} \sup_{y\in B(x,h)} \frac{\abs{f(x)-f(y)}}{h}.
\]
Given a metric space $(Z,d)$ we can define
\[
\hLip^p(Z) = \inf\frac{\norm{\Lip_x(f)}_p}{\norm{f}_p}
\]
where the infimum is taken over all non-constant Lipschitz functions $f:Z\to\R$ with average $0$.

Following \S 10.2 and \S 10.3 from \cite{Tes-Sobolev-ineq}, one can show that---under suitable assumptions on a metric measure space---the Poincar\'{e} constant relative to the Lipschitz norm (for Lipschitz functions) is equivalent to the Poincar\'{e} constant with respect to the gradient at some fixed scale $\alpha>0$.  A closely related result appears in \cite{Keith-Rajala}.

Here, we will focus on one direction (the only one required in the paper, namely in the proof of Theorem \ref{thm:hyp-PI-bdry}) which relies solely on a bounded packing assumption:
\begin{proposition}\label{prop:UpperGradToLargeScale}
Let $(Z,d,\nu)$ be a metric measure space with finite measure $\geq 1$, let $a>0$ and let $C \geq 1$.
Assume that for all $x\in Z$, $\nu(B(x,2a)) \leq C \nu(B(x,a/2))$.
Then, $$\hLip^p(Z)\preceq_{C,a,p}  h_{a}^p(Z).$$
\end{proposition}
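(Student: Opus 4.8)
The plan is: take $f:Z\to\R$ with $\dashint_Z f\,d\nu=0$ nearly realising $h_a^p(Z)$, replace it by a Lipschitz function $g$ obtained as a local weighted average of $f$ at scale $\asymp a$, and show that $g$ is a competitor for $\hLip^p(Z)$ with $\norm{\Lip_\cdot(g)}_p\lesssim_{C,a,p}\norm{\nabla_a f}_p$ while the recentred $g$ keeps $L^p$-norm comparable to $\norm{f}_p$. Two harmless reductions first: by truncating $f$ at height $T$ and recentring — truncation is $1$-Lipschitz so does not increase $\abs{\nabla_a f}$ pointwise, and the change of average is negligible by Lemma~\ref{lem:average/energymin} — we may assume $f$ is bounded, which is what makes the averaged function genuinely Lipschitz rather than only locally so; and we may assume $\nu$ has full support in $Z$, so every ball of positive radius in $Z$ has positive finite measure (indeed the hypothesis forces a ball of measure zero to lie outside $\supp\nu$).

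For the smoothing, fix the cutoff $\psi:[0,\infty)\to[0,1]$ with $\psi\equiv 1$ on $[0,a/2]$, $\psi$ affine on $[a/2,3a/4]$, and $\psi\equiv 0$ on $[3a/4,\infty)$, so $\psi$ is $(4/a)$-Lipschitz; set $w_x(y)=\psi(d(x,y))$, $W(x)=\int_Z w_x\,d\nu$ and $g(x)=W(x)^{-1}\int_Z f\,w_x\,d\nu$. Then $\nu(B(x,a/2))\le W(x)\le \nu(B(x,3a/4))$, and $W(x')\to W(x)$ as $x'\to x$ because $\abs{W(x)-W(x')}\le \tfrac4a d(x,x')\,\nu(B(x,3a/4+d(x,x')))$ with $\nu$ locally finite. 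Writing $g(x)-g(x')=\int(f-c)\bigl(\tfrac{w_x}{W(x)}-\tfrac{w_{x'}}{W(x')}\bigr)d\nu$ for an arbitrary constant $c$, estimating $\abs{\tfrac{w_x}{W(x)}-\tfrac{w_{x'}}{W(x')}}$ via the Lipschitz bound on $\psi$ and the support condition, and letting $d(x,x')\to0$, one gets for each fixed $x$
\[
 \Lip_x(g)\ \le\ \frac{4}{a}\Bigl(\frac{1}{W(x)}+\frac{\nu(B(x,3a/4))}{W(x)^2}\Bigr)\int_{B(x,3a/4)}\abs{f-c}\,d\nu .
\]
Now $\nu(B(x,3a/4))\le\nu(B(x,2a))\le C\nu(B(x,a/2))\le C\,W(x)$, hence $1/W(x)\le C/\nu(B(x,3a/4))$, giving $\Lip_x(g)\le (C_1/a)\dashint_{B(x,3a/4)}\abs{f-c}\,d\nu$ with $C_1=C_1(C)$. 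Taking $c=\dashint_{B(x,a/4)}f\,d\nu$ and using that $d(y,z)\le a$ whenever $y\in B(x,3a/4)$ and $z\in B(x,a/4)$, so that $\abs{f(y)-c}\le\dashint_{B(x,a/4)}\abs{f(y)-f(z)}\,d\nu(z)\le\abs{\nabla_a f}(y)$ for a.e.\ $y$, we conclude $\Lip_x(g)\le \tfrac{C_1}{a}\dashint_{B(x,3a/4)}\abs{\nabla_a f}\,d\nu$.

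To pass to $L^p$, raise to the power $p$, apply Jensen on the probability measure $\nu(B(x,3a/4))^{-1}\mathbf{1}_{B(x,3a/4)}d\nu$, and use Fubini:
\[
 \norm{\Lip_\cdot(g)}_p^p\ \le\ \Bigl(\frac{C_1}{a}\Bigr)^{p}\int_Z \abs{\nabla_a f}(y)^p\Bigl(\int_{B(y,3a/4)}\frac{d\nu(x)}{\nu(B(x,3a/4))}\Bigr)d\nu(y);
\]
for $x\in B(y,3a/4)$ one has $B(y,3a/4)\subseteq B(x,3a/2)\subseteq B(x,2a)$, so $\nu(B(y,3a/4))\le C\nu(B(x,a/2))\le C\nu(B(x,3a/4))$ and the inner integral is $\le C$, whence $\norm{\Lip_\cdot(g)}_p\lesssim_{C,a,p}\norm{\nabla_a f}_p$. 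For the denominator, the zeroth-order bound $\abs{f(y)-g(y)}\le\dashint_{B(y,3a/4)}\abs{f(y)-f(z)}\,d\nu(z)\le\abs{\nabla_a f}(y)$ (a.e.) gives $\norm{f-g}_p\le\norm{\nabla_a f}_p$; replacing $g$ by $\tilde g=g-\dashint_Z g$ (which does not change $\Lip_\cdot$) and noting $\abs{\dashint_Z g}\,\nu(Z)^{1/p}=\abs{\dashint_Z(g-f)}\,\nu(Z)^{1/p}\le\norm{g-f}_p$, we get $\norm{\tilde g}_p\ge\norm{f}_p-2\norm{\nabla_a f}_p$. Normalising $\norm{f}_p=1$ and choosing $\norm{\nabla_a f}_p$ close to $h_a^p(Z)$: if $h_a^p(Z)<\tfrac14$ then $\norm{\tilde g}_p\ge\tfrac12$, so $\hLip^p(Z)\le \norm{\Lip_\cdot(\tilde g)}_p/\norm{\tilde g}_p\lesssim_{C,a,p}\norm{\nabla_a f}_p$, and letting $\norm{\nabla_a f}_p\downarrow h_a^p(Z)$ closes this case. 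In the complementary regime $h_a^p(Z)\ge\tfrac14$ one only needs the far softer statement $\hLip^p(Z)\lesssim_{C,a,p}1$, which is an elementary direct estimate (feed a fixed bounded test function through the same averaging, or use a truncated distance function); I would treat this as a routine separate case.

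The main obstacle is the scale bookkeeping in the smoothing step. The hypothesis only gives doubling across the single window $[a/2,2a]$, and a naive linear bump of radius $a$ unavoidably produces a ball of radius $a/4$ (from the normalising integral $W(x)$), a ball of radius $2a$ or $3a$ (from the Fubini comparison), or the oscillation $\abs{\nabla_{2a}f}$ rather than $\abs{\nabla_a f}$ — all escaping that window or spoiling the target. The choice above is engineered precisely so that only the radii $a/2\le 3a/4\le 3a/2\le 2a$ occur: the cutoff is \emph{flat} on $B(x,a/2)$ and supported in $B(x,3a/4)$ (so $W(x)\asymp_C\nu(B(x,a/2))$ and the Fubini ball has radius $3a/2$), while the inner averaging radius $a/4$ makes $y\in B(x,3a/4)$, $z\in B(x,a/4)$ satisfy $d(y,z)\le a$ exactly, yielding $\abs{\nabla_a f}$ on the nose. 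Getting this trade-off to close — together with the (routine) verification that the smoothed function is honestly Lipschitz once $f$ is bounded — is the technical heart of the proof.
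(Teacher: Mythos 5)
Your proof is correct and takes essentially the same route as the paper's (Lemma~\ref{lem:Pconvolution} combined with the kernel $\theta(x,y)=d(y,B(x,a)^c)$): both arguments mollify the near-optimal test function by a compactly supported Lipschitz bump at scale $\asymp a$, control the Lipschitz constant of the result by $|\nabla_a f|$ (the paper records a pointwise bound $\Lip_x(Pf)\preceq_C|\nabla_a f|(x)$ while your trapezoidal cutoff gives the averaged bound $\Lip_x(g)\lesssim\dashint_{B(x,3a/4)}|\nabla_a f|\,d\nu$, which you upgrade by Jensen and Fubini), and then show by a triangle-inequality argument that the recentred mollified function keeps $L^p$-norm comparable to $\|f\|_p$, treating the regime where $h_a^p(Z)$ is bounded below separately. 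One small slip: the inequality $|f(y)-g(y)|\le\dashint_{B(y,3a/4)}|f(y)-f(z)|\,d\nu(z)$ should carry the additional factor $\nu(B(y,3a/4))/W(y)\le C$, so the case-split threshold is $\tfrac{1}{4C}$ rather than $\tfrac14$, which does not affect the argument.
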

\begin{proof}
We first need the following lemma:
\begin{lemma}\label{lem:Pconvolution}
Assume $h_{a}^p(Z)\leq 1/8$.
Let $(P_x)_{x}$ be a family of probability measures on $Z$, such that $P_x$ is supported in $B(x,a)$ for every $x\in Z$.
Then there exists $f\in L^{\infty}$ such that 
$$\frac{\|\nabla_{a}f\|_{p}}{\|Pf- (Pf)_Z\|_p}\leq 4h^p_{a}(Z),$$
where $Pf(x):=\int f dP_x.$
\end{lemma}

\begin{proof}
We start with $f$ with average $0$, $f_Z=0$, such that 
$$\frac{\|\nabla_{a}f\|_{p}}{\|f\|_p}\leq 2h_{a}^p(Z)\leq \frac14.$$
Observe that 
$$\|f-Pf\|_p\leq \|\nabla_{a} f\|_p\leq \tfrac14 \|f\|_p,$$
from which we deduce that 
$$|(Pf)_Z|=\left|\dashint_Z Pf\right|=\left|\dashint_Z Pf-f\right|
\leq \frac{\|f-Pf\|_p}{\nu(Z)^{1/p}}
\leq \frac{\|f\|_p}{4\nu(Z)^{1/p}}.$$
So $\|(Pf)_Z\|_p \leq \frac14 \|f\|_p$ and then we deduce by the triangle inequality that 
\[
\|Pf- (Pf)_Z \|_p\geq \frac{\|f\|_p}{2}. \qedhere
\]
\end{proof}
The rest of the proof of the proposition is similar to that of \cite[Theorem 10.9]{Tes-Sobolev-ineq}. For the convenience of the reader we sketch it. Define a $1$-Lipschitz map $\theta: Z\times Z\to \R_+$ by
$\theta(x,y)=d(y,B(x,a)^c)$. 
For $U \subset Z$ write
$$P_x(U)=\int_U \frac{\theta(x,y)}{K(x)} d\nu(y),$$ where
$K(x)=\int_{B(x,a)}\theta(x,z)d\nu(z)$.
Note that $K(x)\asymp_C \nu(B(x,a))$, and that by assumption, $\nu(B(x,a))\asymp_C \nu(B(y,a))$ as soon as $d(x,y) < a$.
Since $\theta$ is $1$-Lipschitz with respect to $x$, we see that for all $f\in L^{\infty}(Z)$,
$$\mathrm{\Lip}_x(Pf)\preceq_C |\nabla_{a} f|.$$
Note that if $h_{a}^p(Z)> \frac18$, then the statement of the proposition follows trivially. Hence we can assume that $h_{a}^p(X)\leq \frac18$.
By Lemma \ref{lem:Pconvolution} we deduce that there exists some function $f$ such that 
$$\frac{\|\mathrm{\Lip}_x(Pf)\|_p}{\|Pf-(Pf)_Z\|_p}\preceq_C h_{a}^p(Z).$$ 
Hence the proposition follows.
\end{proof}


\section{Poincar\'e profiles for metric measure spaces}\label{sec:poinc-profiles}

Our goal in this section is to generalise the Poincar\'{e} profile to the class of metric measure spaces defined in Section \ref{sec:noteframe}.

\begin{definition}\label{mmspPoincare}
Let $(X,d,\mu)$ be a metric measure space satisfying our standing assumptions, and fix some number $a\geq 2$. 
We define the \textbf{$L^p$-Poincar\'e profile} $\Lambda^p_{X,a}(r)$ of $X$ at scale $a$ to be the supremum of $\mu(A)h^p_{a}(A)$ over all subspaces $A \subset X$ satisfying $\mu(A)\leq r$. If no such subspace exists, define $\Lambda^p_{X,a}(r)=0$.
\end{definition}
Recall that by assumption, we only consider $1$-thick subsets of $X$ to be subspaces.

\begin{remark}\label{rem:NeumannDirich} As mentioned in the introduction, strictly speaking, we have defined the $L^p$-Neumann-Poincar\'{e} profile. We could alternatively define $L^p$-Dirichlet-Poincar\'{e} profiles using Dirichlet-Poincar\'{e} inequalities (considering the infimum over all functions which are $0$ on the boundary of $\Gamma$ in $X$, rather than those which have average $0$). As defined above, the monotone coarse invariant we obtain detects only if the space has infinite diameter. A small modification to the definition (taking the infimum of $\mu(A)h^p_{a}(A)$ over all subspaces $A \subset X$ satisfying $\mu(A)\geq r$) yields a coarse invariant (it is not even monotone under quasi--isometric embeddings) which detects isoperimetry (and in particular, F\o lner amenability) in the case $p=1$. Dirichlet-type Poincar\'e inequalities were introduced in \cite[Section 7.2]{Coul_randwalks} where they are called Sobolev inequalities (see also \cite{Tes-Sobolev-ineq} for a related notion of $L^p$-isoperimetric profile). They have been extensive studied in the cases $p=1$, where they are equivalent to isoperimetric inequalities, and $p=2$, where they govern the asymptotic behaviour of the probability of return of the simple symmetric random walk.
\end{remark}

We first prove that the Poincar\'e profile does not actually depend on the choice of $a$.

\begin{proposition}\label{prop:PoincareIndependant}
Assume that  $(Z,d,\nu)$ is a finite metric measure space. Then for all $a\geq 2$ and all $p\in[1,\infty)$ we have  
$$h^p_{a}(Z)\asymp_{a} h^p_{2}(Z).$$
\end{proposition}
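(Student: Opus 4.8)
The statement is a two–sided comparison. The easy half is $h^p_2(Z)\le h^p_a(Z)$: since $a\ge 2$ we have $B(x,2)\subseteq B(x,a)$ for every $x$, hence $|\nabla_2 f|\le|\nabla_a f|$ pointwise, so $\|\nabla_2 f\|_p\le\|\nabla_a f\|_p$ for every admissible $f$ and the infima defining the two Poincar\'e constants are ordered accordingly (equivalently this is Lemma~\ref{lem:PCbasics}(i) with $\theta(t)=(a/2)t$). So the content is the reverse inequality $h^p_a(Z)\lesssim_a h^p_2(Z)$, and my plan is to prove it by a telescoping estimate for $|\nabla_a f|$ followed by a covering argument, in the spirit of \S10 of \cite{Tes-Sobolev-ineq}.

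Fix $f\in L^p(Z,\nu)$ with $f_Z=0$, $f\not\equiv0$. Given $x$ and $y,y'\in B(x,a)$, since $Z$ carries the induced $1$-distance it is $1$-geodesic on the $1$-connected component of $x$, so one can join $y$ to $y'$ by a $1$-chain $y=z_0,z_1,\dots,z_n=y'$ with $d(z_{i-1},z_i)\le1$ realising $d(y,y')\le 2a$; deleting points until no two consecutive steps have combined length $\le1$ forces $n\le 4a$ (each consecutive pair sums to more than $1$, and the sum of these pair-sums overcounts the chain length at most twice, i.e.\ is $\le 2d(y,y')\le 4a$), while keeping all $z_i$ inside $B(x,3a+1)$. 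Telescoping and using $|f(z_{i-1})-f(z_i)|\le|\nabla_1 f|(z_{i-1})$ gives the pointwise bound
\[
|\nabla_a f|(x)\ \le\ 4a\,\sup_{z\in B(x,3a+1)}|\nabla_1 f|(z).
\]

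To pass from this supremum back to an $L^p$ quantity involving $|\nabla_2 f|$ I would fix a maximal $1$-separated net $\cN$ in $Z$ and, for each $q\in\cN$, a point $w_q\in B(q,1)$ almost realising $\sup_{w\in B(q,1)}|\nabla_1 f|(w)$. Since $B(w_q,1)\subseteq B(w,2)$ for every $w\in B(w_q,1)$, we have $|\nabla_2 f|(w)\ge|\nabla_1 f|(w_q)$ there; integrating over $B(w_q,1)$, using the uniform lower bound $\nu(B(\cdot,1))\ge c_0>0$ (bounded geometry) and $B(w_q,1)\subseteq B(q,2)$ yields $\sup_{w\in B(q,1)}|\nabla_1 f|(w)^p\le c_0^{-1}\int_{B(q,2)}|\nabla_2 f|^p\,d\nu$. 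Covering $B(x,3a+1)$ by the balls $B(q,1)$ with $q\in\cN\cap B(x,3a+2)$, raising the previous display to the $p$-th power, integrating in $x$ and swapping the order of summation gives
\[
\|\nabla_a f\|_p^p\ \le\ (4a)^p c_0^{-1}\sum_{q\in\cN}\nu\big(B(q,3a+2)\big)\int_{B(q,2)}|\nabla_2 f|^p\,d\nu\ \le\ C(a)\,\|\nabla_2 f\|_p^p,
\]
where $\nu(B(q,3a+2))\le C(a)$ uniformly (iterate bounded packing from the uniform upper bound on $\nu(B(\cdot,1))$) and each point of $Z$ lies in boundedly many of the balls $B(q,2)$ (bounded packing again). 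Dividing by $\|f\|_p^p$ and taking the infimum over admissible $f$ gives $h^p_a(Z)\le C(a)^{1/p}h^p_2(Z)$, and $C(a)^{1/p}$ stays bounded as $p\to\infty$, so this is exactly $h^p_a(Z)\lesssim_a h^p_2(Z)$; together with the first paragraph this proves $h^p_a(Z)\asymp_a h^p_2(Z)$.

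I expect the delicate point to be this last $L^p$-conversion. The harmless-looking inequality $|\nabla_a f|(x)\le|\nabla_{3a+2}f|(x)$ is useless because it raises the scale, so one genuinely has to replace a supremum of $|\nabla_1 f|$ over a ball of radius $\sim a$ by an honest local $L^p$-integral, which is where bounded packing and the two-sided control on measures of unit balls enter, and where one must be careful to land back at scale $2$ rather than at some larger fixed scale — using $|\nabla_1 f|$ in the chain and enlarging it to $|\nabla_2 f|$ on a unit ball around the maximiser is what achieves this, and it requires only $a\ge2$. A minor point worth recording is that the induced $1$-distance on $Z$ may take the value $\infty$, but the whole argument runs separately on each $1$-connected component, so disconnectedness causes no trouble.
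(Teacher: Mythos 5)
Your easy-direction argument is the same as the paper's, and your telescoping bound $|\nabla_a f|(x)\le 4a\,\sup_{B(x,3a+1)}|\nabla_1 f|$ is essentially the same chaining step the paper uses. Where the two proofs diverge is in converting that supremum into an $L^p$ estimate. The paper bypasses any explicit covering by proving a \emph{level-set} comparison, $\nu(\{|\nabla_a f|\ge t\})\preceq_a\nu(\{|\nabla_2 f|\ge t/5a\})$: if $|\nabla_a f|(z)\ge t$ then, by the chaining argument, some unit ball within distance $\sim a$ of $z$ lies inside $\{|\nabla_2 f|\ge t/5a\}$, so $\{|\nabla_a f|\ge t\}$ is contained in a $2a$-neighbourhood of a $1$-thick subset of $\{|\nabla_2 f|\ge t/5a\}$, and Lemma~\ref{lem:doublingThick} finishes. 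This handles all $p$ at once via the layer-cake formula and uses only the standing bounded-packing hypothesis. Your route — pick $w_q$ nearly maximising $|\nabla_1 f|$ on each ball of a separated net, enlarge to $|\nabla_2 f|$ on $B(w_q,1)$, integrate, and sum — is a genuinely different, more hands-on implementation of the same idea.

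The issue is that your implementation, as written, invokes a uniform lower bound $\nu(B(\cdot,1))\ge c_0>0$ and a uniform upper bound on $\nu(B(\cdot,1))$ (you call this ``bounded geometry''). Neither is part of the paper's standing assumptions: Section~\ref{sec:noteframe} only assumes bounded packing at scales $\ge r_0=1$ and $1$-geodesicity, and ``bounded geometry'' (uniform upper bounds on ball measures) is introduced separately, and only as an extra hypothesis, in Section~\ref{section:measureDimension}. There is a second overreach of the same flavour: bounded multiplicity of the cover $\{B(q,2)\}_{q\in\cN}$ for a maximal $1$-separated net $\cN$ would, by the usual disjoint-balls argument, require packing control at scale $1/2$, which falls below the scale $r_0=1$ at which the doubling inequality is guaranteed. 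Both defects are repairable within the paper's framework: replace the absolute constant $c_0^{-1}$ by $\nu(B(w_q,1))^{-1}$, so the relevant coefficient becomes the \emph{ratio} $\nu(B(q,3a+2))/\nu(B(w_q,1))$, which is $\preceq_a 1$ by iterating bounded packing from scale $1$ (using $d(q,w_q)\le 1$, hence $B(q,3a+2)\subset B(w_q,3a+3)$); and take a maximal $2$- or $3$-separated net so that the balls $B(q,1)$ are disjoint and the multiplicity bound follows from doubling at scales $\ge 1$, at the cost of adjusting the covering radii in the intermediate estimates. With those repairs your proof is correct and gives an alternative to the paper's, but as written it assumes quantitative ball-measure bounds the proposition's hypotheses do not grant.
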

\begin{proof}
We claim that for any $t \geq 0$, 
\begin{equation}\label{eq:levelsets}
\nu\left(\set{|\nabla_{a} f|\geq t}\right) \preceq_{a} \nu\left(\set{|\nabla_{2} f|\geq \frac{t}{5a}}\right),
\end{equation}
and $\nu(\set{|\nabla_{2} f|\geq t})\leq\nu(\set{|\nabla_{a} f|\geq t})$. Together these inequalities immediately imply the proposition.
The second inequality is obvious. Let $z\in Z$, and let $x,y\in B(z,a)$. Then one can easily check that our standing assumption implies that there exists a $1$-path $x=x_0,\ldots, x_n=y$ within $B(z,a)$ such that $n\leq 5a$. By the triangle inequality, this means that for at least one $1\leq i\leq n$, $|f(x_i)-f(x_{i-1})|\geq \frac{1}{5a}|f(x)-f(y)|$. Now for all $z'\in B(x_i,1)$ this implies that $|\nabla_{2} f|(z')\geq \frac{1}{5a}|f(x)-f(y)|$. Hence there is a $1$-thick subset which is $2a$-dense in the set $\{|\nabla_{a} f|\geq t) \}$ on which $|\nabla_{2} f|(z')\geq \frac{t}{5a}$. Thus, the left-hand inequality in (\ref{eq:levelsets}) follows from Lemma \ref{lem:doublingThick}.
\end{proof}

\begin{corollary}\label{cor:ProfileIndependant}
Assume that  $(X,d,\mu)$ satisfies our standing assumptions. Then for all $a,a'\geq 2$ and all $p\in[1,\infty)$ we have  
$$\Lambda^p_{X,a}\asymp_{a,a'} \Lambda^p_{X,a'}.$$
\end{corollary}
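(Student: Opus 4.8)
The plan is to deduce Corollary~\ref{cor:ProfileIndependant} directly from Proposition~\ref{prop:PoincareIndependant} by applying the latter to each subspace $A\subset X$ that appears in the definition of the Poincar\'e profile. The key observation is that a $1$-thick subspace $A$ of $X$, equipped with the induced $1$-distance and the subspace measure, is itself a finite metric measure space satisfying the hypotheses of Proposition~\ref{prop:PoincareIndependant} whenever $\mu(A)\leq r<\infty$; indeed, being $1$-thick and a subspace of a space with bounded packing on scales $\geq 1$, it has finite positive measure, and the induced $1$-distance makes it $1$-geodesic (on each of its connected components, which is all that is used in the proof of the proposition). So for every such $A$ we have $h^p_a(A)\asymp_{a,a'} h^p_{a'}(A)$, with comparison constants depending only on $a$ and $a'$.

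The main point to check is that the comparison constants in Proposition~\ref{prop:PoincareIndependant} are genuinely uniform over all subspaces $A$. Inspecting the proof: the constant $5a$ bounding the length of a $1$-path inside $B(z,a)$ comes only from the standing assumption that $X$ (hence every $1$-thick subspace with the induced $1$-distance) is $1$-geodesic, and the bounded-packing constant invoked via Lemma~\ref{lem:doublingThick} is the constant $K_u$ from the standing assumption, which is uniform over $X$ and hence over all its $1$-thick subspaces by Lemma~\ref{lem:doublingThick}. Thus there is a single constant $C=C(a,a')$ such that $C^{-1}h^p_{a'}(A)\leq h^p_{a}(A)\leq C\,h^p_{a'}(A)$ for every subspace $A\subset X$.

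Given this, the conclusion is immediate: for each $r$,
\[
 \Lambda^p_{X,a}(r)=\sup_{\mu(A)\leq r}\mu(A)h^p_a(A)\leq C\sup_{\mu(A)\leq r}\mu(A)h^p_{a'}(A)=C\,\Lambda^p_{X,a'}(r),
\]
and symmetrically $\Lambda^p_{X,a'}(r)\leq C\,\Lambda^p_{X,a}(r)$; the degenerate case where no admissible subspace exists (both profiles equal $0$) is trivial. Since this holds for every $r$, we get $\Lambda^p_{X,a}\asymp_{a,a'}\Lambda^p_{X,a'}$, which is stronger than the asserted $\simeq_{a,a'}$ and in particular implies it. The only mild obstacle is the bookkeeping needed to confirm that every ingredient in the proof of Proposition~\ref{prop:PoincareIndependant} applies to an arbitrary (possibly disconnected, with some infinite induced distances) $1$-thick subspace with constants independent of the subspace; but this is exactly guaranteed by the standing assumptions and Lemma~\ref{lem:doublingThick}, so no new idea is required.
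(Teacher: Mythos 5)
Your proposal is correct and takes exactly the approach the paper intends: the corollary is stated without proof precisely because it is an immediate consequence of applying Proposition~\ref{prop:PoincareIndependant} (via $h^p_a(A)\asymp_a h^p_2(A)$ and $h^p_{a'}(A)\asymp_{a'} h^p_2(A)$) to each admissible subspace $A$ and then taking the supremum, with the uniformity of constants coming from the standing assumptions on $X$ as you describe. One small misreading: the corollary asserts $\asymp_{a,a'}$, not $\simeq_{a,a'}$, so you are not proving something ``stronger than asserted'' — you are proving precisely what is asserted; this does not affect the validity of the argument.
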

Moreover, by Lemma \ref{lem:PCbasics}, choosing a bi-Lipschitz equivalent metric and/or measure does not affect the $L^p$-Poincar\'e profile $\Lambda^p_{X,a}$ for sufficiently large $a$ (up to $\simeq$). In particular this means that for a compactly generated locally compact group, the $L^p$-Poincar\'e profile does not depend on the choice of Haar measure or on the choice of compact generating set.

In light of Corollary \ref{cor:ProfileIndependant}, we now refer to $\Lambda^p_{X}$ as the $L^p$-Poincar\'{e} profile of $X$, without the need to specify a scale.

\section{Regular maps and large scale equivalence}\label{sec:reg-maps-lseq}

The goal of this section is to prove Theorem \ref{bthm:monotoneunderreg}. Firstly, we formally introduce the notion of a coarse regular map and prove that the definition coincides with regular maps for bounded degree graphs.

\subsection{Regular maps}
In this section we show that Poincar\'{e} profiles are monotone non-decreasing under coarse regular maps. These maps are a natural adaptation of the regular maps considered in \cite{David-Semmes_fractal} to the context of metric measure spaces.

\begin{definition}\label{def:coarseregular}
  A map $F:(X,d,\mu)\to (X',d',\mu')$ is said to be \textbf{coarse regular} if it satisfies the following properties:
\begin{enumerate}[(i)]
\item $F$ is coarse Lipschitz: there exists an increasing function $\rho_+:[0,\infty)\to[0,\infty)$ such that for all $x,y\in X$, 
$$d(F(x),F(y))\leq \rho_+(d(x,y));$$
\item $F$ is coarsely measure preserving: there exists $\delta_0$ such that for all $\delta\geq \delta_0$ and for all ($1$-thick) subspaces $A\subset X$,
 $$\mu([A]_{\delta})\asymp_\delta\mu'([F(A)]_{\delta})\asymp_\delta \mu([F^{-1}(F(A))]_{\delta}).$$
  \end{enumerate}
  The {\it parameters} of $F$ are the constant $\delta_0$ as well as the function $\rho_+$.
\end{definition}

\begin{remark}\label{rem:compositionRegularMaps}
Coarse regular maps between spaces with bounded packing on large scales are stable under composition.
\end{remark}

In applications, coarse regular maps often are embeddings of the following kind.
\begin{definition}\label{def:largescaleEquiv}
%
 A coarse regular map $F:(X,d,\mu)\to (Y,d,\nu)$ is called a \textbf{large-scale embedding} if it is also a coarse embedding; there exists a function $\rho_-$ such that $\lim_{t\to \infty}\rho_-(t)=\infty$ and for all $x,y\in X$, 
 $$\rho_-(d(x,y))\leq d(F(x),F(y)).$$
 If, in addition, $[F(X)]_C=Y$ for some $C\geq 0$ (in other words, if $F$ is a coarse equivalence), then $F$ is called a \textbf{large-scale equivalence}.
 \end{definition}

It is easy to see that the relation ``there exists a large scale equivalence from $X$ to $Y$'' is an equivalence relation among metric measure spaces.

\begin{lemma}\label{lem:regcoarsereg} Let $X,X'$ be simplicial graphs of bounded degree equipped with the shortest path metrics and vertex counting measures. A map $F:VX\to VX'$ is regular in the sense of \cite{BenSchTim-12-separation-graphs} if and only if it is coarse regular as a map between metric measure spaces.
\end{lemma}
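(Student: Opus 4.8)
The plan is to unwind both definitions in the special case of bounded degree simplicial graphs with the shortest path metric and vertex counting measure, and observe that the two sets of conditions match up. Recall that $F\colon VX\to VX'$ is \emph{regular} (in the sense of \cite{BenSchTim-12-separation-graphs}) if there is $K$ with $d'(F(x),F(y))\le K(1+d(x,y))$ and $F^{-1}(B(y,1))$ contained in a union of at most $K$ unit balls; and it is \emph{coarse regular} if it is coarse Lipschitz and coarsely measure preserving as in Definition \ref{def:coarseregular}. The key structural facts we will use repeatedly are that in a bounded degree graph, for each fixed radius $u$ the measure $\mu(B(x,u))$ is bounded above and below by constants depending only on $u$ and the degree bound (so $\mu([A]_u)\asymp_u \mu(A)$, which is also Lemma \ref{lem:doublingThick}), and that a set is a union of at most $K$ unit balls iff it has bounded diameter-type control — more precisely, for subsets of a bounded degree graph, being covered by $N$ unit balls is equivalent, up to changing $N$, to having cardinality at most $N'$ times the cardinality of any maximal $1$-separated subnet.

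For the forward direction, suppose $F$ is regular. The Lipschitz-type bound $d'(F(x),F(y))\le K(1+d(x,y))$ gives coarse Lipschitzness with $\rho_+(t)=K(1+t)$. For the coarsely-measure-preserving condition, fix $\delta\ge\delta_0:=1$ and a $1$-thick subspace $A\subset VX$. Since $A$ is $1$-thick and $X$ has bounded degree, $\mu([A]_\delta)\asymp_\delta \mu(A)$ by Lemma \ref{lem:doublingThick}, and similarly $\mu'([F(A)]_\delta)\asymp_\delta\mu'([F(A)]_1)$ and $\mu([F^{-1}(F(A))]_\delta)\asymp_\delta \mu(F^{-1}(F(A)))$. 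So it suffices to compare $\mu(A)$, $\mu'(F(A))$ and $\mu(F^{-1}(F(A)))$ up to multiplicative constants. The inequality $\mu'(F(A))\le\mu(A)$ is trivial (counting measure, $F$ a function). Conversely, using that each point of $VX'$ has at most $K$ preimage-balls of radius $1$ and $X$ has degree at most $D$, every fibre $F^{-1}(y)$ has cardinality at most $K(D+1)$; hence $\mu(A)\le K(D+1)\,\mu'(F(A))$, and the same bound gives $\mu(F^{-1}(F(A)))\le K(D+1)\,\mu'(F(A))\le K(D+1)\,\mu(A)$. Chaining these yields $\mu([A]_\delta)\asymp_\delta\mu'([F(A)]_\delta)\asymp_\delta\mu([F^{-1}(F(A))]_\delta)$, so $F$ is coarse regular.

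For the reverse direction, suppose $F$ is coarse regular. Coarse Lipschitzness gives $d'(F(x),F(y))\le\rho_+(d(x,y))\le\rho_+(1)(1+d(x,y))$ after enlarging, say, to an affine bound, which is the first half of regularity. For the second half, apply the coarsely measure preserving condition to $A=B(x,1)$ for an arbitrary $x$: it gives $\mu([B(x,1)]_{\delta_0})\asymp_{\delta_0}\mu'([F(B(x,1))]_{\delta_0})$, and since the left side is a constant (bounded degree), $F(B(x,1))$ has uniformly bounded measure, hence uniformly bounded cardinality $N_0$; likewise $\mu([F^{-1}(F(B(x,1)))]_{\delta_0})$ is uniformly bounded, so $F^{-1}(F(x))\subset F^{-1}(F(B(x,1)))$ has cardinality at most some $N_1$. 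Taking instead $A$ a maximal $1$-separated subset of $F^{-1}(B(y,1))$ (which is $1$-thick after thickening) and running the same estimate shows $F^{-1}(B(y,1))$ has cardinality bounded by a uniform constant $N$, and a set of at most $N$ points is trivially contained in a union of at most $N$ balls of radius $1$. Hence $F$ is regular, completing the equivalence.

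The main obstacle, and the only place requiring care, is the second bullet of the reverse direction: translating the measure-theoretic statement ``$\mu([F^{-1}(F(A))]_{\delta_0})\asymp_{\delta_0}\mu(A)$ for all thick $A$'' into the combinatorial statement ``$F^{-1}(B(y,1))$ is covered by boundedly many unit balls''. The point is that one must feed the right family of test sets $A$ into the definition — unit balls to control fibre size, and (thickenings of) preimages of unit balls to control the full $F^{-1}(B(y,1))$ — and invoke Lemma \ref{lem:doublingThick} together with the bounded degree hypothesis to convert cardinalities into measures and back. Everything else is bookkeeping with the bounded degree bound $D$ and the constants $K$, $\delta_0$.
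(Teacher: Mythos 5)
Your proof is correct and follows essentially the same line as the paper: unwind both definitions in the bounded degree graph setting, use the observation that cardinality and measure of uniformly thick sets are comparable (Lemma~\ref{lem:doublingThick}) to obtain the coarse measure-preserving condition from the fibre bound in the forward direction, and apply the coarse measure-preserving condition to suitable $1$-thick test sets in the reverse direction. The only cosmetic difference is that the paper finishes the reverse direction by bounding each fibre $|F^{-1}(x'')|$ and summing over $x''\in B(x',1)$, whereas you bound $|F^{-1}(B(y,1))|$ directly by feeding a thickened $1$-separated net of $F^{-1}(B(y,1))$ into the condition; both routes rest on the same estimate.
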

\begin{proof} Recall that $F$ is regular if there is a constant $K$ such that $d_{X'}(F(x),F(y))\leq K(1+d_X(x,y))$ and for every $y\in VX'$, $F^{-1}(B(y,1))$ is contained in a union of at most $K$ balls of radius $1$ in $X$. The first of these conditions immediately implies that $F$ is coarse Lipschitz in the sense of Definition \ref{def:coarseregular}. It remains to show that $F$ is coarsely measure preserving. 

Since we are working with counting measures, the image of every set of measure $m$ has measure at most $m$ and the pre-image of every set of measure $m$ is contained in a union of at most $Km$ balls of radius $1$ in $X$, so has measure at most $K(\Delta_{X}+1)m$ where $\Delta_{X}$ is the maximal vertex degree of the graph $X$. Since $X$ and $X'$ have bounded degree, $F$ is coarse regular.

Suppose $F$ is coarse regular, then it is $\rho_+(1)$-Lipschitz. Fix some suitable $\delta_0$, let $x'=F(x)\in F(VX)$ and notice that the ($1$-thick) subspace $A=[x]_1$ satisfies 
\[\abs{F^{-1}(x')}\leq\abs{[F^{-1}(F(A))]_{\delta}}\preceq_\delta \abs{[A]_\delta} \leq \abs{[x]_{\delta+1}}\preceq_{\delta, \Delta_X} 1.
\]
Therefore
\[
\abs{F^{-1}(B(x',1))} \leq \sum_{x'' \in B(x',1)} \abs{F^{-1}(x'')} \preceq_{\delta,\Delta_X} \sum_{x'' \in B(x',1)} 1 \preceq_{\Delta_X} 1.
\]
In particular, $\abs{F^{-1}(B(x',1))}$ can be covered by a uniformly bounded number of balls of radius $1$ in $X$. Thus, $F$ is regular.
\end{proof}

The following proposition is the main goal of this section, and will be proved in \S \ref{sec:coarseReg}. It is the natural generalisation of \cite[Lemma 1.3]{BenSchTim-12-separation-graphs} from separation profiles of graphs to metric measure spaces and Poincar\'e profiles.

\begin{proposition}\label{prop:Poincarecoarsereg}
Let $F:X\to X'$ be a coarse regular map between metric measure spaces which satisfy our standing assumptions. Then for all $p\in[1,\infty)$,
$$\Lambda^p_{X}\lesssim_p \Lambda^p_{X'}.$$
\end{proposition}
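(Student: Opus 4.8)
The plan is to pull back any good test function on a subspace of $X'$ to a test function on a suitable subspace of $X$, tracking how the $L^p$-norm and the scale-$a$ upper gradient behave under this operation. The key point is that a coarse regular map, while not injective, only collapses sets of uniformly bounded multiplicity (after thickening), so $L^p$-norms are distorted only by multiplicative constants, and the scale of the gradient only gets multiplied by the (coarse) Lipschitz constant.

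\textbf{Step 1: Reduce to a clean situation on the target side.} Given $r$, pick a subspace $A' \subset X'$ with $\mu'(A') \le r$ and $\mu'(A') h^p_a(A') \ge \tfrac12 \Lambda^p_{X',a}(r)$, and a function $g \in L^p(A')$ with $g_{A'} = 0$, $g \not\equiv 0$, realizing (up to a factor $2$) the infimum defining $h^p_a(A')$; so $\|\nabla_a g\|_p / \|g\|_p \le 2 h^p_a(A')$. Here I will work at a convenient large scale $a$ (Corollary~\ref{cor:ProfileIndependant} lets me choose it), larger than $\delta_0$ and than $\rho_+(a)$-type bounds.

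\textbf{Step 2: Build the subspace $A \subset X$ and the pulled-back function.} Set $A = [F^{-1}(A')]_1$, a $1$-thick subspace of $X$ (thickening makes it $1$-thick; one should replace $A'$ by a slightly larger thick set if needed so that $F^{-1}(A')$ is nonempty and its preimage is comparable — the coarsely-measure-preserving condition (ii) handles all such thickenings up to constants). Define $f = g \circ F$ on $A$ (extend $g$ by its values, or by $0$ on the thin collar, adjusting by a constant afterward). Then control the three quantities in turn. \emph{Measure of $A$:} by Definition~\ref{def:coarseregular}(ii), $\mu(A) = \mu([F^{-1}(A')]_1) \asymp \mu'([A']_1) \asymp \mu'(A') \le r'$ for some $r' \asymp r$; combined with Lemma~\ref{lem:doublingThick} to absorb the thickenings. \emph{$L^p$-norm:} using (ii) with $A$ replaced by preimages of level sets of $g$ (or more simply, a layer-cake / fiber-counting argument: preimages of $1$-balls have uniformly bounded measure, so $\int_A |g\circ F|^p \, d\mu \asymp \int_{A'} |g|^p \, d\mu'$), giving $\|f\|_{p,\mu} \asymp \|g\|_{p,\mu'}$ up to constants; then pass to the average-zero normalization using Lemma~\ref{lem:average/energymin}. \emph{Gradient:} if $x, y \in B_X(z, a)$ then $F(x), F(y) \in B_{X'}(F(z), \rho_+(2a))$, so $|\nabla_a f|(z) \le |\nabla_{\rho_+(2a)} g|(F(z))$; hence $\|\nabla_a f\|_{p,\mu}^p = \int_A |\nabla_a f|^p \, d\mu \lesssim \int_{A'} |\nabla_{b} g|^p \, d\mu'$ with $b = \rho_+(2a)$, again by the fiber-counting estimate from (ii).

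\textbf{Step 3: Conclude via scale-independence.} Combining Step 2, $h^p_a(A) \le \|\nabla_a f\|_{p,\mu}/\|f - f_A\|_{p,\mu} \lesssim_p \|\nabla_b g\|_{p,\mu'}/\|g\|_{p,\mu'} \lesssim_b h^p_b(A') \asymp_{a,b} h^p_a(A')$, the last step by Proposition~\ref{prop:PoincareIndependant} applied on $A'$. Multiplying by $\mu(A) \lesssim \mu'(A') \lesssim r$ (and allowing the argument to change by the multiplicative constant, which is the $Cr + C$ slack in $\lesssim$) gives $\mu(A) h^p_a(A) \lesssim_p \mu'(A') h^p_a(A') \lesssim \Lambda^p_{X',a}(Cr)$. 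Taking the supremum over $A$ on the left and using Corollary~\ref{cor:ProfileIndependant} on both sides yields $\Lambda^p_X \lesssim_p \Lambda^p_{X'}$.

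\textbf{The main obstacle} I expect is Step 2's measure bookkeeping: the coarsely-measure-preserving axiom is stated for neighborhoods $[A]_\delta$ of subspaces, so one must carefully choose which sets to feed into it (preimages of thickened level sets of $g$, or of $1$-balls) and repeatedly invoke Lemma~\ref{lem:doublingThick} to trade a $\delta$-neighborhood for the set itself, making sure every implied constant depends only on $\delta_0$, $\rho_+$, $p$ and the packing data — not on $A'$ or $r$. The fiber-counting inequality $\int_A |g \circ F|^p \asymp \int_{A'} |g|^p$ should be extracted once as a clean lemma (covering $A$ by unit balls, noting each point of $A'$ has preimage of bounded measure and image-cover of bounded multiplicity) and then reused verbatim for the gradient term.
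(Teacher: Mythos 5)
The proposal identifies the correct broad strategy (pull back a good test function along $F$), but there are two problems, one of which is a genuine gap that the paper's discretization argument is specifically designed to fill.

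\textbf{Quantifier direction.} In Step~1 you pick a near-maximiser $A'\subset X'$ for $\Lambda^p_{X'}(r)$ and then build $A=[F^{-1}(A')]_1\subset X$. This shows that \emph{one particular} subspace $A$ of $X$ satisfies $\mu(A)h^p_a(A)\lesssim\Lambda^p_{X'}(Cr)$. But $\Lambda^p_X$ is a supremum over \emph{all} subspaces of $X$, so you have not bounded it: you cannot ``take the supremum over $A$ on the left'' because you have only controlled one $A$. To prove $\Lambda^p_X\lesssim\Lambda^p_{X'}$ one must start from an arbitrary $Z\subset X$ with $\mu(Z)\le r$, push forward to $Z'=[[F(Z)]_M]_1\subset X'$ (not pull back), and show $\mu(Z)h^p_a(Z)\lesssim\mu'(Z')h^p_{a'}(Z')$. (Incidentally, for an $A'$ disjoint from $[F(X)]_M$ your $A$ is empty and $\mu(A)\not\asymp\mu'(A')$, illustrating that the pullback direction cannot work as stated.)

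\textbf{The fiber-counting estimate is false as stated, and this is the genuine gap.} You claim that $\int_A|g\circ F|^p\,d\mu\asymp\int_{A'}|g|^p\,d\mu'$ by a covering/multiplicity argument. The coarsely-measure-preserving axiom compares $\mu([A]_\delta)$ with $\mu'([F(A)]_\delta)$ for $\delta\ge\delta_0$; it says nothing at scales below $\delta_0$. In particular the pushforward $F_*(\mu|_A)$ is supported on $F(A)$, which can have $\mu'$-measure zero inside $A'=[[F(A)]_M]_1$ (think of a graph mapped into a manifold, or more prosaically a net mapped into a graph with larger balls). So $F_*(\mu|_A)$ need not even be absolutely continuous with respect to $\mu'|_{A'}$, let alone comparable, and both of your norm comparisons ($\|g\circ F\|_{p,\mu}\gtrsim\|g\|_{p,\mu'}$ and $\|(\nabla_b g)\circ F\|_{p,\mu}\lesssim\|\nabla_b g\|_{p,\mu'}$) can fail badly. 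Any fix must either (a) average $g$ over $\delta_0$-scale pieces of $A'$ before pulling back, accepting an error term controlled by $\|\nabla g\|_p$, or (b) discretise. The paper does exactly this: it replaces $Z$ and $Z'$ by discretisations $Y,Y'$ with comparable discrete measures, shows in Lemma~\ref{lem:disc} that passing to a discretisation changes $h^p$ by bounded factors at the price of a $\|\nabla_{20a}f\|_p$ error term (handled by a small case analysis), and then compares $Y$ and $Y'$ via the surjective discrete map $\Psi=\pi'\circ F\circ j$, which by construction \emph{does} satisfy $\nu(\Psi^{-1}(y'))\asymp\nu'(y')$ (Lemma~\ref{lem:regularDiscrete} and Corollary~\ref{cor:discreteCase}). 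Your sketch correctly identifies the gradient bound $|\nabla_a(g\circ F)|(z)\le|\nabla_{\rho_+(2a)}g|(F(z))$, but without the discretisation (or an equivalent averaging device) you cannot translate this pointwise inequality into an integral inequality, for the same reason as above.

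So the high-level idea is right, and you correctly located the obstacle (``Step 2's measure bookkeeping''), but the proposed resolution via a direct fiber-counting lemma does not work; the discretisation machinery in \S\ref{sec:reg-maps-lseq} is genuinely necessary, not just convenient bookkeeping.
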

Theorem~\ref{bthm:monotoneunderreg} follows immediately from Lemma~\ref{lem:regcoarsereg} and this proposition.
Note that by Proposition~\ref{prop:PoincareIndependant} it suffices to prove $\Lambda^p_{X,a}\lesssim_p \Lambda^p_{X',a'}.$ for some $a,a'\geq 2$.

An important consequence of Proposition~\ref{prop:Poincarecoarsereg} is the following.

\begin{proposition}\label{prop:latticeseparation}
Let $G$ and $H$ be compactly generated locally compact groups, and let $\phi:H\to G$ be a proper continuous morphism (i.e.\ $\ker \phi$ is compact and $\phi(H)$ is a closed subgroup). We assume that both $G$ and $H$ are equipped with left-invariant Haar measures and word metrics with respect to some compact symmetric generating sets. Then, for all $p\in[1,\infty)$, $\Lambda^p_H\lesssim_p\Lambda^p_G$. If $\phi(H)$ is co-compact then $\Lambda^p_H\simeq_p\Lambda^p_G$.
\end{proposition}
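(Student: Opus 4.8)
The plan is to reduce the statement to Proposition~\ref{prop:Poincarecoarsereg} by checking that a proper continuous morphism $\phi:H\to G$, viewed as a map between the metric measure spaces $(H,d_H,\mathrm{Haar}_H)$ and $(G,d_G,\mathrm{Haar}_G)$, is coarse regular; and when $\phi(H)$ is co-compact, that it is in fact a large-scale equivalence, so that Proposition~\ref{prop:Poincarecoarsereg} applies in both directions and yields $\Lambda^p_H\simeq_p\Lambda^p_G$. First I would recall that both $H$ and $G$, being compactly generated locally compact groups with word metrics relative to compact symmetric generating sets and equipped with Haar measures, satisfy the standing assumptions of Section~\ref{sec:noteframe} (bounded packing on large scales and $k$-geodesic for some $k$), so the hypotheses of Proposition~\ref{prop:Poincarecoarsereg} are met once coarse regularity is established.

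The key steps are the following. (1) \emph{Coarse Lipschitz.} Since $\phi$ is a continuous morphism, it sends the compact generating set $K_H$ of $H$ into some compact, hence bounded, subset of $G$; as $\phi$ is a homomorphism this gives $d_G(\phi(x),\phi(y)) = d_G(e,\phi(x^{-1}y)) \leq C\,d_H(x,y) + C$ for a constant $C$ depending on $\mathrm{diam}(\phi(K_H))$ in $G$. So condition (i) of Definition~\ref{def:coarseregular} holds with $\rho_+$ affine. (2) \emph{Coarsely measure preserving.} Here I would use that $\ker\phi$ is compact and $\phi(H)$ is closed in $G$. Properness of $\phi$ (in the sense that preimages of compact sets are compact) implies that fibres of $\phi$ are bounded, of diameter controlled by $\mathrm{diam}(\ker\phi)$; equivalently, $\phi^{-1}(B_G(\phi(x),\delta))$ is contained in a bounded neighbourhood of $x$ in $H$ of radius depending only on $\delta$. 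By left-invariance of the Haar measures and the bounded-packing Lemma~\ref{lem:doublingThick}, for any $1$-thick $A\subset H$ and $\delta$ large we get $\mu_H([A]_\delta) \asymp_\delta \mu_H([\phi^{-1}\phi(A)]_\delta)$ (the two neighbourhoods are at bounded Hausdorff distance). For the comparison with $\mu_G([\phi(A)]_\delta)$: the restriction of Haar measure on $G$ to the closed subgroup $\phi(H)$ is, up to the constant factor $\mathrm{vol}(\ker\phi)$, the pushforward under $\phi$ of Haar measure on $H$; combined with left-invariance and bounded packing on $G$ this gives $\mu_G([\phi(A)]_\delta) \asymp_\delta \mu_H([A]_\delta)$ for $\delta$ large. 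This verifies condition (ii). (3) With $\phi$ coarse regular, Proposition~\ref{prop:Poincarecoarsereg} gives $\Lambda^p_H\lesssim_p\Lambda^p_G$ for all $p\in[1,\infty)$. (4) \emph{Co-compact case.} If $\phi(H)$ is co-compact, then $\phi$ is additionally a coarse embedding — $d_G(\phi(x),\phi(y))$ is bounded below by a proper function of $d_H(x,y)$, because $\phi(H)$ with the induced metric is quasi-isometric to $H/\ker\phi \cong \phi(H)$ and $\ker\phi$ is compact — and its image is co-bounded in $G$, so $\phi$ is a large-scale equivalence in the sense of Definition~\ref{def:largescaleEquiv}. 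Applying Proposition~\ref{prop:Poincarecoarsereg} to $\phi$ and to a coarse inverse then gives $\Lambda^p_H\lesssim_p\Lambda^p_G$ and $\Lambda^p_G\lesssim_p\Lambda^p_H$, i.e.\ $\Lambda^p_H\simeq_p\Lambda^p_G$.

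The main obstacle I expect is step (2), specifically the measure comparison $\mu_G([\phi(A)]_\delta)\asymp_\delta \mu_H([A]_\delta)$: one must be careful that ``Haar measure on $\phi(H)$'' makes sense (it does, since $\phi(H)$ is a closed, hence locally compact, subgroup) and relate it both to the pushforward of $\mu_H$ (accounting for the compact kernel contributing a bounded multiplicative constant) and to the ambient measure $\mu_G$ restricted to a neighbourhood of $\phi(H)$ — this last comparison is where bounded packing on $G$, applied along the fibres of a nearest-point projection onto $[\phi(H)]_\delta$, does the work. The rest is routine once the standing assumptions are invoked; no new Poincaré-constant estimates are needed beyond Proposition~\ref{prop:Poincarecoarsereg}.
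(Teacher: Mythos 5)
Your argument follows exactly the same route as the paper's: the paper's entire proof is the one-line observation that $\phi$ is a large-scale embedding (hence coarse regular), and a large-scale equivalence when $\phi(H)$ is co-compact, so the result follows from Proposition~\ref{prop:Poincarecoarsereg}. You have simply unpacked what "large-scale embedding'' means in this setting — coarse Lipschitz from the compactness of $\phi(K_H)$, coarse measure preservation from the compact kernel, closed image, Weil's formula relating the Haar measures, and bounded packing — and the details you supply are correct; the only caveat is that the paper also silently asserts the coarse embedding lower bound $\rho_-$ unconditionally (which in fact follows from properness of $\phi$ by the usual compactness argument), whereas you only establish it in the co-compact case, but for the stated inequality $\Lambda^p_H\lesssim_p\Lambda^p_G$ coarse regularity alone suffices, so nothing is missing.
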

\begin{proof} The morphism $\phi$ is a large-scale embedding hence it is coarse regular. If $\phi(H)$ is co-compact then $\phi$ is a large-scale equivalence. The result then follows from Proposition \ref{prop:Poincarecoarsereg}.
\end{proof}

\subsection{Proof of Theorem $\ref{bthm:monotoneunderreg}$}\label{sec:coarseReg}

The argument behind the proof is as follows: given a coarse regular map $F:X\to X'$ which is $\rho_+$-coarse Lipschitz and coarsely measure preserving for all $\delta\geq\delta_0$, and a subspace $Z\subseteq X$, we define $M=\max\set{\rho_+(1),\delta_0}$ and build metric measure space discretizations $Y$ of $Z$ and $Y'$ of the $1$-thick subspace $[[F(Z)]_{M}]_1$. By the definition of a coarse regular map and Lemma \ref{lem:doublingThick}, 
\[
\mu_X(Z)\asymp_{M}\mu_X([Z]_{M})\asymp_{M}\mu_{X'}([[F(Z)]_{M}]_1).
\]
We then show that the process of taking a discretization yields spaces with equal measure and comparable Poincar\'{e} constants, and finally prove that $Y$ and $Y'$ have comparable Poincar\'e constants.

The first step of the proof consists in constructing discretizations of our spaces. 
We fix some $b\geq M$ (which we refer to as the \textit{discretization parameter}). We let $Y\subset Z$ be a maximal $3b$-separated subset of $Z$. By maximality $Z$ is covered by the union of balls $\bigcup_{y\in Y}B(y,9b)$. We pick  a set $A_y$ for each $y\in Y$ with the following properties:
\begin{itemize}
\item $B(y,b)\subset A_y\subset B(y,9b)$;
\item $(A_y)_{y\in Y}$ forms a measurable partition of $Z$.
\end{itemize} 
We equip $Y$ with the subspace distance and the measure $\nu_Y(y)=\nu(A_y)$.
Let $\pi: Z\to Y$ be defined by ``$\pi(z)$ is the only $y\in Y$ such that $z\in A_y$''. Note that $\pi$ is surjective, and a right-inverse of the inclusion $j: Y\to Z$. Moreover, $\pi^{-1}(y)=A_y$ for every $y\in Y$.

\begin{remark}\label{rem:YpiAndj}
Observe that the choice of $b$ ensures that $Y$ has bounded packing at all scales $\geq 0$, and that both  $\pi$ and $j$ are large-scale equivalences. In particular, if $Y'$ is a similar discretization of $[[F(Z)]_{M}]_1$, then $\Psi=\pi'\circ F\circ j$ is a coarse regular map. Moreover, if one chooses the discretization parameter $b'$ large enough, then $\Psi$ is surjective.
\end{remark}

Our next goal is to compare the Poincar\'e constant of a subspace with that of its discretization.

\begin{lemma}\label{lem:disc}
Let $(Z,d,\nu)$ be a metric measure space with finite measure. Suppose $(Y,d,\nu_Y)$ is a discretization (with parameter $b\geq 2$) of $Z$ as above. Then for all $a\geq b$,
$$h_{a}^p(Y)\lesssim_a h_{20a}^p(Z),\quad\textup{and}\quad h_{a}^p(Z)\leq h_{20a}^p(Y).$$
\end{lemma}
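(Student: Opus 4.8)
The plan is to prove the two inequalities separately, moving test functions between $Z$ and its discretization $Y$ via the maps $\pi\colon Z\to Y$ and $j\colon Y\to Z$. I expect the inequality $h^p_a(Z)\le h^p_{20a}(Y)$ to be the easy half, since it requires no comparison of measures. Given a mean-zero function $g$ on $Y$, I would pull it back to $f=g\circ\pi$ on $Z$, which is constant equal to $g(y)$ on each piece $A_y$. Because $\nu_Y(y)=\nu(A_y)$ and the $A_y$ partition $Z$, the function $f$ is again mean-zero and $\|f\|_{p,\nu}=\|g\|_{p,\nu_Y}$ exactly. For the gradient: if $z\in Z$ and $w,w'\in B(z,a)$, then $d(\pi(z),\pi(w))\le a+18b\le 19a$ (using $A_y\subseteq B(y,9b)$ and $b\le a$), so $\pi(w),\pi(w')\in B(\pi(z),20a)$ and $|f(w)-f(w')|\le|\nabla_{20a}g|(\pi(z))$; hence $|\nabla_a f|(z)\le|\nabla_{20a}g|(\pi(z))$. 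Integrating over $z$ and using the partition gives $\|\nabla_a f\|_{p,\nu}\le\|\nabla_{20a}g\|_{p,\nu_Y}$, and taking the infimum over $g$ finishes this direction.

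For the first inequality, $h^p_a(Y)\lesssim_a h^p_{20a}(Z)$, I would fix $\eps\in(0,\tfrac14]$ and a mean-zero $f$ on $Z$ with $\|f\|_{p,\nu}=1$ and $\|\nabla_{20a}f\|_{p,\nu}\le h^p_{20a}(Z)+\eps$, and set $g(y)=\dashint_{A_y}f\,d\nu$, which is again mean-zero on $Y$ by the partition property. The numerator estimate: if $y,y'\in Y$ are within $a$ then $A_y\cup A_{y'}\subseteq B(y,10a)$, so $|g(y)-g(y')|\le\operatorname{osc}_{B(y,10a)}f=|\nabla_{10a}f|(y)$, and therefore $|\nabla_a g|(y)\le|\nabla_{20a}f|(z)$ for every $z$ in the unit ball $B(y,1)\subseteq A_y$. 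Using $\nu(A_y)\preceq_a\nu(B(y,1))$ (from Lemma~\ref{lem:doublingThick}, as $A_y\subseteq[B(y,1)]_{9b}$) together with the pairwise disjointness of the balls $B(y,1)$, $y\in Y$ (valid since $Y$ is $3b$-separated with $b\ge 2$), summing yields $\|\nabla_a g\|_{p,\nu_Y}\preceq_a\|\nabla_{20a}f\|_{p,\nu}$.

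The denominator estimate is where the argument branches. Comparing $f$ with the piecewise-constant function $g\circ\pi$, the bound $|f(z)-g(\pi(z))|\le|\nabla_{18b}f|(z)\le|\nabla_{20a}f|(z)$ for $z\in A_{\pi(z)}$ (pieces have diameter $\le 18b\le 18a$) gives $\|f-g\circ\pi\|_{p,\nu}\le\|\nabla_{20a}f\|_{p,\nu}$, while $\|g\circ\pi\|_{p,\nu}=\|g\|_{p,\nu_Y}$; so the triangle inequality forces $1=\|f\|_{p,\nu}\le\|\nabla_{20a}f\|_{p,\nu}+\|g\|_{p,\nu_Y}$, whence at least one summand is $\ge\tfrac12$. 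If $\|g\|_{p,\nu_Y}\ge\tfrac12$, then $h^p_a(Y)\le\|\nabla_a g\|_{p,\nu_Y}/\|g\|_{p,\nu_Y}\preceq_a\|\nabla_{20a}f\|_{p,\nu}\le h^p_{20a}(Z)+\eps$. Otherwise $\|\nabla_{20a}f\|_{p,\nu}\ge\tfrac12$, so $h^p_{20a}(Z)\ge\tfrac12-\eps\ge\tfrac14$, and it then suffices to bound $h^p_a(Y)$ by a constant depending only on $a$ (and $p$): I would use the test function $\mathbb 1_{\{y_0\}}-\nu_Y(y_0)/\nu_Y(Y)$ for a point $y_0$ with $\nu_Y(y_0)\le\nu_Y(Y)/2$, whose gradient at scale $a$ is supported on $B(y_0,a)\cap Y$ and bounded by $1$, so that Lemma~\ref{lem:doublingThick} again controls the ratio; the degenerate case where $Y$ is a single atom (equivalently $Z=A_{y_0}$) is handled directly. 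Letting $\eps\to 0$ combines the two cases into $h^p_a(Y)\preceq_{a}h^p_{20a}(Z)$.

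The step I expect to be the real chore, rather than a conceptual obstacle, is the bookkeeping: carrying the successive scale enlargements $a\mapsto 10a\mapsto 20a$ and the packing constants uniformly, checking that a ($1$-thick) subspace inherits the bounded-packing hypothesis so that Lemma~\ref{lem:doublingThick} can be applied inside $Z$ and inside $Y$ (and that the metric on $Y$ is the restricted one, which is forced since $Y$ is $3b$-separated), and disposing of the degenerate small-diameter cases. Conceptually, the only subtlety is the dichotomy in the denominator: when $Z$ itself is small in diameter the averaging map $f\mapsto g$ can annihilate almost all of the $L^p$-mass of $f$, and the second case is precisely the observation that this forces $\|\nabla_{20a}f\|_{p,\nu}$ to already be large, so the trivial bound on $h^p_a(Y)$ suffices there.
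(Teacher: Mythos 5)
Your proposal is correct and follows essentially the same route as the paper's proof: average $f$ over the pieces $A_y$ to produce a test function on $Y$, handle the possible loss of $L^p$-mass by the same dichotomy (either the averaged function retains a definite fraction of the norm, or the gradient term is already large and a trivial bound $h^p_a(Y)\preceq_a 1$ suffices, cf.\ Proposition~\ref{prop:linubd}), and pull back piecewise-constant functions $g\circ\pi$ for the easy inequality. The one place you diverge is the gradient estimate for $g$: by restricting to $z\in B(y,1)$ you are forced through Lemma~\ref{lem:doublingThick} and hence need (inherited) bounded packing of $Z$, whereas the paper observes that $|\nabla_a g|(y)\leq |\nabla_{20a}f|(z)$ holds for \emph{every} $z\in A_y$, so that $\nu_Y(y)\,|\nabla_a g|(y)^p\leq\int_{A_y}|\nabla_{20a}f|^p\,d\nu$ and summing over $y$ gives $\|\nabla_a g\|_{Y,p}\leq\|\nabla_{20a}f\|_{Z,p}$ with constant $1$ and no packing hypothesis at all; this keeps the lemma valid for an arbitrary finite-measure $(Z,d,\nu)$ and spares you the inheritance check you flag as a chore.
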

\begin{proof}
Let $f\in L^{\infty}(Z)$ be such that $\dashint_Z fd\nu=0$. We define $\phi\in \ell^{\infty}(Y)$ by 
$\phi(y)=\dashint_{A_y}fd\nu$. Clearly $\dashint_Y \phi d\nu_Y=0$ and $\|\phi\circ \pi\|_{Z,p} = \|\phi\|_{Y,p}$. 
Write $f(z)=\phi(\pi(z))+\dashint_{A_{\pi(z)}}(f(z)-f(w))d\nu(w)$.
Then
\begin{align*}
  \|f\|_{Z,p} & \leq \|\phi\circ\pi\|_{Z,p} + \left(\int_Z\left|\dashint_{A_{\pi(z)}}\left(f(z)-f(w)\right) d\nu(w)\right|^p d\nu(z)\right)^{1/p}
  \\ & \leq \|\phi\|_{Y,p} + \left(\int_Z \dashint_{A_{\pi(z)}} |f(z)-f(w)|^p\,d\nu(w)d\nu(z)\right)^{1/p}
  \\ & \leq \|\phi\|_{Y,p} + \left(\int_Z |\nabla_{10a} f|(z)^p \right)^{1/p}
  \\ & = \|\phi\|_{Y,p} + \|\nabla_{10a} f \|_p\ .
\end{align*}
On the other hand, it is immediate from the definitions that $ |\nabla_{a} \phi|(y)\leq |\nabla_{20a}f|(z)$ for all $z\in A_y$. 

We now prove the first inequality. 
If $h_{20a}^p(Z)\leq \frac12,$ then 
for any $\epsilon\in(0,1/6)$ we can find $f$ as above so that
\[
  \frac{2}{3}\geq \frac{1}{2} +\epsilon \geq h_{20a}^p(Z)+\epsilon
  \geq \frac{\|\nabla_{20a} f\|_p}{\|f\|_p}
  \geq \frac{\|\nabla_{20a} f\|_p}{\|\phi\|_p + \|\nabla_{20a} f\|_p}.
\]
Thus $\|\nabla_{20a} f\|_p \leq 2\|\phi\|_p$ and
\[
	h_{20a}^p(Z)+\epsilon \geq \frac{\|\nabla_a \phi\|_p}{3\|\phi\|_p}
	\geq \frac{1}{3} h_a^p(Y).
\]
Since $\epsilon$ was arbitrary, $h_{a}^p(Y)\leq 3h_{20a}^p(Z)$. Moreover, it is easy to see that  $h_{a}^p(Y)\lesssim_a 1$ (a much more general statement is proved in Proposition \ref{prop:linubd}), so if $h_{20a}^p(Z)\geq \frac12$, then $h_{a}^p(Y)\lesssim_a h_{20a}^p(Z)$.

The other direction is easier: 
given $\psi\in \ell^{\infty}(Y)$, such that $\dashint_Y \psi d\nu_Y=0$ we define $g=\sum_{y\in Y} \psi(y)1_{A_y}$, where $1_{A_y}$ denotes the characteristic function of $A_y$. We clearly have $\dashint g d\nu=0$ and $\|g\|_p=\|\psi\|_p$. 
Hence we are left with comparing the gradients. 
\begin{align*}
\|\nabla_{r}g \|_p^p & =  \sum_Y\nu(A_y)\dashint_{A_y}\sup_{z',z''\in B(z,a)}|g(z')-g(z'')|^pd\nu(z) \\
               & \leq   \sum_Y\nu(A_y)\sup_{z',z''\in B(y,10a)}|g(z')-g(z'')|^p \\
               & \leq    \sum_Y \nu_Y(y) \sup_{y',y''\in B(y,20a)\cap Y} |\psi(y')-\psi(y'')|^p\\
               & =   \|\nabla_{20a}\psi\|_p^p. \qedhere
\end{align*}
\end{proof}
Now we compare the Poincar\'e constants of discrete spaces related by a sufficiently nice surjective coarse regular map.

\begin{lemma}\label{lem:regularDiscrete}
Let $\pi:(Y,d,\nu)\to (Y',d,\nu')$ be a map between two discrete metric measure spaces with finite (non-degenerate) measures, and assume that:
 \begin{itemize}
\item $\pi$ is surjective;
\item $\nu(\pi^{-1}(y'))=\nu'(y')$.
\end{itemize}
Then for all $a\geq 0$ and $C$ such that $d(y,z)\leq a$ implies $d(\pi(y),\pi(z))\leq Ca$, we have
$$h_{a}^p(Y)\leq h_{Ca}^p(Y').$$
\end{lemma}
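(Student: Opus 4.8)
The plan is to pull back a test function on $Y'$ to one on $Y$ via $\pi$, and check that the pullback has average zero, the same $L^p$-norm, and a controlled gradient. First I would take any $\psi \in \ell^p(Y',\nu')$ with $\dashint_{Y'} \psi \, d\nu' = 0$ and $\psi \not\equiv 0$, and set $f = \psi \circ \pi$ on $Y$. Because $\pi$ is surjective and $\nu(\pi^{-1}(y')) = \nu'(y')$, the ``layer cake'' identity $\sum_{y \in Y} g(\pi(y))\, \nu(y) = \sum_{y' \in Y'} g(y')\, \nu'(y')$ holds for any $g$; applying this with $g = \psi$ gives $\dashint_Y f \, d\nu = \dashint_{Y'} \psi \, d\nu' = 0$ (note $\nu(Y) = \nu'(Y')$ by the same identity with $g \equiv 1$), and applying it with $g = |\psi|^p$ gives $\|f\|_{Y,p} = \|\psi\|_{Y',p}$; in particular $f \not\equiv 0$.

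The remaining point is the gradient comparison. I would show $|\nabla_{Ca}\psi|(\pi(y)) \geq |\nabla_a f|(y)$ for every $y \in Y$: if $z, w \in B(y,a) \cap Y$ then by hypothesis $\pi(z), \pi(w) \in B(\pi(y), Ca) \cap Y'$, so $|f(z) - f(w)| = |\psi(\pi(z)) - \psi(\pi(w))| \leq |\nabla_{Ca}\psi|(\pi(y))$; taking the supremum over such $z, w$ gives the claim. Then, using the layer cake identity once more with $g(y') = |\nabla_{Ca}\psi|(y')^p$,
\[
  \|\nabla_a f\|_{Y,p}^p = \sum_{y \in Y} |\nabla_a f|(y)^p\, \nu(y) \leq \sum_{y \in Y} |\nabla_{Ca}\psi|(\pi(y))^p\, \nu(y) = \sum_{y' \in Y'} |\nabla_{Ca}\psi|(y')^p\, \nu'(y') = \|\nabla_{Ca}\psi\|_{Y',p}^p.
\]
Combining, $\|\nabla_a f\|_{Y,p} / \|f\|_{Y,p} \leq \|\nabla_{Ca}\psi\|_{Y',p} / \|\psi\|_{Y',p}$, and taking the infimum over all admissible $\psi$ on the right (each producing an admissible competitor $f$ on the left) yields $h_a^p(Y) \leq h_{Ca}^p(Y')$.

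There is no serious obstacle here: the only thing to be slightly careful about is that the construction genuinely produces a \emph{valid} competitor for $h_a^p(Y)$ — i.e.\ that $f \not\equiv 0$, which follows since $\|f\|_{Y,p} = \|\psi\|_{Y',p} > 0$, and that it has average zero, which is exactly the layer cake computation above. The surjectivity of $\pi$ is what guarantees $\|\psi\|_{Y',p} > 0$ forces $\|f\|_{Y,p} > 0$ and, together with the measure-matching hypothesis, is what makes all three layer cake identities (for $1$, $|\psi|^p$, and $|\nabla_{Ca}\psi|^p$) exact rather than merely approximate.
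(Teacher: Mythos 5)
Your proposal is correct and matches the paper's proof essentially verbatim: both pull back a test function $\psi$ on $Y'$ to $f = \psi\circ\pi$ on $Y$, check that the average is zero and $\|f\|_p = \|\psi\|_p$ via the measure-matching hypothesis, and bound $|\nabla_a f|(y)$ by $|\nabla_{Ca}\psi|(\pi(y))$ using the coarse Lipschitz control. The only difference is that you spell out the ``layer cake'' bookkeeping that the paper leaves as a ``straightforward computation.''
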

\begin{proof}
Choose $f'\in \ell^{\infty}(Y')$ such that $\int f'd\mu'=0$ and let $f=f'\circ \pi$. Clearly, we have $\int fd\mu =0$, and $$\|f\|_p=\|f'\|_p.$$
Moreover, for every $y\in Y$, if $y_1,y_2\in B(y,a)$ then $\pi(y_1),\pi(y_2)\in B(\pi(y),Ca)$. So a straightforward computation shows that 
\[ \|\nabla_{a} f\|_p\leq \|\nabla_{Ca} f'\|_p. \qedhere \]
\end{proof}

As a result we obtain a version of Proposition~\ref{prop:Poincarecoarsereg} in the uniformly discrete case.

\begin{corollary}\label{cor:discreteCase}
Let $\Psi:(Y,d,\nu)\to (Y',d,\nu')$ be a surjective coarse regular map between uniformly discrete spaces, which have bounded packing at any scale. Then, for all $a>0$, there exists $C>0$ such that 
$$h_{a}^p(Y)\preceq_a h_{Ca}^p(Y').$$
\end{corollary}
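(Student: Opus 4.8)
The plan is to deduce the statement from Lemma~\ref{lem:regularDiscrete} together with Lemma~\ref{lem:PCbasics}(ii). The obstacle is that a coarse regular map is only \emph{coarsely} measure preserving: the pushforward of $\nu$ under $\Psi$ is comparable to, but not equal to, $\nu'$, whereas Lemma~\ref{lem:regularDiscrete} demands equality. The remedy is to run that lemma against the genuine pushforward measure and then absorb the resulting multiplicative error with Lemma~\ref{lem:PCbasics}(ii).

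First I would fix the scale $\delta=\delta_0$ from Definition~\ref{def:coarseregular}(ii) and prove a \emph{uniform} comparison $M^{-1}\nu'(\{y'\})\leq\nu(\Psi^{-1}(y'))\leq M\nu'(\{y'\})$ for all $y'\in Y'$, with $M\geq 1$ depending only on $\delta_0$ and the packing functions. Since $Y$ is uniformly discrete, $A:=\Psi^{-1}(y')$ is a $1$-thick subspace; it is non-empty by surjectivity, so $\Psi(A)=\{y'\}$ and $\Psi^{-1}(\Psi(A))=A$. Feeding $A$ into Definition~\ref{def:coarseregular}(ii) gives $\nu([A]_\delta)\asymp_\delta\nu'([\{y'\}]_\delta)$, and bounded packing of $Y$ and $Y'$ at all scales (via Lemma~\ref{lem:doublingThick}) replaces the $\delta$-neighbourhoods by $A$ and $\{y'\}$ respectively, yielding the claimed comparison.

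Next, define a measure $\tilde\nu'$ on $Y'$ by $\tilde\nu'(\{y'\})=\nu(\Psi^{-1}(y'))$, extended additively; this measure is finite and non-degenerate, satisfies $\nu(\Psi^{-1}(y'))=\tilde\nu'(\{y'\})$ by construction, and obeys $M^{-1}\nu'\leq\tilde\nu'\leq M\nu'$ on all subsets. Now fix $a>0$. Coarse Lipschitzness of $\Psi$ gives $d(y,z)\leq a\Rightarrow d(\Psi y,\Psi z)\leq\rho_+(a)\leq Ca$ with $C:=\max\{1,\rho_+(a)/a\}$, so Lemma~\ref{lem:regularDiscrete} applied to $\Psi\colon(Y,\nu)\to(Y',\tilde\nu')$ yields $h^p_a(Y,\nu)\leq h^p_{Ca}(Y',\tilde\nu')$, while Lemma~\ref{lem:PCbasics}(ii) yields $h^p_{Ca}(Y',\tilde\nu')\leq 2M^{2/p}h^p_{Ca}(Y',\nu')$. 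Composing these two inequalities gives $h^p_a(Y)\leq 2M^{2/p}h^p_{Ca}(Y')$, which is exactly $h^p_a(Y)\preceq_a h^p_{Ca}(Y')$. I expect the main (and essentially only) delicate point to be the uniformity of the comparison in the second step: one must check that no constant there depends on $y'$, which holds because the scale $\delta_0$ is chosen once and for all and the packing estimates are uniform — this is precisely where uniform discreteness of $Y$ enters. The rest is bookkeeping with the earlier lemmas.
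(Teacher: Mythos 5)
Your proposal is correct and follows essentially the same route as the paper: push $\nu$ forward to the measure $\tilde\nu'=\Psi_*\nu$ on $Y'$, apply Lemma~\ref{lem:regularDiscrete} to compare $h^p_a(Y,\nu)$ with $h^p_{Ca}(Y',\Psi_*\nu)$, and then apply Lemma~\ref{lem:PCbasics}\ref{PCchangemeasure} to replace $\Psi_*\nu$ by $\nu'$. The paper asserts the uniform comparison $\nu(\Psi^{-1}(y'))\asymp\nu'(y')$ without elaboration; your second paragraph simply spells that step out, which is a harmless expansion rather than a different argument.
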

\begin{proof}
The assumptions imply that $\Psi$ is coarse Lipschitz, surjective, and such that  $\nu(\Psi^{-1}(y'))\asymp_a  \nu'(y')$. Hence the corollary follows from Lemmas \ref{lem:PCbasics}\ref{PCchangemeasure} and \ref{lem:regularDiscrete}: if we push the measure $\nu$ forward with $\Psi$ to obtain a measure $\Psi_*\nu$ on $Y'$ we have
\[
  h_a^p(Y,\nu) \preceq h_{Ca}^p(Y',\Psi_*\nu) \preceq h_{Ca}^p(Y',\nu').\qedhere
\]
\end{proof}

Combining these results we are in a position to prove Proposition \ref{prop:Poincarecoarsereg}.

\begin{proof}[Proof of Proposition \ref{prop:Poincarecoarsereg}]
Let $Z$ be a $1$-thick subspace of $X$ and define $Z'=[[F(Z)]_M]_1$ where $M=\max\set{\delta_0,\rho_+(1)}$. Then $Z'$ is a $1$-thick subspace of $X'$ and $\mu(Z)\asymp_{M}\mu'(Z')$.
Let $b,b'$ be sufficiently large that the discretizations $Y$ of $Z$ and $Y'$ of $Z'$ satisfy the hypotheses of Lemma \ref{lem:disc} for some suitable $a=a(b,b')\geq 2$ and so that $\Psi=\pi'\circ F \circ j$ is surjective. Note that $b$ and $b'$ may be chosen independently of the choice of subspace $Z$ of $X$, hence $a$ does not depend on $Z$.

Applying Corollary \ref{cor:discreteCase} we see that there exists a constant $C$ depending only on $a$ such that $h^p_{a}(Y)\preceq_a h^p_{Ca}(Y')$. Now, by Lemma \ref{lem:disc} $h^p_a(Z)\preceq_{a,M} h^p_{C'a}(Z')$ where $a,M,C'$ do not depend on $Z$.

Thus there is some $M'$ depending only on $M$ and $Y'$ such that
\begin{eqnarray*}
\Lambda^p_{X,a}(r) & = & \sup\setcon{\mu(Z)h^p_a(Z)}{\mu(Z)\leq r}\\ & \lesssim_{a,M} & \sup\setcon{\mu'(Z')h^p_{C'a}(Z')}{Z'=[[F(Z)]_M]_1,\ \mu(Z)\leq r} \\ & \leq & \Lambda^p_{X',C'a}(M'r).
\end{eqnarray*}
We conclude using Corollary \ref{cor:ProfileIndependant}.
\end{proof}

\section{Extremal profiles: growth and separation}\label{sec:sepL1}

\subsection{Growth and the $L^\infty$-Poincar\'{e} profile}
In this section we give the proof of Proposition \ref{bprop:P8growth}. Recall our standing assumptions: a metric measure space $(X,d,\mu)$ is $1$-geodesic and has bounded packing at scales $\geq r_0=1$. Recall also that the growth function $\gamma_X(k)$ is the supremum of the measures of balls of radius $k$ in $X$, and the inverse growth function $\kappa_X(n)$ is the infimal $s$ such that there exists a ball $B\subset X$ of radius $s$ with measure $>n$. By assumption subspaces are $1$-thick and equipped with a $1$-geodesic metric.

\begin{proposition}\label{prop:Linfty_inverse growth} Let $(X,d,\mu)$ be a metric measure space with unbounded growth function $\gamma_X:[1,\infty) \ra (0, \infty)$, and let $a\geq 3$. Then
\[
\Lambda^\infty_{X,a}(r) \simeq_a \sup\setcon{\frac{s}{\kappa_X(s)}}{\gamma_X(1) \leq s\leq r},
\]
where we interpret $\sup \emptyset$ to be $0$.
\end{proposition}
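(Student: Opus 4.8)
The plan is to establish the two directions $\Lambda^\infty_{X,a}(r) \preceq_a \sup\{s/\kappa_X(s) : \gamma_X(1)\le s\le r\}$ and $\Lambda^\infty_{X,a}(r) \succeq_a \sup\{\cdots\}$ separately, with the lower bound coming from explicitly exhibiting balls as test subspaces and the upper bound coming from a packing/covering argument applied to an arbitrary subspace.

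For the \textbf{lower bound}, fix $s$ with $\gamma_X(1) \le s \le r$. By definition of $\kappa_X$, there is a ball $B = B(x,k)$ with $k$ close to $\kappa_X(s)$ (say $k = \kappa_X(s)$ or $k = \kappa_X(s)-1$, handling the infimum carefully) and $\mu(B) > s$; after possibly shrinking we may assume $\mu(B)$ is comparable to $s$ and $\mu(B) \le r$, so $B$ is an admissible subspace. On $B$ I would plug in the test function $f(z) = d(z,x) - c$ where $c$ is chosen so that $f$ has average zero (or a median; recall Lemmas~\ref{lem:average/energymin} and \ref{lem:median-minimizer}). Then $\|f\|_\infty \succeq k$ since $f$ attains values near $\pm k$ after centering (using that $B$ is $1$-geodesic, so distances within $B$ are realised and comparable to ambient distances up to the $1$-thickness correction), while $|\nabla_a f| \le a$ everywhere because $d(\cdot,x)$ is $1$-Lipschitz. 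Hence $h^\infty_a(B) \preceq_a 1/k \asymp 1/\kappa_X(s)$, giving $\mu(B) h^\infty_a(B) \succeq_a s/\kappa_X(s)$. Taking the supremum over $s$ yields the lower bound; the minor technical points are the relation between the ambient and induced metric on $B$, and dealing with the case where $\kappa_X(s)$ is small, which the hypothesis $a \ge 3$ should absorb.

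For the \textbf{upper bound}, let $A$ be any $1$-thick subspace with $\mu(A) \le r$; I need to bound $\mu(A) h^\infty_a(A)$. The key point is that $h^\infty_a(A) \preceq_a 1/D$ where $D$ is (comparable to) the ``$a$-diameter'' of $A$, i.e.\ roughly the diameter in the induced $1$-geodesic metric: given any $f$ with average zero, pick points where $f$ is near its max and min; a $1$-path between them of length $\asymp D$ forces, by the triangle inequality, some consecutive jump of size $\succeq \|f\|_\infty/D$ along the path, and since steps have size $\le 1 \le a$ this is witnessed by $|\nabla_a f|$ at some point, so $\|\nabla_a f\|_\infty \succeq \|f\|_\infty / D$. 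On the other hand, if $A$ has induced diameter $\le D$ then $A$ is contained in an ambient ball of radius $\asymp D$, so $\mu(A) \le \gamma_X(CD)$, which means (by definition of $\kappa_X$ as the inverse) $CD \ge \kappa_X(\mu(A))$, i.e.\ $D \succeq \kappa_X(\mu(A))$. Combining, $\mu(A) h^\infty_a(A) \preceq_a \mu(A)/\kappa_X(\mu(A)) \le \sup\{s/\kappa_X(s) : \gamma_X(1)\le s \le r\}$, where the lower endpoint $\gamma_X(1)$ enters because a $1$-thick subspace has measure at least $\mu(B(x,1)) \ge$ something, or more carefully because we only need the bound for $s = \mu(A)$ in the allowed range. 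One subtlety: $A$ need not be connected in the induced $1$-metric, so "diameter" could be infinite; but then $h^\infty_a(A) = 0$ (one can put a function supported on one component), consistent with the bound, so the disconnected case is harmless.

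The \textbf{main obstacle} I anticipate is bookkeeping the discrepancy between the induced $1$-geodesic metric on a subspace and the ambient metric, together with the passage through the inverse growth function $\kappa_X$ — making sure the "$\asymp$ up to multiplicative constants depending on $a$" in the balls' measures translates correctly into the "$\simeq$ up to the affine reparametrisation of $r$" in the profile, and that the supremum range $[\gamma_X(1), r]$ is exactly right on both ends (the lower endpoint ensures we only compare against $s$ that are genuinely realisable as measures of $1$-thick sets, and the unboundedness of $\gamma_X$ guarantees $\kappa_X$ is finite-valued). The geometric content of each direction is short; the care is entirely in the constants and the metric comparison provided by Lemma~\ref{lem:doublingThick} and the $1$-geodesic standing assumption.
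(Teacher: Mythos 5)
Your proposal assembles the right two estimates on $h^\infty_a$, but it assigns them to the wrong halves of the argument; as written, neither direction establishes what it claims. Recall that $\Lambda^\infty_{X,a}(r)=\sup\{\mu(A)h^\infty_a(A):\mu(A)\le r\}$, so a \emph{lower} bound on $\Lambda^\infty$ requires a \emph{lower} bound on $h^\infty_a$ for a specific well-chosen subspace, while an \emph{upper} bound on $\Lambda^\infty$ requires an \emph{upper} bound on $h^\infty_a$ valid for every admissible subspace. In your lower-bound paragraph, the test function $f(z)=d(z,x)-c$ with $\|\nabla_a f\|_\infty\preceq a$ and $\|f\|_\infty\succeq k$ exhibits a single $f$, hence only yields the upper bound $h^\infty_a(B)\preceq 1/k$ on the infimum; from this you can conclude $\mu(B)h^\infty_a(B)\preceq s/\kappa_X(s)$, not $\succeq$, so the step ``giving $\mu(B)h^\infty_a(B)\succeq_a s/\kappa_X(s)$'' does not follow. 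What you actually need is $h^\infty_a(B)\succeq 1/k$, a bound over \emph{all} mean-zero $f$, and that is exactly the ``find $y,z$ near max and min, walk a $1$-path, locate a jump of size $\succeq\|f\|_\infty/k$'' argument that you wrote in the other paragraph. That argument does apply to your ball, since any two points of $B(x_0,k)$ are joined through $x_0$ by a $1$-path of length $\lesssim k$.

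Symmetrically, in your upper-bound paragraph you assert $h^\infty_a(A)\preceq_a 1/D$ but then prove $\|\nabla_a f\|_\infty\succeq\|f\|_\infty/D$ for all $f$, which is $h^\infty_a(A)\succeq 1/D$ — the wrong direction — and moreover your $1$-path argument silently assumes $A$ admits $1$-paths of length $\asymp D$, which an arbitrary $1$-thick $A$ need not (you flag disconnectedness, but even a connected $A$ can have induced diameter far larger than ambient diameter). The correct tool is the one you placed in the lower-bound paragraph: for arbitrary $A$, pick $x,y\in A$ nearly realising the ambient diameter, take $f(z)=d(x,z)$, so $\|\nabla_a f\|_\infty\preceq a$ while $\|f-f_A\|_\infty\succeq\diam(A)$, giving $h^\infty_a(A)\preceq a/\diam(A)$ with no connectivity hypothesis whatsoever. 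Combined with $\diam(A)\succeq\kappa_X(\mu(A))$ (your packing observation) this closes the upper bound. Once the two estimates are swapped into their correct places, your outline matches the paper's proof, which isolates precisely these two bounds on $h^\infty_a$ as Lemma~\ref{lem:inftylwbd} and then runs the same $\kappa_X$-bookkeeping over the range $[\gamma_X(1),r]$ that you describe.
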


In all our applications, the function $\sup\setcon{\frac{s}{\kappa_X(s)}}{\gamma_X(1) \leq s\leq r}$ will be equivalent to $\frac{r}{\kappa_X(r)}$ but in general this may not be the case. The proof requires a lemma.

\begin{lemma}\label{lem:inftylwbd} Let $Z$ be a subspace of $X$ with diameter $m\geq 3$ and let $a\geq 3$. Then $h^\infty_a(Z)\leq \frac{12a}{m}$, and if every $y,z\in Z$ can be joined by a $1$-path of length $\leq 2m$ then $h^\infty_a(Z) \geq \frac{1}{2m}$.
\end{lemma}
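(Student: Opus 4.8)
The plan is to establish the two inequalities independently. In both directions the key observation is that a distance function $g(x)=d(x,p)$ on $Z$ is $1$-Lipschitz and therefore has small upper gradient: any two points of a ball $B(x,a)$ lie within distance $2a$, so $|\nabla_a g|(x)\le 2a$ for every $x$ and $\|\nabla_a g\|_\infty\le 2a$.

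For the upper bound I would pick $p,q\in Z$ with $d(p,q)$ arbitrarily close to $\diam(Z)=m$, set $g(x)=d(x,p)$, and let $f=g-\dashint_Z g\,d\nu$, so that $f$ has zero average and $\|\nabla_a f\|_\infty=\|\nabla_a g\|_\infty\le 2a$. Since $Z$ is $1$-thick, $g$ is close to $0$ on a set of positive measure near $p$ and close to $d(p,q)$ on a set of positive measure near $q$; because $\dashint_Z g$ lies between the essential infimum and the essential supremum of $g$ over $Z$, this forces $\|f\|_\infty$ to be a definite fraction of $m$ — say $\|f\|_\infty\ge m/6$ once $m\ge 3$ — and hence $h^\infty_a(Z)\le 2a/\|f\|_\infty\le 12a/m$.

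For the lower bound I would take an arbitrary test function $f$ with $\dashint_Z f=0$ and $f\not\equiv 0$, and write $M=\|f\|_\infty$, $N=\|\nabla_a f\|_\infty$. Replacing $f$ by $-f$ if necessary, we may assume the essential supremum of $f$ over $Z$ equals $M$; since the average vanishes and $M>0$, the set $\{f\le 0\}$ has positive measure. Choose $u$ with $f(u)>M-\epsilon$ and $w$ with $f(w)\le 0$, and join them by a $1$-path $u=x_0,\dots,x_n=w$ of length at most $2m$ (so $n\le 2m$), as the hypothesis provides. Telescoping along this path,
\[
 M-\epsilon< f(u)-f(w)=\sum_{i=1}^{n}\bigl(f(x_{i-1})-f(x_i)\bigr)\le nN\le 2mN,
\]
using that each increment $|f(x_{i-1})-f(x_i)|$ is controlled by the upper gradient of $f$ at $x_{i-1}$. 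Letting $\epsilon\to0$ gives $N\ge M/(2m)$, and taking the infimum over $f$ yields $h^\infty_a(Z)\ge \tfrac1{2m}$.

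The delicate point — and the main obstacle — is that $|\nabla_a f|$ is defined pointwise whereas $\|\nabla_a f\|_\infty$ is an essential supremum, so the intermediate vertices $x_i$ of the path given by the hypothesis need not satisfy $|\nabla_a f|(x_i)\le N$. I would repair this by first choosing $u$ and $w$ inside the full-measure set $G=\{x:|\nabla_2 f|(x)\le N\}$ (full measure because $2\le a$ gives $|\nabla_2 f|\le|\nabla_a f|$ pointwise), and then replacing each interior vertex $x_i$ by some $x_i'\in B(x_i,\tfrac12)\cap G$, which is non-empty since $G$ has full measure. Consecutive points $x_{i-1}',x_i'$ then lie within distance $2$ of each other, and, each belonging to $G$, satisfy $|f(x_{i-1}')-f(x_i')|\le|\nabla_2 f|(x_{i-1}')\le N$, so the telescoping estimate above goes through with the $x_i'$ in place of the $x_i$. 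This is exactly where the hypothesis $a\ge 3$ enters: it gives the room to inflate length-$1$ hops to length-$2$ hops. Everything else — the triangle inequality for the upper gradient, the bound $\diam B(x,a)\le 2a$, and the comparison of averages with essential suprema — is routine.
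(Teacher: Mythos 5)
Your upper bound argument is essentially the paper's: take the distance function $g(x)=d(x,p)$ from a point $p$ near one end of a diameter, note $|\nabla_a g|\le 2a$, and use $1$-thickness to get positive-measure sets near each end on which $g$ is close to $0$ resp.\ close to $m$; both proofs are somewhat loose about whether $g$ is bounded by $1$ or $2$ on the positive-measure ball near $p$, but the conclusion $h_a^\infty(Z)\le 12a/m$ for $m\ge 3$ comes out the same way.

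For the lower bound, you have identified the right obstacle — one cannot directly bound $|f(x_{i-1})-f(x_i)|$ by $N=\|\nabla_a f\|_\infty$ at the path vertices, since $N$ is only an essential supremum — but your fix has a genuine gap. You want to choose $x_i'\in B(x_i,\tfrac12)\cap G$ and claim this is non-empty ``since $G$ has full measure.'' Full measure of $G$ only gives $\nu(B(x_i,\tfrac12)\cap G)=\nu(B(x_i,\tfrac12))$; you still need $\nu(B(x_i,\tfrac12))>0$, and the standing assumptions do not guarantee this. The bounded-packing hypothesis applies only to scales $\ge 1$, so balls of radius $\tfrac12$ in the general metric measure setting — and even more so balls of radius $\tfrac12$ in the subspace $Z$ with its induced measure — may have measure zero. (It works for graphs, where $B(x_i,\tfrac12)=\{x_i\}$ has unit mass, but the lemma is stated for general metric measure spaces.) If you repair this by using $1$-thickness to find a nearby center $c_i$ with $d(x_i,c_i)\le 1$ and $B(c_i,1)\subseteq Z$ (so that $\mu(B(c_i,1))>0$ and $B(c_i,1)\cap G\neq\emptyset$), then $d(x_i',x_i)\le 2$ and consecutive $x_i'$ are only within distance $5$, forcing you into $\nabla_5 f$ and the hypothesis $a\ge 5$, stronger than the stated $a\ge 3$. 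Relatedly, as written your argument only ever invokes $\nabla_2 f$, i.e.\ only uses $a\ge 2$; the fact that the hypothesis $a\ge 3$ goes unused is a symptom that something is off.

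The paper takes a cleaner route that matches $a\ge 3$ exactly: it does \emph{not} perturb the path. Having a pigeonhole index $j$ with $|f(z_j)-f(z_{j-1})|\ge\tfrac{1}{2m}(\|f\|_\infty-\delta)$, it chooses $y_j$ with $d(z_j,y_j)\le 1$ and $B(y_j,1)\subseteq Z$ (by $1$-thickness), and observes that for \emph{every} $x'\in B(y_j,1)$ one has $z_j,z_{j-1}\in B(x',3)$, whence $|\nabla_3 f|(x')\ge|f(z_j)-f(z_{j-1})|$. Since $B(y_j,1)$ has positive measure by bounded packing at scales $\ge 1$, this gives a lower bound on the essential supremum $\|\nabla_3 f\|_\infty\le\|\nabla_a f\|_\infty$ directly. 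You should adopt this ``bound the gradient from below on a positive-measure ball adjacent to the path'' trick rather than trying to move the path itself.
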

\begin{proof}
Choose $x,y\in Z$ such that $d(x,y)\geq m-\delta$, and define $f(z)=d(x,z)$.  It is clear that $f(z)\leq 1$ and $f(z')\geq m-1-\delta$ hold whenever $z\in B(x,1)$ and $z'\in B(y,1)$ and balls of radius $1$ have positive measure by the bounded packing assumption, so $\norm{f-f_Z}_\infty\geq \frac{m-2-\delta}{2}\geq \frac{m-\delta}{6}$, while $\norm{\nabla_a f}_\infty\leq 2a$ by the triangle inequality. Thus $h^\infty_a(Z)\leq \frac{12a}{m-\delta}$ for all $\delta>0$.

For the second inequality, fix $\delta>0$ and let $f\in L^\infty(Z)$ satisfy $\inf_{z\in Z} f(z)=0$. Choose $y,z$ so that $(f(z)-f(y))+\delta\geq \sup_{z\in Z}{\abs{f(z)}}\geq\norm{f}_\infty$.

By our hypothesis there exists a sequence of points $y=z_0,\ldots,z_k=z$ such that $k\leq 2m$ and $d(z_i,z_{i+1})\leq 1$ for all $i$. Since $Z$ is $1$-thick, for each $i$ we can choose $y_i$ so that $d(z_i,y_i)\leq 1$ and $B(y_i,1)\subseteq Z$. There is some $j$ such that $\abs{f(z_j)-f(z_{j-1})}\geq \frac{1}{2m}(\norm{f}_\infty-\delta)$, hence $\nabla_3 f \geq \frac{1}{2m}(\norm{f}_\infty-\delta)$ holds on $B(y_j,1)$ which has positive measure. Therefore, $\norm{\nabla_3 f}_\infty\geq \frac{1}{2m}(\norm{f}_\infty-\delta)$. Since we have $\norm{f-f_Z}_\infty\leq \norm{f}_\infty$, letting $\delta\to 0$, we see that $h^\infty_a(Z)\geq \frac{1}{2m}$.
\end{proof}

\begin{proof}[Proof of Proposition \ref{prop:Linfty_inverse growth}]
The upper bound on $\Lambda^\infty_{X,a}(r)$ follows immediately from Lemma \ref{lem:inftylwbd}.  Indeed, if $\mu(Z) \leq r$ then 
\[
	\mu(Z)h_a^\infty(Z) \lesssim_a \frac{\mu(Z)}{\diam(Z)} \leq \frac{\mu(Z)}{\kappa(\mu(Z))},
\]
so if $\mu(Z) \geq \gamma_X(1)$ we are done.
We can ignore $Z$ with $\mu(Z)$ bounded by any fixed constant like $\gamma_X(1)$ since any $f \in L^\infty(Z)$ has a representative with $\| \nabla_a f \|_\infty \leq 2\|f\|_\infty$, and so $\mu(Z)h_a^\infty(Z) \leq 2 \mu(Z)$ is bounded too.

We now prove the lower bound.
Let $t\geq 2$ and choose $x_t\in X$ such that $\mu(B(x_t,t))\geq \frac12\gamma_X(t)$. Define $Z_t$ to be the $1$-thick subspace $[B(x_t,t-1)]_1$. By Lemma \ref{lem:doublingThick} there is a constant $C$ (which does not depend on $t$) such that $
 \mu(Z_t) \leq \mu(B(x_t,t)) \leq \gamma_X(t) \leq C\mu(Z_t)$.

By Lemma \ref{lem:inftylwbd} $h^\infty_a(Z_t)\in[\frac{1}{12t},\frac{2a}{(t-1)}]$, so $\mu(Z_t)h^\infty_a(Z_t)\asymp_a \frac{\gamma_X(t)}{t}$.

There exists $s_0 \geq \gamma_X(1)$ so that for all $s \geq s_0$, $\kappa_X(s) \geq 3$.
On any bounded interval in $[\gamma_X(1),\infty)$, $\kappa_X$ is $\geq 1$ and so $s/\kappa_X(s)$ is bounded,
thus we may assume that $s$ and $r$ satisfy $s_0 \leq s \leq r$.
Repeating the above argument, we see that $\gamma_X(t)/\gamma_X(t-1)$ has a uniform upper bound which is independent of $t$.
Let $t = \kappa_X(s)-1 \geq 2$, and so $\gamma_X(t) \leq s \leq \gamma_X(t+2) \preceq \gamma_X(t)$. Thus for $r \geq s_0$,
\[
\Lambda_{X,a}^\infty(r) \succeq_a \frac{\gamma_X(t)}{t} \succeq \frac{s}{\kappa_X(s)}. \qedhere
\]
\end{proof}

\subsection{Separation profiles of metric measure spaces}\label{sec:sepdefns}
The first author has shown that the separation of a graph has an equivalent formulation in terms of Cheeger constants rather than cut sizes~\cite{HumSepExp}.  We now extend this to the setting of metric measure spaces $(X,d,\mu)$ which are $1$-geodesic and have bounded packing at scales $\geq 1$.

Given a subspace $A\subset X$ (which as usual we assume is $1$-thick and equipped with the induced measure and induced $1$-geodesic metric) we define the \textbf{boundary at scale} $a\geq 1$ of $A$ to be $$\partial_a A=[A]_a\cap
[A^c]_a$$ with the usual notation $A^c = X \setminus A$. For clarity, given a subspace $Z$ of $X$ and $A\subset Z$, we also define the boundary at scale $a$ of $A$ in $Z$ to be $\partial^Z_a A=Z\cap[A]_a\cap[Z\setminus A]_a$, and use this notion in the following.

\begin{definition}\label{def:mmsCheeger}
Let $(Z,d,\nu)$ be a metric measure space, where $\nu(Z)$ is finite and let $a\geq 2$. We define the \textbf{Cheeger constant at scale} $a$ of $Z$ to be
$$h_a(Z)=\inf \setcon{\frac{\nu(\partial^Z_a \Omega)}{\nu(\Omega)} }{ \nu(\Omega)\leq \frac{\nu(Z)}{2} }.$$

Let $(X,d,\mu)$ be a metric measure space. We define the function $\sep_{X,a}(r)=\sup \set{\mu(Z)h_a(Z)}$, where the supremum is taken over all ($1$-thick) subspaces $Z\subseteq X$ with $\mu(Z)\leq r$, and is $0$ if no such subspaces exist.
\end{definition}

\begin{remark}\label{rmk:bdrydefs} If $\Gamma$ is a finite graph of bounded degree $D$ then the boundary at scale $a$ has comparable size to the vertex boundary, so the usual (vertex) Cheeger constant $h(\Gamma)$ satisfies $h(\Gamma)\asymp_{a,D} h_a(\Gamma)$. As a result, if $X$ is an infinite graph of bounded degree $D$, then $\sep_{X,a}\simeq_{a,D}\sep_X$, where $\sep_X$ is the usual separation function for graphs.  (See~\cite[Propositions 2.2, 2.4]{HumSepExp}.)
\end{remark}

\subsection{Comparing Cheeger and $L^1$-Poincar\'{e} constants}\label{section:Cheeger/L1}

Our next goal is to prove Proposition \ref{bprop:P1sep}. Along the way we will also prove Proposition \ref{bprop:p1monotone}.

\begin{proposition}\label{prop:Lambda1Sep}
Let $(X,d,\mu)$ be a metric measure space and let $a\geq 2$. Then  
$$\frac{1}{2} \sep_{X,a} \leq\Lambda^1_{X,a}\leq \sep_{X,a}.$$
\end{proposition}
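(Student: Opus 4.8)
The plan is to prove the two inequalities subspace by subspace: I will show that for every ($1$-thick) subspace $Z\subseteq X$ with finite measure,
\[
\tfrac12\, h_a(Z)\ \le\ h^1_a(Z)\ \le\ h_a(Z),
\]
and then, since both $\sep_{X,a}(r)$ and $\Lambda^1_{X,a}(r)$ are suprema of $\mu(Z)\cdot(\text{constant})$ over the \emph{same} family of subspaces $\setcon{Z}{\mu(Z)\le r}$, taking suprema gives $\tfrac12\sep_{X,a}\le\Lambda^1_{X,a}\le\sep_{X,a}$ at once.

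The upper bound $h^1_a(Z)\le h_a(Z)$ is the easy direction. Given a measurable $\Omega\subseteq Z$ with $0<\nu(\Omega)\le\nu(Z)/2$, test the definition of $h^1_a(Z)$ with $f=\mathbf{1}_\Omega-\nu(\Omega)/\nu(Z)$, which has average $0$ and is not identically $0$. Adding a constant does not change the gradient, and one checks directly that $\abs{\nabla_a\mathbf{1}_\Omega}=\mathbf{1}_{\partial^Z_a\Omega}$, so $\norm{\nabla_a f}_1=\nu(\partial^Z_a\Omega)$; meanwhile a one-line computation gives $\norm{f}_1=2\nu(\Omega)\big(1-\nu(\Omega)/\nu(Z)\big)\ge\nu(\Omega)$ using $\nu(\Omega)\le\nu(Z)/2$. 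Hence $h^1_a(Z)\le\nu(\partial^Z_a\Omega)/\nu(\Omega)$, and taking the infimum over $\Omega$ yields $h^1_a(Z)\le h_a(Z)$.

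The lower bound $h^1_a(Z)\ge\tfrac12 h_a(Z)$ uses the co-area formula. First I record the identity: for any measurable $f\colon Z\to\R$,
\[
\int_Z\abs{\nabla_a f}\,d\nu\ =\ \int_{-\infty}^{\infty}\nu\big(\partial^Z_a\set{f>t}\big)\,dt,
\]
which is Tonelli's theorem applied to the observation that, for fixed $x$, the set of levels $t$ with $x\in\partial^Z_a\set{f>t}$ is exactly $\big[\inf_{B(x,a)}f,\ \sup_{B(x,a)}f\big)$, an interval of length $\abs{\nabla_a f}(x)$. Now take any $f\in L^1(Z,\nu)$ with $f_Z=0$ and let $m$ be a median of $f$, which is a $1$-energy minimizer by Lemma~\ref{lem:median-minimizer}. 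Split the integral at $t=m$. For $t\ge m$ we have $\set{f>t}\subseteq\set{f>m}$, so $\nu(\set{f>t})\le\nu(Z)/2$ and $\nu(\partial^Z_a\set{f>t})\ge h_a(Z)\,\nu(\set{f>t})$. For $t<m$ we use the symmetry $\partial^Z_a\Omega=\partial^Z_a(Z\setminus\Omega)$ together with $\set{f\le t}\subseteq\set{f<m}$, whose measure is $\le\nu(Z)/2$, to get $\nu(\partial^Z_a\set{f>t})=\nu(\partial^Z_a\set{f\le t})\ge h_a(Z)\,\nu(\set{f\le t})$; degenerate level sets of measure $0$ satisfy this trivially. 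Integrating and using $\int_m^\infty\nu(\set{f>t})\,dt+\int_{-\infty}^m\nu(\set{f\le t})\,dt=\norm{f-m}_1$ gives $\norm{\nabla_a f}_1\ge h_a(Z)\norm{f-m}_1$. Finally, by Lemma~\ref{lem:average/energymin} applied with $c_1=m$, $\norm{f-m}_1\ge\tfrac12\norm{f-f_Z}_1=\tfrac12\norm{f}_1$, so $\norm{\nabla_a f}_1/\norm{f}_1\ge\tfrac12 h_a(Z)$; taking the infimum over admissible $f$ gives $h^1_a(Z)\ge\tfrac12 h_a(Z)$.

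The only step needing genuine care is the co-area identity at scale $a$: one must identify precisely the set of levels realised at each point, check the joint measurability of $\setcon{(x,t)}{x\in\partial^Z_a\set{f>t}}$ needed for Tonelli, and handle both the degenerate (measure-zero) level sets and the symmetry $\partial^Z_a\Omega=\partial^Z_a(Z\setminus\Omega)$, which is what lets both tails of the split be controlled by $h_a(Z)$. Everything else is bookkeeping with the definitions of $h_a$, $h^1_a$ and the two lemmas comparing averages, medians and energy minimizers. (The same co-area computation, now decomposing $\abs{\nabla_a f}^p$ via $s^p=p\int_0^s t^{p-1}\,dt$, also yields Proposition~\ref{bprop:p1monotone}.)
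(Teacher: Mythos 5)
Your proof is correct and establishes the subspace inequality $\tfrac12 h_a(Z)\le h^1_a(Z)\le h_a(Z)$, which is the content of the paper's Proposition~\ref{prop:h1h} (stated there as $h^1_a(Z)\le h_a(Z)\le 2h^1_a(Z)$); passing to suprema is then immediate, as you say. The upper bound, via the (shifted) indicator test function, is identical to the paper's. The lower bound, however, is organised genuinely differently. The paper normalises a near-optimal $f$ to have median $0$, splits it into its positive and negative parts $f_\pm$, uses the additivity of $\|{\cdot}\|_1$ and $\|\nabla_a{\cdot}\|_1$ over this splitting together with a ratio-of-sums observation to reduce to a single non-negative function, and then runs co-area on $\R_+$ plus a pigeonhole step to exhibit a super-level set $\Omega_t$ with $\nu(\Omega_t)\le\nu(Z)/2$ and a good Cheeger ratio. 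You instead extend the co-area identity to signed $f$ over all of $\R$, split the level-set integral at the median $m$, and bound \emph{both} tails by $h_a(Z)$: the upper tail using $\{f>t\}\subseteq\{f>m\}$, the lower tail using the symmetry $\partial^Z_a\Omega=\partial^Z_a(Z\setminus\Omega)$ applied to $\{f\le t\}\subseteq\{f<m\}$. This gives the clean functional inequality $\|\nabla_a f\|_1\ge h_a(Z)\,\|f-m\|_1$ for \emph{every} admissible $f$, with no intermediate-value step and no choice of a particular level set, and the factor $2$ enters only at the very end via Lemmas~\ref{lem:average/energymin} and~\ref{lem:median-minimizer}. Both routes rely on the median/energy-minimizer comparison; yours bypasses the $f_\pm$ decomposition (a specifically $L^1$ trick) at the modest cost of verifying the two-sided co-area identity and its measurability, and it isolates where the constant $2$ comes from more transparently. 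One small caveat: your closing parenthetical suggests deriving Proposition~\ref{bprop:p1monotone} by a $p$-weighted co-area decomposition, whereas the paper obtains it by re-running the indicator-function estimate with $L^p$ norms and then invoking $h_a(Z)\le 2h^1_a(Z)$; both work, but they are not the same computation.
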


We prove this by comparing the Cheeger constant and the $L^1$-Poin\-car\'{e} constant.  We recall the following classical co-area formula.

\begin{proposition}\label{prop:coarea} 
Let $(X,d,\mu)$ be a metric measure space. The following co-area formula holds for every non-negative measurable function $f:X\to\R$.
\begin{equation}\label{coaire}
\int_{X}|\nabla_a f|(x)d\mu(x) = \int_{\R_+}\mu\left(\partial_a \{f> t\}\right)dt
\end{equation}
\end{proposition}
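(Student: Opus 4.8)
The plan is to prove the co-area formula \eqref{coaire} by rewriting both sides as double integrals over a common domain and applying Fubini's theorem. First I would fix a non-negative measurable $f:X\to\R$ and observe that, by the definition of the upper gradient at scale $a$, for each $x\in X$ we have $|\nabla_a f|(x) = \sup_{y,y'\in B(x,a)}|f(y)-f(y')|$. The key observation is the ``layer-cake'' identity: for any $x$,
\[
|\nabla_a f|(x) = \int_0^\infty \mathbf{1}\big[x\in\partial_a\{f>t\}\big]\,dt,
\]
since $x\in\partial_a\{f>t\}$ precisely when $[B(x,a)\ni]$ some point has $f$-value $>t$ and some other point has $f$-value $\leq t$; the set of such $t$ is (essentially) the interval between $\inf_{B(x,a)}f$ and $\sup_{B(x,a)}f$, whose length is exactly $|\nabla_a f|(x)$. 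I would check this identity carefully, being slightly careful about the endpoints (whether the sup/inf are attained, and whether the relevant set is $\{f>t\}$ versus $\{f\geq t\}$), but these only affect a measure-zero set of $t$ and so do not change the integral.

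Next I would integrate this identity over $x\in X$ against $\mu$:
\[
\int_X |\nabla_a f|(x)\,d\mu(x) = \int_X \int_0^\infty \mathbf{1}\big[x\in\partial_a\{f>t\}\big]\,dt\,d\mu(x).
\]
The integrand $(x,t)\mapsto \mathbf{1}[x\in\partial_a\{f>t\}]$ is non-negative, so Tonelli's theorem applies provided it is jointly measurable on $X\times[0,\infty)$; I would verify joint measurability using that $t\mapsto \mathbf{1}[\,\inf_{B(x,a)}f \le t < \sup_{B(x,a)}f\,]$ has the stated form and that $x\mapsto \inf_{B(x,a)}f$, $x\mapsto\sup_{B(x,a)}f$ are measurable (the latter follows from separability of $X$, reducing the sup/inf over $B(x,a)$ to a countable dense subset, plus lower/upper semicontinuity-type arguments). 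Swapping the order of integration then gives
\[
\int_X |\nabla_a f|(x)\,d\mu(x) = \int_0^\infty \mu\big(\partial_a\{f>t\}\big)\,dt,
\]
which is exactly \eqref{coaire}.

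The main obstacle I anticipate is not the measure-theoretic bookkeeping itself but making the pointwise identity $|\nabla_a f|(x) = \mathrm{Leb}\{t : x\in\partial_a\{f>t\}\}$ precise. One has to be careful that $|\nabla_a f|(x)$ is defined via a supremum of differences $|f(y)-f(y')|$ over $y,y'\in B(x,a)$, whereas the condition $x\in\partial_a\{f>t\}$ depends on comparing $\sup_{B(x,a)}f$ and $\inf_{B(x,a)}f$ to the threshold $t$; reconciling these requires noting that $\sup_{y,y'\in B(x,a)}|f(y)-f(y')| = \sup_{B(x,a)}f - \inf_{B(x,a)}f$ (valid whether or not these are finite, with the usual conventions), and that $x\in[A]_a\cap[A^c]_a$ for $A=\{f>t\}$ translates to $\inf_{B(x,a)}f\leq t$ and $\sup_{B(x,a)}f > t$ up to boundary cases in $t$. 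Once this is pinned down, the rest is a routine application of Tonelli. I would also remark that this is the standard proof of the discrete co-area formula for graphs (cf.\ \cite{HumSepExp}), here adapted to the metric measure space setting with the scale-$a$ gradient and boundary.
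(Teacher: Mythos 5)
Your proof is correct and follows essentially the same route as the paper: you express $|\nabla_a f|(x)$ pointwise as the Lebesgue measure of $\setcon{t\geq 0}{x\in\partial_a\{f>t\}}$ (which is the interval between $\inf_{B(x,a)} f$ and $\sup_{B(x,a)} f$), and then integrate over $X$ and apply Tonelli. The paper phrases the same pointwise identity via $|\nabla_a 1_{\{f>t\}}|(x)$ in place of $\mathbf{1}[x\in\partial_a\{f>t\}]$ (which are the same thing), and leaves the Tonelli step and measurability implicit, so the only difference is that you spell out these routine checks.
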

\begin{proof}
For every measurable subset $A\subset X$, we have
\begin{equation}\label{eq:PartialGradient}
\mu(\partial_a A)=\int_X|\nabla_a 1_A|(x)d\mu(x).
\end{equation}
Thus, (\ref{coaire}) follows by integrating over $X$ the following
local equalities
\begin{equation}\label{localcoaire}
|\nabla_a f|(x) = \int_{\R_+}|\nabla_a 1_{\{f> t\}}|(x) dt.
\end{equation}
It remains to show that these equalities hold for all $x\in X$.

Notice that $|\nabla_a 1_{\{f>t\}}(x)|=1$ if and only if
there exists $y,y' \in B(x,a)$ with $f(y)>t$ and $f(y')\leq t$.
In particular, $|\nabla_a 1_{\{f>t\}}(x)|$ equals one for
$t \in (\inf_{B(x,a)}f,\sup_{B(x,a)}f)$ and equals zero for
$t \notin [\inf_{B(x,a)}f,\sup_{B(x,a)}f]$.
Hence, $$\int_{\R_+}|\nabla_a 1_{\{f> t\}}|(x) dt =
\sup_{B(x,a)}f-\inf_{B(x,a)}f = |\nabla_a f|(x),$$ which proves
(\ref{localcoaire}).
\end{proof}

We can now prove the required relation between $h_a(Z)$ and $h_a^1(Z)$.

\begin{proposition}\label{prop:h1h}
Let $(Z,d,\nu)$ be a metric measure space with finite positive measure $\nu$ and let $a\geq 2$. Then  
$$h^1_a(Z)\leq h_{a}(Z)\leq 2h^1_a(Z).$$
\end{proposition}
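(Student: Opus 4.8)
The plan is to establish the two inequalities separately, using the co-area formula of Proposition~\ref{prop:coarea} as the bridge between the integral $\int_Z |\nabla_a f|\,d\nu$ and the boundary measures $\nu(\partial^Z_a\{f>t\})$.

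For the inequality $h_a^1(Z)\leq h_a(Z)$: given any measurable $\Omega\subset Z$ with $\nu(\Omega)\leq \nu(Z)/2$, I would test the $L^1$-Poincar\'e constant with $f=1_\Omega$. A median of $1_\Omega$ is $0$ (since $\nu(\{f>0\})=\nu(\Omega)\leq\nu(Z)/2$), so by Lemmas~\ref{lem:median-minimizer} and~\ref{lem:average/energymin} we have $\|f-f_Z\|_1\leq 2\|f-0\|_1=2\nu(\Omega)$; in fact it is cleaner to just use that $0$ is a $1$-energy minimizer, giving $\|f-f_Z\|_1 \geq \|f\|_1 = \nu(\Omega)$ after applying the left inequality of Lemma~\ref{lem:average/energymin}. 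Meanwhile $\|\nabla_a f\|_1 = \int_Z |\nabla_a 1_\Omega|\,d\nu = \nu(\partial^Z_a\Omega)$ by equation~\eqref{eq:PartialGradient} (applied within $Z$). Hence $h_a^1(Z)\leq \|\nabla_a f\|_1/\|f-f_Z\|_1 \leq \nu(\partial^Z_a\Omega)/\nu(\Omega)$, and taking the infimum over admissible $\Omega$ gives $h_a^1(Z)\leq h_a(Z)$. I should double-check that $f$ is non-constant, i.e. that $\Omega\neq\emptyset$ and $\Omega\neq Z$; the case $\nu(\Omega)=0$ is excluded by the definition of $h_a(Z)$ (or handled by the convention), and $\nu(\Omega)\leq\nu(Z)/2<\nu(Z)$ ensures $\Omega\neq Z$.

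For the inequality $h_a(Z)\leq 2h_a^1(Z)$: take any non-constant $f\in L^1(Z)$ with $f_Z=0$; by Lemma~\ref{lem:median-minimizer} and Lemma~\ref{lem:average/energymin}, after replacing $f$ by $f-m_f$ where $m_f$ is a median, we have $\|f\|_1 \leq 2\|f-f_Z\|_1$ (here $f-f_Z=f$), so it suffices to bound $h_a(Z)\,\|f\|_1$ by $\|\nabla_a f\|_1$ for $f$ with median $0$. Split $f=f_+ - f_-$ into positive and negative parts. For $f_+$, the co-area formula within $Z$ gives $\int_Z |\nabla_a f_+|\,d\nu = \int_0^\infty \nu(\partial^Z_a\{f_+>t\})\,dt$, and for each $t>0$ the superlevel set $\{f_+>t\}=\{f>t\}$ has measure $\leq\nu(Z)/2$ since $0$ is a median; hence $\nu(\partial^Z_a\{f>t\})\geq h_a(Z)\,\nu(\{f>t\})$, and integrating gives $\int_Z|\nabla_a f_+|\,d\nu \geq h_a(Z)\int_0^\infty \nu(\{f>t\})\,dt = h_a(Z)\,\|f_+\|_1$. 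The symmetric argument applied to $-f$ (whose median is also $0$, or at least $0$ still satisfies $\nu(\{-f>t\})\leq\nu(Z)/2$) yields $\int_Z|\nabla_a f_-|\,d\nu\geq h_a(Z)\,\|f_-\|_1$. Since $|\nabla_a f|\leq |\nabla_a f_+| + |\nabla_a f_-|$ pointwise, adding the two estimates gives $\|\nabla_a f\|_1 \geq h_a(Z)\,\|f\|_1 \geq \tfrac12 h_a(Z)\,\|f-f_Z\|_1$, and taking the infimum over $f$ gives $h_a^1(Z)\geq \tfrac12 h_a(Z)$.

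The only mild subtlety — and the step I would be most careful about — is the bookkeeping with the boundary operator relative to the subspace $Z$: I need the co-area formula and the identity~\eqref{eq:PartialGradient} to hold with $\partial_a$ replaced by $\partial^Z_a$, i.e. with all balls, gradients and integrals taken inside the metric measure space $(Z,d,\nu)$ rather than in the ambient $X$. This is automatic since Propositions~\ref{prop:coarea} applies to any metric measure space, here applied to $Z$ itself, so that $\partial_a$ computed in $Z$ is exactly $\partial^Z_a$. Everything else is a routine application of the previously established lemmas on medians, energy minimizers and the co-area formula.
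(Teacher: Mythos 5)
Your argument is close in spirit to the paper's proof, with one direction error that needs fixing and a minor structural difference worth noting.

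For the first inequality, you and the paper both take $f=1_\Omega$ and use \eqref{eq:PartialGradient} for the numerator; you invoke the left half of Lemma~\ref{lem:average/energymin} (with the observation that $0$ is a median, hence a $1$-energy minimizer), whereas the paper computes $\|f-f_Z\|_1$ directly. Both give $\|f-f_Z\|_1\geq\nu(\Omega)$. For the second inequality, your phrasing is the ``dual'' of the paper's: you show that \emph{every} test function $g$ with median $0$ satisfies $\|\nabla_a g\|_1\geq h_a(Z)\|g\|_1$ by lower-bounding both halves via co-area, whereas the paper picks a near-optimal $f$, uses the observation about $\frac{s+s'}{t+t'}$ to isolate one of $f_\pm$, and then extracts a single level set $\Omega_t$ with small Cheeger ratio. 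Both routes work; yours avoids the $\delta$-bookkeeping, while the paper's is constructive.

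The actual slip: you write ``Since $|\nabla_a f|\leq|\nabla_a f_+|+|\nabla_a f_-|$ pointwise, adding the two estimates gives $\|\nabla_a f\|_1\geq h_a(Z)\|f\|_1$.'' This does not follow. From $\|\nabla_a f_\pm\|_1\geq h_a(Z)\|f_\pm\|_1$ you get $\|\nabla_a f_+\|_1+\|\nabla_a f_-\|_1\geq h_a(Z)\|f\|_1$, and to pass to $\|\nabla_a f\|_1$ on the left you need the inequality $|\nabla_a f|\geq|\nabla_a f_+|+|\nabla_a f_-|$, i.e.\ the opposite of what you wrote. Fortunately this is in fact a pointwise \emph{equality}: writing $M=\sup_{B(x,a)}f$ and $m=\inf_{B(x,a)}f$, one checks directly that $|\nabla_a f_+|(x)+|\nabla_a f_-|(x)=M-m=|\nabla_a f|(x)$ in all three cases $m\leq M\leq 0$, $m<0<M$, $0\leq m\leq M$. (The paper uses the same identity, stated as $\|\nabla_a f\|_1=\|\nabla_a f_+\|_1+\|\nabla_a f_-\|_1$.) So your proof goes through once you replace ``$\leq$'' by ``$=$'' and justify it, but as written the chain of implications is reversed. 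I would also tighten the notation in the sentence beginning ``by Lemma~\ref{lem:median-minimizer}\ldots after replacing $f$ by $f-m_f$'': the parenthetical ``(here $f-f_Z=f$)'' refers to the original $f$, but the conclusion $\|f\|_1\leq 2\|f-f_Z\|_1$ you actually need is $\|f-f_Z\|_1\leq 2\|g\|_1$ where $g=f-m_f$, which is the right-hand inequality of Lemma~\ref{lem:average/energymin} combined with Lemma~\ref{lem:median-minimizer}; giving the shifted function a new name would make this unambiguous.
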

\begin{proof}
Let $\Omega\subset Z$ such that $\nu(\Omega)\leq \nu(Z)/2$. We deduce from  (\ref{eq:PartialGradient}) that
$$\|\nabla_a f\|_1=\nu(\partial^Z_a \Omega),$$
where $f=1_{\Omega}$. 
On the other hand,
$$\|f-f_Z\|_1=\nu(\Omega)\left(1-\frac{\nu(\Omega)}{\nu(Z)}\right)+(\nu(Z)-\nu(\Omega))\left(\frac{\nu(\Omega)}{\nu(Z)}\right)\geq \nu(\Omega).$$
Hence $h_{a}^1(Z)\leq h_a(Z)$.

By Lemmas \ref{lem:average/energymin} and \ref{lem:median-minimizer}, for each $\delta>0$ we may choose $f\in L^1(Z,\nu)$ (with median $0$) such that
$$ \frac{\|\nabla_a f\|_1}{\|f\|_1}\leq 2 h_a^1(Z)+\delta.$$
Let $f_+ = \max\{f,0\}$ and $f_- = \min\{f,0\}$.
For any $s,s',t,t' > 0$ if $\frac{s+s'}{t+t'} \leq C$ then $\frac{s}{t} \leq C$ or $\frac{s'}{t'}\leq C$.
Since $\|f\|_1=\|f_-\|_1+\|f_+\|_1$ and $\|\nabla_a f\|_1=\|\nabla_a f_+\|_1+\|\nabla_a f_-\|_1$, we deduce that up to replacing $f$ by $-f$, we have 
$$ \frac{\|\nabla_a f_+\|_1}{\|f_+\|_1}\leq 2 h_a^1(Z) +\delta.$$
Hence using (\ref{coaire}) and the fact that 
$$\|f_+\|_1=\int_{\R_+}\nu(\{f>t\})dt,$$ we conclude that there exists some $t\geq 0$ such that the subset $\Omega_t=\{f> t\}$ satisfies
$$h_a(Z)\leq \frac{\nu(\partial^Z_a\Omega_t) }{\nu(\Omega_t)}\leq 2h_a^1(Z)+\delta.$$  
This proves the second inequality.
\end{proof}

\begin{proof}[Proof of Proposition \ref{bprop:P1sep}:] By Remark \ref{rmk:bdrydefs} and Proposition \ref{prop:Lambda1Sep} for all $a\geq 2$:
\[
 \sep_X(r) \simeq_a \sep_{X,a}(r) \simeq \Lambda^1_{X,a}(r). \qedhere
\]
\end{proof}

\begin{proof}[Proof of Proposition \ref{bprop:p1monotone}:] The first half of the above proof can easily be adapted to prove that $2^{1-p}h^p_a(Z)^p\leq h_{a}(Z)$. Hence, $h_a^p(Z)^p\leq 2^ph_a^1(Z)$.
\end{proof}

\section{Dependency on $p$}\label{sec:pdep}

In this section we prove Proposition~\ref{bprop:pqmonotone}.
One trivial upper bound can always be put on Poincar\'{e} constants.

\begin{proposition}\label{prop:linubd}
Let $(Z,d,\nu)$ be a metric measure space with $\nu(Z)$ finite. Assume there is no $z\in Z$ with $\nu(\set{z})>\frac23\mu(Z)$. For all $p\in[1,\infty)$ and all $a\geq 2$, $h^p_a(Z)\leq 2 \cdot 3^{\frac1p}$.
\end{proposition}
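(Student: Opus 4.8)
Let $(Z,d,\nu)$ be a metric measure space with $\nu(Z)$ finite. Assume there is no $z\in Z$ with $\nu(\{z\})>\tfrac23\mu(Z)$. For all $p\in[1,\infty)$ and all $a\geq 2$, $h^p_a(Z)\leq 2\cdot 3^{1/p}$.

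Wait — let me think carefully about what this says and how to prove it.

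\textbf{Proof plan.} The Poincar\'e constant $h^p_a(Z)$ is an infimum over test functions with average zero, so to get an upper bound it suffices to exhibit one good test function. The natural candidate is a normalised indicator: pick a measurable set $\Omega\subseteq Z$ that is ``balanced'' — with $\nu(\Omega)$ a definite fraction of $\nu(Z)$ — and take $f=1_\Omega-t\,1_Z$ where $t=\nu(\Omega)/\nu(Z)$, which automatically has $f_Z=0$. Since $1_Z$ is constant on $Z$, we have $|\nabla_a f|=|\nabla_a 1_\Omega|$, which takes values in $\{0,1\}$, so by \eqref{eq:PartialGradient} one gets $\|\nabla_a f\|_p^p=\nu(\partial^Z_a\Omega)\le\nu(Z)$, a bound \emph{independent of $p$ and $a$}. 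Meanwhile $\|f\|_p^p=\nu(Z)\bigl(t(1-t)^p+(1-t)t^p\bigr)$, and a one-line estimate (assume WLOG $t\le\tfrac12$; then $t(1-t)^p\ge t\cdot 2^{-p}$) shows this is $\gtrsim \nu(Z)\,t\,2^{-p}$. Taking the ratio and the $p$-th root will give $h^p_a(Z)\lesssim 2\,t^{-1/p}\le 2\cdot 3^{1/p}$ provided we can arrange $t\ge\tfrac13$.

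The only real content, then, is producing a set $\Omega$ with $\tfrac13\nu(Z)\le\nu(\Omega)\le\tfrac23\nu(Z)$, and this is exactly where the hypothesis that no point has measure $>\tfrac23\nu(Z)$ enters. I would split into two cases. If some atom $z$ has $\nu(\{z\})\ge\tfrac13\nu(Z)$, then by hypothesis $\nu(\{z\})\le\tfrac23\nu(Z)$ and $\Omega=\{z\}$ works. Otherwise every point has measure $<\tfrac13\nu(Z)$; decomposing $\nu$ into its atomic and non-atomic parts and using that a finite non-atomic measure admits a countable partition into pieces of measure $<\tfrac13\nu(Z)$, we can write $Z$ as a countable disjoint union $\bigsqcup_i E_i$ with every $\nu(E_i)<\tfrac13\nu(Z)$. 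Then take $\Omega$ to be the union $\bigsqcup_{i\le n}E_i$ for the least $n$ with $\nu(\Omega)\ge\tfrac13\nu(Z)$; since the last piece added has measure $<\tfrac13\nu(Z)$, we automatically also have $\nu(\Omega)<\tfrac23\nu(Z)$. (Degenerate edge cases — $\nu(Z)=0$, where $h^p_a(Z)=0$ by convention — are handled trivially.)

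With $\Omega$ in hand the rest is the routine computation sketched above, and one checks $f\not\equiv 0$ from $\nu(\Omega),\nu(Z\setminus\Omega)\ge\tfrac13\nu(Z)>0$. The main obstacle is therefore not analytic at all but measure-theoretic: the balanced-splitting lemma and making sure the no-large-atom hypothesis is used in precisely the right place (it is what prevents the measure from being concentrated so badly that no such $\Omega$ exists — e.g.\ a single atom carrying almost all the mass, in which case the Poincar\'e constant would indeed be large). Everything else — the bound $\|\nabla_a f\|_p^p\le\nu(Z)$, the lower bound on $\|f\|_p^p$, and taking $p$-th roots — is elementary and uses none of the standing assumptions beyond finiteness of $\nu(Z)$.
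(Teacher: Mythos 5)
Your proposal is correct and follows essentially the same route as the paper: exhibit a balanced subset $\Omega$ with $\tfrac13\nu(Z)\le\nu(\Omega)\le\tfrac23\nu(Z)$, test against its (mean-subtracted) characteristic function, and bound $\|\nabla_a f\|_p^p\le\nu(Z)$ while $\|f-f_Z\|_p^p\ge\nu(Z)/(3\cdot 2^p)$. The only cosmetic difference is in producing $\Omega$: the paper appeals directly to the fact that a finite Borel measure on a complete separable space is measure-isomorphic to an interval plus countably many atoms, whereas you split into the big-atom/no-big-atom cases and greedily accumulate small pieces; both are valid and use the no-large-atom hypothesis in the same place.
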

\begin{proof} By our standing assumptions (Definition~\ref{def:metricmeasurespace}), $\nu$ is measure isomorphic to a real interval and an at-most-countable collection of atoms.  It is then easy to find a subset $Y\subset Z$ satisfying $\frac13\nu(Z)\leq \nu(Y) \leq \frac23\nu(Z)$.  Let $f$ be the characteristic function of $Y$. 

Then $\norm{f-f_Y}_p^p \geq \frac{\nu(Z)}{3 \cdot 2^{p}}$ and $\norm{\nabla_a f}_p^p \leq \nu(Z)$, thus $h^p_a(Z) \leq 2 \cdot 3^{\frac1p}$.
\end{proof}

Equipped with this we are now able to study the relationship between different Poincar\'{e} profiles of the same space and prove Proposition \ref{bprop:pqmonotone}.

\begin{proposition}\label{prop:monotonep} 
Let $(Z,d,\nu)$ be a metric measure space with $\nu(Z)$ finite. Assume there is no $z\in Z$ with $\nu(\set{z})>\frac23\nu(Z)$. Then for all $1\leq p \leq q<\infty$ and all $a\geq 2$,
$$h_a^q(Z)\succeq_{p,q} h_a^p(Z).$$

For all metric measure spaces $(X,d,\mu)$ (where $\mu$ is possibly infinite), and all $1\leq p \leq q<\infty$,
 $$\Lambda^q_X\succeq_{p,q} \Lambda^p_X.$$
\end{proposition}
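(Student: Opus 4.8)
The plan is to first establish the pointwise inequality $h_a^q(Z)\succeq_{p,q}h_a^p(Z)$ by a power-substitution argument, and then deduce the statement about profiles subspace by subspace. Given $\eps>0$, pick a near-minimiser $f$ for $h_a^q(Z)$, normalised so that $\dashint_Z f\,d\nu=0$, $\|f\|_q=1$ and $\|\nabla_a f\|_q\le h_a^q(Z)+\eps=:\eta$. The idea is to feed into the definition of $h_a^p(Z)$ the function $g=\Phi\circ f$, where $\Phi(t)=\sign(t)|t|^{q/p}$: since $q/p\ge1$ the map $\Phi$ is $C^1$ and monotone with $|\Phi'(t)|=\tfrac qp|t|^{q/p-1}$, and $\|g\|_p^p=\int_Z|f|^q=1$, so $g$ is a legitimate competitor.

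I would first dispose of a degenerate case: if $h_a^q(Z)\ge\tfrac12$ then Proposition~\ref{prop:linubd} (this is the only place the hypothesis that no atom has mass $>\tfrac23\nu(Z)$ is used) gives $h_a^p(Z)\le2\cdot3^{1/p}\le12\,h_a^q(Z)$, so from now on assume $\eta\le\tfrac12$. There are then three ingredients. (i) \emph{A gradient bound:} by the mean value theorem, together with $\sup_{B(x,a)}|f|\le|f(x)|+|\nabla_a f|(x)$,
\[ |\nabla_a g|(x)\le\tfrac qp\bigl(|f(x)|+|\nabla_a f|(x)\bigr)^{q/p-1}|\nabla_a f|(x), \]
and raising this to the $p$th power, integrating and applying H\"older with conjugate exponents $\tfrac{q}{q-p}$ and $\tfrac qp$ gives $\|\nabla_a g\|_p^p\le(q/p)^p\bigl\|\,|f|+|\nabla_a f|\,\bigr\|_q^{q-p}\|\nabla_a f\|_q^p\le(q/p)^p2^q\eta^p$. (ii) \emph{The size of $g$:} $\|g\|_p=1$. (iii) \emph{$g$ stays away from the constants:} I claim $\|g-c\|_p\ge4^{-q/p}$ for every $c\in\R$. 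Given $c$, put $c'=\Phi^{-1}(c)=\sign(c)|c|^{p/q}$; since $s\mapsto\sign(s)|s|^{p/q}$ is $(p/q)$-H\"older with constant $\le2$, one has $|f(x)-c'|\le2|g(x)-c|^{p/q}$ pointwise, hence $\|f-c'\|_q\le2\|g-c\|_p^{p/q}$. On the other hand $\dashint_Z f\,d\nu=0$ forces both $1=\|f\|_q\le\|f-c'\|_q+\nu(Z)^{1/q}|c'|$ and $\nu(Z)^{1/q}|c'|=\nu(Z)^{1/q}\bigl|\dashint_Z(f-c')\,d\nu\bigr|\le\|f-c'\|_q$, so $1\le2\|f-c'\|_q\le4\|g-c\|_p^{p/q}$, which is the claim; in particular $\|g-c_p(g)\|_p\ge4^{-q/p}$ for the $p$-energy minimiser $c_p(g)$. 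Combining (i)--(iii) with Lemma~\ref{lem:average/energymin}, which yields $h_a^p(Z)\le\|\nabla_a g\|_p/\|g-c_p(g)\|_p$, we get $h_a^p(Z)\le(q/p)8^{q/p}\eta$; letting $\eps\to0$ gives $h_a^p(Z)\le(q/p)8^{q/p}h_a^q(Z)$.

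For the profile statement, fix $a=2$ and let $A\subseteq X$ be any subspace with $\mu(A)\le r$. If $A$ has no atom of mass exceeding $\tfrac23\mu(A)$ then the pointwise inequality gives $\mu(A)h_a^p(A)\le C(p,q)\,\mu(A)h_a^q(A)\le C(p,q)\,\Lambda_X^q(r)$; a subspace with such a dominant atom requires a short separate treatment, but this case does not arise for the spaces of primary interest (graphs, manifolds, locally compact groups), where atoms either are absent or all have comparable mass. Taking the supremum over admissible $A$ yields $\Lambda_X^p(r)\le C(p,q)\Lambda_X^q(r)$, i.e.\ $\Lambda_X^q\succeq_{p,q}\Lambda_X^p$. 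The step I expect to be the main obstacle is ingredient (iii): one must rule out that the substituted function $g$ collapses towards a constant, which would make its Rayleigh quotient blow up; the mechanism that saves us is that $\Phi^{-1}$ is H\"older continuous, so closeness of $g$ to a constant forces closeness of $f$ to a constant, contradicting the normalisation $\dashint_Z f\,d\nu=0$, $\|f\|_q=1$.
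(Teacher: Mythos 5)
Your argument is correct and rests on the same key mechanism as the paper's proof: the power substitution $t\mapsto\sign(t)|t|^{q/p}$, the mean-value inequality (Matou\v{s}ek's lemma, cited in the paper) to control the gradient of the transformed function, and H\"older's inequality with exponents $q/(q-p)$ and $q/p$ to conclude. The one place you genuinely diverge is how the centering requirement is handled. The paper applies the power transform to $g+C$, with $C$ chosen by a continuity/intermediate-value argument so that $f^C=\{g+C\}^{q/p}$ has average zero; you instead apply $\Phi$ directly to the zero-average near-minimiser $f$ and argue, via the $(p/q)$-H\"older continuity of $\Phi^{-1}$, that $\Phi\circ f$ cannot be uniformly close to any constant $c$ without forcing $f$ close to $\Phi^{-1}(c)$, which would contradict $\|f\|_q=1$ and $f_Z=0$. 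Both devices resolve the same technicality and give the same constants up to factors; yours sidesteps the continuity argument at the cost of your ingredient (iii). One small remark: your invocation of Lemma~\ref{lem:average/energymin} to pass from $c_p(g)$ to $g_Z$ is unnecessary, since ingredient (iii) already gives $\|g-c\|_p\geq 4^{-q/p}$ for every constant $c$, including $c=g_Z$. Finally, regarding the dominant-atom caveat for the profile inequality: you are right to flag it, but note that the paper's own proof carries exactly the same implicit assumption, since it passes the Poincar\'e-constant bound to each $1$-thick subspace without comment; for the spaces of interest (bounded-degree graphs, Riemannian manifolds with bounded geometry, locally compact groups with Haar measure) the hypothesis of Proposition~\ref{prop:linubd} holds automatically on all thick subspaces.
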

\begin{proof}
Our goal is to prove that for any function $g:Z\to\R$, there is a function $f:Z\to\R$ such that
\[
 \frac{\norm{\nabla_a g}_q}{\norm{g-g_Z}_q} \succeq_{p,q} \frac{\norm{\nabla_a f}_p}{\norm{f-f_Z}_p} \geq h_a^p(Z).
\]
Taking the infimum over all $g$ would then yield the desired result. From this, we see that it suffices to consider all functions $g$ which satisfy the upper bound $\norm{\nabla_a g}_q\leq 6\norm{g-g_Z}_q$ given by Proposition \ref{prop:linubd}.  By \eqref{eq:mean-quasi-min} we have that for all $C \in \R$, $6\norm{g-g_Z}_q\leq 12\norm{g-C}_q$.

For $a \in \R$ and $p \geq 1$, write $\{a\}^p = \text{sign}(a) |a|^p$.
For each $C$, define $f^C:Z\to\R$ by $f^C(z)=\{g(z)+C\}^{q/p}$, for some $C \in \R$.
Since $f^C_Z$ is a continuous function of $C$, we fix $C$ so that $f^C_Z=0$. Set $f=f^C$.

For each $z\in Z$ let $\overline{(g+C)}_a(z)=\sup\varsetcon{\abs{g(z')+C}}{d(z,z')\leq a}$.

By the mean value theorem (see e.g.\ Matou\v{s}ek~\cite[Lemma 4]{Mat-97-ExpandersLp}), for every $s,t \in \R$ and $\alpha \geq 1$,
\[ |\{s\}^\alpha-\{t\}^\alpha| \leq \alpha (|s|^{\alpha-1}+|t|^{\alpha-1}) |s-t|. \]
For each $z \in Z$ we apply this to $s=g(x)+C, t=g(y)+C, \alpha=\frac{q}{p}$ for all pairs of points $x,y \in B(z,a)$ and see that 
\[
	|\nabla_a f|(z) \leq \frac{2q}{p}\overline{(g+C)}_a(z)^{\frac{q-p}{p}}|\nabla_a g|(z).
\]
By the definition of $\nabla_a$, $\overline{(g+C)}_a(z)\leq \abs{g(z)+C} + \abs{\nabla_a g}(z)$, so taking $p$th powers and integrating, we see that
\begin{align*}
 & \norm{g+C}^q_q h^p_a(Z)^p 
   = \norm{f}_p^p h^p_a(Z)^p = \norm{f-f_Z}_p^p h^p_a(Z)^p
 \\
 &
 \leq \int_Z \abs{\nabla_a f}(z)^p d\nu 
 \\ &
 \leq \left(\frac{2q}{p}\right)^p \int_Z \left(\abs{g(z)+C}+\abs{\nabla_a g}(z)\right)^{q-p} |\nabla_a g|(z)^p d\nu
 \\
 &
 \overset{(\star)}{\leq} \left(\frac{2q}{p}\right)^p 2^{q-p} \left( \int_Z \abs{g(z)+C}^{q-p}\abs{\nabla_a g}(z)^p d\nu + \norm{\nabla_a g}_q^q \right)
 \\ &
 \overset{(\dagger)}{\leq} \frac{2^q q^p}{p^p} \left( \norm{g+C}_q^{q-p} \norm{\nabla_a g}_q^p+ 12^{q-p} \norm{g+C}_q^{q-p} \norm{\nabla_a g}_q^p \right)
 \\ &
 \preceq_{p,q} \norm{g+C}_q^{q-p} \norm{\nabla_a g}_q^p,
\end{align*}
where $(\star)$ follows from $(s+t)^\alpha \leq 2^\alpha(s^\alpha+t^\alpha)$ for any $s,t,\alpha>0$, and $(\dagger)$ follows from H\"older's inequality and $\norm{\nabla_a g}_q \leq 12\norm{g+C}_q$.
Rearranging, taking $p$th roots, and applying \eqref{eq:mean-quasi-min} we have
\[
\norm{g-g_Z}_q \leq 2 \norm{g+C}_q \preceq_{p,q} \frac{\norm{\nabla_a g}_q}{h_a^p(Z)}.
\qedhere
\]
\end{proof}

\begin{proof}[Proof of Proposition \ref{bprop:pqmonotone}] This is immediate from Proposition \ref{prop:monotonep}.
\end{proof}

\begin{remark} There are graphs $X$ of bounded degree containing expanders, and by Propositions \ref{prop:monotonep} and \ref{prop:Lambda1Sep}, $$\Lambda^p_X(r_n)\succeq_p\Lambda^1_X(r_n)\asymp \sep_X(r_n)\succeq r_n$$ on some unbounded subsequence $(r_n)$ \cite{HumSepExp}, but $\Lambda^\infty_X(r)\simeq r/\log(r)$ by Proposition \ref{bprop:P8growth}, so one should not expect universal constants (independent of $p,q$) in the above proposition.
\end{remark}

\section{Poincar\'e profiles of groups with polynomial growth}\label{sec:polygrowth}

The goal of this section is to prove the lower bound in Theorem \ref{bthm:CGLC-poly}.

Given a compactly generated locally compact group $G$, with compact symmetric generating set $K$, let $d=d_K$ be the associated word metric and let $\mu$ be a left-invariant Haar measure. We refer to the triple $(G,d,\mu)$ as a \textbf{metric measure CGLC group}. By Lemma~\ref{lem:PCbasics} and Corollary \ref{cor:ProfileIndependant}, the $L^p$-Poincar\'{e} profile of $G$ is well-defined (up to $\simeq$).

\begin{theorem}\label{thm:CGLC-poly-PI} 
Let $(G,d,\mu)$ be a metric measure CGLC group. If there exists some $m>0$ such that $\gamma(r)\asymp r^m$, then for every $p\in[1,\infty]$, $\Lambda_G^p(r)\gtrsim_p r^{\frac{m-1}{m}}$.
\end{theorem}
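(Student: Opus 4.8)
The plan is to produce a single well-chosen subspace of $G$ for each scale and lower-bound its $L^1$-Poincaré constant, since by Proposition~\ref{bprop:P1sep} and Proposition~\ref{bprop:pqmonotone} (monotonicity in $p$) it suffices to treat the case $p=1$, i.e.\ to show $\sep_G(r) \gtrsim r^{(m-1)/m}$. The natural candidate is a metric ball: fix a radius $n$ and let $Z = Z_n = [B(e,n)]_1$, which by $\gamma(r)\asymp r^m$ and Lemma~\ref{lem:doublingThick} has measure $\mu(Z_n)\asymp n^m$. The claim will follow if I can show that the Cheeger constant at scale $a$ satisfies $h_a(Z_n) \gtrsim n^{-1}$, for then $\mu(Z_n)h_a(Z_n) \gtrsim n^{m-1} \asymp \mu(Z_n)^{(m-1)/m}$, which with $r = \mu(Z_n)$ gives $\sep_{G,a}(r)\gtrsim r^{(m-1)/m}$, and hence the theorem via Proposition~\ref{prop:Lambda1Sep} and Corollary~\ref{cor:ProfileIndependant}.

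The heart of the matter is the lower bound $h_a(Z_n)\gtrsim n^{-1}$, equivalently (via Proposition~\ref{prop:h1h}, $h_a^1 \geq \tfrac12 h_a$) an $L^1$-Poincaré inequality $\|f\|_1 \lesssim n\,\|\nabla_a f\|_1$ for mean-zero $f$ on $Z_n$. This is exactly the classical ($L^1$, or via Hölder $L^2$) Poincaré inequality on balls in a group of polynomial growth: such groups are virtually nilpotent by Gromov, their balls are doubling, and they support a $(1,1)$-Poincaré inequality on balls with the linear-in-radius constant — see Saloff-Coste's work and the references in the paper. Concretely I would invoke the known Poincaré inequality $\int_{B(e,n)} |f - f_{B(e,n)}|\, d\mu \leq C n \int_{B(e,Cn)} |\nabla_a f|\, d\mu$ for the continuous group (or pass through a quasi-isometric Cayley graph and use the graph version), then transfer it to the $1$-thick subspace $Z_n$: the induced $1$-geodesic metric on $Z_n$ agrees up to bounded multiplicative error with the restriction of $d_G$ (since $Z_n$ is a coarsely connected neighbourhood of a ball in a geodesic group), the measure is just the restriction, and enlarging $B(e,n)$ to $Z_n$ changes the measure only by a bounded factor by Lemma~\ref{lem:doublingThick}; a short argument comparing $f_{Z_n}$ to $f_{B(e,n)}$ via \eqref{eq:mean-quasi-min} handles the averages. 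The scale $a$ in $\nabla_a$ versus the Riemannian/length-metric gradient is absorbed using Proposition~\ref{prop:PoincareIndependant}/Corollary~\ref{cor:ProfileIndependant} (the profile is independent of $a\geq 2$) together with the fact that $|\nabla_a f|$ and the metric gradient are comparable on a $k$-geodesic doubling space.

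The main obstacle is not conceptual but a matter of carefully citing and assembling the right classical Poincaré inequality on balls in a compactly generated locally compact group of polynomial growth — making sure the constant is genuinely linear in the radius $n$ (so that the exponent comes out as $(m-1)/m$ rather than something worse) and that the geometry-to-combinatorics passage (group $\rightsquigarrow$ ball $\rightsquigarrow$ $1$-thick subspace $\rightsquigarrow$ discretized graph) does not lose that sharpness. Once that inequality is in hand, everything else — the conversions $h_a \leftrightarrow h_a^1$, the choice $p=1$ via Propositions~\ref{bprop:P1sep} and \ref{bprop:pqmonotone}, scale independence, and the bookkeeping $\mu(Z_n)\asymp n^m$ — is routine. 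I would also remark that the case $p=\infty$ can alternatively be read off directly from Proposition~\ref{bprop:P8growth}, since $\kappa_G(r)\asymp r^{1/m}$ gives $\Lambda^\infty_G(r)\simeq r/\kappa_G(r)\asymp r^{(m-1)/m}$, providing an independent check at that endpoint.
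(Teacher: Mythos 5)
Your high-level strategy --- reduce to $p=1$ via Propositions~\ref{bprop:P1sep} and \ref{bprop:pqmonotone}, then lower-bound a Cheeger constant using the classical Poincar\'e inequality on balls in polynomial-growth groups --- matches the paper's opening moves, and your endpoint check at $p=\infty$ via Proposition~\ref{bprop:P8growth} is fine. But the central claim, that $h_a(Z_n)\gtrsim n^{-1}$ ``is exactly the classical Poincar\'e inequality on balls,'' conceals a genuine gap, and what you dismiss as merely ``a matter of carefully citing'' is in fact the substance of the paper's Section~\ref{sec:polygrowth}.

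Here is the gap. The Poincar\'e inequality you want to invoke (Corollary~\ref{cor:polygrowthPI}) reads $\int_B|f-f_B|\,d\mu\le CR\int_{3B}|\nabla_a f|\,d\mu$, with $|\nabla_a f|$ computed in the ambient group $G$, whereas $h_a(Z_n)$ only measures the boundary of $\Omega\subset Z_n$ \emph{inside} $Z_n$. Plugging $f=\mathbf 1_\Omega$ into the Poincar\'e inequality, the right-hand side picks up not just $\mu(\partial_a^{Z_n}\Omega)$ but also a contribution from the annular shell near $\partial Z_n$ where $\Omega$ abuts $G\setminus Z_n$. That shell has measure $\asymp n^{m-1}$, which is precisely the order of the interior boundary you are trying to produce, so the outer term can swallow the estimate. (Self-improving from $3B$ to $B$ does not fix this: the obstruction is ambient-versus-intrinsic gradient, not the enlarged domain.) The paper never applies the Poincar\'e inequality at the top scale. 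Instead, Lemmas~\ref{lem:smallcoronas} and \ref{lem:deepsets} locate a sub-ball $B(w,\epsilon r)$ deep inside $B$ --- with $B(w,3\epsilon r)\subset B$ --- on which both $A$ and $A^c$ occupy a definite fraction; applying the Poincar\'e inequality there makes the gradient term sit entirely inside $B$, yielding Proposition~\ref{prop:polygrowthcut}. A second, independent gap: that proposition only controls \emph{balanced} cuts, $\mu(\Omega)\in[\tfrac14\gamma(r),\tfrac12\gamma(r)]$, whereas $h_a(Z_n)$ requires control of small $\Omega$ as well, i.e.\ a full relative isoperimetric inequality on the ball. The paper avoids proving that by routing through the cut-set formulation of separation --- establishing $\cut^\delta(\Gamma_r)\gtrsim r^{m-1}$ and invoking \cite[Proposition~2.4]{HumSepExp} to extract \emph{some} large subgraph of $\Gamma_r$ with large Cheeger constant, which need not be the ball itself. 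Your plan commits to the ball itself having Cheeger constant $\gtrsim 1/n$: probably true, but a stronger statement that your citation of the weak Poincar\'e inequality does not deliver.
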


This theorem will be our goal for the section. Note that the $p=\infty$ case follows immediately from Proposition \ref{prop:Linfty_inverse growth}. Moreover, by Proposition \ref{prop:monotonep} $\Lambda_G^p\gtrsim_p \Lambda^1_G$ for all $p\in[1,\infty)$.
 Using Proposition \ref{prop:Lambda1Sep}, Theorem \ref{bthm:monotoneunderreg} and \cite[Proposition 2.4]{HumSepExp}, it will suffice to define a bounded degree graph $X$ large-scale equivalent to $G$ and prove that $\sep_X(r)\gtrsim r^{\frac{m-1}{m}}$, in the cut-set sense of \cite{BenSchTim-12-separation-graphs}.  Our method is to prove that a metric measure space version of the hypothesis of \cite[Proposition 2.4]{HumSepExp} holds for CGLC metric measure groups (Proposition \ref{prop:polygrowthcut}); the starting point for achieving this is a Poincar\'e inequality satisfied by $G$.

\subsection{A Poincar\'e inequality}
Poincar\'e inequalities are well known to hold for groups with polynomial growth, see for example~\cite{Saloff-Coste}.
In this subsection we present a generalisation of \cite[Theorem $2.2$]{Kleiner-10-Gromov-thm} (attributed to Saloff-Coste and explicitly appearing in the $L^2$ case in \cite{DiaLSC-Comparison}) to compactly generated locally compact groups in our framework. The proof below is also similar in nature to \cite[Proposition $11.17$]{HajKos00-Sobolev-met-Poincare} which is attributed to Varopoulos \cite{Var-harmonic-Lie}.

\begin{theorem}\label{thm:mmsPoincare}
Let $(G,d,\mu)$ be a metric measure CGLC group. Let $\Delta:G \ra \R$ be the modular function on $G$; i.e., for $U \subset G$ and $g \in G$, $\mu(Ug)=\mu(U)\Delta(g)$. Define $\Delta(K)=\sup_{g\in K} \Delta(g)$.

For any $p \geq 1$, $a\geq 1$, for any metric ball $B=B(x_0,R)$ of radius $R$ and any function $f\in L^p(G)$ we have the following:
\[
\int_{B} \abs{f(x)-f_B}^p d\mu(x) \leq \frac{(2R)^p\mu(2B)\Delta(K)^{2R}}{\mu(B)} \int_{3B} \abs{\nabla_a f}(x)^p d\mu(x),
\]
where for $\lambda>0$, $\lambda B = B(x_0,\lambda R)$.
\end{theorem}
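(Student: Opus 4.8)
The plan is to prove the Poincaré inequality by the classical "telescoping along a path through group elements" argument, exploiting the fact that in a compactly generated group the metric ball $B(x_0,R)$ is covered by products of generators, so that any two points in $B$ are joined by a controlled path and the difference $f(x)-f(y)$ can be estimated by summing gradient terms along that path. The key input making this work uniformly is that, since $G$ has polynomial growth, it is unimodular-up-to-bounded-error on balls: the factor $\Delta(K)^{2R}$ is exactly what is needed to control how left-translating a subset by a word of length $\leq 2R$ changes its measure. First I would reduce to estimating $\dashint_B \dashint_B |f(x)-f(y)|^p\, d\mu(x)d\mu(y)$, which dominates $\dashint_B |f-f_B|^p$ by Jensen (the $C=-f_B$ case of \eqref{eq:mean-quasi-min}, or a direct convexity argument), so the real work is bounding this double average.

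The heart of the argument: fix $x,y \in B = B(x_0,R)$. Then $d(x,y) \leq 2R$, so there is a word $g = s_1\cdots s_n$ with each $s_i \in K$ and $n \leq 2R$ with $y = xg$. Set $x_0' = x$, $x_j = x s_1\cdots s_j$, so $x_n = y$ and each consecutive pair satisfies $d(x_{j-1},x_j)\leq 1 \leq a$, hence $|f(x_{j-1})-f(x_j)| \leq |\nabla_a f|(x_{j-1})$ (indeed both points lie in $B(x_{j-1},a)$). Telescoping and applying Hölder (or the power-mean inequality with $n \leq 2R$ terms),
\[
|f(x)-f(y)|^p \leq (2R)^{p-1}\sum_{j=1}^{n} |\nabla_a f|(x_{j-1})^p.
\]
Now I would integrate this over $x \in B$ and $y \in B$. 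The subtle point is that the intermediate points $x_{j-1} = x\, s_1\cdots s_{j-1}$ are not the integration variable; one changes variables $z = x s_1\cdots s_{j-1}$, which costs a Jacobian factor from the modular function — precisely $\mu(Ug) = \mu(U)\Delta(g)$ gives a factor bounded by $\Delta(K)^{j-1} \leq \Delta(K)^{2R}$ — and the new variable $z$ ranges over a translate of $B$, which is contained in $3B = B(x_0,3R)$ since $d(z,x_0) \leq d(z,x) + d(x,x_0) \leq (j-1) + R \leq 3R$. Summing over $j$ (there are $\leq 2R$ values) and over the $\leq 2R$ terms in each inner sum yields a factor $(2R)^p$, and integrating over $y \in B$ simply contributes $\mu(B)$, which cancels against one of the averaging denominators; the measure $\mu(2B)$ appears as a crude bound for $\mu$ of any translate of $B$ inside $2B$ that shows up when one is not careful, or more precisely as the measure of the relevant region the $z$-variable and $y$-variable jointly range over. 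Reassembling all constants gives the stated inequality.

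The main obstacle is bookkeeping the modular function correctly: left Haar measure is translation-invariant on the left but scales by $\Delta$ on the right, and since the path $x \mapsto x s_1 \cdots s_{j-1}$ is built by right multiplication, each change of variables genuinely introduces a $\Delta$ factor, and one must check that the total accumulated factor over a path of length $\leq 2R$ is bounded by $\Delta(K)^{2R}$ (not worse), and that this is the only place the non-unimodularity enters. A secondary technical point is making sure all the sets one integrates over really sit inside $3B$ (for the gradient integral) and that the normalizing volumes combine to $(2R)^p \mu(2B)/\mu(B)$ rather than some other arrangement — this just requires carefully tracking which of the two averaging integrals $\dashint_B\dashint_B$ is "used up" cancelling $\mu(B)$ and which region the surviving integral covers. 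I do not expect any genuinely hard estimate beyond this; the polynomial growth hypothesis is not even needed for the inequality itself (it only guarantees, via $\gamma(r)\asymp r^m$ and doubling, that the prefactor is polynomially controlled, which is what the application in Theorem~\ref{bthm:CGLC-poly} will exploit).
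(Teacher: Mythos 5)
Your proposal is correct and follows essentially the same route as the paper: telescope $f(x)-f(y)$ along a word of length at most $2R$, apply H\"older to get $(2R)^{p-1}$, change variables by a right translation (paying a modular Jacobian bounded by $\Delta(K)^{2R}$, using symmetry of $K$), and observe that the intermediate points land in $3B$ while the outer variable ranges over $2B$, producing $\mu(2B)$. The only superficial difference is that the paper fixes a geodesic $\gamma_z$ for each $z = x^{-1}y \in 2B$ once and for all (which tidies up measurability and the change of variables), whereas you describe choosing a word for each pair $(x,y)$; the content and constants are identical, and your closing remark that polynomial growth plays no role in this particular inequality is also accurate.
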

\begin{proof}
	We may assume $x_0 = e$.
	Recall that 
	\[ \abs{\nabla_a f}(x)=\sup\setcon{\abs{f(y)-f(z)}}{y,z\in B(x,a)}. \]
	If $a\leq a'$ then $\abs{\nabla_a f}(x)\leq \abs{\nabla_{a'} f}(x)$ so it suffices to prove the result above for $a=1$.

For every $z \in 2B$, we choose a geodesic $\gamma_z:\set{0,1,\dots,k}\to G$ with $\gamma_z(0)=e$ and $\gamma_z(k)=z$.

For $x, y \in B(R)$, let $z = x^{-1}y$, and let $|\gamma_z|=k$ be the length of the corresponding path.
Then by the triangle and H\"older's inequality,
\begin{align*}
	|f(x)-f(y)|^p 
	\leq \left| \sum_{i=1}^{|\gamma_z|} |\nabla_{1} f|(x\gamma_z(i)) \right|^p
	 \leq |\gamma_z|^{p-1} \sum_{i=1}^{|\gamma_z|} |\nabla_1 f|(x\gamma_z(i))^p.
\end{align*}

For fixed $z \in 2B$, consider the map $F:(x,i) \mapsto (x\gamma_z(i),i)$.
This is clearly injective, so $(x,i) \mapsto x\gamma_z(i)$ is at most $2R$-to-1,
and
\begin{align*}
	\int_{B} \sum_{i=1}^{|\gamma_z|} |\nabla_{1} f|(x\gamma_z(i))^p d\mu(x)
	& = \sum_{i=1}^{|\gamma_z|} \int_B |\nabla_1 f|(x\gamma_z(i))^p d\mu(x)
	\\ & = \sum_{i=1}^{|\gamma_z|} \int_{B \cdot \gamma_z(i)} |\nabla_1 f|(x)^p \Delta(\gamma_z(i)^{-1}) d\mu(x) 
	\\ & \leq 2R \sup_{g \in 2B}\Delta(g) \int_{3B} |\nabla_1 f|(x)^p d\mu(x).
\end{align*}
Since $2B = K^{2R}$ we have $\sum_{g\in 2B} \Delta(g) = \Delta(K^{2R}) \leq \Delta(K)^{2R}$, so
\begin{align*}
	\int_{B} \abs{f-f_B}^p d\mu
	& \leq \int_B \left| \int_B |f(x_1)-f(x_2)| \frac{d\mu(x_2)}{\mu(B)} \right|^p d\mu(x_1) \\
	& \leq \frac{1}{\mu(B)} \int_{B\times B} \abs{f(x_1)-f(x_2)}^p d\mu(x_1)d\mu(x_2)
	\\ & \leq \frac{(2R)^{p-1}}{\mu(B)}
		\int_{x \in B} \int_{z \in 2B}
		\sum_{i=1}^{|\gamma_z|} |\nabla_{1} f|(x\gamma_z(i))^p d\mu(z) d\mu(x)
	\\ & \leq \frac{(2R)^p \Delta(K)^{2R}}{\mu(B)}
		\int_{z \in 2B} \int_{x \in 3B} |\nabla_1 f|(x)^p d\mu(x) d\mu(z)
	  \\ & \leq \frac{(2R)^p \Delta(K)^{2R} \mu(2B)}{\mu(B)}
		\int_{3B} |\nabla_1 f|(x)^p d\mu(x). \qedhere
\end{align*}
\end{proof}

\subsection{CGLC groups with polynomial growth}
We begin by refining the above Poincar\'e inequality.

\begin{lemma} If $\liminf_{r\to\infty} \frac{1}{r}\log(\mu(B(1,r)))=0$, then $G$ is unimodular.
\end{lemma}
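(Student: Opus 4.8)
The plan is to prove the contrapositive: if $G$ is \emph{not} unimodular, then $\mu(B(1,r))$ grows at least exponentially, so that $\liminf_{r\to\infty}\frac1r\log\mu(B(1,r))>0$. Since $G$ is non-unimodular, the modular function $\Delta\colon G\to\R_{>0}$ is a non-trivial continuous homomorphism, so we may fix an element $g\in G$ with $c:=\Delta(g)>1$ (replace $g$ by $g^{-1}$ if needed). Put $n_1:=d(1,g)=d_K(e,g)$; this is a finite positive integer because $K$ generates $G$ and $g\neq 1$, and by subadditivity of the word length $d(1,g^j)\le jn_1$ for all $j\in\N$.

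The key mechanism is that right translation by $g^j$ moves a fixed ball into a controllably larger ball. Since the word metric is left-invariant, for any $x$ with $d(1,x)\le R$ we have $d(1,xg^j)\le d(1,x)+d(x,xg^j)=d(1,x)+d(1,g^j)\le R+jn_1$, hence $B(1,R)\,g^j\subseteq B(1,R+jn_1)$. By the defining property of the modular function, $\mu(B(1,R)\,g^j)=\mu(B(1,R))\,\Delta(g^j)=\mu(B(1,R))\,c^{\,j}$. Now choose $R$ large enough that $0<\mu(B(1,R))<\infty$; such an $R$ exists because $\mu$ is non-trivial and locally finite while $G=\bigcup_{R}B(1,R)$. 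Combining, $\mu(B(1,R+jn_1))\ge \mu(B(1,R))\,c^{\,j}$ for every $j$.

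Finally, since $r\mapsto\mu(B(1,r))$ is non-decreasing, for $r\ge R$ take $j=\lfloor (r-R)/n_1\rfloor\ge (r-R)/n_1-1$ to get $\mu(B(1,r))\ge\mu(B(1,R+jn_1))\ge\mu(B(1,R))\,c^{\,j}$, whence $\frac1r\log\mu(B(1,r))\ge \tfrac1r\bigl(\log\mu(B(1,R))+((r-R)/n_1-1)\log c\bigr)\longrightarrow \tfrac{\log c}{n_1}>0$ as $r\to\infty$; in particular $\liminf_{r\to\infty}\frac1r\log\mu(B(1,r))\ge\frac{\log c}{n_1}>0$, contradicting the hypothesis, so $G$ must be unimodular. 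There is no serious obstacle here; the only points needing a word of justification are the left-invariance of the word metric (which yields the containment $B(1,R)g^j\subseteq B(1,R+jn_1)$) and the existence of a ball of positive finite measure, both of which are immediate from the standing framework.
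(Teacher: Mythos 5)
Your proof is correct and takes essentially the same route as the paper: prove the contrapositive by fixing $g$ with $\Delta(g)>1$, noting that right translates $B(1,R)g^j$ stay in balls of linearly growing radius while their measures grow like $\Delta(g)^j$, giving exponential growth. The paper streamlines this slightly by observing that multiplicativity of $\Delta$ and the symmetry of $K$ force some $k\in K$ with $\Delta(k)>1$, so $Kk^n\subseteq B(1,n+1)$ directly yields the bound with word length $1$, but the substance is identical.
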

\begin{proof} Suppose $G$ is not unimodular, then there exists some $g\in G$ such that $\Delta(g)>1$. Since $\Delta$ is multiplicative, there is some $k\in K$ with $\Delta(k)>1$.

  Now, for each $n$, $Kk^n\subseteq B(1,n+1)$, so $\mu(B(1,n+1))>\Delta(k)^n\mu(K)$, and therefore $\liminf_{r\to\infty} \frac{1}{r}\log(\mu(B(1,r)))>0$.
\end{proof}

From this we obtain the following refinement of a special case of Theorem \ref{thm:mmsPoincare}.

\begin{corollary}\label{cor:polygrowthPI} If $G$ has polynomial growth then there exists a constant $C$ such that, for any $p \geq 1$ and $a\geq 1$, for any metric ball $B=B(x_0,R)$ of radius $R$ and any function $f\in L^p(G)$ we have the following:
\begin{equation}\label{eq:cglcPolyPI}
	\int_{B} \abs{f(x)-f_B}^p d\mu(x) \leq C R^p \int_{3B} \abs{\nabla_a f}(x)^p d\mu(x).
\end{equation}
\end{corollary}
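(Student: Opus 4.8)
The plan is to read the corollary off from Theorem~\ref{thm:mmsPoincare}: that inequality already has the exact shape of \eqref{eq:cglcPolyPI}, and the only two quantities appearing in it that are not visibly bounded independently of the ball are the modular factor $\Delta(K)^{2R}$ and the volume ratio $\mu(2B)/\mu(B)$. So the whole task reduces to controlling these two factors uniformly over all balls $B=B(x_0,R)$.

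For the modular factor, recall that polynomial growth means $\gamma_G(r)\asymp r^m$ for some $m>0$, and that, since both $d$ and $\mu$ are left-invariant, $\gamma_G(r)=\mu(B(x,r))$ for \emph{every} $x\in G$. Hence $\tfrac1r\log\mu(B(e,r))=\tfrac{m\log r+O(1)}{r}\to 0$, so by the preceding lemma $G$ is unimodular; therefore $\Delta\equiv 1$ and $\Delta(K)^{2R}=1$ for every $R$. For the volume ratio, left-invariance again gives $\mu(2B)/\mu(B)=\gamma_G(2R)/\gamma_G(R)$, a quantity depending on $R$ alone: for $R\geq 1$ the estimate $\gamma_G(r)\asymp r^m$ bounds it by a constant $C_1=C_1(G)$, and for $0<R<1$ the claimed inequality is either vacuous ($\mu(B)=0$) or trivial ($G$ discrete, the ratio bounded by $\gamma_G(2)/\mu(\{e\})$). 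This uniform control of $\mu(2B)/\mu(B)$ --- equivalently, uniform doubling of the growth function, which does \emph{not} follow from the standing bounded-packing hypothesis alone but does follow from polynomial growth --- is really the only content of the proof.

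Substituting $\Delta(K)^{2R}=1$ and $\mu(2B)/\mu(B)\leq C_1$ into Theorem~\ref{thm:mmsPoincare} then gives, for all $p\geq 1$, $a\geq 1$, balls $B=B(x_0,R)$, and $f\in L^p(G)$,
\[
\int_{B}\abs{f(x)-f_B}^p\,d\mu(x)\;\leq\;\frac{(2R)^p\,\mu(2B)}{\mu(B)}\int_{3B}\abs{\nabla_a f}(x)^p\,d\mu(x)\;\leq\;2^pC_1\,R^p\int_{3B}\abs{\nabla_a f}(x)^p\,d\mu(x),
\]
so \eqref{eq:cglcPolyPI} holds with $C=2^pC_1$. (The constant inherits a $p$-dependence through the factor $2^p$ coming from Theorem~\ref{thm:mmsPoincare}; this is harmless for the application to Theorem~\ref{thm:CGLC-poly-PI}, where everything is measured up to constants depending on $p$.)
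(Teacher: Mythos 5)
Your proof is correct and matches the paper's intended argument: the corollary is read directly off Theorem~\ref{thm:mmsPoincare} by invoking the preceding lemma (polynomial growth $\Rightarrow$ unimodularity, so $\Delta(K)^{2R}=1$) and bounding $\mu(2B)/\mu(B)$ uniformly via the polynomial growth estimate $\gamma_G(r)\asymp r^m$. Your extra care with the small-$R$ case and the observation that the constant picks up a factor $2^p$ (so strictly speaking depends on $p$, harmlessly for the application at $p=1$) go slightly beyond what the paper spells out, but the route is the same.
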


Using this refined Poincar\'{e} inequality (specifically the case $p=1$) we will now present a proof of Theorem \ref{thm:CGLC-poly-PI} via a series of lemmas. The goal is to prove that any subset $A$ of $B$ such that both $A\cap B$ and $A^c\cap B$ have measure proportional to $B$ must have large boundary inside $B$. It is not sufficient to apply the Poincar\'{e} inequality (\ref{eq:cglcPolyPI}) to the characteristic function of $A$ inside $B$ as we cannot distinguish the contribution coming from the boundary of $A$ in $B$ with that coming from the boundary of $B$ in $X$. The solution is to apply the Poincar\'{e} inequality (\ref{eq:cglcPolyPI}) ``deep inside'' $B$.

From this we will show that there is a large subset of $B$ with sufficiently large Cheeger constant. This step is modelled on ideas from \cite{HumSepExp} relating the cut size and Cheeger constant definitions of separation.

\begin{definition} Let $X$ be a metric space, let $x\in X$, and let $r,s\in\R_+$ with $s>r$. The $(r,s)$\textbf{-corona} around $x$ is the set $C_{r,s}(x) = B(x,s) \setminus B(x,r)$.
\end{definition}

\begin{lemma}\label{lem:smallcoronas}
Let $(G,d,\mu)$ be a metric measure CGLC group with $\gamma(r)\asymp r^m$. For each $\delta\in(0,1)$ there exists some $\eps>0$ such that for every $x\in G$ and $r$ sufficiently large, we have $\mu(C_{r,(1+\eps)r}(x)) \leq \delta \mu(B(x,r))$.
\end{lemma}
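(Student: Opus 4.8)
The plan is to strip the statement down to a purely quantitative fact about the growth function $\gamma$, using the left-invariance of both the word metric and the Haar measure, and then to feed in the sharp volume asymptotics known for groups of polynomial growth. First I would observe that since $d=d_K$ is left-invariant, $B(x,s)=xB(e,s)$ for every $x\in G$ and $s\geq 0$, and since $\mu$ is a left-invariant Haar measure, $\mu(B(x,s))=\mu(B(e,s))=\gamma(s)$ and $\mu(C_{r,s}(x))=\gamma(s)-\gamma(r)$, with no dependence on $x$. Hence the asserted inequality $\mu(C_{r,(1+\eps)r}(x))\leq\delta\,\mu(B(x,r))$ is equivalent to
\[
	\gamma\big((1+\eps)r\big)\leq(1+\delta)\,\gamma(r),
\]
and it suffices to produce one $\eps>0$ for which this holds for all sufficiently large $r$; the $x$-dependence has disappeared entirely.

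The key input is that a compactly generated locally compact group of polynomial growth of degree $m$ satisfies the precise asymptotic $\gamma(r)\sim L\,r^m$ for a constant $L\in(0,\infty)$ (Pansu's theorem on the volume growth of nilpotent groups, together with its extension to the locally compact setting through the structure theory of polynomial-growth groups; note $\gamma\asymp r^m$ itself forces $L\in[c,C]$ once the limit is known to exist). Granting this, $\gamma((1+\eps)r)/\gamma(r)\to(1+\eps)^m$ as $r\to\infty$, so choosing $\eps>0$ small enough that $(1+\eps)^m<1+\delta$ (possible since $\delta>0$ and $(1+\eps)^m\to1$ as $\eps\to0$) gives $\gamma((1+\eps)r)\leq(1+\delta)\gamma(r)$ for all sufficiently large $r$, as required.

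The only real obstacle is precisely this appeal to the sharp asymptotic. The weaker two-sided bound $c\,r^m\leq\gamma(r)\leq C\,r^m$ coming from Bass--Guivarc'h does not suffice on its own: it permits $\gamma(r)/r^m$ to oscillate between $c$ and $C$ on arbitrarily large scales, which would force $\gamma((1+\eps)r)/\gamma(r)\geq C/c>1$ for infinitely many large $r$ however small $\eps$ is chosen. It is the existence of $\lim_{r\to\infty}\gamma(r)/r^m$ that rules this out, and that is the substantive fact to be imported; the remainder of the argument is the routine reduction described above.
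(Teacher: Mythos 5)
Your argument is correct, but it takes a genuinely different route from the paper's. The reduction is fine: since $d_K$ and the Haar measure are left-invariant, all balls of a given radius have the same measure, so the lemma is equivalent to $\gamma((1+\eps)r)\leq(1+\delta)\gamma(r)$ for large $r$; and the sharp asymptotic $\gamma(r)\sim L r^m$ that you invoke is indeed available in this generality --- it is Breuillard's extension of Pansu's theorem to compactly generated locally compact groups of polynomial growth with word metrics, i.e.\ exactly \cite{Breu-14-geom-loc-cpt-poly-growth}, which the paper already cites for a different purpose, and the hypothesis $\gamma(r)\asymp r^m$ pins the exponent down to $m$. You are also right that the two-sided Bass--Guivarc'h bound alone cannot give the statement for an abstract function, so some structural input is needed. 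The paper supplies that input much more cheaply: it observes that a word metric has Property (M) and applies Tessera's corona comparison \cite[Lemma 24]{Tes-vol-spheres}, namely $\mu(C_{r-s,r}(x))\geq\alpha\,\mu(C_{r,r+s}(x))$ for $4\beta<s\leq r$, and then stacks roughly $\log_2(1/\eps')$ nested coronas inside $B(x,r)$, each of measure at least $(1+\alpha)$ times the previous, deriving a contradiction if the outer corona had measure $\geq\delta\mu(B(x,r))$ with $\eps'$ too small. That argument is elementary, yields an explicit $\eps$ depending only on $\delta$ and the doubling/corona constant $\alpha$, and uses only doubling plus Property (M) rather than the full structure theory underlying the exact volume asymptotics; your route is shorter to write but imports a substantially deeper theorem. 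Both establish the lemma.
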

\begin{proof} Note that since $d$ is a word metric on $G$ with respect to a compact symmetric generating set $K$, $d$ is $1$-geodesic: for every pair of points $x,y\in G$ there is a sequence $x=x_0,\ldots,x_{d(x,y)}=y$ such that $d(x_0,x_i)=i$ for all $i$. Hence $(G,d)$ has Property (M) \cite[Proposition 2]{Tes-vol-spheres}. Therefore, by \cite[Lemma $24$]{Tes-vol-spheres}, there exist constants $\alpha,\beta>0$ independent of $r$ such that $\mu\left(C_{r-s,r}(x)\right) \geq \alpha \mu(C_{r,r+s}(x))$ for every $x\in G$ whenever $4\beta<s\leq r$. 

Let $\eps'\in(0,1)$, let $r\geq 8\beta$ and for each $1\leq i\leq k=\lfloor -\log_2\eps'\rfloor$, let $b_i=\mu(C_{(1-2^i\eps')r,r})$.

By construction $b_{i+1}\geq (1+\alpha)b_{i}$ for all $i\geq 1$, so $b_k\geq (1+\alpha)^{k-1} b_1$.

Fix $\delta\in(0,1)$. If $\mu(C_{r,(1+\eps')r})>\delta \mu(B(x,r))$, then $\mu(C_{r,(1+\eps')r})\geq \delta b_k \geq \delta (1+\alpha)^{k-1} b_1$. But, by \cite[Lemma $24$]{Tes-vol-spheres}, $b_1\geq \alpha \mu(C_{r,(1+\eps')r})$, so $\alpha\delta (1+\alpha)^{k-1}\leq 1$.

Thus $k\leq \log_{1+\alpha}(\frac{1}{\alpha\delta})+1$, which implies that 
$$\eps'\geq \eps_{\alpha,\delta}:=\frac{1}{4}(\alpha\delta)^{\log_{1+\alpha}(2)}$$ 
The conclusion of the lemma holds for all $\eps<\eps_{\alpha,\delta}$.
\end{proof}

\begin{lemma}\label{lem:deepsets} Let $(G,d,\mu)$ be a metric measure CGLC group with $\gamma(r)\asymp r^m$. There exist constants $r_0,\eps,k>0$ such that the following holds for all $r\geq r_0$.

For any $A\subset B(x,r)$ with $\frac14\gamma(r)\leq\mu(A)\leq\frac12\gamma(r)$, there exists a point $w\in B(x,r)$ such that $B(w,3\eps r)\subset B(x,r)$, and such that $\mu(B(w,\eps r) \cap A)\geq k \gamma(r)$ and $\mu(B(w,\eps r) \cap A^c)\geq k \gamma(r)$.
\end{lemma}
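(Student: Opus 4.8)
The plan is to introduce, for a suitable scale $\eps r$, the \emph{local density} function $\rho(w):=\mu(B(w,\eps r)\cap A)/\mu(B(w,\eps r))\in[0,1]$, and to locate a point $w$ lying deep inside $B(x,r)$ where $\rho(w)$ sits in a fixed middle band $[t_0,t_1]\subset(0,1)$ (one can take $t_0=\tfrac15$, $t_1=\tfrac23$). Since $d$ is left-invariant, $\mu(B(w,s))=\gamma(s)$ for every $w$, so together with $\gamma(r)\asymp r^m$ the condition $\rho(w)\in[t_0,t_1]$ turns into the two desired lower bounds $\mu(B(w,\eps r)\cap A)\geq k\gamma(r)$ and $\mu(B(w,\eps r)\cap A^c)\geq k\gamma(r)$, with $k$ a fixed multiple of $\eps^m$. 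Choosing $w$ in the shrunken ball $B(x,(1-3\eps)r)$ automatically forces $B(w,3\eps r)\subset B(x,r)$.

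First I would fix the constants. Applying Lemma~\ref{lem:smallcoronas} with a small parameter (say $\tfrac1{40}$) produces an $\eps$ so that, for $r$ large, $B(x,r)\setminus B(x,(1-4\eps)r)$ has measure at most $\tfrac1{40}\gamma(r)$; a second application, with parameter $\tfrac14$, controls thin annuli of width $2$ around radius $\eps r$. Write $B'=B(x,(1-3\eps)r)$ and $B''=B(x,(1-4\eps)r)$. The hypothesis $\tfrac14\gamma(r)\le\mu(A)\le\tfrac12\gamma(r)$ together with negligibility of the outer annulus gives $\mu(A\cap B'')\geq\tfrac9{40}\gamma(r)$ and $\mu(A^c\cap B'')\geq\tfrac{19}{40}\gamma(r)$. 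A Tonelli computation — using that $B(y,\eps r)\subset B'$ whenever $y\in B''$ — then yields
\[
\int_{B'}\rho\,d\mu\ \geq\ \mu(A\cap B''),\qquad \int_{B'}(1-\rho)\,d\mu\ \geq\ \mu(A^c\cap B''),
\]
so the average of $\rho$ over $B'$ is $\geq\tfrac9{40}>t_0$ and $\leq\tfrac{21}{40}<t_1$ (here $\mu(B')\le\gamma(r)$).

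Next I would record the ``coarse continuity'' of $\rho$: if $d(w,w')\le1$ then $B(w,\eps r)\,\triangle\,B(w',\eps r)$ is contained in a width-$2$ annulus around radius $\eps r$, which by the second use of Lemma~\ref{lem:smallcoronas} has measure $\le\tfrac14\mu(B(w,\eps r))$; hence $|\rho(w)-\rho(w')|\le\tfrac14$. The point $w$ is then found by an intermediate-value argument: if $\rho$ avoided $[t_0,t_1]$ on $B'$, then $B'$ would split as $\{\rho<t_0\}\sqcup\{\rho>t_1\}$; but $B'$ is $1$-connected (given $u,v\in B(x,(1-3\eps)r)$, concatenating a $1$-geodesic from $u$ to $x$ with one from $x$ to $v$ keeps all intermediate points within $(1-3\eps)r$ of $x$), so along such a chain some $1$-step would change $\rho$ by more than $t_1-t_0=\tfrac7{15}>\tfrac14$, which is impossible — unless one of the two pieces is empty, and either of those contradicts the average bounds $\tfrac9{40}\le\bar\rho\le\tfrac{21}{40}$ just established. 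This yields $w\in B'$ with $\rho(w)\in[t_0,t_1]$, and the stated bounds follow from $\mu(B(w,\eps r)\cap A)=\rho(w)\gamma(\eps r)$, $\mu(B(w,\eps r)\cap A^c)=(1-\rho(w))\gamma(\eps r)$, and $\gamma(\eps r)\asymp \eps^m\gamma(r)$.

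The main obstacle I anticipate is bookkeeping the constants: the small-ball ratio $\eps$ must be chosen small relative to the corona ratio coming out of Lemma~\ref{lem:smallcoronas} (so the outer annulus $B(x,r)\setminus B(x,(1-4\eps)r)$ really is negligible), while the continuity estimate needs only $r$ large relative to a fixed $\eps$; one must check these requirements are compatible and that the final $k$ and $r_0$ depend only on the growth data. The one conceptual point to get right is that the intermediate-value step takes place inside the subset $B'$ with its induced $1$-geodesic structure, which is exactly why the ``balls are $1$-connected'' observation is needed.
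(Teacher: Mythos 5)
Your proposal is correct and follows essentially the same route as the paper: Lemma~\ref{lem:smallcoronas} to control coronas, a Tonelli/averaging argument to show the local density of $A$ (and of $A^c$) must be substantial somewhere deep inside the ball, and a discrete intermediate-value argument along a $1$-chain inside $B(x,(1-3\eps)r)$. The main difference is cosmetic: you normalize to the density $\rho(w)\in[0,1]$ and run the intermediate-value step in terms of a forbidden band for $\rho$, whereas the paper tracks the absolute quantities $\mu(A\cap B(v,\eps r))$ and $\mu(A^c\cap B(v,\eps r))$ and picks the last index along the chain at which one of them is large; your packaging is arguably tidier (one averaging computation rather than two, and a symmetric-difference bound replaces the explicit $\gamma(\eps r+1)-\gamma(\eps r)$ estimate), but the underlying mechanism is identical.
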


\begin{proof}  By Lemma \ref{lem:smallcoronas}, for all $r$ sufficiently large, and $\eps$ sufficiently small the corona $C_{(1-4\eps)r,r}(x)$  has size $<\frac{1}{10}\gamma(r)$ for every $x\in X$. 
Now fix $k>0$ such that $\gamma(\eps r)\geq \frac{80}{3}k\gamma(r)$ for all $r\geq r_0$. Applying Lemma \ref{lem:smallcoronas} with $\delta=\frac{k}{2}$ we deduce that
\begin{equation}\label{kbound} 
\gamma(\eps r + 1) -\gamma(\eps r) \leq \frac{k}{2} \gamma(\eps r) \leq \frac{k}{2}\gamma(r),
\end{equation}
holds whenever $\epsilon$ is sufficiently small and $r$ sufficiently large.

Since $\mu(A\cap B(x,(1-4\eps)r))\geq \frac{3}{20}\gamma(r)$, there exists a point $y\in B(x,(1-3\eps)r))$ such that $\mu(A\cap B(y,\eps r))\geq \frac{3}{20} \gamma(\eps r) \geq 2k\gamma(r)$. To see this, suppose for a contradiction that it is not the case. Then
\begin{align*}
  \frac{3}{20}\gamma(r)\gamma(\epsilon r)
  & \leq \int_{B(x,(1-4\epsilon)r)}\chi_A(z) \gamma(\epsilon r) d\mu(z)
  \\ & \leq \int_{B(x,(1-3\epsilon)r)} \int_{B(y,\epsilon r)} \chi_A(z) d\mu(z)\,d\mu(y)
  \\ & < \frac{3}{20}\gamma(\epsilon r) \gamma((1-3\epsilon)r) < \frac{3}{20}\gamma(\epsilon r)\gamma(r),
\end{align*}
a contradiction.

Similarly, there is some $z\in B(x,(1-3\eps)r))$ with $\mu(A^c\cap B(z,\eps r)) \geq 2k\gamma(r)$. Now, by our choice of $k$, for every $v\in B(x,(1-3\eps)r)$, $$\max\set{\mu(A\cap B(v,\eps r)),\mu(A^c\cap B(v,\eps r))} \geq \tfrac{1}{2}\gamma(\epsilon r)\geq 2k\gamma(r).$$

Since $y,z\in B(x,(1-3\eps)r))$ there is a sequence $y=v_0,v_1,\dots,v_l=z$ such that $d(v_{i-1},v_i)=1$, $l\leq 2r$ and $\set{v_i}\subset B(x,(1-3\eps)r)$. By \eqref{kbound}, we see that the measure of the symmetric difference of $B(v_i,\eps r)$ and $B(v_{i+1},\eps r)$ is at most $k\gamma(r)$ for all $i$.

Choose $i$ maximal such that $\mu(A\cap B(v_i,\eps r)) \geq 2k\gamma(r)$. 
If $i=l$ then we choose $w=v_l$ and the proof is complete.
If $i<l$ then $\mu(A^c\cap B(v_{i+1},\eps r)) \geq 2k\gamma(r)$, but since the symmetric difference of $B(v_i,\eps r)$ and $B(v_{i+1},\eps r)$ has measure at most $k\gamma(r)$, we see that $\mu(A^c\cap B(v_{i},\eps r)) \geq k\gamma(r)$ and we set $w=v_i$.
\end{proof}

With this lemma we can show that large subsets of balls have large boundaries inside the ball.

\begin{proposition}\label{prop:polygrowthcut} Let $(G,d,\mu)$ be a metric measure CGLC group with $\gamma(r)\asymp r^m$.  There exists a constant $k>0$ so that for every $a \geq 1$, for every ball $B$ of sufficiently large radius $r$, and any subspace $A\subset B$ with $\frac14\gamma(r)\leq\mu(A)\leq\frac12\gamma(r)$, we have $\mu(\partial_a^B A)\geq k r^{m-1}$.
\end{proposition}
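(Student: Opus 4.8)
The plan is to derive this from the $p=1$ case of the Poincar\'e inequality (Corollary~\ref{cor:polygrowthPI}) together with the ``deep set'' Lemma~\ref{lem:deepsets}. First I would reduce to $a=1$: since $[A]_1\subseteq[A]_a$ and $[B\setminus A]_1\subseteq[B\setminus A]_a$ for $a\geq 1$, we have $\partial_1^B A\subseteq\partial_a^B A$, so it suffices to produce a constant $k>0$, independent of $a$, with $\mu(\partial_1^B A)\geq k r^{m-1}$ for all $A\subset B=B(x,r)$ with $\tfrac14\gamma(r)\leq\mu(A)\leq\tfrac12\gamma(r)$ and $r$ large.

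Fix such a $B$ and $A$, and apply Lemma~\ref{lem:deepsets} to obtain $\eps,k_0>0$ (depending only on the growth data) and a point $w$ with $B(w,3\eps r)\subset B$ and $\mu(B(w,\eps r)\cap A),\ \mu(B(w,\eps r)\cap A^c)\geq k_0\gamma(r)$. The naive approach of feeding $f=\mathbf{1}_A$ into the Poincar\'e inequality on all of $B$ fails, because $\int_{3B}|\nabla_1\mathbf{1}_A|$ then picks up a contribution from the whole sphere $\partial B$ rather than only the part of the boundary of $A$ lying inside $B$. Instead I would apply Corollary~\ref{cor:polygrowthPI} with $p=1$ to $f=\mathbf{1}_A$ on the slightly shrunk ball $B'=B(w,\eps r-1)$ (valid once $r$ is large enough that $\eps r-1>0$). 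Then $3B'=B(w,3\eps r-3)$, and since $d(x,w)\leq(1-3\eps)r$ we get $3B'\subseteq B(x,r-3)$; this unit of room ensures that for every $z\in 3B'$ the whole ball $B(z,1)$ lies in $B$. Hence if $|\nabla_1\mathbf{1}_A|(z)=1$ for some $z\in 3B'$, then $z\in B$, $z\in[A]_1$, and the witnessing point of $B(z,1)$ outside $A$ also lies in $B$, so $z\in\partial_1^B A$; therefore
\[
\int_{3B'}|\nabla_1 f|\,d\mu\ \leq\ \mu(\partial_1^B A).
\]

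For the left-hand side of the Poincar\'e inequality I would use that for a characteristic function $\int_{B'}|f-f_{B'}|\,d\mu=2\mu(A\cap B')\,\mu(A^c\cap B')/\mu(B')$. A corona estimate (Lemma~\ref{lem:smallcoronas}, applied to the annulus $B(w,\eps r)\setminus B(w,\eps r-1)$, whose relative width $1/(\eps r)$ tends to $0$) shows that passing from $B(w,\eps r)$ to $B'$ loses an arbitrarily small fraction of $\gamma(r)$, so $\mu(A\cap B'),\ \mu(A^c\cap B')\gtrsim\gamma(r)$, while $\mu(B')\leq\gamma(\eps r)\lesssim\gamma(r)$ since $\gamma(s)\asymp s^m$. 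Thus $\int_{B'}|f-f_{B'}|\,d\mu\gtrsim\gamma(r)\asymp r^m$. Combining with $\int_{B'}|f-f_{B'}|\,d\mu\leq C(\eps r-1)\int_{3B'}|\nabla_1 f|\,d\mu$ and the displayed bound gives $r^m\lesssim r\cdot\mu(\partial_1^B A)$, i.e.\ $\mu(\partial_1^B A)\gtrsim r^{m-1}$, with all implied constants depending only on the growth exponent $m$ and the structure constants of $G$, as required.

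I expect the main obstacle to be exactly this localization step: guaranteeing that the gradient term produced by the Poincar\'e inequality sees only $\partial_1^B A$ and not the boundary sphere of $B$. Lemma~\ref{lem:deepsets} does the essential work, supplying a ball deep inside $B$ on which both $A$ and $A^c$ carry a definite proportion of the mass; the remaining points — leaving a unit of room near $\partial B$ by shrinking the ball, and controlling the thin corona discarded in doing so — are routine given the volume regularity of $G$.
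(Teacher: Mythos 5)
Your proof is correct, and at the structural level it is the same as the paper's: reduce to $a=1$, use Lemma~\ref{lem:deepsets} to find a ball $B(w,\eps r)$ ``deep inside'' $B$ carrying a definite mass of both $A$ and $A^c$, and feed $f=\mathbf{1}_A$ into the $p=1$ Poincar\'e inequality of Corollary~\ref{cor:polygrowthPI} on that ball. What you do differently, and more carefully, is the localization step. The paper applies the inequality on $B(w,\eps r)$ itself and then writes the gradient term as $\mu(\partial_1^{B(w,3\eps r)}A)$; but since $\nabla_1\mathbf{1}_A$ is the gradient on all of $G$, a point $z\in B(w,3\eps r)$ near the sphere $\partial B(x,r)$ can have $|\nabla_1\mathbf{1}_A|(z)=1$ because $B(z,1)$ escapes $B$ and meets $B^c\subseteq A^c$, even if $z$ has no nearby point of $B(w,3\eps r)\setminus A$; so the identification of $\int_{B(w,3\eps r)}|\nabla_1\mathbf{1}_A|\,d\mu$ with $\mu(\partial_1^{B(w,3\eps r)}A)$ needs an extra unit of room. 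Your shrinking to $B'=B(w,\eps r-1)$, which forces $3B'\subseteq B(x,r-3)$ so that every witnessing point of $A^c$ in $B(z,1)$ automatically lies in $B$, supplies exactly that room and makes the bound $\int_{3B'}|\nabla_1\mathbf{1}_A|\,d\mu\leq\mu(\partial_1^B A)$ unconditional. The compensating corona estimate from Lemma~\ref{lem:smallcoronas} correctly shows that $\mu(A\cap B')$ and $\mu(A^c\cap B')$ remain $\gtrsim\gamma(r)$, and your computation of $\int_{B'}|\mathbf{1}_A-f_{B'}|$ (the paper uses the simpler observation that $|\mathbf{1}_A-f_{B'}|\geq\frac12$ on one of $A\cap B'$ or $B'\setminus A$, but both work) gives the required $\gtrsim\gamma(r)$. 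So everything closes; the only buyable difference is that your version spells out a boundary edge case that the paper elides.
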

\begin{proof}
Let $A\subset B(x,r)=B$ be such that $\frac14\mu(B)\leq\mu(A)\leq \frac12\mu(B)$. 
It suffices to prove the proposition for $a=1$, in which case $\partial_a^B A$ is the same whether computed with the $1$-geodesic metric on $B$ or with $d$.  
By Lemma \ref{lem:deepsets}, assuming $r$ is sufficiently large, there exists some $w\in B(x,(1-3\eps)r)$ such that $\mu(B(w,\eps r) \cap A),\ \mu(B(w,\eps r) \cap A^c)\geq k' \mu(A)$, where $k'>0$ is independent of $r$.  Applying the Poincar\'{e} inequality \eqref{eq:cglcPolyPI} with $p=1$ to the characteristic function $\mathbf{1}_{A}$ on the ball $B(w,\eps r)$ we see that
\[
	\tfrac12 k' \mu(A) \leq C\epsilon r \mu(\partial_1^{B(w,3\epsilon r)} A).
\]
Since $B(w,3\eps r)\subseteq B$ we deduce that there exists a constant $k>0$ (independent of $r$) such that
\[
 \mu(\partial_1^{B}A)\geq \frac{k}{r} \mu(B).\qedhere
\]
\end{proof}

The last step in the argument ensures that there is a large subset of the ball with suitable Cheeger constant at scale $a$ (compare with a similar result for graphs presented in \cite{HumSepExp}).

\begin{proof}[{Proof of Theorem \ref{thm:CGLC-poly-PI}}]
Let $(G,d,\mu)$ be a metric measure CGLC group with $\gamma(r)\asymp r^m$. 
Our goal is to show that for every $p \in [1,\infty]$ we have $\Lambda_G^p(\gamma(r)) \gtrsim_p \gamma(r)^{\frac{m-1}{m}}$.  As discussed above, by Propositions~\ref{prop:Linfty_inverse growth} and \ref{prop:monotonep} it suffices to show that $\Lambda_G^1(\gamma(r)) \gtrsim \gamma(r)^{\frac{m-1}{m}}$.

Let $Y$ be a maximal $3$-separated set in 
\[
	\setcon{y \in G}{\exists r \in \Z : d(1,y)=3r},
\]
then $Y$ is a $6$-net in $G$.  For each $r \geq 2$, for each $y \in Y$ satisfying $d(1,y)=3(r-1)$, set 
\[
B_y = \setcon{z \in B(1,3r)}{d_{B(1,3r)}(z,y) \leq 6},
\]
where $d_{B(1,3r)}$ is the $1$-geodesic metric on $B(1,3r)$.
 It follows that
\[
 B(y,3) \subseteq B_y \subseteq B(y,6) \quad \textup{and} \quad \bigcup_{y\in Y\cap B(1,3(r-1))} B_y = B(1,3r)
\]
holds for all $y\in Y$ and $r\geq 2$.

Let $X$ be the graph with vertex set $Y$ and edges between $y,y' \in Y$ whenever $d(y,y')\leq 13$; this graph has bounded degree.  With respect to the vertex counting measure on $X$, the natural inclusion $f:X\to G$ is a large-scale equivalence (cf.\ Definition \ref{def:largescaleEquiv}), so it suffices to prove lower bounds on the usual separation profile of $X$. Fix a large-scale equivalence $h: G\to X$ by sending each $g\in G$ to some $y\in Y$ with $g \in B_y$ so that $d(g,f(y))$ is minimal among all such $y$.  This map satisfies $B(y,1)\subset h^{-1}(y)\subset B(y,6)$ for each $y\in Y$.  Moreover, if $d(g,g')=1$ then $h(g)$ and $h(g')$ are adjacent or equal in $X$, so $h$ is $1$-Lipschitz.  Renormalise $\mu$ so that balls of radius $1$ have measure $1$ and then let $c$ be the measure of balls of radius $6$.

Let $\Gamma_r$ be the full subgraph of $X$ with vertex set $Y\cap B(1,3(r-1))$. 
Let $C\subset V\Gamma_r$ be such that any connected component of $V\Gamma_r\setminus C$ has at most $\delta \abs{\Gamma_r}$ vertices, for some constant $\delta$ to be chosen.

Suppose for a contradiction that $\abs{C}\leq \delta' r^{m-1}$. For $\delta,\delta'$ small enough, there is a union of connected components $D$ of $\Gamma_r\setminus C$ such that $A=h^{-1}(D)\subseteq B(1,3r)$ satisfies
\[
 \frac14\mu(B(1,3r)) \leq \mu(A) \leq \frac12\mu(B(1,3r)).
\]
This is possible, as $B:=B(1,3r)$ has measure at least $\abs{\Gamma_r}$ and for any connected component $A'$ of $V\Gamma_r\setminus C$ we have $h^{-1}(A') \subseteq \bigcup _{x\in A'} B(f(x),6)$ which has measure at most $c\abs{A'}\leq c\delta \abs{\Gamma_r}$. As long as $\delta\leq\frac{1}{2c}$ a simple greedy choice of connected components (ordered by the measure of their pre-images) yields the desired set $A$; we fix $\delta=\frac{1}{2c}$.

By Proposition \ref{prop:polygrowthcut}, there is a $k>0$ so that $\mu(\partial^B_1 A) \geq kr^{m-1}$.  
Recall that we consider $B$ as a subspace of $X$ equipped with its $1$-geodesic metric which we denote by $d_B$.

Now, by definition, if $z\in\partial^B_1 A$ then either $z\in A$ and there is some $z'\in B\setminus A$ with $d_B(z,z')\leq 1$ or the same distance bound holds with $z'\in A$ and $z\in B\setminus A$. We consider the first case; the second is similar. Since $B\setminus A \subseteq h^{-1}(V\Gamma_r\setminus D)$ there are vertices $x\in D$ and $x'\in V\Gamma_r\setminus D$ such that $d_B(z,f(x)), d_B(z',f(x'))\leq 6$, and so $d_B(f(x),f(x'))\leq 13$.  Since $h$ is $1$-Lipschitz, and $h(f(x))=x$, $d_{\Gamma_r}(x,x')\leq 13$. Now there is a path of length at most $13$ from $x$ to $x'$ in $\Gamma_r$ and at least one vertex in this path lies in $C$. Since adjacent vertices in $X$ are at most $13$ apart in $G$, $z$ is within a distance of $d(z,f(x))+13\cdot 13\leq 6+169=175$ of $f(C)$.
Therefore we have $\partial^B_aA\subseteq \bigcup_{x\in C} B_{175}(f(x))$, so
\[
 kr^{m-1} \leq \abs{C}\mu(B(1,175)),
\]
a contradiction for $\delta'$ sufficiently small, therefore any $C \subset V\Gamma_r$ so that all connected components of $V\Gamma_r\setminus C$ have size at most $\delta \abs{\Gamma_r}$ must have size $> \delta' r^{m-1}$.  In the terminology of \cite{BenSchTim-12-separation-graphs, HumSepExp}, $\cut^\delta(\Gamma_r) > \delta' r^{m-1}$.  So, by \cite[Proposition 2.4]{HumSepExp} and Remark~\ref{rmk:bdrydefs} there is a subgraph $\Gamma_r'$ of $\Gamma_r$ with $\abs{\Gamma_r'}\geq \frac12 \abs{\Gamma_r}$ and $r^m h_1(\Gamma_r') \succeq r^{m-1}$.
Therefore by Proposition~\ref{prop:Lambda1Sep}, and writing $n=\gamma(r)\asymp r^m$ for clarity,
\[
 \Lambda^1_G(n) \simeq \Lambda^1_X(n) \simeq \sep_X(n) \gtrsim n^{\frac{m-1}{m}}.
 \qedhere
\]
\end{proof}

\section{Upper bounds and large-scale dimension}\label{section:measureDimension}

	The goal of this section is to obtain upper bounds on the Poincar\'e profiles of a metric measure space which is finite dimensional in the sense of the definition below.  In doing so, we will prove that the lower bound for groups of polynomial growth in section \ref{sec:polygrowth} is sharp to complete the proof of Theorem~\ref{bthm:CGLC-poly}.

\begin{definition}\label{def:measurabledim} Let $(X,d,\mu)$ be a metric measure space. We say $X$ has \textbf{measurable dimension at most} $n$ ($\mdim(X)\leq n$) if, for all $r\geq 0$ we can write $X=X_0\cup \dots \cup X_n$ and decompose each $X_i=\bigcup X_{ij}$ so that each $X_{ij}$ is $1$-thick, $\sup(\mu(X_{ij}))<\infty$ and $d(X_{ij},X_{ij'})\geq r$ whenever $j\neq j'$.

If $\mdim(X)\leq n$ we define the function $\gamma_n(r)$ to be the infimal value of $\sup(\mu(X_{ij}))+1$ taken over all decompositions of $X$ satisfying the above hypotheses.
\end{definition}

Notice that $\gamma_n(r)$ is non-decreasing as a function of $r$.

A simple comparison can be made with asymptotic dimension when the metric measure space has \textbf{bounded geometry}: for all $r\geq 0$ there exists some $C_r$ such that $\mu(B(x,r))\leq C_r$ for all $x\in X$.

\begin{lemma} Let $(X,d,\mu)$ be a metric measure space with bounded geometry.
Then the asymptotic dimension of $X$ is at least $\mdim(X)$.
\end{lemma}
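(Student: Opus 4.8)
The plan is to read a measurable-dimension decomposition directly off an asymptotic-dimension decomposition, with the bounded geometry hypothesis entering only at the very end to turn a diameter bound into a measure bound. We may assume the asymptotic dimension of $X$ is finite, say $\mathrm{asdim}(X)=n$ (otherwise there is nothing to prove), and the goal is to establish $\mdim(X)\leq n$.

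First I would fix $r\geq 0$ and apply the standard ``$R$-disjoint families'' characterisation of asymptotic dimension with $R=r+2$: since $\mathrm{asdim}(X)\leq n$, there are families $\{Y_{ij}\}_j$ for $0\leq i\leq n$ whose union is $X$, with $d(Y_{ij},Y_{ij'})\geq r+2$ for $j\neq j'$ and a uniform bound $\diam(Y_{ij})\leq D<\infty$. Next I would thicken each piece, setting $X_{ij}=\bigcup_{y\in Y_{ij}}B(y,1)$ (discarding any empty ones) and $X_i=\bigcup_j X_{ij}$. Then each $X_{ij}$ is a union of closed unit balls, hence $1$-thick; the $X_i$ still cover $X$ since $Y_{ij}\subseteq X_{ij}$; separations drop by at most $2$, so $d(X_{ij},X_{ij'})\geq r$ for $j\neq j'$; and diameters grow by at most $2$, so $\diam(X_{ij})\leq D+2$. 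Finally, a nonempty set of diameter at most $D+2$ lies in a ball of radius $D+2$, so bounded geometry yields $\mu(X_{ij})\leq C_{D+2}<\infty$ uniformly in $i$ and $j$. This is exactly a decomposition witnessing $\mdim(X)\leq n$ at scale $r$, and since $r$ was arbitrary we conclude $\mdim(X)\leq n=\mathrm{asdim}(X)$.

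The only genuine subtlety is matching up the two definitions, and it is bookkeeping rather than substance: the $1$-thickness requirement in $\mdim$ is what forces the thickening step, which is precisely why one must feed the larger parameter $R=r+2$ (not $r$) into the asymptotic-dimension hypothesis, so that $r$-separation survives the thickening; and bounded geometry is used exactly to upgrade the diameter bound produced by asymptotic dimension to the measure bound demanded by the definition of $\mdim$. If one reads a $\mdim$-decomposition as requiring pairwise disjoint pieces, this holds automatically for $r>0$ from the $r$-separation, and for $r=0$ one simply runs the same argument with $R=3$.
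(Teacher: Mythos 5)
Your proof is correct and follows essentially the same route as the paper: apply the asymptotic-dimension decomposition at scale $r+2$, pass to $1$-neighbourhoods of each piece to obtain $1$-thick sets that are still $r$-separated, and use bounded geometry to turn the (slightly enlarged) diameter bound into a uniform measure bound. The only differences from the paper's write-up are minor bookkeeping remarks (discarding empty pieces, the $r=0$ aside) that do not change the substance.
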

\begin{proof} Suppose $asdim(X)\leq n$. This implies that for all $r\geq 0$ one can decompose $X=X'_0\cup\ldots\cup X'_n$ and further decompose each $X'_i=\bigcup X'_{ij}$ so that $\sup\set{\diam(X'_{ij})}=K_{r}<\infty$ and $d(X_{ij},X_{ij'})\geq r+2$ whenever $j\neq j'$.

Define $X_{ij}=\bigcup_{y\in X'_{ij}} B(y,1)$. Each $X_{ij}$ is $1$-thick, it has diameter at most $L=K_{r}+2$ and $d(X_{ij},X_{ij'})\geq r$ whenever $j\neq j'$. Since $X$ has bounded geometry, $\mu(X_{ij})\leq C_L$ for all $i,j$.
\end{proof}

\begin{lemma} Let $(X,d,\mu)$ and $(Y,d',\mu')$ be metric measure spaces and suppose $Y$ has bounded packing at scales $\geq 1$. If there exists a coarse regular map $F:X\to Y$, then $\mdim(X)\leq\mdim(Y)$. Moreover, for all suitable $n$ we have $\gamma^X_{n}\lesssim_n \gamma^Y_{n}$.
\end{lemma}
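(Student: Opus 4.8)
The plan is to pull an almost‑optimal decomposition of $Y$ back along $F$, thicken it at scale $1$ so that the pieces become genuine subspaces of $X$, and then check that covering, separation, and the uniform measure bound all survive with only a controlled loss.

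First I would assume $\mdim(Y)=n<\infty$ (otherwise there is nothing to prove) and fix the target scale $r\ge 0$. Write $\delta_0,\rho_+$ for the parameters of $F$ (Definition~\ref{def:coarseregular}) and set $r'=\rho_+(r+2)+1$. Using $\mdim(Y)\le n$, choose a decomposition $Y=Y_0\cup\dots\cup Y_n$ with $Y_i=\bigcup_j Y_{ij}$, each $Y_{ij}$ $1$‑thick, $d'(Y_{ij},Y_{ij'})\ge r'$ for $j\ne j'$, $\sup_{i,j}\mu'(Y_{ij})<\infty$, and---by near‑optimality---$\sup_{i,j}\mu'(Y_{ij})+1\le\gamma^Y_n(r')+1$. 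Define $X_{ij}=\bigcup_{x\in F^{-1}(Y_{ij})}B(x,1)$ (ignored when the preimage is empty) and $X_i=\bigcup_j X_{ij}$; each non‑empty $X_{ij}$ is a union of closed $1$‑balls, hence a $1$‑thick subspace of $X$. Covering is automatic, since $x\in B(x,1)\subseteq X_{ij}$ whenever $F(x)\in Y_{ij}$. For separation, given $u\in X_{ij}$ and $v\in X_{ij'}$ with $j\ne j'$, pick $x,x'$ in the respective preimages with $d(u,x),d(v,x')\le 1$; then $\rho_+(d(x,x'))\ge d'(F(x),F(x'))\ge r'>\rho_+(r+2)$ forces $d(x,x')\ge r+2$ (as $\rho_+$ is increasing), whence $d(u,v)\ge r$.

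The step I expect to be the real obstacle is the uniform measure bound, because the coarsely‑measure‑preserving axiom is stated only for $1$‑thick subspaces of the \emph{source} $X$, while I want to compare $\mu$ of a neighbourhood of $F^{-1}(Y_{ij})$ with $\mu'(Y_{ij})$ downstairs. I would get around this by applying the axiom directly with $A=X_{ij}$, which \emph{is} a $1$‑thick subspace of $X$: this gives $\mu(X_{ij})\le\mu([X_{ij}]_{\delta_0})\asymp_{\delta_0}\mu'([F(X_{ij})]_{\delta_0})$. Since $F$ is coarse Lipschitz, $F(B(x,1))\subseteq B(F(x),\rho_+(1))$ for each $x$, so $F(X_{ij})\subseteq[Y_{ij}]_{\rho_+(1)}$ and hence $[F(X_{ij})]_{\delta_0}\subseteq[Y_{ij}]_{\rho_+(1)+\delta_0}$; Lemma~\ref{lem:doublingThick} applied to $Y$ (bounded packing at scales $\ge 1$) and the $1$‑thick set $Y_{ij}$ then yields $\mu'([Y_{ij}]_{\rho_+(1)+\delta_0})\preceq\mu'(Y_{ij})$. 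Chaining these inequalities gives $\sup_{i,j}\mu(X_{ij})\le C\sup_{i,j}\mu'(Y_{ij})<\infty$ for a constant $C=C(\delta_0,\rho_+(1))$ (also absorbing the packing constant of $Y$ at scale $\rho_+(1)+\delta_0$).

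The decomposition $\{X_{ij}\}$ now witnesses $\mdim(X)\le n=\mdim(Y)$, and combining the measure estimate with the near‑optimal choice of $\{Y_{ij}\}$ gives
\[
\gamma^X_n(r)\le\sup_{i,j}\mu(X_{ij})+1\le C\big(\gamma^Y_n(r')+1\big)+1=C\,\gamma^Y_n\big(\rho_+(r+2)+1\big)+(C+1),
\]
which is the asserted $\gamma^X_n\lesssim_n\gamma^Y_n$ once one observes that $\rho_+$ may be taken affine in the relevant cases---e.g.\ for regular maps of bounded degree graphs, by Lemma~\ref{lem:regcoarsereg}. A routine bookkeeping point throughout is that $1$‑thickening costs $2$ in separation, which is exactly why the $Y$‑decomposition is built at scale $r+2$ before $\rho_+$ is applied.
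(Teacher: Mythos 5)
Your proposal is correct and takes essentially the same approach as the paper: pull back a $Y$-decomposition at scale $\rho_+(r+2)$ along $F$, thicken each piece by $1$ to get $X_{ij}=[F^{-1}(Y_{ij})]_1$, and verify covering, separation (via the coarse-Lipschitz bound), and the uniform measure bound (via the coarsely-measure-preserving axiom and Lemma~\ref{lem:doublingThick}). Your handling of the measure bound is in fact slightly more careful than the paper's one-line assertion ``$\mu(X^r_{ij})\asymp\mu'([Y^r_{ij}]_1)$'': you correctly apply the axiom to the genuinely $1$-thick set $X_{ij}$ (rather than to $F^{-1}(Y_{ij})$, which need not be thick) and then push forward through $F(X_{ij})\subseteq[Y_{ij}]_{\rho_+(1)}$, which is the right way to close the gap.
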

\begin{proof} Suppose $\mdim(Y)\leq n$. Then for all $r\geq 0$ one can write $Y=\bigcup_{i=0}^n\bigcup_j Y^r_{ij}$ where each $Y^r_{ij}$ is $1$-thick, $\mu'(Y^r_{ij})\leq C$ for some $C$ and all $i,j$, and $d'(Y^r_{ij},Y^r_{ij'})>\rho_+(r+2)$ whenever $j\neq j'$.

Let $X^r_{ij}=[F^{-1}(Y^r_{ij})]_1$. By Definition \ref{def:coarseregular}(i), $d(X^r_{ij},X^r_{ij'})>r$ whenever $j\neq j'$, and by (ii) $\mu(X^r_{ij})\asymp \mu'([Y^r_{ij}]_1)\preceq \mu'(Y^r_{ij})$ by Lemma \ref{lem:doublingThick}.
\end{proof}

\begin{remark} One can remove the assumption that $Y$ has bounded packing at scales $\geq 1$ by removing the assumption that each $X_{ij}$ is $1$-thick in the definition of measurable dimension.
\end{remark}

\begin{proposition}\label{prop:mdim} Let $(X,d,\mu)$ be a metric measure space with $\mu(X)=\infty$ and measurable dimension at most $n$. For all $\delta>0$, 
$$\Lambda^p_{X}(r) \lesssim_{n} \sup\setcon{\gamma_{n}(t+\delta)/t}{\gamma_{n}(t)\leq r/(4n+4)}.$$
\end{proposition}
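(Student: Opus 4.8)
The plan is to mimic the argument for the separation profile in \cite[Theorem 1.5]{HumSepExp}, replacing cut-sets by Poincar\'e constants and working measure-theoretically. Fix a subspace $Z\subset X$ with $\mu(Z)\leq r$; we must bound $\mu(Z)h^p_a(Z)$ for some convenient scale $a$, say $a=2$. Set $t$ to be the largest value with $\gamma_n(t)\leq \mu(Z)/(4n+4)$ (if no such $t$ exists the bound is trivial), and apply the definition of $\mdim(X)\leq n$ at scale $a$ to get a decomposition $X=X_0\cup\dots\cup X_n$, $X_i=\bigcup_j X_{ij}$, with $\sup_{ij}\mu(X_{ij})\leq \gamma_n(a)\leq\gamma_n(t+\delta)$ (choosing $a\le t+\delta$; for $t$ small the statement is vacuous so we may assume $t$ large), $X_{ij}$ $1$-thick, and $d(X_{ij},X_{ij'})\geq a$ for $j\neq j'$. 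Intersecting with $Z$ and thickening appropriately, the pieces $Z_{ij}:=Z\cap X_{ij}$ (or rather their $1$-thick modifications inside $Z$) have $a$-separated different-$j$ pieces and bounded measure.

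Next I would build a test function. For each $i$, the sets $\{X_{ij}\}_j$ are pairwise $a$-separated, so $|\nabla_a f|$ cannot see two different pieces $X_{ij},X_{ij'}$ simultaneously. The idea is: since $\sum_i \mu(Z\cap X_i)=\mu(Z)$, there is an index $i$ with $\mu(Z\cap X_i)\geq \mu(Z)/(n+1)$. Among the pieces $X_{ij}$ meeting $Z$ within this $X_i$, greedily group them into two disjoint families whose unions $A$ and $B$ each satisfy $\mu(A\cap Z),\mu(B\cap Z)\geq c\,\mu(Z)$ for a constant $c=c(n)$ --- this is possible because each individual piece has measure $\leq\gamma_n(t+\delta)\leq \mu(Z)/(4n+4)\leq \tfrac14\mu(Z\cap X_i)$, so no single piece dominates and a greedy balancing works. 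Put $f=1_A-1_B$ extended by a value on $Z\setminus(A\cup B)$ chosen (or just take $f=1_{A'}$ for a suitable union $A'$ of pieces with $\tfrac14\mu(Z)\le\mu(A')\le\tfrac12\mu(Z)$, following the polynomial growth proof) so that $\|f-f_Z\|_p^p \succeq_n \mu(Z)$. Crucially, because the pieces within $X_i$ are $a$-separated, $|\nabla_a f|$ is supported in $[A']_a$ — but that is too big.

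The actual mechanism (as in \cite{HumSepExp}) must instead iterate over $r$: one should \emph{not} apply the decomposition just at one scale but realize that cutting $Z$ into pieces of measure $\leq\gamma_n(t+\delta)$ costs removing a set whose measure is controlled. More precisely, I would argue: applying the $\mdim$ decomposition at scale $a$, the $(n+1)$ "colour classes" $X_i$, after thickening, cover $Z$; removing $\partial_a^Z(Z\cap X_i)$-type collar sets for each colour disconnects $Z$ into pieces each of measure $\leq\gamma_n(t+\delta)$. Iterating a standard "local cutting" lemma $\log(\mu(Z)/\gamma_n(t+\delta))$-many times as in \cite[Theorem 1.5]{HumSepExp}, and using that at each stage the cut-set has measure $\lesssim_n \mu(Z)/t$ (since removing an $a$-collar from $(n+1)$ bounded-diameter pieces costs a bounded fraction, while $t$ is roughly $\mu(Z)^{1/?}$... ) one gets $\mathrm{cut}^{1/2}(Z)\lesssim_n \mu(Z)/t$. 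Then Proposition~\ref{prop:Lambda1Sep} (via the cut-set/Cheeger equivalence, Remark~\ref{rmk:bdrydefs} and \cite[Prop.~2.4]{HumSepExp} adapted to metric measure spaces) gives $\mu(Z)h_a^1(Z)\lesssim_n \mu(Z)/t$, hence $\Lambda^1_X(r)\lesssim_n \sup\{\gamma_n(t+\delta)/t : \gamma_n(t)\le r/(4n+4)\}$ after reindexing. To upgrade from $p=1$ to general $p$ one passes instead through the $p$-analogue: construct the test function $f$ above directly and estimate $\|\nabla_a f\|_p^p\lesssim_n \mu([\text{cut collar}]_a)\lesssim_n \mu(Z)/t$ while $\|f-f_Z\|_p^p\succeq_n\mu(Z)$, giving $h_a^p(Z)^p\lesssim_n 1/t$ and so $\mu(Z)h_a^p(Z)\lesssim_n \mu(Z)/t^{1/p}$ — but the statement has $/t$ not $/t^{1/p}$, so the correct route must be the $p=1$ bound combined with Proposition~\ref{bprop:p1monotone} ($h^p(\Gamma)^p\le 2^p h^1(\Gamma)$) applied in reverse? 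No: one wants an \emph{upper} bound on $\Lambda^p$, and $h^p_a(Z)\gtrsim h^1_a(Z)$ by Proposition~\ref{prop:monotonep}, which goes the wrong way. Hence the genuinely correct argument is to do the cutting directly for $p$: produce $f$ with $|\nabla_a f|$ supported on the collar and $|\nabla_a f|\le 1$ there, so $\|\nabla_a f\|_p^p\le \mu(\text{collar})\lesssim_n\mu(Z)/t$, \emph{while normalizing} $\|f-f_Z\|_p=1$; one then gets $h_a^p(Z)\le(\mu(Z)/t)^{1/p}\cdot(\mu(Z))^{-1/p}=t^{-1/p}$... still $t^{-1/p}$.

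The resolution — and the main obstacle — is that the correct iteration gives collar measure $\lesssim_n \mu(Z)^{(t-1)/t}$-type bounds only when $\gamma_n$ is polynomial; in general one must, at scale where pieces have size $s:=\gamma_n(t+\delta)$, cut $Z$ (of size $\le r$) into pieces of size $\le s$ using a cut-set of measure $\lesssim_n (r/s)\cdot(\text{collar per piece}) = (r/s)\cdot O_n(s/t)\cdot$(number of pieces)... this is circular unless one is careful. I would therefore follow \cite[proof of Theorem 1.5]{HumSepExp} essentially verbatim: the key lemma there shows that a space of $\mdim\le n$ and measure $\le r$ can be cut into pieces of measure $\le\gamma_n(t)$ by removing a set of measure $O_n(r/t)$, by a single application of the decomposition at scale comparable to $t$ (not an iteration!) — the $(n+1)$ colour classes are cut one at a time, each cut removing $a$-collars of total measure $\le (n+1)\cdot(\text{number of }X_{ij}\text{ meeting }Z)\cdot O(1)$, and the number of such pieces is $\le r/\gamma_n(a)$, giving collar measure $\lesssim_n r/\gamma_n(a)\cdot 1$, and setting $a\asymp t$ so $\gamma_n(a)\asymp\gamma_n(t)\asymp$ large makes this $\lesssim_n r/(\text{large})$ — hmm, that's $r/\gamma_n(t)$, not $r/t$. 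So in fact the cut-set bound is $\lesssim_n$ (number of interfaces), and one must bound the number of distinct $X_{ij}$ that actually meet $Z$: at most $\mu(Z)/\mu(\text{smallest relevant piece})$, but pieces can be small. The honest main obstacle is exactly this bookkeeping: \emph{controlling how the cut-set measure compares to $\mu(Z)/t$}, which in \cite{HumSepExp} is handled by the finite-Assouad--Nagata-dimension hypothesis giving a \emph{linear} relation between the scale $t$ and the piece-diameter. In our measurable-dimension setting the analogous input is built into $\gamma_n$, and the inequality $\gamma_n(t)\le r/(4n+4)$ in the statement is precisely what guarantees the greedy balancing into halves works, while the factor $\gamma_n(t+\delta)/t$ emerges as (collar-per-unit-length $O_n(1)$) $\times$ (diameter of a piece $\le$ something) $/$ ... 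In the write-up I would make this precise by first stating and proving the cutting lemma: \emph{if $\mdim(X)\le n$ and $\mu(Z)\le r$ then $Z$ can be partitioned, after removing a subset of measure $\lesssim_n r/t$, into pieces each of measure $\le \gamma_n(t+\delta)$}, proved by applying the decomposition at scale $a\le\delta$ and at "radius parameter" $t$ and doing the $(n+1)$-step sequential peeling; then combine with the $L^1$/Cheeger/cut equivalence (Propositions~\ref{prop:Lambda1Sep}, \ref{prop:h1h} and \cite[Prop.~2.4]{HumSepExp}) and Proposition~\ref{prop:monotonep} is \emph{not} needed since the bound for general $p$ follows by building the explicit $p$-test function $f=1_{A'}$ directly on the cut pieces, for which $\|\nabla_a f\|_p^p=\mu(\partial_a^Z A')\lesssim_n r/t$ and $\|f-f_Z\|_p^p\succeq \mu(Z)$, whence $\mu(Z)h_a^p(Z)\lesssim_n \mu(Z)\cdot(r/t)^{1/p}/\mu(Z)^{1/p}\le (r/t)$ using $\mu(Z)\le r$ and $1/p\le 1$ together with $r/t\ge 1$. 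Taking the supremum over $t$ with $\gamma_n(t)\le r/(4n+4)$ and replacing $\gamma_n(t)$ by the bound $\gamma_n(t+\delta)$ on piece-size gives the claimed inequality; I would finish by invoking Corollary~\ref{cor:ProfileIndependant} to remove the dependence on the scale $a$.
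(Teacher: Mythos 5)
You correctly identify the decomposition coming from $\mdim(X)\leq n$, the choice of a heaviest colour class, and the greedy balancing into two unions $X'_0$, $X''_0$, and you correctly diagnose the central obstruction: with an indicator test function $f=1_{A'}$ one only obtains $h^p_a(Z)\lesssim t^{-1/p}$, which is too weak for $p>1$. But your final salvage does not repair this. The claimed inequality $\mu(Z)^{1-1/p}(r/t)^{1/p}\leq r/t$ is, after dividing by $(r/t)^{1/p}$, equivalent to $\mu(Z)^{1-1/p}\leq (r/t)^{1-1/p}$, i.e.\ $\mu(Z)\leq r/t$; this fails whenever $\mu(Z)$ is comparable to $r$ and $t$ is large, which is exactly the regime of interest. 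The detours through cut-set iteration, Proposition~\ref{prop:Lambda1Sep}, and \cite[Prop.~2.4]{HumSepExp} are also unnecessary and do not lead anywhere.

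The missing idea is to take the $\mdim$-decomposition at scale $t$ (so distinct pieces $X^t_{0j},X^t_{0j'}$ are $t$-separated) and, instead of an indicator function, use the Lipschitz ramp
\[ f(x)=\frac{1}{t}\min\{t,d_X(x,X'_0)\}. \]
Because $X''_0$ is $t$-separated from $X'_0$, $f$ is identically $0$ on $X'_0$ and identically $1$ on $X''_0$, each of which has $\mu(\cdot\cap A)\geq \mu(A)/(4(n+1))$, so $\|f-f_A\|_p^p\succeq_n\mu(A)$ as in your attempt. But now $f$ is $\tfrac{1}{t}$-Lipschitz, so one has the pointwise bound $|\nabla_2 f|\preceq 1/t$ on all of $A$, hence $\|\nabla_2 f\|_p^p\preceq t^{-p}\mu(A)$ and $h^p_2(A)\preceq_n 1/t$ with no loss of a $p$-th root. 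Choosing $t$ so that $4(n+1)\gamma_n(t)\leq\mu(A)\leq 4(n+1)\gamma_n(t+\delta)$ then gives $\mu(A)h^p_2(A)\preceq_n\mu(A)/t\leq 4(n+1)\gamma_n(t+\delta)/t$. This is a single-shot construction; no cutting lemma, iteration, or passage through the $L^1$ theory is required.
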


\begin{proof} If $\gamma_{n}$ is bounded then $\mu$ is bounded, which is a contradiction.

  Choose $s > 4(n+1)\gamma_n(0)$ and assume $\mu(A) = s\leq r$.
Fix $\delta>0$ and find $t$ so that $4(n+1)\gamma_{n}(t)\leq \mu(A) \leq 4(n+1)\gamma_{n}(t+\delta)$.
  Select a decomposition of $X$ into sets $X^t_{ij}$ as above where $\mu(X^t_{ij})\leq \gamma_{n}(t)$ for all $i,j$. 

Then there exists some $i$ such that $\mu(A\cap X_i)\geq \frac{1}{n+1}\mu(A) \geq 4\gamma_{n}(t)$. Without loss of generality, assume $i=0$. Choose $J$ so that 
\[X'_0:=\bigcup_{j\in J}X^t_{0j} \quad \textrm{satisfies} \quad \frac{\mu(A)}{4(n+1)}\leq \mu(A\cap X'_0)\leq \frac{\mu(A)}{2(n+1)}.\] 
Set $X''_0=X^t_0\setminus X'_0$ and let $f_t:A\to\R$ be the function $f(x)=\frac{1}{t}\min\set{t,d_X(x,X'_0)}$.

Now $f_t$ is $\frac{1}{t}$-Lipschitz, so $\int_A \abs{\nabla_2 f}^p \leq \frac{2^p}{t^p}\mu(A)$. Since $f$ takes values in $[0,1]$ and has value $0$ on $X'_0$ and value $1$ on $X''_0$ each of measure $\geq \mu(A)/4(n+1)$, we see that $\int_A \abs{f-f_R}^p d\mu(x) \geq (\frac{1}{2})^{p}\frac{1}{4(n+1)} \mu(A)$. 

Thus, $h^p_a(A)\leq \frac{4}{t}(n+1)^{2}\preceq_{n} \frac1t$. As this holds for every measurable $A\subset X$ of finite measure the result follows.
\end{proof}

\begin{remark} Under nice circumstances, for instance when a space $X$ has a cobounded isometry group, and finite asymptotic dimension where the $K_r$ can be bounded by an affine function of $r$ (sometimes called linearly controlled or asymptotic Assouad--Nagata dimension), the function $\gamma_{n}(s_r+\delta)/s_r$ is equivalent (up to $\simeq$) to $r/\kappa(r)$ where $\kappa$ is the inverse growth function. This is easily deduced from the argument in the proof of Proposition \ref{prop:Linfty_inverse growth}.
\end{remark}

\begin{proof}[Proof of Theorem $\ref{bthm:CGLC-poly}$]
	Let $(G,d,\mu)$ be a CGLC metric measure group with $\mu(B(1,r))\asymp r^m$. Such groups have finite asymptotic Assouad--Nagata dimension (\cite[Theorem 1.2]{Breu-14-geom-loc-cpt-poly-growth} and \cite[Theorem 5.5]{HP-13-ANdimension-nilppolyc}), so by Proposition \ref{prop:mdim}, $\Lambda^p_G(r)\lesssim r^{\frac{m-1}{m}}$ for all $p\geq 1$. The lower bound is proved in Theorem \ref{thm:CGLC-poly-PI}.
\end{proof}

\begin{example}
As another example, for $X$ equal to the product of two $3$-regular trees we have $\Lambda^p_X(r)\simeq r/\log(r)$ for all $p\in[1,\infty]$: The case $p=\infty$ follows immediately from Proposition \ref{prop:Linfty_inverse growth}. By \cite[Theorem $3.1$]{BenSchTim-12-separation-graphs} and Proposition \ref{prop:Lambda1Sep}, the lower bound holds when $p=1$, so the lower bound for general $p$ follows from Proposition \ref{prop:monotonep}. For the upper bound, $X$ has exponential growth, a cobounded isometry group, and asymptotic Assouad--Nagata dimension $2$, so by Proposition \ref{prop:mdim}, $\Lambda^p_X(r)\lesssim r/\log(r)$ for all $p\geq 1$.
\end{example}

\section{Trees}\label{sec:trees}

In this section, we calculate the Poincar\'e profile for regular trees.
\begin{theorem}[Theorem~\ref{bthm:trees}]\label{thm:tree-profile}
  Let $T$ be the infinite $3$-regular tree.
  Then for every $p\in [1,\infty)$, $\Lambda^p_T(r) \asymp_p r^{(p-1)/p}$.
\end{theorem}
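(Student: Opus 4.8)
The plan is to establish the two inequalities $\Lambda^p_T(r)\lesssim_p r^{(p-1)/p}$ and $\Lambda^p_T(r)\gtrsim_p r^{(p-1)/p}$ by separate arguments. First a reduction: every induced subgraph of $T$ is a forest, and a disconnected one has $h^p=0$ (give different components locally constant values of different signs, so that $\nabla f\equiv 0$ while $f\not\equiv 0$), so in the supremum defining $\Lambda^p_T$ only connected subgraphs — subtrees — play a role; moreover subtrees with a bounded number of vertices satisfy $h^p\lesssim_p 1$ by Proposition~\ref{prop:linubd}, so only subtrees of large size matter in either direction.

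For the upper bound, let $\Gamma$ be a subtree with $n=|V\Gamma|$ large. Since all vertices of $T$ have degree $3$, a centroid argument produces an edge $e=uv$ of $\Gamma$ whose removal disconnects $\Gamma$ into subtrees $\Gamma_1\ni u$, $\Gamma_2\ni v$ with $|V\Gamma_i|\asymp n$ (peel off the largest of the $\le 3$ components at a centroid vertex). I would then test $h^p(\Gamma)$ with $f=|V\Gamma_2|\,\mathbf 1_{\Gamma_1}-|V\Gamma_1|\,\mathbf 1_{\Gamma_2}$, which has mean zero; its gradient is supported on $\{u,v\}$ with $\nabla f(u)=\nabla f(v)=n$, so $\|\nabla f\|_p^p=2n^p$, while $\|f\|_p^p=|V\Gamma_1||V\Gamma_2|\big(|V\Gamma_1|^{p-1}+|V\Gamma_2|^{p-1}\big)\asymp_p n^{p+1}$. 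Hence $h^p(\Gamma)\le\|\nabla f\|_p/\|f\|_p\lesssim_p n^{-1/p}$ and $|V\Gamma|\,h^p(\Gamma)\lesssim_p n^{(p-1)/p}\le r^{(p-1)/p}$; taking the supremum over $|V\Gamma|\le r$ finishes this half.

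For the lower bound it suffices, given $r$, to produce one subgraph $\Gamma$ with $|V\Gamma|\le r$ and $|V\Gamma|\,h^p(\Gamma)\gtrsim_p r^{(p-1)/p}$; I would take $\Gamma=B(o,k)$ with $k$ maximal so that $n:=|B(o,k)|=3\cdot 2^k-2\le r$, whence $n\asymp r$. The claim is the Poincar\'e inequality $\|f-\bar f\|_p^p\le C_p\, n\,\|\nabla f\|_p^p$ for all $f$ on $B(o,k)$; this gives $h^p(B(o,k))\ge(C_pn)^{-1/p}$, hence $|V\Gamma|\,h^p(\Gamma)\gtrsim_p n^{(p-1)/p}\asymp r^{(p-1)/p}$. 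By \eqref{eq:mean-quasi-min} it is enough to bound $\sum_{x\in B(o,k)}|f(x)-f(o)|^p$. For a vertex $x$, with root geodesic $o=x^{(0)},x^{(1)},\dots,x^{(\operatorname{depth}(x))}=x$, I would telescope $f(x)-f(o)=\sum_{j\ge 1}\big(f(x^{(j)})-f(x^{(j-1)})\big)\mathbf 1[\operatorname{depth}(x)\ge j]$ and bound the $\ell^p(B(o,k))$-norm of this sum by the triangle inequality \emph{over the scales $j$}. For fixed $j$, the vertices $x$ with a prescribed depth-$j$ ancestor $v$ form the subtree rooted at $v$, of size $\le 2\cdot 2^{k-j}$, and $|f(v)-f(\operatorname{par}(v))|\le\nabla f(\operatorname{par}(v))$, so the $j$-th term has $\ell^p$-norm at most $\big(6\cdot 2^{k-j}E_{j-1}\big)^{1/p}$, where $E_\ell=\sum_{u:\,\operatorname{depth}(u)=\ell}\nabla f(u)^p$ and $\sum_\ell E_\ell=\|\nabla f\|_p^p$. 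Summing over $j$ and applying H\"older in the scale variable,
\[
\sum_{j=1}^{k}\big(2^{k-j}E_{j-1}\big)^{1/p}\ \lesssim_p\ 2^{k/p}\Big(\sum_{\ell}E_\ell\Big)^{1/p}\ =\ 2^{k/p}\,\|\nabla f\|_p ,
\]
the geometric weights $2^{-j/p}$ making the scale-sum converge; this yields $\sum_x|f(x)-f(o)|^p\lesssim_p 2^k\|\nabla f\|_p^p\asymp n\,\|\nabla f\|_p^p$, as required.

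The step I expect to be the real obstacle is obtaining this Poincar\'e inequality with the sharp constant $\asymp n$ (rather than $\asymp n(\log n)^{p-1}$): the crude estimate $|f(x)-f(o)|^p\le\operatorname{depth}(x)^{\,p-1}\sum_z\nabla f(z)^p$ along the root-to-$x$ geodesic, of length $\le k\asymp\log n$, costs a spurious factor $(\log n)^{p-1}$. Organising the telescoping by scale and summing in $\ell^p$ rather than along individual paths is what removes it: each scale then contributes comparably and the H\"older loss is absorbed into a convergent geometric series — which is precisely where the hypothesis $p<\infty$ is used. The remaining small-$r$ cases are trivial since $\Lambda^p_T$ is non-decreasing and positive for $r\ge 2$ while $r^{(p-1)/p}$ is bounded there, and for $p=1$ everything specialises to $\Lambda^1_T(r)\asymp 1$.
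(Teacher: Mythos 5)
Your proof is correct.  Both of your bounds are valid, though your route differs noticeably from the paper's, especially in the lower bound.

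For the upper bound, the paper deletes a \emph{vertex} at the centroid and uses a $\{-1,0,1\}$-valued test function; you delete a centroid \emph{edge} and use the locally constant function $|V\Gamma_2|\mathbf 1_{\Gamma_1}-|V\Gamma_1|\mathbf 1_{\Gamma_2}$.  These are cosmetically different but essentially the same idea: in a bounded-degree tree the gradient of a function that is constant off a bounded separator has $\ell^p$-norm $O(1)$ while the function itself has norm $\asymp n^{1/p}$ (after normalising). Your variant is slightly cleaner in that the test function is mean-zero by construction.

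The lower bound is where the approaches genuinely diverge. The paper's proof goes through Proposition~\ref{prop:complete-graph-cheeger} to reduce the claim to the inequality $\sum_{e\in EB}|\nabla f(e)|^p \geq \frac{1}{2|B|^2}\sum_{x,y\in B}|f(x)-f(y)|^p$, then bounds each pairwise difference along the unique path $\gamma_{xy}$ with H\"older weights $N_e^{\pm 1/p}$, where $N_e$ counts paths through $e$; the geometric decay of $N_e$ in the depth $d(x_0,e)$ makes the inner sum over $\gamma_{xy}$ converge. You instead fix the root as basepoint, use \eqref{eq:mean-quasi-min} to reduce to controlling $\|f-f(o)\|_p$, and telescope along root geodesics; but crucially, you apply Minkowski's inequality first (over the scale index $j$) and only then H\"older in the scale variable, which converts the individual-path estimate into a sum of \emph{per-level} contributions $\big(2^{k-j}E_{j-1}\big)^{1/p}$.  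The geometric factor $2^{k-j}$ (the size of the subtree below a depth-$j$ vertex) plays exactly the role that $N_e$ plays in the paper, and in both arguments it is the exponential decay of this weight that allows the $L^p$-H\"older step to go through with a constant independent of $r$, rather than incurring the spurious $(\log r)^{p-1}$ factor you correctly identify as the danger.  Your version avoids the detour through the complete-graph lemma and averages from a single basepoint rather than over all pairs; it is arguably more direct, while the paper's formulation (and its $N_e$ weights) is the one that generalises naturally to the hyperbolic setting of Theorem~\ref{thm:hyp-PI-bdry-sharp}.

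One small loose end: you should note that the summand weights in the H\"older step are $2^{-j/(p-1)}$ rather than $2^{-j/p}$; writing
\[
  \sum_{j=1}^k 2^{-j/p}E_{j-1}^{1/p} \leq \Bigl(\sum_{j\geq 1} 2^{-j/(p-1)}\Bigr)^{(p-1)/p}\Bigl(\sum_{j} E_{j-1}\Bigr)^{1/p}
\]
makes the geometric convergence (for $p>1$) explicit, and the case $p=1$ is trivial since then $\sum_j 2^{-j}E_{j-1}\leq\|\nabla f\|_1$. This is what you meant, but as written "geometric weights $2^{-j/p}$" could be misread.
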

For $p=1$ this is immediate from \cite{BenSchTim-12-separation-graphs}. This theorem immediately implies the following corollary for groups admitting quasi-isometric embeddings of such trees.
\begin{corollary}\label{cor:hyp-lower-bound}
  If $(G,d,\mu)$ is a CGLC measure group which is non-amenable, non-unimodular, or is compact-by-elementary amenable and has exponential growth, then for any $p \geq 1$, $\Lambda^p_G(r) \gtrsim_{G,p} r^{(p-1)/p}$.
\end{corollary}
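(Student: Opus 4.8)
The plan is to deduce Corollary~\ref{cor:hyp-lower-bound} from Theorem~\ref{thm:tree-profile} and the monotonicity of Poincar\'e profiles under coarse regular maps, by exhibiting in each of the three cases a coarse regular map from the $3$-regular tree $T$ into $G$. First I would use the discretization of \S\ref{sec:coarseReg} to replace $G$ by a bounded degree graph $Y$ with a large-scale equivalence $G\to Y$; since a large-scale equivalence is in particular coarse regular, Proposition~\ref{prop:Poincarecoarsereg} gives $\Lambda^p_G\simeq_p\Lambda^p_Y$, and then by Lemma~\ref{lem:regcoarsereg} it suffices to produce a regular map $VT\to VY$ in the sense of \cite{BenSchTim-12-separation-graphs}. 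Given one, Proposition~\ref{prop:Poincarecoarsereg} and Theorem~\ref{thm:tree-profile} would yield
\[
\Lambda^p_G(r)\simeq_p\Lambda^p_Y(r)\gtrsim_p\Lambda^p_T(r)\simeq_p r^{\frac{p-1}{p}}
\]
for all $p\in[1,\infty)$, as required; note that for $p=1$ the inequality $\Lambda^1_G(r)\gtrsim 1$ is automatic, so the real content lies in the range $p>1$.

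In the non-amenable case $Y$ has positive Cheeger constant, so by the theorem of Benjamini--Schramm~\cite{Ben-Sch-trees-in-nonamen} it contains a subtree with positive Cheeger constant; a bounded degree tree with positive Cheeger constant contains a quasi-isometrically embedded copy of $T$, and the resulting composite $VT\to VY$ is a coarse embedding of bounded degree graphs, hence regular. In the compact-by-elementary amenable case with exponential growth I would write $G$ as an extension $1\to K\to G\to Q\to 1$ with $K$ compact and $Q$ elementary amenable; exponential growth passes to $Q$, so $Q$ is not virtually nilpotent, and $G\to Q$ is a coarse equivalence, so by Proposition~\ref{prop:latticeseparation} it is enough to bound $\Lambda^p_Q$ from below. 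By Chou's theorem~\cite{Chou}, $Q$ contains a free sub-semigroup on two generators, and (arguing as in \cite{BenSchTim-12-separation-graphs}) this provides a regular map from $T$, the right Cayley graph of the sub-semigroup being a rooted binary tree that quasi-isometrically contains $T$.

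The non-unimodular case is the only genuinely new one. I would fix $t\in G$ with $\Delta(t)=\lambda\neq1$ and, after replacing $t$ by a suitable power of $t^{\pm1}$, assume $\lambda\geq2$. Conjugation by $t$ multiplies the Haar measure of the closed normal subgroup $N=\ker\Delta$ by $\lambda$, so a compact symmetric identity neighbourhood $N_0\subset N$ satisfies $\mu_N(t^nN_0t^{-n})\geq 2^n\mu_N(N_0)$ for $n\geq0$: the cosets $t^nN_0$ branch exponentially as $n$ grows, giving $\langle N_0,t\rangle$ the coarse geometry of a horoball in a branching tree. From this I would build a regular map $VT\to G$ by sending the $n$-th sphere of $T$, rooted at a basepoint, into the coset $t^nN_0$, choosing the images inside the expanding copies $t^nN_0t^{-n}$ of $N_0$ to be $d_G$-separated and realising the tree edges in the $\langle t\rangle$-direction; the Lipschitz bound would be immediate, and the bounded-multiplicity condition in the definition of a regular map would follow from the bounded packing of $G$ via Lemma~\ref{lem:doublingThick}, exactly as in the proof of Lemma~\ref{lem:regcoarsereg}. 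Alternatively, one may first pass via Proposition~\ref{prop:latticeseparation} to the open compactly generated subgroup $\langle N_0,t\rangle$, which inherits the modular element, and carry out the construction there.

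The hard part will be this last case: unlike the other two it cannot be reduced to quoting an existing tree-embedding result, and one must produce an explicitly regular embedding of $T$ from the modular function alone. The delicate points are (i) checking that points sent into distinct levels $t^nN_0$ and $t^mN_0$ are genuinely $d_G$-far once $|n-m|$ is large --- not merely that the conjugated neighbourhoods $t^nN_0t^{-n}$ are far apart inside $N$ --- and (ii) controlling the multiplicity of the map so that it is regular rather than merely a coarse embedding of the metric, for which the bounded packing assumption is used. Everything else is routine bookkeeping with large-scale equivalences.
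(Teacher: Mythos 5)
Your treatment of the non-amenable and compact-by-elementary-amenable cases matches the paper's one-line proof, which simply cites \cite{Ben-Sch-trees-in-nonamen} and \cite{Chou} respectively; the overall reduction to Theorem~\ref{thm:tree-profile} via regular maps is exactly right. However, the claim that the non-unimodular case ``cannot be reduced to quoting an existing tree-embedding result'' is mistaken, and this is where your proposal diverges from the paper. A non-unimodular CGLC group --- even an amenable one such as the $ax+b$ group --- has graph discretizations with positive Cheeger constant: if $\Delta(g)=\lambda>1$ for some $g$, then for every compact set $F$ one has $\mu([F]_r)\geq\mu(Fg)=\lambda\mu(F)$ with $r=d(e,g)$, which is a linear isoperimetric inequality at a fixed scale. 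Thus the non-unimodular case falls under exactly the same hypothesis as the non-amenable case, and \cite{Ben-Sch-trees-in-nonamen} applies verbatim; the paper's phrase ``the first two cases'' covers both.

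As for the hand-crafted construction you propose in its place: the idea of exploiting the modular function to extract a tree is sound in spirit (it is essentially how $T$ sits in $\HH^2\simeq\mathrm{Aff}(\R)$), but the details as written do not work. You propose to map the $n$-th level of $T$ into the \emph{left} coset $t^nN_0$, with $\Delta(t)\geq2$; but for a left-invariant word metric, $d_G(t^nn_1,t^nn_2)=d_G(e,t^{-n}n_1^{-1}n_2\,t^n)$, and conjugation by $t^{-n}$ \emph{contracts} $N$ by $\lambda^{-n}$, so the points of $t^nN_0$ collapse onto one another as $n$ grows rather than separating. (In the $ax+b$ model these points sit at height $\lambda^n$ in the upper half-plane at bounded Euclidean spread, hence are hyperbolically close.) The correct version takes a $1$-separated net $\Sigma_n$ in the expanded conjugate $t^nN_0t^{-n}\subset N$ and maps level $n$ to the left-translate $t^{-n}\Sigma_n$, which lies in the right coset $N_0t^{-n}$; then $d_G(t^{-n}\sigma,t^{-n}\sigma')=d_G(\sigma,\sigma')$ by left-invariance and the separation is preserved, while tree edges connect $t^{-n}\sigma$ to $t^{-(n+1)}\sigma'$ with $\sigma'\in\Sigma_{n+1}$ near $t\sigma t^{-1}$. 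One also needs the lower bound $d_G(e,t^{-n}\sigma)\succeq n$, which follows because $\log\Delta$ is a Lipschitz homomorphism to $\R$. With these corrections the construction can be made to work, but it is substantially longer than the paper's argument and, as noted above, unnecessary.
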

\begin{proof}
In the first two cases this follows from \cite{Ben-Sch-trees-in-nonamen}, and in the third from \cite{Chou}.
\end{proof}

In this section, for a graph $X$, and a function $f:VX \ra \R$, we define $|\nabla f|: EX \ra \R$ as $|\nabla f|(e) = |f(x)-f(y)|$ where $e\in EX$ has endpoints $x,y \in VX$.  If $X$ has maximum vertex degree $d$ then for each $p \geq 1$,
\[
  \| \nabla_2 f \|_p \asymp_d \| \nabla f \|_p = \left(\sum_{e \in EX} |\nabla f|(e)^p \right)^{1/p}.
\]
A key step in proving Theorem~\ref{thm:tree-profile} is to reduce to an estimate on complete graphs in the spirit of, for example, Spielman~\cite[Section 4.7]{Spielman-course15}.
\begin{proposition}\label{prop:complete-graph-cheeger}
  For any $r \in \N, r \geq 2$ and $p \in [1,\infty)$, letting $K_r$ denote the complete graph on $r$ vertices, we have 
  \[ 
  r^{1/p} \leq \inf \setcon{\frac{\| \nabla f\|_p}{\|f - f_{K_r}\|_p}}{f:VK_r \ra \R, f \not\equiv f_{K_r}}\preceq_p r^{1/p}.
  \]
\end{proposition}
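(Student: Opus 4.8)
The plan is to prove the two inequalities separately, using throughout the elementary fact that the quantity $\norm{\nabla f}_p/\norm{f-f_{K_r}}_p$ is unchanged if $f$ is replaced by $\alpha f+\beta$ for any $\alpha\neq 0$, $\beta\in\R$ (since $\nabla(\alpha f+\beta)=\alpha\nabla f$ edgewise and $(\alpha f+\beta)_{K_r}=\alpha f_{K_r}+\beta$). In particular, for the lower bound I may normalise so that $f_{K_r}=0$, i.e.\ $\sum_{v\in VK_r}f(v)=0$.

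For the lower bound I would exploit this mean-zero normalisation directly: for each vertex $v$ one has $f(v)=\tfrac1r\sum_{w\in VK_r}\bigl(f(v)-f(w)\bigr)$, so applying Jensen's inequality to the convex function $t\mapsto\abs{t}^p$ gives $\abs{f(v)}^p\leq\tfrac1r\sum_{w}\abs{f(v)-f(w)}^p$. Summing over $v$, and observing that each edge of $K_r$ is counted twice in the resulting double sum, yields $\norm{f}_p^p\leq\tfrac2r\norm{\nabla f}_p^p$, hence $\norm{\nabla f}_p/\norm{f-f_{K_r}}_p\geq(r/2)^{1/p}\asymp_p r^{1/p}$, which is what is needed. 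I should flag that this one-line argument produces $(r/2)^{1/p}$ rather than $r^{1/p}$ on the nose; the constant $2^{-1/p}\geq\tfrac12$ is harmless for the equivalence $\asymp_p$ and is only used to that precision in the proof of Theorem~\ref{thm:tree-profile}. (When $p=2$ the estimate is in fact sharp: $K_r$ has Laplacian eigenvalues $0$ and $r$, the latter with multiplicity $r-1$, so the ratio equals $r^{1/2}$ on the orthogonal complement of the constants.)

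For the upper bound I would simply test against the indicator $f=\mathbf{1}_{\{v_0\}}$ of a single vertex $v_0\in VK_r$, which is non-constant since $r\geq 2$. Only the $r-1$ edges incident to $v_0$ contribute to $\nabla f$, so $\norm{\nabla f}_p^p=r-1<r$, while $f_{K_r}=1/r$ gives $\norm{f-f_{K_r}}_p^p\geq(1-\tfrac1r)^p\geq 2^{-p}$ because $r\geq 2$. Dividing, $\norm{\nabla f}_p/\norm{f-f_{K_r}}_p\leq 2r^{1/p}$, so the infimum is $\preceq r^{1/p}$ (indeed with a constant independent of $p$).

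Neither step is a serious obstacle. The only point requiring care is the lower bound: recognising that the hypothesis $f_{K_r}=0$ is precisely what lets one rewrite the single value $f(v)$ as an average of $r$ edge-differences of $K_r$, so that a single convexity estimate delivers the correct exponent $r^{1/p}$ (up to the harmless constant noted above) rather than a weaker power of $r$.
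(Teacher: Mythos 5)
Your proposal is correct and follows essentially the same route as the paper: H\"older/Jensen applied to $f(v)-f_{K_r}=\tfrac1r\sum_w(f(v)-f(w))$ for the lower bound, and a simple test function for the upper bound (the paper tests against a function taking values $+1,-1$ on two vertices and $0$ elsewhere; your single-vertex indicator works just as well). Your observation that the double sum $\sum_v\sum_w|f(v)-f(w)|^p$ counts each edge of $K_r$ twice, so that the clean estimate is $(r/2)^{1/p}$ rather than $r^{1/p}$, is in fact correct and the paper's displayed chain silently drops this factor of $2$; as you rightly note, this is harmless since the result is only invoked up to $\asymp_p$ in the proof of Theorem~\ref{thm:tree-profile}.
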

\begin{proof}
  Let $f : VK_r \ra \R$ be any non-constant function on $K_r$.  Then
  \begin{align*}
    \| f-f_{K_r} \|_p^p
    & = \sum_x \left| f(x) - \frac{1}{r} \sum_y f(y) \right|^p
    \\ & \leq \frac{1}{r^p} \sum_x \left( \sum_y | f(x) - f(y) | \right)^p
    \\ & \leq \frac{1}{r^p} \sum_x \left( \sum_y | f(x) - f(y) |^p \right) r^{p-1}
    \\ & = r^{-1} \| \nabla f \|_p^p.
  \end{align*}
  This proves the first inequality; the second can be seen by considering a function which is $1$ and $-1$ on one vertex each, and zero everywhere else.
\end{proof}

\begin{proof}[Proof of Theorem~\ref{thm:tree-profile}]
  First we show the upper bound, which is relatively simple.

  Suppose $A \subset T$ is a graph of size $|A|=r$; we can find a vertex $x$ so that on deleting this vertex, all remaining connected components have size $\leq r/2$.  Group these components into sets $U, V$ of size $\in [r/4, 3r/4]$.  Let $f:A \ra [-1,1]$ be identically $-1$ on $U$, $1$ on $V$ and $0$ on $x$.  

  Clearly $\| f-f_A \|_p^p \geq \frac14 r$, and since $\nabla f$ is only non-zero on edges adjacent to $x$, $\| \nabla f \|_p^p \leq 3$.  Thus $h^p(A) \leq (12/r)^{1/p}$ and \[ \Lambda^p(r) = \sup_{|A|\leq r} |A|h^p(A) \leq 12r^{(p-1)/p}.\]

  Second, we show the lower bound.

  For any $r > 0$ there exists a ball $B=B(x_0,t) \subset T$ of size $\asymp 2^t \asymp r$, so we can assume $r = |B|$ and it then suffices to show that $h^p( B ) \succeq |B|^{-1/p}$, with constant independent of $B$.

  Let $K_r$ be the complete graph on $r$ vertices.
  Suppose that a non-constant function $f : B \ra \R$ is given.  Consider $f$ as a function on the complete graph $K_r = K_{|B|}$.  In light of Proposition~\ref{prop:complete-graph-cheeger}, to show that $h^p(B) \succeq |B|^{-1/p}$, it suffices to show that
  \[
    \sum_{e \in EB} |\nabla f(e)|^p \geq \frac{1}{2|B|^2} \sum_{x,y \in B} |f(x)-f(y)|^p,
  \]
  for then $h^p(B) \succeq |B|^{-2/p} r^{1/p} \succeq |B|^{-1/p}$.

  Now for each $x,y \in B$, let $\gamma_{xy}$ be the simple path in $T$ joining $x$ to $y$.  Observe that $|f(x)-f(y)| \leq \sum_{e \in \gamma_{xy}} |\nabla f(e)|$.
  
  For each $e \in EB$, let $N_e$ be the number of such simple paths that pass through $e$.  Simple paths passing through $e$ are in one-to-one correspondance with pairs $(v,w)$, where $v,w \in B$ are in different components of $B$ with $e$ deleted.  The component containing $x_0$ has size $\asymp 2^t \asymp |B|$, while the component not containing $x_0$ has size $\asymp 2^{t-d(x_0,e)}$ where $d(x_0,e)$ is the distance from the centre of the ball to the edge $e$.  So we deduce that $N_e \asymp 2^t \cdot 2^{t-d(x_0,e)}$.  Using H\"older's inequality, we have
\begin{align*}
  \sum_{x,y \in B} |f(x)-f(y)|^p
  & \leq \sum_{x,y \in B} \left( \sum_{e \subset \gamma_{xy}} |\nabla f(e)| \right)^p \\
  & = 
  \sum_{x,y \in B} \left( \sum_{e \subset \gamma_{xy}} |\nabla f(e)| N_e^{-1/p} N_e^{1/p} \right)^p \\
  & \leq \sum_{x,y \in B} \left( \sum_{e \subset \gamma_{xy}} |\nabla f(e)|^p N_e^{-1} \right)  
  \left( \sum_{e \subset \gamma_{xy}} N_e^{1/(p-1)} \right)^{p-1}
\end{align*}
For each simple path, $N_e^{1/(p-1)}$ takes values in (two) geometric series, with ratio depending only on $p$ and maximum value $\preceq (2^{2t})^{1/(p-1)} \asymp |B|^{2/(p-1)}$, and so the sum inside the second parentheses above is also $\preceq |B|^{2/(p-1)}$.
Thus,
\begin{align*}
  \sum_{x,y \in B} |f(x)-f(y)|^p 
  & \preceq 
  \sum_{x,y \in B} \sum_{e \in \gamma_{xy}} |\nabla f(e)|^p N_e^{-1} |B|^2
  \\ & \leq 2 |B|^2 \sum_{e \in B} |\nabla f(e)|^p,
\end{align*}
and so we are done.
\end{proof}

\section{Lower bounds for hyperbolic spaces with boundary Poincar\'{e} inequalities}
  \label{sec:hyp-PI}

  In this section we find lower bounds on Poincar\'e profiles for hyperbolic groups whose boundaries admit Poincar\'e inequalities in the style of Heinonen and Koskela (Theorem~\ref{bthm:lbdQrgPI}). In section~\ref{sec:applications} we will apply these results to rank $1$ symmetric spaces, and a family of hyperbolic buildings studied by Bourdon and Pajot.

Suppose a metric space $(Z,\rho)$ is \textbf{Ahlfors $Q$-regular}, i.e.\ there is a measure $\mu$ on $Z$ so that for every ball $B(z,r)$ in $Z$ with $r \leq \diam(Z)$, we have $\mu(B(z,r)) \asymp r^Q$.  (We may take $\mu$ to be the Hausdorff $Q$-measure on $Z$.)
For $p, q \geq 1$, we say $(Z,\rho)$ admits a \textbf{$(q,p)$-Poincar\'e inequality} (with constant $L \geq 1$) if for every Lipschitz function $f:Z \ra \R$ and every ball $B(z,r) \subset Z$,
\begin{equation*}
  \left( \dashint_{B(z,r)} |f-f_{B(z,r)}|^q \, d\mu \right)^{1/q} \leq Lr \left( \dashint_{B(z,Lr)} (\Lip_x f)^p \, d\mu(x) \right)^{1/p} \ ,
\end{equation*}
where for $U \subset Z$, $f_U = \dashint_U f\, d\mu= \frac{1}{\mu(U)} \int_U f\, d\mu$,
and \[ \Lip_x f = \limsup_{r\ra 0} \sup_{y \in B(x,r)} \frac{|f(y)-f(x)|}{r} .\]
If $q=1$, we say $Z$ admits a \textbf{$p$-Poincar\'e inequality}.
Note that the Poincar\'e inequality above is a variation of Heinonen and Koskela's original that is shown to be equivalent by Keith~\cite[Theorem 2]{Kei-03-mod-pi}.

By H\"older's inequality, if $Z$ admits a $p$-Poincar\'e inequality, it admits a $q$-Poincar\'e inequality for all $q \geq p$.  Moreover, since $Z$ is doubling, it will admit $(q,q)$-Poincar\'e inequalities for all $q \geq p$ by \cite[Theorem~5.1]{HajKos00-Sobolev-met-Poincare}.

\smallskip
A geodesic metric measure space $(X,d,\mu)$ is \textbf{Gromov hyperbolic} if it is $\delta$-hyperbolic for some $\delta\geq 0$: for every geodesic triangle $T=(\gamma_1,\gamma_2,\gamma_3)$, we have $\gamma_1\subseteq [\gamma_2\cup\gamma_3]_\delta$.
It is \textbf{visual} if there exists $x_0 \in X$ and $C \geq 0$ so that every $x \in X$ belongs to a $C$-quasi-geodesic ray $\gamma:[0,\infty)\ra X$ with $\gamma(0)=x_0$. 
Gromov hyperbolic metric spaces have a boundary at infinity $\bdry X$ which comes with a family of metrics:
if $X$ is visual with respect to $x_0$, a \textbf{visual metric} $\rho$ on $\bdry X$ based at $x_0\in X$ with visibility parameter $\eps>0$ is a metric satisfying $\rho(\cdot,\cdot)\asymp \exp(-\eps(\cdot|\cdot)_{x_0})$, where $(\cdot|\cdot)_{x_0}$ denotes the Gromov product with respect to $x_0$.
For more background and discussion, see~\cite{BS-00-gro-hyp-embed, BP-03-lp-besov}.

We can now state the first main result of this section (cf.\ Theorem~\ref{bthm:lbdQrgPI}).
\begin{theorem}\label{thm:hyp-PI-bdry}
	Suppose that $X$ is a visual Gromov hyperbolic graph with a visual metric $\rho$ on $\bdry X$ that is Ahlfors $Q$-regular and admits a $p$-Poincar\'e inequality.  Then for all $q\geq p$, $\Lambda_X^q(r)\gtrsim r^{1-1/Q}$. 
\end{theorem}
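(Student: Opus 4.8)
The plan is to produce, for each large integer $n$, a subspace $S_n\subset X$ with $\mu(S_n)\asymp r_n$ and $h^q_a(S_n)\gtrsim r_n^{-1/Q}$, where $r_n\asymp e^{\eps Qn}$ and $\eps>0$ is the visibility parameter of $\rho$. Then $\mu(S_n)h^q_a(S_n)\gtrsim r_n^{1-1/Q}$, and since the $r_n$ grow geometrically while $\Lambda^q_X$ is non-decreasing, this gives $\Lambda^q_X(r)\gtrsim r^{1-1/Q}$ for all $r$. For $S_n$ I would take a bounded thickening of the metric sphere $\setcon{x\in X}{d(x_0,x)=n}$ about a basepoint $x_0$, thickened by a large enough multiple of the hyperbolicity constant that $S_n$ is connected in its induced $1$-geodesic metric; this uses that $\bdry X$ is connected, which holds because a space admitting a $p$-Poincar\'e inequality is quasiconvex.

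First I would record the size estimate and the relevant projection. By visuality, each $x\in S_n$ lies on a quasi-geodesic ray from $x_0$ whose endpoint $\pi_n(x)\in\bdry X$ has a shadow $V_x\subset\bdry X$ of $\rho$-diameter $\asymp e^{-\eps n}$, hence, by Ahlfors $Q$-regularity, $\mu(V_x)\asymp e^{-\eps Qn}$. The $V_x$ cover $\bdry X$ with bounded overlap (bounded degree), so $\mu(S_n)=\abs{S_n}\asymp e^{\eps Qn}$. The same correspondence shows that $V_x\cap V_{x'}\neq\emptyset$ forces $d(x,x')=O(1)$, that $\pi_n$ is surjective up to bounded error, and that it sends a $d$-distance $m$ on $S_n$ to a $\rho$-distance $\asymp e^{-\eps(n-m/2)}$; in particular $\pi_n$ is coarse Lipschitz and coarsely measure-preserving from $S_n$ onto the rescaled boundary $(\bdry X,\ e^{\eps n}\rho,\ e^{\eps Qn}\mu)$.

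The heart of the argument is the Poincar\'e inequality $\norm{f}_q\lesssim e^{\eps n}\norm{\nabla_a f}_q$ for mean-zero $f\in L^q(S_n)$. I would prove it by transferring $f$ to the boundary: fix a Lipschitz partition of unity $\set{\phi_x}$ on $\bdry X$ subordinate to $\set{V_x}$ with $\Lip\phi_x\asymp e^{\eps n}$, and set $\bar f=\sum_x f(x)\phi_x$, a Lipschitz function. Since $\bdry X$ is Ahlfors $Q$-regular and admits a $p$-Poincar\'e inequality, it admits a $q$-Poincar\'e inequality for $q\geq p$ by H\"older; applied with the ball equal to all of $\bdry X$ and with $c:=(\bar f)_{\bdry X}$, this reads $\norm{\bar f-c}_{q,\mu}\lesssim\diam(\bdry X)\norm{\Lip_x\bar f}_{q,\mu}\asymp\norm{\Lip_x\bar f}_{q,\mu}$. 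Using bounded overlap, $\mu(V_x)\asymp e^{-\eps Qn}$, and the fact that on $V_x$ both $\abs{\bar f-f(x)}$ and $e^{-\eps n}\Lip_\bullet\bar f$ are $\lesssim\abs{\nabla_a f}$ near $x$, one gets $\norm{f-c}_q^q\lesssim e^{\eps Qn}\norm{\bar f-c}_{q,\mu}^q+\norm{\nabla_a f}_q^q$ and $\norm{\Lip_x\bar f}_{q,\mu}^q\lesssim e^{-\eps Qn}e^{\eps qn}\norm{\nabla_a f}_q^q$. Combining with Lemma~\ref{lem:average/energymin} (so $\norm{f}_q=\norm{f-f_{S_n}}_q\leq 2\norm{f-c}_q$, and $\overline{f-c}=\bar f-c$) collapses the powers of $e^{\eps Qn}$ and yields $\norm{f}_q^q\lesssim e^{\eps qn}\norm{\nabla_a f}_q^q$, hence $h^q_a(S_n)\gtrsim e^{-\eps n}\asymp r_n^{-1/Q}$. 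A cleaner alternative is to discretize $(\bdry X,e^{\eps n}\rho,\mu)$ at scale $1$ into a bounded-degree graph $Y_n$, observe that $\pi_n$ induces a surjective coarse regular map $S_n\to Y_n$ so that $h^q_a(Y_n)\lesssim h^q_{Ca}(S_n)$ by Corollary~\ref{cor:discreteCase}, and bound $h^q_a(Y_n)$ below by $\hLip^q(\bdry X,e^{\eps n}\rho)=e^{-\eps n}\hLip^q(\bdry X,\rho)\gtrsim e^{-\eps n}$, the last step being the $q$-Poincar\'e inequality on all of $\bdry X$, using the equivalence of $\hLip^q$ with the scale-$a$ Poincar\'e constant of a discretization in doubling spaces (cf.\ Proposition~\ref{prop:UpperGradToLargeScale} and \S10 of \cite{Tes-Sobolev-ineq}).

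The main obstacle is the exponential change of scale: on $S_n$, $d$-balls of bounded radius correspond to $\rho$-balls of radius $e^{-\eps n}$, so the inequality one inherits from $\bdry X$ is genuinely a rescaled one. One must check both that the continuous, infinitesimal $q$-Poincar\'e inequality on $(\bdry X,\rho)$ descends to a discrete Poincar\'e inequality at scale $e^{-\eps n}$ with constants independent of $n$ (this is where Ahlfors regularity, hence the doubling property, is essential), and that the partition-of-unity transfer is genuinely two-sided, losing no power of $n$ when comparing $\norm{f}_q$ on $S_n$ with $\norm{\bar f}_{q,\mu}$ on $\bdry X$. Tracking the $n$-independence of all constants is the delicate bookkeeping; granting it, the numerology $\mu(S_n)\asymp e^{\eps Qn}$ and $h^q_a(S_n)\gtrsim e^{-\eps n}$ produces exactly the claimed bound $r^{1-1/Q}$.
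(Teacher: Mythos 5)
Your proposal is essentially the paper's proof, and your ``cleaner alternative'' is almost verbatim what the paper does: identify a sphere of radius $n$ with a discretization of the rescaled boundary $(\bdry X, e^{\eps n}\rho, e^{\eps Q n}\mu)$, transfer the $q$-Poincar\'e inequality on $\bdry X$ to that discretization via Proposition~\ref{prop:UpperGradToLargeScale} and the discretization comparison of Lemma~\ref{lem:disc}, and then move from the rescaled boundary metric to the graph metric on the sphere. The only formal difference is that the paper works not in $X$ itself but in the Bourdon--Pajot graph $\Gamma$ (quasi-isometric to $X$), whose layer $\Gamma_t$ \emph{is} by construction a maximal $e^{-t}$-net in $(\bdry X,\rho)$, so the ``sphere $=$ boundary discretization'' step is tautological and the constants are transparently $t$-independent. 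Your first route via a Lipschitz partition of unity is an inline reproof of Proposition~\ref{prop:UpperGradToLargeScale} combined with Lemma~\ref{lem:disc} and is also sound.

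One detail in your second route is stated backwards: applying Corollary~\ref{cor:discreteCase} to the surjection $\pi_n\colon S_n\to Y_n$ gives $h^q_a(S_n)\preceq h^q_{Ca}(Y_n)$, i.e.\ an \emph{upper} bound on $h^q_a(S_n)$, not the claimed $h^q_a(Y_n)\lesssim h^q_{Ca}(S_n)$. This is easily repaired: as $\pi_n$ is a large-scale equivalence, the correct tool is the two-sided discretization estimate of Lemma~\ref{lem:disc} rather than the one-sided Corollary~\ref{cor:discreteCase}; alternatively, as in the paper, one observes directly that closeness in the rescaled boundary metric forces closeness in the graph metric (concretely, $\rho_t(x,y)\leq 40 \Rightarrow d_\Gamma(x,y)\leq C$), so that graph-metric upper gradients at scale $C$ dominate $\rho_t$-gradients at scale $40$, which immediately gives the desired lower bound on the Poincar\'e constant of the layer in its graph metric.
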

By taking discretizations, one can apply this result to rank-$1$ symmetric spaces, amongst other examples.

\begin{proof}
Consider $\bdry X$ with the metric $\rho$, which admits a $p$-Poincar\'e inequality with some constant $L \geq 1$.  
As a consequence, $(\bdry X, \rho)$ is quasi-convex\footnote{This result is usually attributed to Semmes, a full proof can be found in \cite[\S17]{Che-99-diff}.}, so $\rho$ is bi-Lipschitz equivalent to a geodesic metric. Therefore we may assume that $\rho$ is geodesic, and so our standing assumptions hold.

Following Bourdon--Pajot~\cite[Section 2.1]{BP-03-lp-besov}, we ensure that $Z = (\bdry X, \rho)$ has diameter $1/2$ by rescaling, and define a graph $\Gamma$ which approximates $Z$: $\Gamma$ has vertex set $\{z_t^i : t \in \N, 1 \leq i \leq k(t)\}$ where for each $t \in \N$, $\Gamma_t=\{z_t^1,\ldots,z_t^{k(t)}\}$ is a maximal $e^{-t}$-separated net in $Z$.  To each $z_t^i$ we associate a ball $B(z_t^i,e^{-t}) \subset Z$, and we join $z_t^i$ and $z_u^j$ by an edge if and only if $|t-u| \leq 1$ and $B(z_t^i,e^{-t}) \cap B(z_u^j, e^{-u}) \neq \emptyset$.  

By Bourdon--Pajot~\cite[Proposition 2.1, Corollary 2.4]{BP-03-lp-besov}, $\Gamma$, with the path metric $d$, is a bounded degree hyperbolic graph which is quasi-isometric to $X$, and so it suffices to show the Poincar\'e profile bound for $\Gamma$.  

We now consider the sequence $Z_t=(Z, \rho_t, \mu_t)$ of metric measure spaces, where $\rho_t=6e^t\rho$, and $\mu_t=e^{Qt}\mu$. Note that $\mu_t(Z_t)\asymp e^{Qt}$. We deduce from the Poincar\'e inequality satisfied by $Z$ that $Z_t$ satisfies for any Lipschitz function $f$ on $Z_t$, for all $q\geq p$
$$ \left(\dashint |f-f_{Z_t}|^q \, d\mu_t\right)^{1/q} \preceq  e^{t}\left(\dashint(\Lip_x f)^q \, d\mu_t(x)\right)^{1/q},$$
and therefore that
$$h_{\Lip}^q(Z_t)\succeq e^{-t}$$
with constant independent of $t$.
By Proposition \ref{prop:UpperGradToLargeScale}, this implies that 
$$h_{2}^q(Z_t)\succeq e^{-t}.$$
Now equip $\Gamma_t$ with the counting measure and the distance induced from its inclusion in $Z_t$. Since $\Gamma_t$ is a maximal $6$-separated subset of $Z_t$, we can find a measurable partition 
$$Z_t=\bigsqcup_{\gamma\in \Gamma_t} A_{\gamma},$$ 
where  
$$B_{\rho_t}(\gamma,2)\subset A_{\gamma}\subset B_{\rho_t}(\gamma,18).$$ 
By the Ahlfors regularity of $Z_t$, $\mu(A_{\gamma})\asymp 1$. Hence by Lemmas \ref{lem:disc} and \ref{lem:PCbasics}\ref{PCchangemeasure}, we deduce that
 $$h_{40}^q(\Gamma_t,\rho_t)\succeq e^{-t}.$$
In order to conclude, we need to show that there exists a constant $C$ such that for every $t$ and every pair of vertices  $x,y\in\Gamma_t$ such that  $\rho_t(x,y)\leq 40$ satisfy  $d(x,y)\leq C$ (where $d(x,y)$ is their distance in $\Gamma$). Indeed, that will show that 
\begin{equation} \label{eq:hyp-gammat-h}
  h^q_C(\Gamma_t, d) \succeq e^{-t},
\end{equation}
and since $|\Gamma_t|\asymp e^{Qt}$,
$$\Lambda_{\Gamma}^q(r)\succeq r^{1-1/Q}.$$

By \cite[Lemma 2.2]{BP-03-lp-besov}, for $x,y \in \Gamma$ corresponding to balls $B_x, B_y \subset Z$, $e^{-(x|y)} \asymp \diam(B_x \cup B_y)$, where $(x|y)$ denotes the Gromov product with respect to the base point $z_1^1$.  For $x,y \in \Gamma_t$, we have $(x|y)$ equal to $t-\frac12 d(x,y)$ up to a uniform additive error, and $\diam(B_x \cup B_y) \asymp e^{-t} + \rho(x,y)$, so
\[
	e^{-t} e^{\tfrac12 d(x,y)} \asymp \diam(B_x \cup B_y) \asymp e^{-t} + \rho(x,y).
\]

Thus, $\rho(x,y)\leq \frac{40}{6}e^{-t}$ implies that $d(x,y)\preceq 1$, which completes the proof of Theorem~\ref{thm:hyp-PI-bdry}.
\end{proof}

We will see in section~\ref{sec:applications} that for many spaces, Theorem~\ref{thm:hyp-PI-bdry} gives sharp lower bounds for $\Lambda^q_X$ when $q \in [1,Q)$.
For $q=Q$, however, one can do better.
\begin{theorem}
  \label{thm:hyp-PI-bdry-sharp}
  Suppose that $X$ is a visual Gromov hyperbolic graph with a visual metric $\rho$ on $\bdry X$ that is Ahlfors $Q$-regular and admits a $Q$-Poincar\'e inequality.  Then $\Lambda_X^Q(r)\gtrsim r^{1-1/Q}\log(r)^{1/Q}$. 
\end{theorem}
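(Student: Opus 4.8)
The plan is to mimic the proof of Theorem~\ref{thm:hyp-PI-bdry}, but to replace a single spherical ``slice'' $\Gamma_t$ of the Bourdon--Pajot approximating graph by an \emph{annulus} made of $\asymp\log r$ consecutive slices, and to prove a Poincar\'e inequality on this annulus which beats the single-slice inequality by a logarithmic factor. First I would fix the graph $\Gamma$ with its levels $\Gamma_t$ exactly as in the proof of Theorem~\ref{thm:hyp-PI-bdry} (so $\Gamma$ is a bounded degree hyperbolic graph quasi-isometric to $X$, $\abs{\Gamma_t}\asymp e^{Qt}$, and, after rescaling $\rho$ so that $\bdry X$ has diameter $\tfrac12$, each $\Gamma_t$ approximates the metric measure space $Z_t=(\bdry X,6e^t\rho,e^{Qt}\mu)$ of total measure $\asymp e^{Qt}$), and set, for $n$ large,
\[
	A_n=\setcon{x\in\Gamma}{n\le d(z_1^1,x)\le 2n},
\]
a $1$-thick subspace with $\mu(A_n)=\abs{A_n}\asymp e^{2Qn}=:N$. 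Since $\Lambda^Q$ is a quasi-isometry invariant and $\Gamma\simeq X$, it suffices to prove $h^Q_C(A_n)\gtrsim (n/N)^{1/Q}\asymp e^{-2n}n^{1/Q}$ for some fixed $C$; together with $\log r\asymp n$ for $r=N$ this gives $\Lambda^Q_X(r)\ge\mu(A_n)h^Q_C(A_n)\gtrsim r^{1-1/Q}(\log r)^{1/Q}$.

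Next I would reformulate the required bound as an \emph{annulus Poincar\'e inequality}. By an averaging argument in the spirit of (the first half of) Proposition~\ref{prop:complete-graph-cheeger} together with Jensen's inequality, $\norm{f-f_{A_n}}_{A_n,Q}^Q\asymp N^{-1}\sum_{x,y\in A_n}\abs{f(x)-f(y)}^Q$ for every $f\colon A_n\to\R$, so the desired bound is equivalent to
\[
	\norm{\nabla_C f}_{A_n,Q}^Q\ \gtrsim\ \frac{n}{N^2}\sum_{x,y\in A_n}\abs{f(x)-f(y)}^Q
	\qquad\textup{for all }f\colon A_n\to\R.
\]
The input on a single slice is: since $(\bdry X,\rho)$ is Ahlfors $Q$-regular and admits a $Q$-Poincar\'e inequality, it is $Q$-Loewner in the sense of Heinonen--Koskela~\cite{HK-98-qc-pi} (using also the self-improvement of~\cite{HajKos00-Sobolev-met-Poincare}); scaling shows each $Z_t$ is $Q$-Loewner with uniform data, and transferring from $Z_t$ to the net $\Gamma_t$ as in the end of the proof of Theorem~\ref{thm:hyp-PI-bdry} (via Proposition~\ref{prop:UpperGradToLargeScale}, Lemma~\ref{lem:disc}, Lemma~\ref{lem:PCbasics}\ref{PCchangemeasure}, and the comparison $e^{-t}e^{\frac12 d(x,y)}\asymp e^{-t}+\rho(x,y)$ between the two metrics on $\Gamma_t$) yields that for suitably placed subsets $S_1,S_2\subseteq\Gamma_t$ of measure $\gtrsim\abs{\Gamma_t}$ the family of paths in $\Gamma_t$ joining $S_1$ to $S_2$ has $Q$-modulus $\gtrsim 1$, with constant independent of $t$ and $n$. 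Because the $n+1$ slices $\Gamma_n,\dots,\Gamma_{2n}$ are pairwise disjoint inside $A_n$, superadditivity of $Q$-modulus over curve families with disjoint supports multiplies this by $\asymp n$: for ``cylindrical'' sets $S_i\subseteq A_n$ (all vertices lying over a fixed boundary region, at every level in $[n,2n]$) with $\mu(S_i)\gtrsim N$, the family of paths in $A_n$ joining $S_1$ to $S_2$ has $Q$-modulus $\gtrsim n$. Feeding $\abs{\nabla_C f}$ as an admissible function for such path families and chaining the resulting capacity estimates over a decomposition of $A_n$, while controlling the number of times each edge is used, is the curve-counting step that should upgrade these estimates to the displayed inequality; it is analogous to, but more delicate than, the one in the proof of Theorem~\ref{thm:tree-profile}.

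The main obstacle is precisely this last upgrade: passing from modulus/capacity lower bounds between pairs of large ``cylindrical'' pieces --- boosted by the factor $n$ coming from the disjointness of the $n+1$ slices --- to a Poincar\'e inequality valid for \emph{every} function on $A_n$, not merely for split or bump test functions. The difficulty is that $A_n$ is not Ahlfors regular and its metric is strongly inhomogeneous: its radial extent is only $\asymp n$, yet two points lying over far-apart boundary points are at distance $\asymp e^{n}$ inside $A_n$, so one cannot argue by H\"older along shortest paths as in the tree case. Instead the $n$ parallel ``routes'' through the different slices must be used simultaneously by a flow/averaging argument ensuring no edge is overloaded, and one must exploit that $\mu(A_n)$ is genuinely concentrated near the outermost slice so that the improvement over the single-slice inequality of Theorem~\ref{thm:hyp-PI-bdry} is logarithmic rather than merely a constant. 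A secondary technical point, as in Theorem~\ref{thm:hyp-PI-bdry}, is to check that the combinatorics of $\Gamma$ between consecutive levels reproduces the metric measure structure of $Z_t$ uniformly in $t\in[n,2n]$, so that the constants in the slice estimates do not degrade with $n$.
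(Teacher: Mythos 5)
Your high-level plan is the right one: take an annulus $B_{s,t}=\Gamma_{s+1}\cup\cdots\cup\Gamma_t$ with $t-s\asymp t\asymp\log|B_{s,t}|$, and argue that the presence of $\asymp t-s$ independent ``layers'', each carrying the boundary Poincar\'e inequality at its own scale, upgrades the $r^{1-1/Q}$ lower bound from Theorem~\ref{thm:hyp-PI-bdry} by the extra factor $\log(r)^{1/Q}$. The paper's proof uses exactly this subgraph (with $s=\lfloor t/2\rfloor$), so your setup matches. Where you diverge is in the mechanism for combining the slices, and that is where your argument has a genuine gap.

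You propose to pass through the $Q$-Loewner property: single-slice modulus estimates, superadditivity of $Q$-modulus over the $t-s$ disjoint slices, and then a conversion of these capacity lower bounds into an $L^Q$-Poincar\'e inequality on $B_{s,t}$. But --- as you yourself observe --- that conversion step is the entire content of the theorem, and there is no off-the-shelf way to do it here. The Loewner $\Rightarrow$ Poincar\'e implication of Heinonen--Koskela is proved under Ahlfors $Q$-regularity (equivalently doubling with a uniform regularity exponent), and $B_{s,t}$ is neither: its growth is exponential in the ``radial'' direction and enormous in the ``horizontal'' one, and the two scales interact badly. Moreover, modulus superadditivity gives a lower bound on the modulus of a \emph{union of single-slice curve families}; this does not control the variance $\sum_{x,y}|f(x)-f(y)|^Q$ for arbitrary $f$, since arbitrary $f$ may be far from indicator-type functions adapted to your cylindrical pieces. ``Chaining the capacity estimates'' and ``ensuring no edge is overloaded'' is precisely the hard flow-spreading problem, and your sketch does not address it. As written, the argument does not close.

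The paper circumvents all of this by never invoking modulus at all. It instead uses a purely combinatorial routing/colouring argument (Lemma~\ref{lem:colour}). Each vertex $x\in B_{s,t}$ is assigned a colour $c_x\in\{s+1,\dots,i_x\}$ in such a way that, among all $x\in\Gamma_i$ with a fixed projection $\pi_{i_z}(x)=z$, each colour is used $\preceq e^{Q(i-i_z)}/(i-s)$ times (a packing consequence of Ahlfors $Q$-regularity of $\bdry X$, \emph{not} of $B_{s,t}$). Then for each pair $(x,y)$ with $i_x\le i_y$, the difference $|f(x)-f(y)|$ is routed along the explicit path $x\to\pi_{i_x-1}(x)\to\cdots\to\pi_{c_x}(x)\to\pi_{c_x}(y)\to\cdots\to y$, so $|f(x)-f(y)|$ splits into two ``radial'' sums of $|\nabla_1 f|$ along $\alpha_x$, $\beta_{x,y}$, plus one ``horizontal'' term $|f(\pi_{c_x}(x))-f(\pi_{c_x}(y))|$ living inside the single slice $\Gamma_{c_x}$. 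The colouring guarantees that only a $\asymp 1/(t-s)$ fraction of the pairs $(x,y)$ land in any given slice, and after H\"older's inequality with the edge-multiplicity weights $M_z$, $N_z$, $P_{x',y'}$ this produces the $1/(t-s)$ gain in all three terms $S_1$, $S_2$, $S_3$; the horizontal term $S_2$ is the one that uses the slice Poincar\'e inequality \eqref{eq:hyp-gammat-h}, and the identity $p=Q$ is what makes the geometric series over slice levels converge to give the sharp exponent. In short: the paper replaces your ``parallel routes via modulus'' with a deterministic assignment of one route per pair, chosen so that no slice is overloaded, and then estimates everything directly with H\"older --- much closer in spirit to your cited tree argument (Theorem~\ref{thm:tree-profile}) than to the Loewner machinery. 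If you want to salvage the modulus approach, you would essentially have to reprove this routing lemma in the language of discrete flows, at which point you have reinvented the paper's argument.
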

\begin{proof}
We continue with the notation of the proof of Theorem~\ref{thm:hyp-PI-bdry}.
Given $s < t \in \N$, let $B_{s,t}$ be the full subgraph of $\Gamma$ containing the layers $\Gamma_{s+1}, \Gamma_{s+2},\ldots, \Gamma_t$.  (Later we will take $s = \lfloor t/2 \rfloor$.) The strategy of the proof is to use the Poincar\'e inequality in each layer to get a stronger constant for all of $B_{s,t}$.

Let us be given a function $f:B_{s,t} \ra \R$, i.e.\ a function on $VB_{s,t}$.
For $x\in \Gamma$, define $i_x \in \N$ to satisfy $x \in \Gamma_{i_x}$.
Given $x \in \Gamma$ and $i \leq i_x$, let $\pi_i(x) \in \Gamma_i$ be (one of) the points in $\Gamma_i$ so that the point in $Z$ corresponding to $x$ lies in the ball of radius $e^{-i}$ corresponding to $\pi_i(x)$; the allowed choices of $\pi_i(x)$ are all at distance $1$ from each other.

For $i = s+1, \ldots, t$, there are $i-s$ layers in $B_{s,t}$ with labels $\leq i$.  
\begin{lemma}\label{lem:colour} There is an assignment $B_{s,t} \ra \N$ that maps each $x \in B_{s,t}$ to a layer $c_x \in \{s+1,\ldots,i_x\}$,  so that for any $z \in B_{s,t}$ and any $c, i$ with $c \leq i_z \leq i \leq t$ we have 
\begin{equation}\label{eq:hypsharp0}
	|\{ x \in \Gamma_i : \pi_{i_z}(x)=z \text{ and } c_x = c \}| \preceq \frac{e^{Q(i-i_z)}}{i-s} \leq \frac{e^{Q(t-s)}}{t-s},
\end{equation}
where the constant of `\,$\preceq$\!' is independent of $s, t, z, c$ and $i$.
\end{lemma}
This follows from a colouring argument that we defer until later.

Similarly to the proofs in Section~\ref{sec:trees}, we bound
\begin{align}
  \| f-f_{B_{s,t}} \|_p^p
  & = \sum_x \left| f(x) - \frac{1}{|B_{s,t}|} \sum_y f(y) \right|^p
  \notag
  \\ & \leq \frac{1}{|B_{s,t}|} \sum_x \sum_y | f(x) - f(y) |^p .
  \label{eq:hypsharp1}
\end{align}
(We refrain from setting $p=Q$ at present to clarify the role this plays in the proof.)

At a cost of multiplying by $2$, we can restrict to sum only over $x,y$ where $i_x \leq i_y$.  In particular, $c_x \leq i_y$.
Given such $x,y$, we consider the path $\alpha_x$ that follows $x, \pi_{i_x-1}(x), \ldots, \pi_{c_x}(x)$, and also the path $\beta_{x,y}$ that follows along $\pi_{c_x}(y), \pi_{c_x+1}(y), \ldots, \pi_{i_y-1}(y), y$. 

Continuing from \eqref{eq:hypsharp1}, since
\begin{multline*}
	|f(x)-f(y)| \leq \Bigg( \sum_{z\in \alpha_x} |\nabla_1 f|(z) \Bigg)
  +  |f(\pi_{c_x}(x)) - f(\pi_{c_x}(y)) | \\ + 
   \Bigg( \sum_{z \in \beta_{x,y}} |\nabla_1 f|(z) \Bigg),
\end{multline*}
we use the inequality $(a+b+c)^p \leq 3^p (a^p+b^p+c^p)$ 
to find the following:
\begin{multline}\label{eq:hypsharp2}
  \| f-f_{B_{s,t}} \|_p^p
   \preceq 
   \frac{1}{|B_{s,t}|} \sum_{\substack{x,y \\ i_x \leq i_y} }
   \left( \sum_{z\in \alpha_x} |\nabla_1 f|(z)\right)^p 
	+ \\ 
	\frac{1}{|B_{s,t}|} \sum_{\substack{x,y \\ i_x \leq i_y} }
	|f(\pi_{c_x}(x)) - f(\pi_{c_x}(y)) |^p
	+ 
   \frac{1}{|B_{s,t}|} \sum_{\substack{x,y \\ i_x \leq i_y} }
   \left( \sum_{z \in \beta_{x,y}} |\nabla_1 f|(z) \right)^p .
\end{multline}
We denote the resulting three terms of the sum by $S_1$, $S_2$, and $S_3$.
For each $z \in B_{s,t}$, let $M_z$ be the number of pairs $(x,y)$ so that $\alpha_x$ passes through $z$, and likewise $N_z$ for $\beta_{x,y}$.
Let us bound the first term of \eqref{eq:hypsharp2}, $S_1$.
\begin{align*}
	S_1 & = \frac{1}{|B_{s,t}|} \sum_{\substack{x,y \\ i_x \leq i_y} } \left( \sum_{z \in \alpha_x} |\nabla_1 f|(z) M_z^{-1/p}M_z^{1/p} \right)^p
	\\ & \leq \frac{1}{|B_{s,t}|} \sum_{\substack{x,y \\ i_x \leq i_y} } \left( \sum_{z \in \alpha_x} |\nabla_1 f|(z)^p M_z^{-1}\right)
	\left( \sum_{z \in \alpha_x} M_z^{1/(p-1)} \right)^{p-1}
\end{align*}
when $p >1$.
If $z \in \Gamma_{s+j}$ for some $j \in \{1, \ldots, t-s\}$,
then by \eqref{eq:hypsharp0} the number of possible choices of $x$
is
\[
	\preceq \sum_{i=s+j}^t j \frac{e^{Q(i-s-j)}}{i-s} 
	\preceq \frac{j}{t-s} e^{Q(t-s-j)}
\]
and there are $\leq |B_{s,t}|$ possible choices of $y$ so that $z \in \alpha_x$.  Thus 
\[
 M_z \preceq \frac{j}{t-s} e^{Q(t-s-j)} |B_{s,t}| = 
  \frac{j}{e^{Qj}} \cdot \frac{e^{Q(t-s)}|B_{s,t}|}{t-s}.
\]
For any $p >1$, $\sum_{j \geq 1} (je^{-Qj})^{1/(p-1)}$ is bounded by some constant depending only on $Q$ and $p$.  Whether $p>1$ or $p=1$, we get that
\begin{align*}
	S_1 & \preceq \frac{e^{Q(t-s)}}{t-s} \sum_{\substack{x,y \\ i_x \leq i_y} } \left( \sum_{z \in \alpha_x} |\nabla_1 f|(z)^p M_z^{-1}\right)
	\\ & = \frac{e^{Q(t-s)}}{t-s} \sum_z |\nabla_1 f|(z)^p \left( \sum_{\substack{x,y : i_x \leq i_y, z \in \alpha_x}} M_z^{-1} \right)
	 = \frac{e^{Q(t-s)}}{t-s}  \left\| \nabla_1 f \right\|_p^p.
\end{align*}

A very similar calculation lets us bound $S_3$:
if $z \in \Gamma_{s+j}$ for some $j \in \{1,\ldots,t-s\}$, then by \eqref{eq:hypsharp0} there are $\preceq \frac{j}{t-s}|B_{s,t}|$ possible choices of $x$ and $\preceq e^{Q(t-s-j)}$ possible choices of $y$ so that $z \in \beta_{x,y}$.
Thus
\[
	N_z \preceq \frac{j}{t-s} |B_{s,t}| \cdot e^{Q(t-s-j)} = \frac{j}{e^{Q_j}} \cdot \frac{e^{Q(t-s)}|B_{s,t}|}{t-s},
\]
and the rest of the calculation goes through as before to give
$S_3 \preceq \frac{1}{t-s}e^{Q(t-s)} \| \nabla_1 f \|_p^p$.

It remains to bound $S_2$.
Suppose we have $x', y' \in \Gamma_{s+j}$ for some $j \in \{1,\ldots,t-s\}$.
Let $P_{x',y'}$ be the number of pairs $x,y \in B_{s,t}$ so that $i_x \leq i_y$ and $\pi_{c_x}(x)=x'$ and $\pi_{c_x}(y)=y'$.
Using again \eqref{eq:hypsharp0}, we can bound $P_{x',y'}$ by the product of the number of choices of $x$, which is $\preceq \frac{1}{t-s} e^{Q(t-s-j)}$, and the number of choices of $y$, which is $\preceq e^{Q(t-s-j)}$.
Thus
\begin{align}
	S_2 & = \frac{1}{|B_{s,t}|} \sum_{\substack{x,y \\ i_x \leq i_y} }
	|f(\pi_{c_x}(x)) - f(\pi_{c_x}(y)) |^p
	\notag \\ & = \frac{1}{|B_{s,t}|} \sum_{j=1}^{t-s} \sum_{x',y' \in \Gamma_{s+j}} P_{x',y'} |f(x')-f(y')|^p
	\notag \\ & \preceq \frac{e^{2Q(t-s)}}{(t-s)|B_{s,t}|} \sum_{j=1}^{t-s} e^{-2Qj} \sum_{x',y' \in \Gamma_{s+j}} |f(x')-f(y')|^p . \label{eq:hypsharp3}
\end{align}
Fixing for a moment our choice of $j$, let $f_j$ be the average value of $f$ restricted to $\Gamma_{s+j}$.
Assuming $Z=(\bdry X, \rho)$ satisfies a $p$-Poincar\'e inequality,
we apply \eqref{eq:hyp-gammat-h} to $\Gamma_{s+j}$ to obtain: 
\[
 \sum_{x' \in \Gamma_{s+j}} \left| f(x') - f_j \right|^p \preceq 
 e^{p(s+j)} \sum_{x' \in \Gamma_{s+j}} |\nabla_C f|(x')^p.
\]
Applying this twice, we have that
\begin{align*}
	\sum_{x',y' \in \Gamma_{s+j}} |f(x')-f(y')|^p 
	& \leq 2^p 
	\sum_{x',y' \in \Gamma_{s+j}} \left( |f(x')-f_j|^p +|f(y')-f_j|^p \right)
	\\ & \preceq e^{p(s+j)} |\Gamma_{s+j}| \sum_{x' \in \Gamma_{s+j}} |\nabla_C f|(x')^p.
\end{align*}
Since $|\Gamma_{s+j}|\asymp e^{Q(s+j)}$, and $|B_{s,t}| \asymp e^{Qt}$, on
substituting this back in to \eqref{eq:hypsharp3}, we get
\begin{align*}
 	S_2 & \preceq
	\frac{e^{2Q(t-s)}}{(t-s)|B_{s,t}|} \sum_{j=1}^{t-s} e^{-2Qj}e^{p(s+j)} e^{Q(s+j)} \sum_{x' \in \Gamma_{s+j}} |\nabla_C f|(x')^p
	\\ & \asymp \frac{e^{ Qt+(p-Q)s}}{t-s} \sum_{j=1}^{t-s} e^{(p-Q)j} \sum_{x'\in\Gamma_{s+j}} |\nabla_C f|(x')^p.
\end{align*}
Provided $p=Q$, this simplifies to 
\[
	S_2 \preceq \frac{e^{Qt}}{t-s} \left\| \nabla_C f \right\|_Q^Q.
\]

Our bounds for $S_1$ and $S_3$ are dominated by our bounds for $S_2$, so we set $s = \lfloor t/2 \rfloor$ and conclude by \eqref{eq:hypsharp2} that
\[
\|f-f_{B_{s,t}} \|_Q^Q \preceq 
	\frac{e^{Qt}}{t} \left\| \nabla_C f \right\|_Q^Q 
	\asymp \frac{|B_{s,t}|}{\log |B_{s,t}|} \left\| \nabla_C f \right\|_Q^Q . \qedhere
\]
\end{proof}
It remains to show the colouring argument giving~\eqref{eq:hypsharp0}.
\begin{proof}[Proof of Lemma~\ref{lem:colour}]
	Recall that we are defining a colouring $B_{s,t}\ra \{s+1,\ldots,t\}, x \mapsto c_x$.

	For each $i \in \{s+1,\ldots,t\}$, the vertices of $\Gamma_i$ correspond to a maximal $e^{-i}$-separated net in $Z$.
	By Ahlfors $Q$-regularity, there exists $C$ so that the number of $e^{-i}$ separated points in any $r$-ball in $Z$ is $\leq C (r/e^{-i})^Q = C r^Q e^{iQ}$.
	So if we let $r_i = \tfrac12 (i-s)^{1/Q} C^{-1/Q} e^{-i}$, we guarantee that any $r_i$-ball in $Z$ meets at most $(i-s)$ points corresponding to vertices of $\Gamma_i$.

	Define $\Gamma_i \ra \{s+1,\ldots,i\}, x \mapsto c_x$ to be any mapping so that no two points at distance $\leq r_i$ in $Z$ are mapped to the same value.  The existence of such a mapping follows from Zorn's lemma applied to the collection of all such partially defined functions.

	Doing this for each $i$, we obtain our mapping $B_{s,t} \ra \{s+1,\ldots,t\}$.  To verify that \eqref{eq:hypsharp0} holds, observe that for any $z \in B_{s,t}$ and $c, i$ satisfying $c \leq i_z \leq i \leq t$ the set $\{x \in \Gamma_i : \pi_{i_x}(x)=z \text{ and } c_x = i_z\}$ is an $r_i$-separated set in $B(z,e^{-i_z}) \subset Z$, therefore by Ahlfors regularity it has cardinality
	\[
		\preceq \left(\frac{e^{-i_z}}{r_i}\right)^Q
		\preceq \left( \frac{e^{-i_z}}{ (i-s)^{1/Q} e^{-i} } \right)^Q
		= \frac{e^{Q(i-i_z)}}{(i-s)}
		\leq \frac{e^{Q(t-s)}}{t-s}.
		\qedhere
	\]
\end{proof}

\section{Upper bounds for hyperbolic spaces with hyperplanes}\label{section:upperHyp}
In this section we present an approach to finding upper bounds on the $L^p$-Poincar\'e profiles of hyperbolic spaces.  
Our hypotheses are as follows:
\begin{enumerate}
	\item $(X,d,\mu)$ is a $\delta$-hyperbolic geodesic metric measure space, and it is visual with respect to a given point $x_0 \in X$: there exists $C \geq 0$ so that every $x \in X$ belongs to a $C$-quasi-geodesic ray $\gamma:[0,\infty)\ra X$ with $\gamma(0)=x_0$. 
	\item There exists a constant $h(X)>0$ (called the volume entropy) and a constant $C\geq 0$ such that for every $R>0$, $h(X)R-C\leq \log_e(\mu(B(x_0,R)))\leq h(X)R+C$.
	\item There is a visual metric $\rho$ on $\bdry X$ based at $x_0\in X$ with visibility parameter $\eps>0$; i.e., $\rho(\cdot,\cdot)\asymp \exp(-\eps(\cdot|\cdot)_{x_0})$, where $(\cdot|\cdot)_{x_0}$ denotes the Gromov product with respect to $x_0$.
\end{enumerate}
For our last hypothesis, we require the following notion.
\begin{definition}\label{defn:boundaryshadow} Let $(X,d)$ be a metric space and $x_0\in X$. For $C\geq 1$, a subset $A\subseteq X$ is said to be a \textbf{$C$-asymptotic shadow of} $x_0$ if, for every $x\in A$ there is a $C$-quasi-geodesic ray $\gamma_x:[0,\infty)\to X$ with $\gamma_x(0)=x_0$ and $\gamma_x(r_x)=x$ for some $r_x$, and $\gamma_x[r_x,\infty)\subseteq A$. (Recall that a $C$-quasi-geodesic ray is a $(C,C)$-quasi-isometric embedding of $[0,\infty)$.)
\end{definition} 
The final hypothesis only needs to hold for large $a$, where $a\geq 2$ is the constant of thickness in Definition~\ref{mmspPoincare}.
Let $\Isom_\mu(X)$ be the group of $\mu$-preserving isometries of $X$.
\begin{enumerate}[label=(4)]
  \item \label{condition:cutsets} There exist constants $\kappa,N,C>0$ such that for any $a$-thick subspace $Z$ of $X$ with measure at least $N$, there is some $\psi\in \Isom_{\mu}(X)$, and there exist two measurable subsets $H^\pm$ of $X$ which are $C$-asymptotic shadows of $x_0$, and satisfy the inequalities $\rho(\bdry H^+,\bdry H^-)\geq\kappa$, $\mu(\psi(Z)\cap H^+)\geq \kappa \mu(Z)$ and $\mu(\psi(Z)\cap H^-)\geq\kappa\mu(Z)$.
\end{enumerate}

These properties are satisfied for suitable geometric actions of a hyperbolic group, as we will see in \S~\ref{ssec:hypshadows}. 
\begin{proposition}\label{prop:hypgroup-satisfies4}
	If $G$ is a non-elementary hyperbolic group which acts geometrically on a proper geodesic metric measure space $(X,d,\mu)$ and preserves $\mu$, then for any $x_0 \in X$ and visual metric $\rho$ on $\bdry X$ based at $x_0$ with visual parameter $\epsilon$, $(X,d,\mu)$ satisfies properties $(1)$--$(4)$ for suitable $\delta,C$ and $h(X)$.
	Moreover, $(\bdry X, \rho)$ is Ahlfors $h(X)/\epsilon$-regular.
\end{proposition}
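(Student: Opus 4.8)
The plan is to verify the four properties in turn; $(1)$--$(3)$ together with the Ahlfors regularity clause are standard consequences of the theory of hyperbolic groups, and the real content is $(4)$.

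For $(1)$--$(3)$ and Ahlfors regularity I would argue as follows. Since the action is geometric, $X$ is proper, geodesic and quasi-isometric to $G$, hence $\delta$-hyperbolic for some $\delta$ and of bounded packing; visuality follows from cocompactness together with $\bdry X\ne\emptyset$ (true as $G$ is non-elementary), which gives $(1)$. For $(3)$ one invokes the standard construction of visual metrics on $\bdry X$ with any sufficiently small visibility parameter $\epsilon$ (Gromov; see Bonk--Schramm \cite{BS-00-gro-hyp-embed}), and the statement merely quantifies over such a $\rho$. For $(2)$ and the last clause, the $\mu$-preserving cocompact action lets one compare $\mu(B(x_0,R))$ with $\#\{g\in G:d(x_0,gx_0)\le R\}$ up to multiplicative constants; Coornaert's theorem then gives growth $\asymp e^{h(X)R}$, so $(2)$ holds with $C$ absorbing the constants, and identifies the Patterson--Sullivan class on $(\bdry X,\rho)$ as an Ahlfors $Q$-regular measure $\nu$ with $Q=h(X)/\epsilon$.

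For $(4)$ the halfspaces will be \emph{cones over boundary balls}: to a ball $B_\rho(\xi,s)\subseteq\bdry X$ I associate the union $\cC(B_\rho(\xi,s))\subseteq X$ of the outward sub-rays of $C$-quasi-geodesic rays from $x_0$ whose endpoints lie in $B_\rho(\xi,s)$ and which shadow it. Hyperbolicity shows $\cC(B_\rho(\xi,s))$ is a $C'$-asymptotic shadow of $x_0$ whose boundary trace is within bounded Hausdorff distance of $\overline{B_\rho(\xi,s)}$, so $\rho$-separation of two such traces is controlled by that of the underlying balls; and Sullivan's shadow lemma combined with Ahlfors regularity yields the volume bound
\[
  \mu\bigl(\cC(B_\rho(\xi,s))\cap B(x_0,R)\bigr)\ \asymp_{s}\ \mu(B(x_0,R))\qquad\text{once }R\gtrsim\tfrac1\epsilon\log\tfrac1s ,
\]
i.e.\ a cone over a boundary ball of fixed radius captures a definite proportion of every large metric ball about $x_0$; the thickness hypothesis on subspaces and bounded packing guarantee this persists after intersecting with a thick set and passing to induced measures. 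Given an $a$-thick $Z$ with $\mu(Z)\ge N$, the plan is then to choose $\psi\in G\le\Isom_\mu(X)$ so that $\psi(Z)$ carries mass $\ge\kappa\mu(Z)$ inside each of two cones over a fixed $\rho$-separated pair of boundary balls. To do this I record, for each point of $Z$, the boundary region it points to, getting a distribution on $\bdry X$ of total mass $\asymp\mu(Z)$; if it is already spread out --- assigning a definite proportion outside every small ball --- a pigeonhole over a fixed finite cover of $\bdry X$ by small boundary balls supplies the two well-separated cones and one takes $\psi=\mathrm{id}$; if instead the mass concentrates in a tiny ball $B_\rho(\xi_*,s)$, one picks a loxodromic $g\in G$ with repelling fixed point $\rho$-far from $\xi_*$ and uses the north--south dynamics of $g^n$ to expand $B_\rho(\xi_*,s)$ onto all but a small neighbourhood of the attracting fixed point, taking $\psi=g^n$.

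The main obstacle is exactly this spreading step, for two reasons. First, applying $\psi$ moves the basepoint from $x_0$ to $\psi x_0$, whereas $(4)$ requires the halfspaces to be asymptotic shadows of the \emph{fixed} point $x_0$; one must therefore prove a hyperbolicity estimate to the effect that, far from $x_0$, a cone based at $\psi x_0$ and pointing away from $x_0$ is contained (up to bounded error) in a cone based at $x_0$, and then marry this to the shadow-lemma volume bounds. Second, the quantifier is over \emph{every} large thick $Z$, so all of $\kappa,N,C$ and the finite cover must be chosen uniformly; controlling the concentrated case uniformly in $Z$ (the needed power $n$ of the loxodromic depends on $s$, which depends on $Z$) is the delicate part. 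An alternative route --- the one effectively taken in \S\ref{section:upperHyp} --- replaces this boundary dynamics by a Helly-type theorem for \textup{CAT($0$)} spaces applied inside the real hyperbolic space into which $X$ embeds quasi-isometrically by Bonk--Schramm \cite{BS-00-gro-hyp-embed}, exploiting its genuine totally geodesic hyperplanes and the $G$-action to manufacture the required family of separating halfspaces.
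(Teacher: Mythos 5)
Your treatment of properties $(1)$--$(3)$ and Ahlfors regularity is correct and matches the paper: visuality via cocompactness and nonemptiness of $\bdry X$, Coornaert for property $(2)$ and the $Q$-regular Patterson--Sullivan measure with $Q = h(X)/\epsilon$, and property $(3)$ by construction of visual metrics.

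For property \ref{condition:cutsets} your primary proposal --- cones over boundary balls, Sullivan's shadow lemma, and loxodromic north--south dynamics --- is incomplete, and the two obstacles you flag are real obstructions, not loose ends. The basepoint issue is not a technicality: property \ref{condition:cutsets} fixes $x_0$ once and for all, the shadows $H^\pm$ must be asymptotic shadows of $x_0$, and applying $g^n$ for a loxodromic $g$ does not give you control of the angular distribution of $g^n Z$ as seen from the original $x_0$ (only as seen from $g^n x_0$). And since you must handle all thick $Z$ simultaneously, the power $n$ and hence the constants would depend on how concentrated $Z$ is, which is not uniform. The idea that resolves both difficulties at once --- and which is the actual content of the paper's proof --- is a \emph{centrepoint theorem} (Lemma~\ref{realhypcentrepoint}), proved from Ivanov's Helly theorem (Theorem~\ref{thm:helly}) in $\HH^k_\R$: for any thick finite-measure set there is a point $x'$ such that every halfspace containing $x'$ captures a definite fraction $c(k,a)$ of the set, with $c$ uniform in $Z$. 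One embeds $X$ into $\HH^k_\R$ by Bonk--Schramm, finds the centrepoint, uses Lemma~\ref{lem:Convex/quasiconvex} to move it onto the image of $X$, recenters by a $G$-translation (this $g^{-1}$ is your $\psi$), and then averages over hyperplanes through the origin to produce the pair $H^\pm$ (Lemma~\ref{lem:shadows-real-hyp}); these pull back to asymptotic shadows of $x_0$ in $X$. You correctly name this alternative route in your last sentence, but you do not carry it out, and in particular the centrepoint device is the key step you are missing. As written, your proof of $(4)$ is a plan with an acknowledged gap rather than a proof.
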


Properties $(1)$--$(3)$ are already known to hold in this generality, so our efforts will be focused on property $(4)$. Given these properties, we find the following bounds on the Poincar\'e profile of $X$.  
\begin{theorem}\label{thm:hypubd} 
  Suppose $X$ satisfies conditions (1)--(4) above for some fixed $\delta,C,\eps,\kappa,N$ and set $Q=h(X)/\eps$. Then we have the following bounds on $\Lambda^p_X$:
\[
\Lambda^p_{X,a}(r) \lesssim_{\delta,C,\kappa,N} \left\{
\begin{array}{ll}
r^{\frac{Q-1}{Q}}
&
\textrm{if } p< Q,
\\
r^{\frac{p-1}{p}}\log(r)^{\frac{1}{p}}
&
\textrm{if } p = Q,
\\
r^{\frac{p-1}{p}}
&
\textrm{if } p > Q.
\end{array}\right.
\]
\end{theorem}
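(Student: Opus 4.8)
The plan is to produce, for any $a$-thick subspace $Z$ of $X$ with $\mu(Z) = r$ sufficiently large, a test function $f$ on $Z$ with $f_Z = 0$ and $\|\nabla_a f\|_p / \|f\|_p$ small, thereby bounding $h^p_a(Z)$ and hence $\mu(Z)h^p_a(Z)$. By hypothesis (4) we may first apply an isometry $\psi \in \Isom_\mu(X)$ so that, replacing $Z$ by $\psi(Z)$ (which changes neither $\mu(Z)$ nor $h^p_a(Z)$), there are $C$-asymptotic shadows $H^\pm$ of $x_0$ with $\rho(\bdry H^+, \bdry H^-) \geq \kappa$ and $\mu(Z \cap H^\pm) \geq \kappa \mu(Z)$. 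The core of the argument is then to build a single function $u \colon X \to [0,1]$, depending only on the geometry of $X$ and of the shadows $H^\pm$, which is $\equiv 0$ on (a large part of) $H^+$, $\equiv 1$ on (a large part of) $H^-$, and whose gradient $\|\nabla_a u\|_{L^p(Z)}$ is controlled. Restricting $u$ to $Z$ and subtracting its $Z$-average (which costs only a factor of $2$ by Lemma~\ref{lem:average/energymin}) gives the required test function, because $\|u - u_Z\|_{L^p(Z)} \gtrsim \mu(Z)^{1/p}$ thanks to the two sets of measure $\gtrsim \kappa\mu(Z)$ on which $u$ takes values $0$ and $1$.

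To construct $u$, I would pass through the boundary $\bdry X$ and its visual metric $\rho$, which is Ahlfors $Q$-regular with $Q = h(X)/\epsilon$. The natural choice is a function of the form $u(x) = \phi(\xi)$ where $\xi \in \bdry X$ is an endpoint of a quasi-geodesic ray through $x$ from $x_0$, and $\phi \colon \bdry X \to [0,1]$ separates the closed sets $\overline{\bdry H^+}$ and $\overline{\bdry H^-}$ which are at $\rho$-distance $\geq \kappa$. For $p < Q$ one takes $\phi$ to be essentially a truncated distance function to $\bdry H^+$, rescaled on the annular collar of width $\kappa$; the key point is that pulling this back to $X$ via the shadow/shell structure, the gradient at scale $a$ of $u$ at a point $x$ at distance $t$ from $x_0$ is comparable to $e^{-\epsilon t}/\kappa$ (the "size" of the shadow of a ball of radius $a$ around $x$, measured in $\rho$), nonzero only when the shadow of $x$ meets the collar. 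Summing $(e^{-\epsilon t})^p$ against the volume growth $e^{h(X)t}$ of shells, i.e. $\sum_t e^{h(X)t} e^{-\epsilon p t} = \sum_t e^{(h(X) - \epsilon p)t}$, converges when $p < Q = h(X)/\epsilon$ and is dominated by the outermost shell of radius $\asymp \epsilon^{-1}\log r$, giving $\|\nabla_a u\|_p^p \lesssim r^{1 - p/Q}$ — wait, more precisely one gets a geometric series whose total is $\lesssim$ (value at top shell) $\asymp e^{(h(X)-\epsilon p)\cdot\frac1\epsilon \log r} = r^{\frac{h(X)}{\epsilon}\cdot\frac1\epsilon \cdot\epsilon - p} $; cleaning this up yields $\|\nabla_a u\|_p^p \lesssim r^{(Q-p)/Q}$, so $h^p_a(Z) \lesssim r^{-1/Q}$ and $\mu(Z)h^p_a(Z) \lesssim r^{1-1/Q} = r^{(Q-1)/Q}$ as claimed. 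For $p = Q$ the geometric series degenerates to a sum of $\asymp \log r$ equal terms, producing the extra $\log(r)^{1/Q}$ factor. For $p > Q$ the series $\sum_t e^{(h(X) - \epsilon p)t}$ is now dominated by its \emph{first} term (small $t$), so the gradient contribution is $O(1)$ and, more carefully counting, one obtains $\|\nabla_a u\|_p^p = O(1)$ while $\|u - u_Z\|_p^p \gtrsim r$, yielding $h^p_a(Z) \lesssim r^{-1/p}$ and $\mu(Z)h^p_a(Z) \lesssim r^{(p-1)/p}$ — this matches the tree bound of Theorem~\ref{bthm:trees}, and indeed here one should think of $u$ as a "spike" function supported near $x_0$, analogous to the tree argument.

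The main obstacle will be making rigorous the passage between $X$ and $\bdry X$: one must show that the shadow map $x \mapsto \{$endpoints of $C$-quasi-geodesic rays through $x\}$ behaves well enough that (i) $u$ is well-defined up to bounded ambiguity (different choices of ray give endpoints within $\rho$-distance $\asymp e^{-\epsilon d(x_0,x)}$ of each other, which one absorbs into $\|\nabla_a u\|$), (ii) $|\nabla_a u|(x)$ is genuinely controlled by the $\rho$-diameter of the shadow of $B(x,a)$, for which one needs the standard estimate relating Gromov products, visual metric, and the geometry of geodesics in $\delta$-hyperbolic spaces (a $B(x,a)$-ball has shadow of $\rho$-diameter $\asymp e^{\epsilon a}e^{-\epsilon d(x_0,x)}$), and (iii) the sets $H^\pm$ being asymptotic shadows guarantees that $u$ really is locally constant deep inside $H^\pm$ so that the two "level sets" have the claimed measure. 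A secondary technical point is handling the subspace metric and measure on $Z$: one only has $Z$ $a$-thick with its induced $1$-geodesic metric, but since $\mu$ restricted to $Z$ is just the ambient measure and $|\nabla^Z_a f| \leq |\nabla_a f|$, the restriction of the globally-defined $u$ still works, and Lemma~\ref{lem:doublingThick} together with the bounded packing assumption lets one compare measures of neighbourhoods as needed. I expect the bulk of \S\ref{section:upperHyp} to be devoted to these hyperbolic-geometry lemmas and to the separate but parallel treatments of the three ranges of $p$, with the $p = Q$ borderline case requiring the most care in tracking the logarithmic factor.
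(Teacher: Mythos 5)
Your proposal follows essentially the same route as the paper's proof: apply hypothesis (4), construct a Lipschitz separating function on $\bdry X$, pull it back to $X$ via quasi-geodesic rays, bound the gradient by $e^{-\eps\,d(x,x_0)}$ using visuality, and integrate against the volume growth $e^{h(X)t}$, with the three ranges of $p$ corresponding to whether the resulting exponential series decays, is flat, or grows. The one step you elide, and which is needed in the cases $p\leq Q$ where the series grows toward the outer shells, is that an arbitrary $Z$ of measure $r$ contributes no more gradient energy than a ball of comparable measure; the paper handles this by noting that $e^{-\eps\,d(\cdot,x_0)}$ is a decreasing function of the distance to $x_0$, so its $L^p$-norm on any set whose measure does not exceed that of $B(x_0,R)$ is bounded by its $L^p$-norm on $B(x_0,R)$.
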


To find the best bound possible for the Poincar\'e profiles $\Lambda_G^p$ of a hyperbolic group $G$ from the above theorem, it is natural to consider the following concept.
\begin{definition}\label{def:equiv-confdim} The \textbf{equivariant conformal dimension} of a hyperbolic group $G$ is defined to be the infimum of the Hausdorff dimension of $(\bdry X,\rho)$ where $\bdry X$ is the boundary of a space $X$ on which $G$ acts geometrically and $\rho$ is a visual metric on $\bdry X$. We say the equivariant conformal dimension is attained if the infimum is realised. 
\end{definition}
An equivalent definition is to minimise $h(X)/\epsilon$ over all such actions, metrics and permissible visibility parameters, thus optimising the bounds from Theorem~\ref{thm:hypubd} (\cite{Coo-93-meas-bdry}, cf.\ proof of Proposition~\ref{prop:hypgroup-satisfies4}).

We note that this quantity can be compared to Pansu's conformal dimension~\cite{Pan-89-cdim}, an important invariant in the study of boundaries of hyperbolic spaces and analysis on metric spaces; we state the Ahlfors regular variation of this definition as if $G$ acts on $X$ geometrically, visual metrics on $\bdry X$ are Ahlfors regular.  (For discussion see e.g.~\cite{Mac-Tys-cdimexpo}.)
\begin{definition}\label{def:confdim}
	The \textbf{(Ahlfors regular) conformal dimension} of a hyperbolic group $G$ is the infimum of the Hausdorff dimensions of $(\bdry G, \rho)$ where $\rho$ is a metric on $\bdry G$ that is Ahlfors regular and also quasisymmetric to some visual metric.
\end{definition}
We do not define ``quasisymmetric'' maps here, but note that by work of Bonk--Schramm~\cite{BS-00-gro-hyp-embed}, a metric on $\bdry G$ is quasisymmetric to a visual metric if and only if it is a visual metric on a space $X$ quasi-isometric to $G$.  Therefore, the conformal dimension is bounded above by the equivariant conformal dimension of Definition~\ref{def:equiv-confdim}.  
Conjecturally the two quantities are equal (Conjecture of Kleiner \cite[Problem 61]{Kap05-problem-list-boundaries}). 

Using Proposition~\ref{prop:hypgroup-satisfies4} and Theorem~\ref{thm:hypubd} we are now ready to prove Theorem~\ref{bthm:hypupperbd}.

\begin{corollary}\label{cor:eqConfdim} Let $G$ be a hyperbolic group and let $Q$ be its equivariant conformal dimension. Then, for any $\epsilon>0$,
\[
 \Lambda^p_G(r) \lesssim \left\{
 \begin{array}{lcl}
 r^{\frac{Q-1}{Q}+\epsilon} & \textup{if} & p \leq Q
 \\
 r^{\frac{p-1}{p}} & \textup{if} & p>Q.
 \end{array}\right.
\]
If the equivariant conformal dimension is attained, we have:
\[
 \Lambda^p_G(r) \lesssim \left\{
 \begin{array}{lcl}
 r^{\frac{Q-1}{Q}} & \textup{if} & 1\leq p < Q
 \\
 r^{\frac{Q-1}{Q}} \log^{\frac{1}{Q}}(r) & \textup{if} & p = Q
 \\
 r^{\frac{p-1}{p}} & \textup{if} & p>Q.
 \end{array}\right.
\]
\end{corollary}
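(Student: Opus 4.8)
The plan is to deduce Corollary~\ref{cor:eqConfdim} from Proposition~\ref{prop:hypgroup-satisfies4} and Theorem~\ref{thm:hypubd} by feeding in near-optimal geometric actions of $G$ and using quasi-isometry invariance of the Poincar\'e profile. We may assume $G$ is non-elementary, since otherwise $\Lambda^p_G$ is bounded and there is nothing to prove. First I would record two preliminaries. (a)~If $G$ acts geometrically on a proper geodesic metric measure space $(X,d,\mu)$ preserving $\mu$, then by the \v{S}varc--Milnor lemma the orbit map $G\to X$ is a quasi-isometry, hence a large-scale equivalence, hence coarse regular in both directions; so Proposition~\ref{prop:latticeseparation} (equivalently Proposition~\ref{prop:Poincarecoarsereg}) together with Corollary~\ref{cor:ProfileIndependant} gives $\Lambda^p_G\simeq_p\Lambda^p_{X,a}$ for all $p\in[1,\infty)$ and all $a\geq 2$, and the choice of $G$-invariant measure $\mu$ on $X$ is immaterial by Lemma~\ref{lem:PCbasics}. (b)~By the remark following Definition~\ref{def:equiv-confdim}, and since Proposition~\ref{prop:hypgroup-satisfies4} makes $(\bdry X,\rho)$ Ahlfors $h(X)/\epsilon$-regular (hence of Hausdorff dimension exactly $h(X)/\epsilon$), the equivariant conformal dimension $Q$ of $G$ equals the infimum of $h(X)/\epsilon$ over all geometric actions of $G$ on proper geodesic metric measure spaces $X$ and all visual metrics $\rho$ on $\bdry X$ with admissible parameter $\epsilon$.

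For the second (attained) set of inequalities, I would fix a geometric action on some $(X,d,\mu)$ and a visual metric $\rho$ of parameter $\epsilon$ realising $h(X)/\epsilon=Q$. By Proposition~\ref{prop:hypgroup-satisfies4}, $X$ satisfies hypotheses $(1)$--$(4)$ of Theorem~\ref{thm:hypubd} for suitable $\delta,C,\kappa,N$ with this value of $Q$, so that theorem gives
\[
\Lambda^p_{X,a}(r)\lesssim
\begin{cases}
r^{\frac{Q-1}{Q}} & \text{if } p<Q,\\
r^{\frac{Q-1}{Q}}\log(r)^{\frac{1}{Q}} & \text{if } p=Q,\\
r^{\frac{p-1}{p}} & \text{if } p>Q,
\end{cases}
\]
where for $p=Q$ we used that $r^{\frac{p-1}{p}}\log(r)^{\frac{1}{p}}=r^{\frac{Q-1}{Q}}\log(r)^{\frac{1}{Q}}$. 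Combined with $\Lambda^p_G\simeq_p\Lambda^p_{X,a}$, this is exactly the claimed second display.

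For the first (general) set of inequalities, fix $\epsilon>0$. If $p\leq Q$, pick $\eta>0$ with $1-\tfrac{1}{Q+\eta}<1-\tfrac{1}{Q}+\tfrac{\epsilon}{2}$ and, using (b), a geometric action with visual parameter $\epsilon'$ such that $Q\leq Q':=h(X)/\epsilon'<Q+\eta$. Since $p\leq Q\leq Q'$, Theorem~\ref{thm:hypubd} (via Proposition~\ref{prop:hypgroup-satisfies4}) gives $\Lambda^p_{X,a}(r)\lesssim r^{\frac{Q'-1}{Q'}}$ if $p<Q'$, and $\Lambda^p_{X,a}(r)\lesssim r^{\frac{Q'-1}{Q'}}\log(r)^{\frac{1}{Q'}}$ if $p=Q'$ (which forces $p=Q'=Q$); in either case, as $\tfrac{Q'-1}{Q'}<\tfrac{Q-1}{Q}+\tfrac{\epsilon}{2}$ and $\log(r)^{1/Q'}\lesssim r^{\epsilon/2}$, this is $\lesssim r^{\frac{Q-1}{Q}+\epsilon}$, whence $\Lambda^p_G(r)\lesssim r^{\frac{Q-1}{Q}+\epsilon}$. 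If instead $p>Q$, pick a geometric action with visual parameter $\epsilon'$ such that $Q\leq Q':=h(X)/\epsilon'<p$ (possible since $Q<p$); then $p>Q'$, so Theorem~\ref{thm:hypubd} gives $\Lambda^p_{X,a}(r)\lesssim r^{\frac{p-1}{p}}$, and hence $\Lambda^p_G(r)\lesssim r^{\frac{p-1}{p}}$, with no $\epsilon$-loss. This establishes the first display.

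I do not expect a genuine obstacle here: all the substantive work is contained in Proposition~\ref{prop:hypgroup-satisfies4} and Theorem~\ref{thm:hypubd}. The only delicate points are matching the threshold $p=Q$ of Theorem~\ref{thm:hypubd} to the statement (the exponents coincide and the residual power of $\log$ is absorbed into $r^{\epsilon}$ for the general display), and observing that when $p>Q$ one may choose a near-optimal action whose associated exponent $Q'$ still lies strictly below $p$, so that the sharp bound $r^{\frac{p-1}{p}}$ applies without any $\epsilon$-loss.
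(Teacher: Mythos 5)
Your proposal is correct and fills in, with appropriate care, the details that the paper leaves implicit: the paper states Corollary~\ref{cor:eqConfdim} immediately after the sentence ``Using Proposition~\ref{prop:hypgroup-satisfies4} and Theorem~\ref{thm:hypubd} we are now ready to prove Theorem~\ref{bthm:hypupperbd}'' without writing out a proof, and your argument is precisely the natural one: identify $\Lambda^p_G$ with $\Lambda^p_{X,a}$ via the orbit map (which is a large-scale equivalence for a geometric, measure-preserving action), then either plug in an optimal action (attained case) or a near-optimal action with $Q' = h(X)/\epsilon'$ close to $Q$, applying Theorem~\ref{thm:hypubd}. You also correctly observe the small but essential point that when $p>Q$ one can choose $Q'$ strictly between $Q$ and $p$ so the sharp $r^{(p-1)/p}$ bound applies with no $\epsilon$-loss, and that the boundary case $p = Q'$ of Theorem~\ref{thm:hypubd} only produces a logarithmic factor which is harmless for the first display. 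The only detail you leave mostly asserted rather than proved is that the \v{S}varc--Milnor orbit map is coarsely measure preserving (so that $\Lambda^p_G\simeq_p\Lambda^p_{X}$); this is standard for a proper cocompact $\mu$-preserving action, and the paper likewise treats it as implicit, so it is not a gap relative to the source.
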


\subsection{Helly's theorem and centrepoints}

Inspired by the arguments presented in \cite[Section 4]{BenSchTim-12-separation-graphs}, we show that finite measure thick subsets of real hyperbolic spaces have ``medians''.
To find a suitable centrepoint of a subset, we use Helly's theorem (cf.\ \cite{Mil-Ten-Thur-Vav-97-Separators}).
The version suitable for our needs is the following variation on a result of Ivanov~\cite{Ivanov_Helly}.
\begin{theorem}[Ivanov]\label{thm:helly}
	Let $X$ be a uniquely geodesic space of compact topological dimension $k < \infty$.
	Let $\cH$ be a (possibly infinite) collection of closed convex subsets of $X$,
	with the property that there exists a compact convex set $Y \subset X$ so that
	for any $H_1,\ldots,H_{k+1} \in \cH$ we have $Y \cap H_1 \cap \cdots \cap H_{k+1} \neq \emptyset$.
	Then $\bigcap_{H \in \cH} H \supset \bigcap_{H \in \cH} H \cap Y \neq \emptyset$.
\end{theorem}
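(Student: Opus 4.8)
The plan is to deduce this finite-dimensional Helly-type statement from Ivanov's original result in \cite{Ivanov_Helly}, which (in the form we need) asserts: if $X$ is a uniquely geodesic space of topological dimension $k<\infty$ and $H_1,\dots,H_{k+1}$ are closed convex subsets with $H_{i_1}\cap\cdots\cap H_{i_j}\neq\emptyset$ for all subfamilies, together with some compactness on at least one of them, then $\bigcap_i H_i\neq\emptyset$. More precisely, Ivanov's finite Helly theorem states that for a finite collection $\cH'$ of closed convex sets in such an $X$, if every $k+1$ of them have a common point and at least one of them is compact, then all of $\cH'$ has a common point. The task is to upgrade this to a possibly infinite collection $\cH$, using the uniform compact convex set $Y$ as the source of compactness.

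First I would replace $\cH$ by the collection $\cH_Y = \{ H\cap Y : H\in\cH\}$. Each $H\cap Y$ is a closed convex subset of $X$ (intersection of two closed convex sets), and it is compact since $Y$ is compact. The hypothesis gives that for any $H_1,\dots,H_{k+1}\in\cH$ we have $(H_1\cap Y)\cap\cdots\cap(H_{k+1}\cap Y) = Y\cap H_1\cap\cdots\cap H_{k+1}\neq\emptyset$; in particular every subfamily of size $\leq k+1$ of $\cH_Y$ has nonempty intersection, and every member is compact. By Ivanov's finite Helly theorem, every \emph{finite} subfamily of $\cH_Y$ has nonempty intersection: indeed for finitely many $H_1\cap Y,\dots,H_m\cap Y$ we apply the finite theorem inside $X$, using that each is compact convex and that $(k+1)$-wise intersections are nonempty. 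Thus $\{H\cap Y : H\in\cH\}$ is a family of compact sets with the finite intersection property.

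Second, I would invoke compactness of $Y$: a family of closed (hence compact, being closed subsets of the compact space $Y$) subsets of $Y$ with the finite intersection property has nonempty total intersection. Therefore $\bigcap_{H\in\cH}(H\cap Y)\neq\emptyset$, which is exactly $\bigcap_{H\in\cH}H\cap Y\neq\emptyset$, and this set is contained in $\bigcap_{H\in\cH}H$, giving the claimed chain of inclusions and nonemptiness.

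The main obstacle is locating and correctly citing the exact form of Ivanov's theorem: \cite{Ivanov_Helly} is stated for CAT$(0)$ or more generally Busemann-convex / uniquely geodesic spaces and one must check that the hypothesis "uniquely geodesic with finite topological dimension $k$" together with "at least one set compact" suffices for the finite $(k+1)$-Helly conclusion, rather than needing extra curvature or completeness assumptions. If the literature version requires, say, completeness or local compactness, one works around it by noting that $Y$ is compact and replacing $X$ by a suitable uniquely geodesic neighborhood, or by applying the finite theorem only to the compact sets $H\cap Y$ inside the compact space $Y$ (which is itself uniquely geodesic of dimension $\leq k$). Once the finite version is in hand, the passage to infinite $\cH$ is a soft compactness argument and presents no difficulty.
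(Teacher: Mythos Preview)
Your proposal is correct and follows essentially the same approach as the paper: reduce to Ivanov's finite Helly theorem \cite[Theorem~1.1]{Ivanov_Helly} and then invoke compactness of $Y$. The paper phrases the compactness step as a contradiction via an open cover of $Y$ (applying Ivanov's theorem to the finite family $\{Y, H_{y_1},\ldots,H_{y_m}\}$), whereas you use the finite intersection property directly on $\{H\cap Y : H\in\cH\}$; these are two equivalent formulations of the same argument.
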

The `compact topological dimension' of a space $X$ is the maximum topological dimension of all compact subsets of $X$.
\begin{proof}
	If not, then for any $y \in Y$ there exists $H_y \in \cH$ with $y \notin H_y$.
	Since $Y$ is compact, for some $y_1,\ldots, y_m$ we have that $\{X\setminus H_{y_i}\}_{i=1,\ldots,m}$ is a finite subcover of the open cover $\{X\setminus H_y\}_{y \in Y}$ of $Y$.
	By assumption, any $k+1$ of the finite collection of convex sets $\{Y,H_{y_1},\ldots,H_{y_m}\}$ have non-empty intersection (in $Y$), and so Helly's Theorem \cite[Theorem~1.1]{Ivanov_Helly} implies that there exists $y \in Y \cap H_{y_1}\cap\cdots\cap H_{y_m} \neq \emptyset$.
	This is a contradiction, since $y$ is not covered by $\{X\setminus H_{y_i}\}_{i=1,\ldots,m}$.
\end{proof}

\begin{lemma}\label{realhypcentrepoint}(Centrepoint theorem) Let $a>0$. There exists a constant $c=c(k,a)>0$ such that for any $a$-thick subset $Z$ of $\HH_\R^k$ with finite measure, there is a point $x\in \HH_\R^k$ such that for any half-space $H$ of $\HH_\R^k$ containing $x$, we have $\mu(H\cap Z)\geq c\mu(Z)$.
\end{lemma}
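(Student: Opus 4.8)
The plan is to adapt the classical centrepoint theorem for $\R^{k}$ --- a standard application of Helly's theorem --- to $\HH_{\R}^{k}$, using the non-compact version of Helly's theorem recorded in Theorem~\ref{thm:helly}. Since $Z$ is $a$-thick it contains a ball of radius $a$, and hence $\mu(Z)>0$; if $\mu(Z)=0$ the statement is vacuous, so assume $\mu(Z)>0$. I expect the constant to be $c=\tfrac{1}{2(k+1)}$, depending only on $k$. The one place where hyperbolicity (rather than Euclidean geometry) intervenes is that Theorem~\ref{thm:helly} demands a fixed compact convex set $Y$ meeting every $(k+1)$-fold intersection of the relevant half-spaces; I would manufacture $Y$ from the finiteness of $\mu(Z)$, at the cost of replacing the sharp Euclidean constant $\tfrac{1}{k+1}$ by $\tfrac{1}{2(k+1)}$.

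In detail: fix a basepoint $x_{0}$, and --- using that $\mu(Z)<\infty$ and that $\{B(x_{0},R)\}_{R>0}$ exhausts $\HH_{\R}^{k}$ --- choose $R$ with $\mu(Z\setminus B(x_{0},R))<\tfrac12\mu(Z)$; set $Y=B(x_{0},R)$, which is compact and convex. Let $\mathcal{H}$ be the collection of all closed half-spaces $H$ of $\HH_{\R}^{k}$ with $\mu(H\cap Z)>(1-c)\mu(Z)$; these are closed and convex. For any $H_{1},\dots,H_{j}\in\mathcal{H}$ with $j\leq k+1$ we have $\mu(Z\setminus\bigcap_{i}H_{i})\leq\sum_{i}\mu(Z\setminus H_{i})<jc\,\mu(Z)\leq\tfrac12\mu(Z)$, hence $\mu(\bigcap_{i}H_{i}\cap Z)>\tfrac12\mu(Z)>\mu(Z\setminus Y)$, so $Y\cap\bigcap_{i}H_{i}\neq\emptyset$. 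As $\HH_{\R}^{k}$ is uniquely geodesic of compact topological dimension $k$, Theorem~\ref{thm:helly} applies and produces $x\in\bigcap_{H\in\mathcal{H}}H$; this will be the required centrepoint.

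To verify the conclusion, let $H$ be a closed half-space with $x\in H$, let $H^{*}$ be the opposite closed half-space (so $\HH_{\R}^{k}=H\cup H^{*}$), and suppose for contradiction that $\mu(H\cap Z)<c\,\mu(Z)$. If $x\in\operatorname{int}(H)$ then $x\notin H^{*}$, so $H^{*}\notin\mathcal{H}$, i.e.\ $\mu(H^{*}\cap Z)\leq(1-c)\mu(Z)$, whence $\mu(H\cap Z)\geq\mu(Z)-\mu(H^{*}\cap Z)\geq c\,\mu(Z)$, a contradiction. If instead $x\in\partial H$, I would reduce to the previous case by perturbation: take the geodesic $\gamma$ through $x$ orthogonal to $\partial H$, parametrised so that $\gamma((0,\infty))$ lies in the complement of $H$, and for $n\geq1$ let $H_{n}$ be the closed half-space bounded by the geodesic hyperplane through $\gamma(1/n)$ orthogonal to $\gamma$, on the side containing $x$; working in the hyperboloid model one checks (via the reverse Cauchy--Schwarz inequality for timelike vectors) that the unit normals of these half-spaces converge to that of $\partial H$, so that $H\subseteq\operatorname{int}(H_{n})$ for every $n$ and $\bigcap_{n}H_{n}=H$. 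Then $\mu(H_{n}\cap Z)\downarrow\mu(H\cap Z)<c\,\mu(Z)$ by continuity from above of the finite measure $A\mapsto\mu(A\cap Z)$, so $\mu(H_{n}\cap Z)<c\,\mu(Z)$ for large $n$ while $x\in\operatorname{int}(H_{n})$, contradicting the interior case. Hence one may take $c=c(k,a)=\tfrac{1}{2(k+1)}$. The main obstacle, as flagged above, is securing the compact set $Y$ in Helly's theorem by exploiting $\mu(Z)<\infty$; everything else is formally the Euclidean argument together with a routine perturbation for the boundary case.
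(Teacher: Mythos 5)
Your proof is correct, and it takes a genuinely different (and arguably cleaner) route than the paper's. The paper first discretizes $Z$ to a $2a$-separated $4a$-net $Z'$, applies the combinatorial centrepoint argument (via Theorem~\ref{thm:helly}) to the finite point set, observing that the compact convex set $Y$ in Helly's theorem can be taken to be a large ball containing the finite set $Z'$; it then transfers the counting conclusion back to a measure conclusion via disjoint balls around the net points. You instead run the centrepoint argument directly on the measure $\mu|_Z$: you take $\mathcal{H}$ to be the half-spaces carrying more than $(1-c)\mu(Z)$ of the mass, and manufacture the compact convex set $Y$ needed in Theorem~\ref{thm:helly} from the finiteness of $\mu(Z)$ (choosing a ball capturing more than half the mass). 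This avoids the discretization entirely and yields the constant $c=\tfrac{1}{2(k+1)}$ depending only on $k$, whereas the paper's constant also depends on $a$ through the volume ratio $\mu(B(z,a))/\mu(B(z,5a))$; your version is therefore quantitatively a bit sharper. The two proofs also diverge in how they handle the ``boundary'' configuration: the paper states as a short exercise that the Helly point lies in every half-space meeting $Z'$ in more than a $\tfrac{1}{k+1}$-fraction, which elides the degenerate case where the centrepoint lies on $\partial H$; you instead handle that case explicitly with a perturbation of the boundary hyperplane along the orthogonal geodesic and continuity from above of the finite measure. One cosmetic remark: the appeal to ``unit normals converging'' and the reverse Cauchy--Schwarz inequality in the hyperboloid model is more machinery than is needed; the facts you use, namely that two distinct geodesic hyperplanes orthogonal to a common geodesic are disjoint (so $H\subset\operatorname{int}(H_n)$) and that $\bigcap_n H_n=H$, follow immediately from the elementary observation that a hyperbolic triangle cannot have two right angles, or from a short computation in the upper half-space model with $\gamma$ the vertical axis.
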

\begin{proof} By assumption $Z=\bigcup_{i\in I} B(z_i,a)$ for some $\{z_i\}_{i\in I} \subset Z$.  Let $Z'$ be an $2a$-separated $4a$-net in $\setcon{z_i}{i\in I}$. It follows that $\abs{Z'}\asymp_a\mu(Z)$ since $\abs{Z'}\mu(B(z_i,a))\leq \mu(Z) \leq \abs{Z'}\mu(B(z_i,5a))$ for some (any) $z_i$.

Let $Y$ be a large closed (convex) ball containing $Z'$.
Let $\mathcal{Z}$ be the set of all closed half-spaces of $\HH_\R^k$ containing more than $\frac{k}{k+1}\abs{Z'}$ of the points in $Z'$.  Thus the intersection of any $k+1$ of the sets in $\mathcal{Z}$ has non-empty intersection with $Y$.

Applying Theorem~\ref{thm:helly}, and the fact that $\HH_\R^k$ has compact topological dimension $k$, there exists some $x\in \bigcap_{H \in \mathcal{Z}} H$.  Thus for any half-space $H\subset \HH_\R^k$ with $\abs{H\cap Z'}>\frac{k}{k+1}\abs{Z'}$ we have $x\in H$. 
It is a short exercise to see that $x$ is contained in every half-space $H$ such that $\abs{Z'\cap H}>\frac{k}{k+1}\abs{Z'}$ if and only if every half-space $H$ containing $x$ satisfies $\abs{Z'\cap H}>\frac{1}{k+1}\abs{Z'}$.

Let $H$ be a half-space containing $x$ and let $Z'_H=Z'\cap H$. It is clear that $\mu(B(z,r)\cap H)\geq \frac12\mu(B(z,r))$ for any $z\in Z'_H$ and any $r\geq 0$, so
\[
	\mu(Z\cap H) \geq \frac{\mu(B(z,a))}{2(k+1)}\abs{Z'} \asymp_{k,a} \mu(Z).\qedhere
\]
\end{proof}
We can use a measure-preserving isometry to move such a centrepoint $x$ to the origin $o \in \HH_\R^k$ in the Poincar\'e ball model, and now show that hypothesis \ref{condition:cutsets} of Theorem~\ref{thm:hypubd} is satisfied for $\HH_\R^k$.
\begin{lemma}\label{lem:shadows-real-hyp}
  There exist constants $\kappa,C>0$ so that for any $a$-thick subset $Z \subset \HH_\R^k$, and $o\in \HH_\R^k$ a centrepoint of $Z$, there exist $C$-asymptotic shadows of $o$ denoted by $H^-,H^+ \subset \HH_\R^k$ so that we have $\rho(\bdry H^-, \bdry H^+) \geq \kappa$ and that $\mu(Z \cap H^-), \mu(Z \cap H^+) \geq \kappa\mu(Z)$.
\end{lemma}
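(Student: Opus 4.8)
The plan is to reduce the lemma to a measure--concentration fact on the boundary sphere. Since $o$ is a centrepoint of $Z$, by Lemma~\ref{realhypcentrepoint} there is a constant $c=c(k,a)>0$ such that $\mu(Z\cap H)\geq c\mu(Z)$ for every closed half-space $H\subset\HH_\R^k$ containing $o$; in particular this holds whenever the bounding hyperplane of $H$ passes through $o$. Place $o$ at the centre of the Poincar\'e ball model (this is where $x_0$ sits, by the remark preceding the lemma), identify the unit sphere $S=S^{k-1}\subset T_o\HH_\R^k$ with $\bdry\HH_\R^k$ via the geodesic--ray endpoint map, and note $\rho$ is then a visual metric based at $o$. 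Let $v\colon\HH_\R^k\setminus\{o\}\to S$ record the initial direction of $[o,x]$ and put $\nu=v_*(\mu|_Z)$, a Borel measure on $S$ with $\nu(S)=\mu(Z)$ (here we use that $Z$ is $a$-thick, so $\mu|_Z$ is non-atomic). Because a hyperplane $P\ni o$ is totally geodesic, a geodesic issuing from $o$ meets $P$ only at $o$; hence for $x\neq o$ and a half-space $H$ with $\partial H=P$ one has $x\in H$ iff $v(x)$ lies in the closed hemisphere $\overline{\mathrm{Hem}}(H)\subset S$ of directions pointing weakly into $H$. Therefore $\nu(\overline{\mathrm{Hem}}(H))=\mu(Z\cap H)\geq c\mu(Z)$, and letting $H$ vary this says precisely that \emph{every closed hemisphere of $S$ has $\nu$-mass at least $c\,\nu(S)$}.

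Next I would run a first--moment argument on $S$ to find an ``almost empty equator''. For $u\in S$ and $\delta>0$ write $W(u,\delta)=\{w\in S:|\angle(u,w)-\pi/2|\leq\delta\}$ for the $\delta$-band about $u^{\perp}$. Integrating against the rotation--invariant probability measure on $S$ and applying Fubini gives $\int_S\nu(W(u,\delta))\,du=\beta(\delta,k)\,\nu(S)$, where $\beta(\delta,k)$ is the normalised volume of such a band and $\beta(\delta,k)\to 0$ as $\delta\to 0$. Fix $\delta=\delta(k,c)$ with $\beta(\delta,k)\leq c/2$ and choose $v_+\in S$ with $\nu(W(v_+,\delta))\leq\tfrac{c}{2}\nu(S)$. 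The complement of the cap $C^+=\{w\in S:\angle(w,v_+)\leq\pi/2-\delta\}$ is contained in the union of the open hemisphere about $-v_+$ and $W(v_+,\delta)$; the former has $\nu$-mass at most $(1-c)\nu(S)$, being the complement of a closed hemisphere, so $\nu(C^+)\geq(c-\tfrac{c}{2})\nu(S)=\tfrac{c}{2}\mu(Z)$. By the symmetric computation the antipodal cap $C^-=\{w\in S:\angle(w,-v_+)\leq\pi/2-\delta\}$ also satisfies $\nu(C^-)\geq\tfrac{c}{2}\mu(Z)$.

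To conclude I would take $H^{\pm}$ to be the \emph{sectors} $H^{\pm}=\{o\}\cup\{x\in\HH_\R^k\setminus\{o\}:\angle_o(v(x),\pm v_+)\leq\pi/2-\delta\}$. Each $H^{\pm}$ is a $1$-asymptotic shadow of $o$ in the sense of Definition~\ref{defn:boundaryshadow}: for $x\in H^+$ the geodesic ray from $o$ through $x$ has constant initial direction $v(x)$, hence stays in $H^+$ for all time. Moreover $\mu(Z\cap H^{\pm})=\nu(C^{\pm})\geq\tfrac{c}{2}\mu(Z)$ by construction, and $\bdry H^{\pm}=C^{\pm}$. Since $C^{+}$ and $C^{-}$ are caps of radius $\pi/2-\delta$ about antipodal centres, every $\xi\in C^{+}$ and $\eta\in C^{-}$ satisfy $\angle_o(\xi,\eta)\geq 2\delta$; as $\rho(\cdot,\cdot)\asymp\exp(-\eps(\cdot|\cdot)_o)$ and $(\xi|\eta)_o$ is bounded above once $\angle_o(\xi,\eta)$ is bounded below (it is a function of the Euclidean distance of $\xi,\eta$ on $S$), this gives $\rho(\bdry H^{+},\bdry H^{-})\geq\kappa'(\delta,\eps)>0$. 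Setting $\kappa=\min\{c/2,\kappa'\}$ and $C=1$ finishes the proof.

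I expect the only genuine subtlety to lie in the choice of the sets $H^{\pm}$. Half-spaces through $o$ carry the measure lower bound for free, but their traces at infinity are hemispheres of $\bdry\HH_\R^k$, and two hemispheres always intersect; shrinking such half-spaces to separate their boundary traces risks discarding nearly all of the mass of $Z$, which could concentrate near the bounding hyperplane. Passing to the radial projection $\nu$ on $\bdry\HH_\R^k$ removes this tension: the centrepoint hypothesis becomes exactly the statement that $\nu$ charges every closed hemisphere, the averaging step then produces two opposite almost-hemispherical caps each carrying a fixed proportion of $\nu$, and the sectors over these caps are automatically asymptotic shadows of $o$ whose intersection with $Z$ has precisely the projected mass. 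The identification of $(\xi|\eta)_o$ with a function of the angular distance on $S$, and the verification that sectors meet Definition~\ref{defn:boundaryshadow}, are the routine points.
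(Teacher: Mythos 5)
Your argument is correct and is essentially the paper's: rotational averaging locates a thin band of geodesic directions through $o$ carrying at most $\tfrac{c}{2}\mu(Z)$ of the mass, the centrepoint inequality forces each complementary sector to retain at least $\tfrac{c}{2}\mu(Z)$, and the two sectors are radial cones (hence asymptotic shadows) whose boundary caps are uniformly separated. The only cosmetic difference is that you push $\mu|_Z$ forward to a measure on $S^{k-1}$ and average there, whereas the paper performs the same Fubini computation level-by-level on the spheres $S_r$ and then integrates over $r$; the substance is identical. (One stray remark: non-atomicity of $\mu|_Z$ has nothing to do with $a$-thickness --- the Riemannian measure on $\HH^k_\R$ is non-atomic regardless --- but this plays no role in the proof.)
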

\begin{proof}
  Fix $a>0$ and $c=c(k,a)>0$ the constants from Lemma~\ref{realhypcentrepoint}.

  Let $H\subset \mathbb{H}^k_\R$ be a hyperplane containing $o$, and let $\alpha>0$.  We denote by $H^{\alpha}$ the union of all two-sided geodesics passing through $o$ and with end points in the $\alpha$-neighbourhood of the boundary $\bdry H\subset \bdry \mathbb{H}^k_\R = \Sph^{k-1}$. 

 We start with an argument inspired by the proof of \cite[Proposition 4.1]{BenSchTim-12-separation-graphs}.  Consider for every $r>0$ the sphere $S_r=\{x\in \mathbb{H}^k_\R, \; d(x,o)=r\}$ equipped with its Riemannian measure $\nu_r$. Note that  
$$\nu_r(S_r\cap H^{\alpha})= \eta(\alpha) \nu_r(S_r)$$ for some increasing function $\eta$ satisfying $\lim_{\alpha\to 0}\eta(\alpha)=0$.  We now fix $\alpha>0$ so that $\eta(\alpha)\leq \frac{c}{2}$.

Recall that hyperplanes passing through $o$ are characterized by their normal vector at $o$, and therefore are parametrized by the projective space $P^{k-1}$. We consider the Lebesgue probability measure $\nu$ on $P^{k-1}$. Given $\theta\in P^{k-1}$ we define $H_\theta$ to be the hyperplane through $o$ with normal vector $\theta$.
Recall that $Z \subset \HH_\R^k$ is a measurable subset of finite measure, so for each $r$
$$\int_{P^{k-1}} \nu_r(Z\cap H_{\theta}^{\alpha}\cap S_r)d\nu(\theta)=\nu_r(Z\cap S_r)\frac{\nu_r(S_r\cap H^{\alpha})}{\nu_r(S_r)}= \nu_r(Z\cap S_r) \eta(\alpha) .$$
Integrating over $r$, we deduce that 
\begin{equation*}
  \int_{P^{k-1}} \mu(Z\cap H_{\theta}^{\alpha})d\nu(\theta)=\mu(Z)\eta(\alpha),
\end{equation*}
and so for some hyperplane $H_Z$ we have $\mu(Z \cap H_Z^\alpha) \leq \mu(Z)\eta(\alpha) \leq \frac{c}{2}\mu(Z)$.

Let $H^-,H^+$ be the two connected components of the complement of $H_Z^\alpha$; these are convex and asymptotic shadows of $o$, and satisfy $\mu(H^- \cap Z), \mu(H^+ \cap Z) \geq \frac{c}{2}\mu(Z)$.  Moreover, $\rho(\bdry H^-, \bdry H^+) \geq 2\alpha$.
\end{proof}

\subsection{Hyperbolic groups and centrepoints}\label{ssec:hypshadows}
In this subsection, we prove Proposition~\ref{prop:hypgroup-satisfies4}. 
\begin{proof}[Proof of Proposition~\ref{prop:hypgroup-satisfies4}]
  Property (1) follows from a standard argument:  As $G$ is infinite, there are (at least) two distinct points $z_1,z_2\in\bdry X$, and so there is a geodesic line $\gamma$ with endpoints $z_1$ and $z_2$ (by e.g.\ \cite[Proposition~7.6]{Ghys-dlH-90-hyp-groups}).  Given $x \in X$, as the action of $G$ on $X$ is cocompact, there exists $g\in G$ so that $g\cdot\gamma$ is within bounded distance of $x$.  As the geodesic triangle between $x_0 \in X$ and $g z_1, g z_2 \in \bdry X$ is $2\delta$-thin,  $x$ must be within a bounded distance of either the geodesic ray from $x_0$ to $gz_1$, or the geodesic ray from $x_0$ to $gz_2$; this ray can be adjusted to a $C$-quasi-geodesic ray that passes through $x$ for some uniform $C$.

Property (2) follows from~\cite[Theorem 7.2]{Coo-93-meas-bdry}, and $(\bdry X, \rho)$ is Ahlfors $Q$-regular with $Q = \frac{1}{\epsilon} h(X)$.  Property (3) is the definition of a visual metric, so it remains only to show that property \ref{condition:cutsets} is satisfied.

%
%
%


We require a probably well-known basic fact about convex hulls of quasi-convex subsets of real hyperbolic spaces. Recall that a subset $Y$ of a geodesic metric space is $K$-quasi-convex if every geodesic that connect a pair of points of $Y$ lies within the $K$-neighbourhood of $Y$. It turns out that in real hyperbolic spaces, quasi-convex subsets are ``nearly'' convex in a stronger sense:

\begin{lemma}\label{lem:Convex/quasiconvex}
Given $K\geq 0$, there exists $N=N(K,k)$ such that for every $K$-quasi-convex subset $Z\subset \HH^k_\R$, the convex hull of $Z$ is contained in the $N$-neighbourhood of $Z$. 
\end{lemma}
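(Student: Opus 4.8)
The plan is to derive this from the general principle that, in a Gromov hyperbolic geodesic space, a quasi-convex subset lies at bounded Hausdorff distance from its convex hull, with the bound controlled only by the quasi-convexity constant and the hyperbolicity constant. Since $\HH^k_\R$ is CAT$(-1)$ it is $\delta_0$-hyperbolic for a universal constant $\delta_0$, and being a Hadamard manifold its closed convex subsets (in particular closed convex hulls) are exactly the intersections of totally geodesic half-spaces. Thus it suffices to show that $\mathrm{Hull}(Z)\subseteq [Z]_N$ for some $N=N(K,k)$.

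First I would build the convex hull as $\mathrm{Hull}(Z)=\overline{\bigcup_{n\geq 0}Z_n}$ with $Z_0=Z$ and $Z_{n+1}=\bigcup_{x,y\in Z_n}[x,y]$. The first iterate is already controlled: $K$-quasi-convexity gives $Z_1\subseteq [Z]_K$, and a short thin-triangle computation shows $Z_1$ is $2\delta_0$-quasi-convex, since a geodesic between two points of $Z_1$ fellow-travels a concatenation of at most three geodesics each having both endpoints in $Z$, hence each lying in $Z_1$.

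The main obstacle is to show that the higher iterates $Z_n$ stay inside a \emph{fixed} neighbourhood $[Z_1]_{C}$ of $Z_1$: the obvious induction loses an additive $\sim\delta_0$ at each step and so only yields the useless bound $Z_n\subseteq [Z_1]_{2(n-1)\delta_0}$. Instead one must exploit hyperbolicity to see that a geodesic joining two points near $Z_1$ can only stray from $[Z_1]_{2\delta_0}$ within bounded distance of its two endpoints, since in its central portion it $2\delta_0$-fellow-travels a geodesic with both endpoints genuinely in $Z_1$, which lies in $[Z_1]_{2\delta_0}$ by the previous paragraph. This is the standard fact alluded to above. An alternative route, which is presumably the source of the dependence on $k$ in the statement, is to use that $\HH^k_\R$ is a $k$-dimensional Hadamard manifold: a Carath\'eodory-type argument places every point of $\mathrm{Hull}(Z)$ in, or boundedly close to, the convex hull of at most $k+1$ points of $\overline Z$, and an induction on dimension shows that a solid geodesic $k$-simplex in a $\delta_0$-hyperbolic space lies within $C(k)\delta_0$ of its $1$-skeleton, which in turn lies in $[Z]_K$ by quasi-convexity.

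Combining these, $\mathrm{Hull}(Z)\subseteq [Z_1]_{C}\subseteq [Z]_{K+C}$ with $C$ depending only on $\delta_0$ and, in the second route, on $k$, which gives the lemma. The only points needing care are the precise constants in the thin-triangle estimates and the exact form of the Carath\'eodory/simplex-thinness input; these are routine once the framework above is in place.
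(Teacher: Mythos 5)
Your second route is exactly the paper's proof: the paper passes to the Klein (Beltrami--Klein) model of $\HH^k_\R$, where hyperbolic geodesics are Euclidean chords, so hyperbolic and Euclidean convex hulls coincide; Carath\'eodory then places any point of the hull in the (geodesic) simplex spanned by at most $k+1$ points of $Z$, and an induction on the number of vertices, using $K$-quasi-convexity together with thin triangles at each step, gives $N(K,k)$. The Klein-model observation is worth keeping explicit, as it makes the Carath\'eodory reduction exact rather than approximate (you hedge with ``in, or boundedly close to''), and the induction you describe --- placing a solid geodesic simplex within $C(k)\delta_0$ of its $1$-skeleton --- is the same as the paper's ``induction on $m$''.

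Your first route, however, has a real gap and should not be presented as established. You invoke as a ``standard fact'' that in a Gromov hyperbolic geodesic space a quasi-convex set lies at bounded Hausdorff distance (depending only on $\delta$ and the quasi-convexity constant) from its convex hull. This is neither cited nor proved, and the heuristic you sketch does not close the induction you need: showing that a geodesic between two points of $[Z_1]_C$ strays from $[Z_1]_{2\delta_0}$ only near its endpoints controls the middle of the geodesic, but the endpoint regions can still go out to distance roughly $C+2\delta_0$ from $Z_1$, so you have not produced a fixed $C$ that is preserved under passing to $Z_{n+1}$. In fact the $N=N(K,k)$ in the lemma statement already hints that the authors did not have a dimension-free argument available, and the whole point of passing through Carath\'eodory is to avoid needing such a fact. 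If you want to pursue route~1, you would need to prove, not merely assert, a stabilisation result for iterated geodesic joins in $\HH^k_\R$ with constants independent of $k$; short of that, route~2 is the correct and self-contained argument.
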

\begin{proof}
Note that in Klein model of $\HH^k_\R$, the hyperbolic convex hull coincides with the Euclidean one. By Carath\'eodory's theorem, we deduce that any point of the convex hull of $Z$ is a convex combination of some points $z_1,\ldots, z_m\in Z$, with $m\leq k+1$. Using the quasi-convexity of $Z$, the lemma follows by induction on $m$.
\end{proof}
%
 
We now show that \ref{condition:cutsets} holds for $X$.
Let $X$ be a $\delta_X$-hyperbolic Cayley graph of the hyperbolic group $G$.
By a result of Bonk--Schramm~\cite{BS-00-gro-hyp-embed}, there exists constants $k \in \N, \lambda_\psi \geq 1, C_\psi \geq 0$ and a $(\lambda_\psi,C_\psi)$-quasi-isometric embedding $\psi: X \ra \HH^k_\R$.
By post-composing $\psi$ with an appropriate element of $\Isom_\mu(\HH^k_\R)$ if necessary, we may assume $\psi(1)=o$, the origin in the Poincar\'{e} ball model of $\HH^k_\R$.

  Given a finite subset $Y$ of $VX$, define $Y'\subset \HH^k_\R$ to be the closed $2$-neighbourhood of $\psi(Y)$. By Lemma~\ref{realhypcentrepoint}, there is a constant $c=c(k) > 0$ and a point $x'\in \HH^k_\R$ such that for any half-space $H$ of $\HH^k_\R$ containing $X$ we have $\mu(H\cap Y')\geq c\mu(Y')$. 
  Such $x'$ is contained in the convex hull of $\psi(Y)$, so by Lemma~\ref{lem:Convex/quasiconvex}, $d_{\HH^k_\R}(x',\psi(x))\leq N(k)$ for some $x\in X$. By applying a left-translation in $G$ (by an element $g$) we may assume $x=1$, while by applying an isometry $\phi\in\Isom(\HH^k_\R)$, we may assume $x'=o$. Define $f=\phi\circ \psi \circ g^{-1}:X\to \HH^k_\R$ and let $\bdry f$ be the induced map $\bdry f:\bdry X\to \Sph^{k-1}$, where $\bdry X$ is endowed with a visual metric $\rho$ based at $1$ and $\Sph^{k-1} = \bdry \HH_\R^k$ is endowed with the Euclidean (visual) metric $\rho_{Euc}$.

  By Lemma~\ref{lem:shadows-real-hyp}, there exist constants $\kappa,C$ and $C$-asymptotic shadows of $o$ denoted $H^\pm$ so that $\rho_{Euc}(\bdry H^-,\bdry H^+)\geq 4\kappa$ and that $\mu(Y'\cap H^\pm)\geq 4\kappa\mu(Y')$.  Since $f(1)=o$, it follows that 
  \[ \rho(\bdry f^{-1}[\bdry H^-]_{\kappa},\bdry f^{-1}[\bdry H^+]_\kappa)\geq \kappa' \]
  for some $\kappa'>0$ which does not depend on the choices of $\phi$ and $g$ used to construct $f$.  (It is not \textit{a priori} obvious that either of $\bdry f^{-1}[\bdry H^\pm]_{\kappa}$ is non-empty.)

  Define $H^\pm_X$ to be the set of all points $y \in X \setminus B(1,R)$ contained in the $A$-neighbourhood of the set of all geodesic rays in $X$ from $1$ to a point in $\bdry f^{-1}([\bdry H^\pm]_{\kappa})$, where $A$ and $R$ are determined below.
  
  
  We claim that there exist $A, R$ so that if $y\in Y$ satisfies $d_X(1,y) \geq R$ and $B(f(y),2)\cap H^\pm\neq\emptyset$, then $y\in H^\pm_X$. 
Let $z\in H^\pm$ satisfy $d_{\HH^k_\R}(z,f(y))\leq 2$, and let $\gamma$ be the unique geodesic ray in $\HH^k_\R$ starting at $o$ and containing $z$ (we assume $y$ is sufficiently far from $1$ that $z\neq o$); denote the boundary point of $\gamma$ by $\zeta$. 
Since $X$ is $C_X$-visual for some $C_X$, there exists a $C_X$-quasi-geodesic ray $\beta$ in $X$ from $1$ that contains $y$; denote by $\eta$ the boundary point of $\beta$ in $\bdry X$.
The Gromov product of $\zeta$ and $\bdry f(\eta)$ (relative to $o$) is bounded from below by $d_{\HH^k_\R}(o,f(y))$ up to a uniform additive error, so by insisting that $d_X(1,y)\geq R$ is sufficiently large, we may assume that $\rho_{Euc}(\zeta,\bdry f(\eta))\leq \kappa$, hence $\bdry f(\eta)\in [\bdry H^\pm]_{\kappa}$.
By the Morse Lemma, $\beta$ is contained in a uniform neighbourhood of a geodesic ray from $1$ to $\eta$, and hence for a suitable choice of $A$ will be contained in $H^\pm_X$ outside $B(1,R)$.  For these choices of $R,A$ we have that $y \in H^\pm_X$ as desired.

  From this, and the fact that $f$ is a quasi-isometry with fixed constants, it follows that there exist $\eta, \eta' >0$ so that
$|Y \cap H_X^\pm| \geq \eta \mu(Y' \cap H^\pm) \geq \eta \kappa \mu(Y') \geq \eta \kappa \eta' |Y|$.

The proof of Proposition~\ref{prop:hypgroup-satisfies4} is complete.
\end{proof}

\subsection{Upper bounds for the Poincar\'e profile}
\begin{proof}[Proof of Theorem~\ref{thm:hypubd}] 
  Let $x_0\in X$ and $a\geq 2$ be fixed so that \ref{condition:cutsets} holds.  Let $Z$ be an $a$-thick subspace of $X$ of sufficiently large finite measure (to be determined later).  Apply \ref{condition:cutsets} to move $Z$; without loss of generality we may assume that $\psi=id$.  Let $H^\pm$ be the corresponding $C$-asymptotic shadows of $x_0$.
  
Define $\bdry\phi:(\bdry X,\rho)\to [0,1]$ by
\[
\bdry\phi(z) = \min\{1, \max\{0, \tfrac{3}{\kappa}\rho(z,\bdry H^-)-1\}\};
\]
this is a $\frac{3}{\kappa}$-Lipschitz function so that $\bdry\phi$ is zero on $[\bdry H^-]_{\kappa/3}$ and one on $[\bdry H^+]_{\kappa/3}$.

We choose a function $\phi:X\to[0,1]$ by setting $\phi(x) = \bdry\phi(\eta)$ where $\eta\in \bdry X$ is the endpoint of some $C$-quasi-geodesic $\gamma_x:[0,\infty)\ra X$ with $\gamma_x(0)=x_0$ and $\gamma_x(t)=x$ for some $t$.
Regardless of the choices made in defining this function we have the following control: for any $x,y \in X$ with $d(x,y) \leq C'$ there exists $K=K(\delta, C, C', \rho, \kappa)$ so that
\begin{equation}\label{eq:lip-ext}
 \abs{\phi(x)-\phi(y)}\leq K\exp(-\eps d(x,x_0)).
\end{equation}
By a similar argument, there exists $L >0$ so that if $d(x,x_0) \geq L$ and $x \in H^-$ then the endpoint $\eta$ of $\gamma_x$ used to define $\phi(x)$ satisfies $\rho(\eta, \bdry H^-) \leq \kappa/3$, and so $\phi(x)=0$.  Likewise, if $x \in H^+$ and $d(x,x_0) \geq L$ then $\phi(x)=1$. 

Note that $\phi$ might not be measurable, but it is easy to see how it can be slightly modified so that it, and its gradient, are measurable: consider a measurable partition of the space, whose subsets have diameter at most $1$, and on each subset replace $\phi$ by its maximum over this subset.  

By assuming that $\mu(Z)$ is greater than $\frac{2}{\kappa} \mu(B(x_0,L))$, we know---by assumption \ref{condition:cutsets}---that $\mu(Z \cap H^- \setminus B(x_0,L))$ and $\mu(Z \cap H^+ \setminus B(x_0,L))$ are both $\geq \frac{\kappa}{2}\mu(Z)$.  Switching the roles of $H^\pm$ if necessary, we assume $\phi_Z \geq 1/2$ and so
\begin{equation}\label{eq:hyp-norm-bound}
  \norm{\phi-\phi_Z}_{Z,p}^p \geq |\phi_Z|^p \mu(Z \cap H^- \setminus B(x_0,L)) \geq 2^{-p-1} \kappa \mu(Z).
\end{equation}

We now bound $\norm{\nabla_a \phi}_{B,p}$ on the ball $B=B(x_0,r)$. Since we have $\mu(B(x_0,R)) \asymp \exp(h(X)R)$, \eqref{eq:lip-ext} gives 
\begin{equation}\label{eq:lip-ball-bound}
  \norm{\nabla_a \phi}_{B,p}^p
  \preceq_{K,\kappa,p}
  \int_{t=0}^r \exp(h(X)t)\exp(-p\eps t) dt.
\end{equation}

We now consider the three cases for $p$ separately.
(Recall that $h(X)=\epsilon Q$.)

\smallskip
\textbf{Case $1$, $p>Q$:}
Equation \eqref{eq:lip-ball-bound} gives that $\norm{\nabla_a \phi}_{X,p}^p$ is bounded by some constant $D$ only depending on $K$, $\kappa$ and $p$, so \eqref{eq:hyp-norm-bound} gives $h_a^p(Z) \preceq_{K,\kappa,p} \mu(Z)^{-1/p}$ for any subspace $Z$ and the case $p>Q$ follows.  

\smallskip
\textbf{Case 2, $p<Q$:}
By \eqref{eq:lip-ext}, we have
$ |\nabla_a \phi|(x) \preceq \exp(-\eps d(x,x_0)) $, so
\begin{equation*}
	\| \nabla_a \phi \|_{Z,p}^p \preceq \| \exp(-\eps d(\cdot,x_0)) \|_{Z,p}^p.
\end{equation*}
Since $\exp(-\eps d(\cdot,x_0))$ is a decreasing function of the distance to $x_0$, for every $r$ such that $\mu(B(x_0,r)) \geq \mu(Z)$ we have
\begin{align*}
   & \|\exp(-\eps d(\cdot,x_0)) \|_{Z,p}^p
   \\ & = 
   \|\exp(-\eps d(\cdot,x_0)) \|_{Z\cap B(x_0,r),p}^p
	+  \|\exp(-\eps d(\cdot,x_0)) \|_{Z\setminus B(x_0,r),p}^p
   \\ & \leq \|\exp(-\eps d(\cdot,x_0)) \|_{Z \cap B(x_0,r),p}^p
	+   \|\exp(-\eps d(\cdot,x_0)) \|_{B(x_0,r)\setminus Z,p}^p
   \\ & = \|\exp(-\eps d(\cdot,x_0)) \|_{B(x_0,r),p}^p.
\end{align*}
Pick $r$ such that $\mu(B(x_0,r-1))\leq \mu(Z)\leq \mu(B(x_0,r))$ so that $\mu(Z)\asymp \mu(B(x_0,r))$.
Therefore, as in \eqref{eq:lip-ball-bound}, we have 
\begin{align*}
  \norm{\nabla_a \phi}_{B(x_0,r),p}^p 
  & \preceq 
  \|\exp(-\epsilon d(\cdot,x_0))\|_{B(x_0,r)}^p
  \\ & \preceq \int_{t=0}^r \exp(h(X)t)\exp(-p\epsilon t) dt
  \\ & \preceq
  \exp(h(X)r) \cdot \exp(-p\eps r) 
  \asymp \mu(Z) \cdot \mu(Z)^{-p/Q},
\end{align*}
thus $h_a^p(Z) \preceq \mu(Z)^{-1/Q}$ and the bound on $\Lambda^p_{X,a}(\mu(Z))$ follows.

\smallskip
\textbf{Case 3, $p=Q$:}
If $p=Q$ then the same argument as in Case 2 shows that $\|\exp(-\epsilon d(\cdot,x_0))\|_{Z,p}^p$ is maximised for a metric ball, so
\[
\norm{\nabla_a \phi}_{Z,p}^p \preceq \int_{t=0}^r \exp(h(X)t)\exp(-p\eps t) dt = r \asymp \log(\mu(Z)),
\]
so $h_a^p(Z) \preceq \log(\mu(Z))^{1/p} \cdot \mu(Z)^{-1/p}$ and thus the bound on $\Lambda^{p}_{X}$ for $p=Q$ follows.
\end{proof}



\section{Applications to buildings and symmetric spaces}\label{sec:applications}
	We use results from Sections~\ref{sec:hyp-PI} and \ref{section:upperHyp} to calculate Poincar\'e profiles of buildings and rank-one symmetric spaces (Theorem~\ref{bthm:hypconfdim}).

Bourdon and Pajot~\cite{BP-99-hyp-build-PI} showed that a family of Fuchsian buildings earlier studied by Bourdon~\cite{Bou-97-GAFA-exact-cdim} have boundaries that admit $1$-Poincar\'e inequalities.  
\begin{definition}
  Let $m \geq 5, n \geq 3$ be given.  Let $R$ be the regular, right-angled hyperbolic polygon with $m$ sides.  Let $I=I_{m,n}$ be the Fuchsian building where the chambers are isometric to $R$, each edge is adjacent to $n$ copies of $R$, and the vertex links are copies of the complete bipartite graph with $n,n$ vertices.

  The group
  \[ G_{m,n}=\langle s_1, \ldots, s_m \,|\, s_i^n, [s_i, s_{i+1}] \, \forall i \rangle, \]
  where indices are modulo $m$,
  acts cellularly and geometrically on $I_{m,n}$.  By \cite[Theorem 1.1]{BP-99-hyp-build-PI}, $\bdry G_{m,n} = \bdry I_{m,n}$ carries an Ahlfors $Q_{m,n}$-regular metric, where
$Q_{m,n} = 1+\log(n-1)/\mathrm{arccosh}((m-2)/m) \in (1, \infty)$,
and  which admits a $1$-Poincar\'e inequality in the sense of Hein\-on\-en--Koskela (Section~\ref{sec:hyp-PI}).
\end{definition}
The apartments in $I_{m,n}$ are each copies of the hyperbolic plane tiled by right-angled regular $m$-gons.  As such, they have separation at least $\log(r)$; the boundary geometry lets us find much larger lower bounds. 
\begin{theorem}
  Given $m\geq 5, n \geq 3$, and $p \in [1, \infty)$,
  \begin{equation*}
    \Lambda_{I_{m,n}}^p(r) \simeq_p
    \begin{cases}
      r^{1-1/{Q_{m,n}}} & \text{ if } p < Q_{m,n} \\
	  r^{1-1/Q_{m,n}} \log(r)^{1/Q_{m,n}} & \text{ if } p=Q_{m,n}\\
      r^{1-1/p} & \text{ if } p > Q_{m,n} .\\
    \end{cases}
  \end{equation*}
\end{theorem}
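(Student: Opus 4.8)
The plan is to prove the two bounds separately, deriving the upper bound from Theorem~\ref{thm:hypubd} and the lower bounds from the boundary Poincar\'e inequality results of Section~\ref{sec:hyp-PI}, all transferred through the geometric action of $G_{m,n}$ on $I_{m,n}$. Since $G_{m,n}$ acts geometrically on the proper geodesic building $I_{m,n}$ and preserves its natural measure, $I_{m,n}$ has bounded geometry and is large-scale equivalent to $G_{m,n}$; hence by Proposition~\ref{prop:Poincarecoarsereg} (applied in both directions) $\Lambda^p_{I_{m,n}}\simeq_p\Lambda^p_{G_{m,n}}$, and it suffices to prove each estimate for whichever of these, or for a bounded degree graph quasi-isometric to them, is most convenient.

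For the upper bound I would invoke Proposition~\ref{prop:hypgroup-satisfies4}: $I_{m,n}$ satisfies hypotheses $(1)$--$(4)$ of Section~\ref{section:upperHyp} for any basepoint $x_0$ and any visual metric $\rho$ on $\bdry I_{m,n}$, and $(\bdry I_{m,n},\rho)$ is Ahlfors $h(I_{m,n})/\eps$-regular, where $\eps$ is the visibility parameter of $\rho$. Taking $\rho$ to be the Bourdon--Pajot metric of \cite{BP-99-hyp-build-PI}, which is a visual metric and is Ahlfors $Q_{m,n}$-regular, uniqueness of the Ahlfors regularity exponent forces $h(I_{m,n})/\eps=Q_{m,n}$. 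Theorem~\ref{thm:hypubd} with $Q=Q_{m,n}$ then yields exactly the three claimed upper bounds: $r^{1-1/Q_{m,n}}$ for $p<Q_{m,n}$, $r^{1-1/Q_{m,n}}\log(r)^{1/Q_{m,n}}$ for $p=Q_{m,n}$, and $r^{1-1/p}$ for $p>Q_{m,n}$.

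For the lower bounds, in the range $p\leq Q_{m,n}$ I would use that $(\bdry I_{m,n},\rho)$ is Ahlfors $Q_{m,n}$-regular and admits a $1$-Poincar\'e inequality by \cite[Theorem 1.1]{BP-99-hyp-build-PI}, hence a $q$-Poincar\'e inequality for every $q\geq1$ by H\"older's inequality. Theorem~\ref{thm:hyp-PI-bdry}, applied to a graph quasi-isometric to $I_{m,n}$ with boundary metric $\rho$, then gives $\Lambda^q_{I_{m,n}}(r)\gtrsim r^{1-1/Q_{m,n}}$ for all $q\geq1$, which handles $p<Q_{m,n}$; Theorem~\ref{thm:hyp-PI-bdry-sharp}, applied with the $Q_{m,n}$-Poincar\'e inequality, sharpens this at $q=Q_{m,n}$ to $\Lambda^{Q_{m,n}}_{I_{m,n}}(r)\gtrsim r^{1-1/Q_{m,n}}\log(r)^{1/Q_{m,n}}$. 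For $p>Q_{m,n}$, the group $G_{m,n}$ is a non-elementary hyperbolic group and hence non-amenable, so Corollary~\ref{cor:hyp-lower-bound} gives $\Lambda^p_{G_{m,n}}(r)\gtrsim_p r^{1-1/p}$, which transfers to $I_{m,n}$. Matching these lower bounds with the upper bounds above finishes the proof.

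The step I expect to be the main obstacle, such as it is, is the identification of the exponent $h(I_{m,n})/\eps$ controlling Theorem~\ref{thm:hypubd} with the Bourdon--Pajot exponent $Q_{m,n}$: one must verify that the Bourdon--Pajot metric genuinely is a visual metric with a well-defined visibility parameter and then appeal to uniqueness of Ahlfors regularity dimension. Once this bookkeeping is done, all the bounds coming out of Sections~\ref{section:upperHyp} and \ref{sec:hyp-PI} line up exactly in each range of $p$, and no further estimates are needed.
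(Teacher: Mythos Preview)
Your proposal is correct and follows essentially the same route as the paper: the lower bounds come from Theorem~\ref{thm:hyp-PI-bdry}, Theorem~\ref{thm:hyp-PI-bdry-sharp}, and Corollary~\ref{cor:hyp-lower-bound} in the three regimes, while the upper bounds come from Theorem~\ref{thm:hypubd} via Proposition~\ref{prop:hypgroup-satisfies4} (the paper cites the packaged form Corollary~\ref{cor:eqConfdim}, but this is the same argument). Your remark about identifying $h(I_{m,n})/\eps$ with $Q_{m,n}$ is exactly the point that the equivariant conformal dimension of $G_{m,n}$ is attained by the Bourdon--Pajot metric, which is what makes Corollary~\ref{cor:eqConfdim} give the sharp exponents.
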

\begin{proof}
  The lower bounds follow from Theorem~\ref{thm:hyp-PI-bdry} for $p < Q_{m,n}$, Theorem~\ref{thm:hyp-PI-bdry-sharp} for $p=Q_{m,n}$ and Corollary~\ref{cor:hyp-lower-bound} for $p \geq Q_{m,n}$.
  The upper bounds follow from Corollary~\ref{cor:eqConfdim}.
\end{proof}

Finally, we calculate the Poin\-car\'{e} profiles of rank-one symmetric spaces.
The case of $p=1$ for $\HH^k_{\R}$ is dealt with by \cite[Proposition $4.1$]{BenSchTim-12-separation-graphs} and Proposition \ref{prop:Lambda1Sep}, but all other cases are new.

\begin{theorem}\label{pProfhyp} 
	Let $\mathbb{K} \in \{ \R, \C, \HH, \mathbb{O}\}$ be a real division algebra, and let $X=\HH_{\mathbb{K}}^m$ be a rank-one symmetric space for $m \geq 2$ (and $m=2$ when $\mathbb{K}=\mathbb{O}$).
	Let $Q = (m+1)\dim_{\R} \mathbb{K} -2$, then
\[
\Lambda^p_{\mathbb{H}^m_{\mathbb{K}}}(r) \simeq \left\{
\begin{array}{ll}
  r^{\frac{Q-1}{Q}}
&
\textrm{if } p< Q
\\
r^{\frac{Q-1}{Q}}\log(r)^{\frac{1}{Q}}
&
\textrm{if } p = Q
\\
r^{\frac{p-1}{p}}
&
\textrm{if } p > Q
\end{array}\right.
\]
\end{theorem}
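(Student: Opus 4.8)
The plan is to obtain the theorem by matching the upper bounds of Section~\ref{section:upperHyp} with the lower bounds of Sections~\ref{sec:trees} and \ref{sec:hyp-PI}. Throughout, write $Q=(m+1)\dim_{\R}\mathbb{K}-2$, which is simultaneously the Hausdorff dimension of $\bdry X$ for its standard (Carnot--Carath\'eodory) visual metric and Pansu's conformal dimension of $\bdry X$, the latter being realised by this metric. Poincar\'e profiles are invariant under quasi-isometry (indeed under regular maps, Theorem~\ref{bthm:monotoneunderreg}), so we are free to replace $X$ by any metric measure space quasi-isometric to it, for instance a Cayley graph of a cocompact lattice in $\Isom(X)$, or $\Isom(X)$ itself with a word metric.

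For the upper bound I would check that $X=\HH^m_{\mathbb{K}}$, equipped with its Riemannian volume, satisfies hypotheses (1)--(4) of Theorem~\ref{thm:hypubd} with visual parameter $\epsilon$ chosen so that $h(X)/\epsilon=Q$. Properties (1)--(3) are classical facts about rank-one symmetric spaces. Property \ref{condition:cutsets} is not literally Proposition~\ref{prop:hypgroup-satisfies4}, which is phrased for hyperbolic groups, but follows from the same argument: a Bonk--Schramm quasi-isometric embedding $X\to\HH^k_\R$, the centrepoint theorem (Lemma~\ref{realhypcentrepoint}), the shadow construction in real hyperbolic space (Lemma~\ref{lem:shadows-real-hyp}), and the fact that $\Isom(X)$ acts transitively on $X$ and preserves the Riemannian volume, so that the centrepoint can be translated to $x_0$, taking over the role played by left-translations in the group case. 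Theorem~\ref{thm:hypubd} then yields $\Lambda^p_X(r)\lesssim r^{(Q-1)/Q}$ for $p<Q$, $\Lambda^Q_X(r)\lesssim r^{(Q-1)/Q}\log(r)^{1/Q}$, and $\Lambda^p_X(r)\lesssim r^{(p-1)/p}$ for $p>Q$. Equivalently, one may pass to a cocompact lattice $G$ in $\Isom(X)$ and invoke Corollary~\ref{cor:eqConfdim}, noting that the equivariant conformal dimension of $G$ equals $Q$ and is attained.

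For the lower bounds I would split into the three ranges. When $p<Q$ the bound $\Lambda^p_X(r)\gtrsim_p r^{(Q-1)/Q}$ is precisely Corollary~\ref{bcor:lbdrank1ss} (obtained from a parabolic nilpotent subgroup, or alternatively from the boundary Poincar\'e inequality via Theorem~\ref{bthm:lbdQrgPI}), and it matches the upper bound. When $p>Q$, a $3$-regular tree quasi-isometrically embeds into $X$, so Theorem~\ref{bthm:trees} and monotonicity give $\Lambda^p_X(r)\gtrsim_p r^{(p-1)/p}$; equivalently one may apply Corollary~\ref{cor:hyp-lower-bound} to the non-amenable CGLC group $\Isom(X)$. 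The critical case is $p=Q$: here I would apply the second statement of Theorem~\ref{bthm:lbdQrgPI} (Theorem~\ref{thm:hyp-PI-bdry-sharp}) to a bounded-degree hyperbolic graph quasi-isometric to $X$. Its hypothesis requires that the visual boundary be Ahlfors $Q$-regular and admit a $Q$-Poincar\'e inequality; for rank-one symmetric spaces $\bdry X$ in fact admits a $1$-Poincar\'e inequality (Jerison; Heinonen--Koskela; Mackay--Tyson), and a $1$-Poincar\'e inequality implies a $q$-Poincar\'e inequality for every $q\geq 1$ by H\"older's inequality, in particular a $Q$-Poincar\'e inequality. Theorem~\ref{thm:hyp-PI-bdry-sharp} then gives $\Lambda^Q_X(r)\gtrsim r^{(Q-1)/Q}\log(r)^{1/Q}$, matching the upper bound.

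The main obstacle is the borderline case $p=Q$, on both sides. The upper bound requires running the proof of Theorem~\ref{thm:hypubd} at the exact exponent where the integral $\int_0^r e^{h(X)t}e^{-p\epsilon t}\,dt$ is linear in $r$, which is precisely what produces the extra logarithmic factor; the lower bound requires the full strength of the curve-counting and annulus-Poincar\'e-inequality argument underlying Theorem~\ref{thm:hyp-PI-bdry-sharp}, which in turn rests on the classical but non-trivial fact that the sub-Riemannian boundary of $\HH^m_{\mathbb{K}}$ supports a Poincar\'e inequality. Everything else is assembly of already-proved results, modulo the routine verification that the hyperbolic-group toolkit of Section~\ref{section:upperHyp} applies to the symmetric space $X$ once $\Isom(X)$ replaces the group.
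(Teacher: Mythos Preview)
Your proposal is correct and matches the paper's own proof, which is a terse assembly of exactly the same ingredients: the boundary $1$-Poincar\'e inequality feeding Theorems~\ref{thm:hyp-PI-bdry} and \ref{thm:hyp-PI-bdry-sharp} for the lower bounds when $p\leq Q$, Corollary~\ref{cor:hyp-lower-bound} for $p>Q$, and Theorem~\ref{thm:hypubd} (equivalently Corollary~\ref{cor:eqConfdim} via a cocompact lattice) for all upper bounds. Your additional discussion of how property~\ref{condition:cutsets} is verified for $X$ itself versus for a lattice is a helpful elaboration the paper leaves implicit.
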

\begin{proof}
  The boundary of a rank-one symmetric space carries a visual metric that is Ahlfors $Q$-regular for the given exponent, and satisfies a $1$-Poincar\'e inequality.
  The result then follows from Theorem~\ref{thm:hyp-PI-bdry}, Theorem~\ref{thm:hyp-PI-bdry-sharp}, Corollary~\ref{cor:hyp-lower-bound}, and Theorem~\ref{thm:hypubd}.  
\end{proof}

%
%
%
%

\def\cprime{$'$}

\end{document}